\newcommand{\alglinelabel}{%
  \addtocounter{ALC@line}{-1}% Reduce line counter by 1
  \refstepcounter{ALC@line}% Increment line counter with reference capability
  \label% Regular \label
}
\theoremstyle{plain}
\newtheorem{theorem}{Theorem}[section]
\newtheorem{lemma}[theorem]{Lemma}
\newtheorem{corollary}[theorem]{Corollary}
\theoremstyle{definition}
\newtheorem{assumption}[theorem]{Assumption}
\theoremstyle{remark}
\newtheorem{remark}[theorem]{Remark}
\def\grad{\nabla}
\def\ba{\mathbf{a}}
\def\bv{\mathbf{v}}
\def\bw{\mathbf{w}}
\def\bx{\mathbf{x}}  %{\mbox{\boldmath $\lambda$}}
\def\by{\mathbf{y}}
\def\bz{\mathbf{z}}
\def\cA{\mathcal{A}}
\def\cD{\mathcal{D}}
\def\cE{\mathcal{E}}
\def\cG{\mathcal{G}}
\def\cK{\mathcal{K}}
\def\cL{\mathcal{L}}
\def\cO{\mathcal{O}}
\def\cP{\mathcal{P}}
\def\cR{\mathcal{R}}
\def\cT{\mathcal{T}}
\def\cX{\mathcal{X}}
\def\cY{\mathcal{Y}}
\def\smskip{\smallskip}
\def\texitem#1{\par\smskip\noindent\hangindent 25pt
               \hbox to 25pt {\hss #1 ~}\ignorespaces}
\def\norm#1{\|#1\|}
\newcommand{\BEAS}{\begin{eqnarray*}}
\newcommand{\EEAS}{\end{eqnarray*}}
\newcommand{\BEA}{\begin{eqnarray}}
\newcommand{\EEA}{\end{eqnarray}}
\newcommand{\BEQ}{\begin{eqnarray}}
\newcommand{\EEQ}{\end{eqnarray}}
\newcommand{\BIT}{\begin{itemize}}
\newcommand{\EIT}{\end{itemize}}
\newcommand{\BNUM}{\begin{enumerate}}
\newcommand{\ENUM}{\end{enumerate}}
\newcommand{\BA}{\begin{array}}
\newcommand{\EA}{\end{array}}
\newcommand{\reals}{\mathbb{R}}
\newcommand{\integers}{\mathbb{Z}}
\def\fprod#1{\left\langle#1\right\rangle}
\def\prox#1{\mathbf{prox}_{#1}}
\DeclareMathOperator*{\argmax}{\mathbf{argmax}}
\DeclareMathOperator*{\argmin}{\mathbf{argmin}}
\newcommand{\dom}{\mathop{\bf dom}}
\newif\ifpagenumbering
\newsavebox{\theorembox}
\newsavebox{\lemmabox}
\newsavebox{\defnbox}
\newsavebox{\assbox}
\savebox{\theorembox}{\noindent\bf Theorem}
\savebox{\lemmabox}{\noindent\bf Lemma}
\savebox{\defnbox}{\noindent\bf Definition}
\newtheorem{defn}{\usebox{\defnbox}}
\crefname{assumption}{Assumption}{Assumptions}
\crefname{theorem}{Theorem}{Theorems}
\crefname{lemma}{Lemma}{Lemmas}
\def\xz#1{\textcolor{teal}{#1}}
\def\sa#1{\textcolor{purple}{#1}}
\def\xqs#1{\textcolor{cyan}{#1}}
\def\mg#1{\textcolor{magenta}{#1}}
\def\bby{\Bar{\mathbf{y}}}
\def\shortalgline#1{\texttt{line}~\red{#1}}
\def\algline#1{\texttt{line}~\red{#1} of \agdap}
\definecolor{darkgreen}{rgb}{0.2,0.8,0.1}
\def\mg#1{\textcolor{blue}{#1}}
\newcommand{\gda}{\texttt{GDA}{}}
\newcommand{\agda}{\texttt{AGDA}{}}
\newcommand{\smagda}{\texttt{Sm-AGDA}{}}
\newcommand{\tiada}{\texttt{TiAda}{}}
\newcommand{\neada}{\texttt{NeAda}{}}
\newcommand{\sgdab}{\texttt{SGDA-B}{}}
\def\xz#1{\textcolor{black}{#1}}
\def\sa#1{\textcolor{black}{#1}}
\def\xqs#1{\textcolor{black}{#1}}
\def\mg#1{\textcolor{black}{#1}}
\def\xzr#1{\textcolor{blue}{#1}}
\def\na#1{\textcolor{black}{#1}}
\def\rv#1{\textcolor{black}{#1}}
\newcommand{\agdap}{\texttt{AGDA+}{}}
\def\algline#1{\texttt{line}~{#1} of \agdap}
\def\shortalgline#1{\texttt{line}~{#1}}
\def\cond#1{$\texttt{BacktrackCond}(#1)$}
\def\N{\mathbb{N}}
\def\ms{\texttt{max\_solver}}
\def\msf#1{$\texttt{max\_solver}=\texttt{#1}$}
\patchcmd{\@addmarginpar}{\ifodd\c@page}{\ifodd\c@page\@tempcnta\m@ne}{}{}
\newcolumntype{P}[1]{>{\centering\arraybackslash}p{#1}}
\newcommand{\thickhline}{%
    \noalign {\ifnum 0=`}\fi \hrule height 1pt
    \futurelet \reserved@a \@xhline
}
\newcolumntype{"}{@{\hskip\tabcolsep\vrule width 1pt\hskip\tabcolsep}}
\title{\sa{\agdap{}: proximal alternating gradient descent ascent method with \na{a nonmonotone adaptive step-size search %backtracking 
for 
%double-regularized nonconvex-strongly concave 
nonconvex minimax problems}}}
\author{
Xuan Zhang\thanks{The student authors contributed equally.}\\
Department of Industrial and Manufacturing Engineering\\
Pennsylvania State University
\\
University Park, PA,USA.\\
\texttt{xxz358@psu.edu}
\And
Qiushui Xu\footnotemark[1]\\
Department of Industrial and Manufacturing Engineering\\
Pennsylvania State University
\\
University Park, PA,USA.\\
\texttt{qjx5019@psu.edu}
\And
Necdet Serhat Aybat\\
Department of Industrial and Manufacturing Engineering\\
Pennsylvania State University
\\
University Park, PA,USA.\\
\texttt{nsa10@psu.edu}
}
\date{}
\begin{document}
\maketitle  
\begin{abstract}
%\xtodo{Shall we add equal contribution under Qiu Shui and my name? If it is ok, can Qiushui implement this change?}
We consider double-regularized \na{nonconvex}-strongly concave~(NCSC) minimax problems of the form $(P):\min_{\mathbf{x}\in\mathcal{X}}\max_{\mathbf{y}\in\mathcal{Y}} g(\mathbf{x})+f(\mathbf{x}, \mathbf{y})-h(\mathbf{y})$, where $g,h$ are closed convex, $f$ is \na{$L$-smooth} in $(\mathbf{x},\mathbf{y})$ %with Lipschitz constant $L$ 
and strongly concave in $\mathbf{y}$. We propose a 
proximal alternating gradient descent ascent method {\texttt{AGDA+}} that can adaptively choose \na{nonmonotone} primal-dual stepsizes to compute an approximate stationary point for $(P)$ without requiring the knowledge of the \na{global Lipschitz constant $L$ {and the concavity modulus $\mu$}}. %This algorithm 
\na{Using a nonmonotone step-size search (backtracking) scheme, \agdap{}} stands out by its ability to exploit the local Lipschitz structure and eliminates the need for precise tuning of hyper-parameters.
%For  deterministic scenarios,
{\texttt{AGDA+}} achieves the optimal iteration complexity of {$\mathcal{O}(\epsilon^{-2})$} %with the best $\mathcal{O}(1)$ constant among the existing methods agnostic to $L$.
and \na{it is the first step-size search method for NCSC minimax problems that require only $\cO(1)$ calls to $\grad f$ per backtracking iteration.}  
The numerical experiments demonstrate its robustness and efficiency. 
%in both deterministic and stochastic scenarios. 
% To the best of our knowledge, this is the first single-loop backtracking method for solving %nonconvex-strongly concave 
% WCSC minimax problems.
\end{abstract}
\section{Introduction}
\label{sec:intro}
\sa{Many machine learning~(ML) applications, such as training deep neural networks~(DNN), use adaptive gradient methods, e.g., AdaGrad~\cite{duchi2011adaptive} and Adam~\cite{KingBa15}, as they are robust to hyper-parameter choices and exhibit reasonable convergence behavior in practice even for \na{non-convex problems}. %settings. 
As also discussed in~\cite{li2022tiada}, a common characteristic of the majority of these methods is that they use \textit{diminishing} step sizes inversely proportional to the cumulative sum of gradient norms from previous iterations, \na{and} their step sizes eventually go below $2/L$ (this threshold is also required by the analysis of the gradient descent \na{method for smooth minimization problems}). %and this 
\na{This choice of monotonic stepsizes} guarantees the convergence of the method while being agnostic to the parameter $L$, which is usually \textit{unknown} and hard to estimate for complex ML models such as DNN.} \sa{While adaptive gradient methods for minimization problems have been actively studied, this is not the case for %adaptive 
primal-dual methods for solving non-convex minimax problems; indeed, even the deterministic setting is theoretically understudied, and there is a need for %single-loop 
simple primal-dual methods \na{that are} agnostic to the usually \textit{unknown global Lipschitz constant $L$} {and \textit{the concavity modulus} $\mu$,} and \na{that have} convergence guarantees for adaptive step sizes which do not tend to $0$ --as this would make them exhibit faster convergence behavior in practice while still being easy to tune. \na{Moreover, for better practical performance, it is highly desirable to design a method with a \textit{nonmonotone} step-size search mechanism in order to exploit the fluctuations of the local Lipschitz constants over the problem domain.}}

\sa{In this paper, we study the following class of \sa{\emph{double regularized nonconvex-strongly concave minimax}} problems:
\begin{equation}
\begin{aligned}\label{eq:main-problem}
\min_{\mathbf{x} \in \mathcal{X}} \max _{\by \in \mathcal{Y}} \mathcal{L}(\mathbf{x}, \by) \triangleq %\sum_{i \in \mathcal{N}} g_i\left(x_i\right)
g(\bx)+f(\mathbf{x}, \by)-h(\by),
\end{aligned}
\end{equation}
where $\mathcal{X}$ and $\mathcal{Y}$ are finite dimensional Euclidean spaces, $f:\cX\times\cY\to \mathbb{R}$ is differentiable \sa{such that $\grad f$ is Lipschitz with constant $L$,} $f$ is possibly non-convex in $\bx$ and strongly concave in $\by$ with modulus \sa{$\mu>0$},
%\nsa{did we get results for $\mu=0$?} 
$h:\cY\to\reals\cup\{+\infty\}$ and $g:\cX\to\reals\cup\{+\infty\}$  are closed convex functions. %If $\mu>0$, then 
\sa{The minimax problem in~\eqref{eq:main-problem} belongs to the class of \textit{weakly convex-strongly concave} (WCSC) minimax problems.}}
%, whereas for $\mu=0$, it is called \textit{weakly convex-merely concave}~(WCMC).
\sa{Nonconvex optimization problems in the form of \eqref{eq:main-problem} arise in many applications such as constrained optimization of weakly-convex objectives based on Lagrangian duality~\cite{li2021augmented}, Generative Adversarial Networks (GAN)~\cite{goodfellow2014generative},}
distributionally robust optimization (DRO)~\cite{namkoong2016stochastic}, and reinforcement learning~\cite{NEURIPS2021_d994e372}. 

\rv{For the minimax problem in~\eqref{eq:main-problem},} \sa{we are interested in designing a \textit{proximal gradient \na{descent ascent}}~(GDA) type method, i.e., given $(\bx_0,\by_0)\in\dom g\times\dom h$, for $t\geq 0$, let
%$\{(\bx_t,\by_t)\}_{t\in\integers_+}$ is constructed via
\begin{subequations}
\label{eq:gda}
    \begin{align}
        \bx_{t+1}&=\prox{\tau_t g}\big(\bx_t-\tau_t\grad_{\bx} f(\bx_t,\by_t)\big), \label{eq:x-update}\\
        \by_{t+1}&=\prox{\sigma_t h}\big(\by_t+\sigma_t\grad_y f(\bx_{t+1},\by_t)\big), \label{eq:y-update}
    \end{align}
\end{subequations}
such that the primal-dual step size sequence $\{(\tau_t,\sigma_t)\}_{t\in\integers_+}$ will be \textit{chosen on the fly} without requiring to know the \textit{global} Lipschitz constant $L$ and/or the concavity modulus $\mu$, where $\prox{r}(\cdot)$ denotes the proximal mapping of a given proper, closed, convex function $r:\cX\to\reals\cup\{+\infty\}$, i.e., \na{using the Euclidean norm $\norm{\cdot}$, we define %the proximal map
$\prox{r}$ as follows:}
\begin{equation}
\label{eq:prox}
\prox{r}:\bx\mapsto\argmin_{\bw\in\cX}\{r(\bw)+\frac{1}{2}\norm{\bw-\bx}^2\}.
\end{equation}}%
%\nsa{Use the first paragraph of \tiada{} to motivate adaptive gradient methods.}
% \todo[inline]{NSA: From \tiada{} paper ``To sum up,
% adaptive stepsizes designed for minimization, are not time-scale adaptive for minimax optimization and thus
% not parameter-agnostic."}
% \xtodo{I found this has been written in the later paragraph, do I still need to add this?}
\paragraph{Preliminaries: Assumptions and Definitions}
\label{sec:preli}
\begin{assumption}\label{ASPT:lipshiz gradient}
Let $g:\mathcal{X}\rightarrow \reals\cup\{+\infty\}$
and 
$h:\mathcal{Y}\rightarrow \reals\cup\{+\infty\}$ 
be proper, closed, convex functions.
Moreover, let 
$f:\cX\times\cY\to\reals$  
such that $f$ is differentiable on an open set containing $\dom g \times \dom h$ such that $\grad f$ is $L$-Lipschitz on $\dom g \times \dom h$, and that $f(\bx,\cdot)$ is strongly concave with modulus \sa{$\mu> 0$ uniformly for \rv{all} $\bx\in \dom h\subset \mathcal{Y}$.}
\end{assumption}

\begin{assumption}\label{aspt:bounded-Y}
\rv{Suppose} $\cD_y \triangleq \sup_{\by_1,\by_2\in \dom g} \norm{\by_1 - \by_2} < \infty$.
%\nsa{do we use it anywhere? --except for the definition of $\cD_y$}
\end{assumption}

% \begin{assumption}
%  $\cL(x,y)\geq \underline{\cL}$   for all $x,y\in \cX\times\cY$.
% \end{assumption}
% \qstodo{need to discuss if we need this assumption}

\begin{defn}
\label{def:opt-response}
    Define $\na{{\by}^{*}}:\cX\to\cY$ such that ${\by}^{*}(\bx)\triangleq\argmax_{\by\in\cY}f(\bx,\by)-h(\by)$ for all $\bx\in\dom g$.
\end{defn}
\begin{defn}
\label{def:max-function}
Let $F:\dom g\to \reals\cup\{+\infty\}$ denote the primal function, i.e., $F(\bx)\triangleq g(\bx)+\Phi(\bx)$, where the max function \rv{$\Phi$ is defined as} $\Phi(\bx)\triangleq \max_{\by\in\cY} f(\bx,\by)-h(\by)$ for all $\bx\in\rv{\dom g}$. 
\end{defn}
%\nsa{We removed lower boundedness assumption on $\cL$}
\begin{assumption}\label{aspt:primal_lb}
\sa{Suppose $-\infty<F^*\triangleq \rv{\inf_{\bx\in\cX}}F(\bx)$.}
\end{assumption}
\begin{defn}
%[1st order SP for min. problem]
\label{def:1st order SP for min. problem}
Given $\epsilon\geq 0$, \sa{$\bx_\epsilon\in\dom g$} is an $\epsilon$-stationary point in metric (\textbf{M1}) 
%of a differentiable function $\Phi$ 
if $\norm{\rv{\cG^\tau}(\bx_\epsilon)}\leq \epsilon$ \na{for some $\tau>0$}, where the gradient map $\cG^\tau(\bx) \triangleq [\bx- \prox{\tau g} (\bx-\tau \grad \Phi(\bx))]/\tau$ for any $\bx\in\rv{\dom g\subset}\cX$. %and $\Phi(\cdot) = \max_y f(\cdot,\by)-h(\by)$.
\end{defn}
\begin{defn}\label{def:prox}
Given $\tau,\sigma>0$, 
%and $(\bx,\by)\in\dom f\times \dom g$, 
define $G^{\na{\tau}}_x:\cX\times\cY\to\cX$ and $G^{\na{\sigma}}_y:\cX\times\cY\to\cY$ such that for any $(\bx,\by)\in\cX\times \cY$,
{\small
\begin{align*}
    G^{\na{\tau}}_x(\bx,\by)&\triangleq [\bx- \prox{\tau g} (\bx-\tau \grad_x f(\bx,\by))]/\tau,\\
    G^{\na{\sigma}}_y(\bx,\by)&\triangleq [\prox{\sigma h} (\by+\sigma \grad_y f(\bx,\by))-\by]/\sigma,
\end{align*}}%
where $\prox{\tau g}(\cdot)$ and $\prox{\sigma h}(\cdot)$ are defined as in~\eqref{eq:prox}. 
%$\prox{\tau g}(\bx')\triangleq \argmin_{\bx\in\cX}\tau g(\bx)+\frac{1}{2}\norm{\bx-\bx'}^2$ for all $\bx'\in\cX$, and $\prox{\sigma h}$ is defined similarly. 
%For notational simplicity, we also
Moreover, define $G^{\na{\tau,\sigma}}(\bx,\by) \triangleq [G^\tau_x(\bx,\by)^\top G^\sigma_y(\bx,\by)^\top]^\top$. 
\end{defn}
\begin{defn}
\label{def:eps-stationarity}
Given $\epsilon\geq 0$, \sa{$(\bx_\epsilon,\by_\epsilon)\in\dom g\times\dom h$} is an $\epsilon$-stationary in metric (\textbf{M2}) if {$\norm{G^{\tau,\sigma}(\bx_\epsilon,\by_\epsilon)}\leq \epsilon$} for some $\tau,\sigma>0$.
\end{defn}
% \begin{remark}
% We should note that 1st order SP for min. problem and 1st order SP are suitable for \cref{eq:main-problem} when $h$ and $g$ are closed convex functions; and 1st order NE for min. problem and 1st order NE are suitable for \cref{eq:main-problem} when $h,g \equiv 0$.
% \end{remark}
%\subsection{Relation among different stationarity notions}
%\xtodo{Where should we put this paragraph}
\begin{remark}
\sa{Metrics (\textbf{M1}) and (\textbf{M2}) are equivalent under \cref{ASPT:lipshiz gradient}; indeed, %for the case 
when $g=h=0$, 
%Proposition 2.1 in~
\cite[Proposition 2.1]{yang2022faster} implies that %the results in 
each metric can be translated to the other requiring at most $\cO(\kappa\log(1/\epsilon))$ additional gradient calls, which is negligible when compared to $\cO(\epsilon^{-2})$ complexity of computing an $\epsilon$-stationary point in either metric.} 
% More precisely, given $x'$ such that $\norm{\grad \Phi(x')}\leq \epsilon$, one can compute $y'$ such that $\norm{\grad f(x',y')}=\cO(\epsilon)$ and it requires additional $\cO(\kappa \log(\frac{1}{\epsilon}))$ gradient calls.  Conversely, given $(x'', y'')$ such that $\norm{\grad_x f(x'',y'')}\leq \epsilon$ and $\norm{\grad_y f(x'',y'')}\leq \epsilon/\kappa$, one can compute $x'$ such that $\norm{\grad \Phi(x')}=\cO(\epsilon)$ and it requires additional $\cO(\kappa \log(\kappa))$ gradient calls.
\end{remark}
\paragraph{Related work.}
{%For the case 
When the \textit{global} Lipschitz constant ($L$) \rv{and the concavity modulus ($\mu$) are} \textit{known}, there are many theoretically efficient methods in the literature, e.g., see~\cite{lin2020near,ostrovskii2021efficient,kong2021accelerated} and references therein. Indeed, when $L$ {and $\mu$ are}  known, both Lin \emph{et al.}~\cite{lin2020near} and Ostrovskii \emph{et al.}~\cite{ostrovskii2021efficient} show an iteration complexity of $\tilde\cO(L\kappa^{1/2}\epsilon^{-2})$ for finding an $\epsilon$-stationary point in terms of metric (\textbf{M2})
 for deterministic WCSC problems --both algorithms 
have \textit{three nested loops} and 
%impose restrictions on 
require the closed convex functions $g$ and $h$
%, i.e., $g,h$ can only 
be the indicator functions of some closed convex sets. 
%Furthermore, both \cite{lin2020near} and \cite{ostrovskii2021efficient} establish an iteration complexity of $\tilde\cO(\epsilon^{-2.5})$ for deterministic WCMC problems, which is later improved to $\cO(\epsilon^{-2.5})$ in~\cite{kong2021accelerated}. 
{Given the substantial volume of recent research %in this field 
for addressing \textit{deterministic} WCSC minimax problems, we have restricted the scope of the discussion in \cref{table:related_work} to primarily include \textit{single-loop} methods \rv{that are} closely related to the proposed \agdap{} method in this paper.}}\footnote{We recently become aware of a new work~\cite{yang2024two} citing our online arXiv preprint~\cite{zhang2024agda+}. The authors of \cite{yang2024two} propose a GDA-type method with backtracking employing \textit{monotonically decreasing} stepsizes. \cite[Remark 7]{yang2024two} claims that the proposed method achieves $\cO(L\kappa^3/\epsilon^2)$ complexity; however, this bound is incorrect. Indeed, the proposed method 
%takes $l_{11}^1,l_{12}^1,l_{22}^1$ and $\mu^1$ as the initial estimates of the block Lipschitz constants $L_{11},L_{12},L_{22}$ and the concavity modulus $\mu$. In the following we consider two cases: $\mu$ unknown, and $\mu$ known. For the case $\mu$ is unknown, we treat $l_{11}^1,l_{12}^1,l_{22}^1$ and $\mu^1$ as $\cO(1)$ constants; on the other hand, when $\mu$ is known, we treat $l_{11}^1,l_{12}^1,l_{22}^1$ as $\cO(1)$ constants, and set $\mu_1=\mu$. The algorithm searches for $L$ and $\mu$ simultaneously, and it 
is guaranteed to generate an $\epsilon$-stationary point in terms of metric \textbf{(M2)} within $\frac{F_0-\underline{F}}{d_1\epsilon^2}$ iterations, where $F_0-\underline{F}=\Omega(\cL(\bx_0,\by_0)-\cL^*+L\kappa^3\cD_y^2)$ and $\cL^*=\inf_{\bx\in X, \by\in Y}\cL(\bx,\by)$. According to 
%the definition of $d_1$ in 
\cite[Theorem 6]{yang2024two}, it holds that $d_1\leq \frac{\beta_{\min}}{2\beta_{\max}^2}$, where $\beta_{\max}=\Omega(L\kappa^3)$ and $\beta_{\min}=\cO(1)$ when $\mu$ is unknown, and $\beta_{\min}=\cO(1/\mu^3)$ when $\mu$ is known, i.e., $\mu^1=\mu$. Therefore, we can conclude that for the case $\mu$ is unknown,
    $(F_0-\underline{F})/(d_1\epsilon^2)=\Omega(L^2\kappa^6(\cL(x_0,y_0)-\cL^*+L\kappa^3\cD_y^2)\epsilon^{-2})=\Omega(L^3\kappa^9\epsilon^{-2})$;
and when $\mu$ is known, the bound %improves to
becomes
    $(F_0-\underline{F})/(d_1\epsilon^2)=\Omega(L^5\kappa^3(\cL(x_0,y_0)-\cL^*+L\kappa^3\cD_y^2)\epsilon^{-2})=\Omega(L^6\kappa^6\epsilon^{-2})$.
%\cite[Theorem 6]{yang2024two} implies $\cO(L^3\kappa^9/\epsilon^2)$ and $\cO(L^6\kappa^6/\epsilon^2)$ complexity bounds when $\mu$ is unknown and known, respectively.
}
%Indeed, although there are many \sa{algorithms for \eqref{eq:main-problem} and its special cases}, 
To the best of 
our knowledge, there are \textit{very few} first-order methods for \eqref{eq:main-problem} that are \textit{agnostic} to the Lipschitz constant of $\grad f$ \rv{and/or concavity modulus $\mu$}, %and weak-convexity modulus of $f(\cdot,y)$, 
and use adaptive step sizes for solving minimax problems in the nonconvex setting, e.g.,~\cite{xu2024stochastic,li2022tiada,yang2022nest,lee2021fast,pethick2023escaping}.
%\qstodo{do we need to change the table? since some of them are agnostic to both $L$ and $\mu$ while \texttt{SGDA-B} is agnostic to $L$ only.}

{\renewcommand{\arraystretch}{1.2}
    \small %
    \begin{table*}[h]
    \captionsetup{font=scriptsize}
        \centering
        \resizebox{0.99\textwidth}{!}{
        \begin{tabular}{|l"c|c|c|P{2.5cm}|P{2.5cm}|P{2.75cm}|P{2cm}|c|c|}
            \thickhline
            \textbf{Work} & \textbf{Agnostic} & $g$ & $h$ & \textbf{Step-size search (backtracking)} & \textbf{Nonmonotone step-size search} & \textbf{\# of $\grad f$ calls per backtracking iter.} &\textbf{\# of calls to max-solver} &\textbf{Metric}  & \textbf{Complexity} \\
            \thickhline
            \gda{} \cite{lin2020gradient} & \XSolidBrush & \XSolidBrush & $\mathds{1}_{Y}$ &  \XSolidBrush & N/A & N/A & -- & M1 & $\cO\left((L\kappa^2 B_0+\kappa L^2\cD_y^2)  \epsilon^{-2}\right)$  \\
            \hline
            \agda{}  \cite{boct2020alternating} & \XSolidBrush & \CheckmarkBold & \CheckmarkBold  & \XSolidBrush & N/A & N/A & -- & M1 & $\cO\left((L\kappa^2 B_0+\kappa L^2\cD_y^2)  \epsilon^{-2}\right)$ \\
            \hline
            \smagda{} \cite{yang2022faster} & \XSolidBrush & \XSolidBrush & \XSolidBrush & \XSolidBrush & N/A & N/A & -- & M2 & $\cO(\kappa L (B_0+G_0)\epsilon^{-2})$ \\
            \hline
            \texttt{AltGDAm}~\cite{chen2022accelerated} &\XSolidBrush & \CheckmarkBold & \CheckmarkBold  & \XSolidBrush & N/A & N/A & -- & M1 & $\cO(\na{L}\kappa^{\frac{11}{6}}\na{B_0}\epsilon^{-2})$ \\
            \hline
            $\texttt{\na{HiBSA}}^{\diamond}$  \cite{lu2020hybrid} & \XSolidBrush & \CheckmarkBold & \CheckmarkBold & \XSolidBrush & N/A & N/A & -- & M2 & \na{$\cO(L\kappa^5(\cL(\bx_0,\by_0)-\cL^*+L\kappa^3\cD_y^2) \epsilon^{-2})$} \\
            \hline
            $\texttt{AGP}^{\diamond}$  \cite{xu2023unified} & \XSolidBrush & $\mathds{1}_{\sa{X}}$ & $\mathds{1}_{Y}$ & \XSolidBrush & N/A & N/A & -- & M2 & $\na{\cO(L\kappa^5(\cL(\bx_0,\by_0)-\cL^*+L\kappa\cD_y^2) \epsilon^{-2})}$ \\
            \thickhline
            \thickhline
            $\neada{}^{\diamond}$   \cite{yang2022nest}  & $L,\mu$ & \XSolidBrush  & $\mathds{1}_{Y}$ & \XSolidBrush & N/A & N/A & $\cO(\kappa^4L^4/\epsilon^2)$ & M2 & $\cO(\rv{\kappa^{a}}\log(\kappa)(L^4\kappa^4+B_0^2) \epsilon^{-2})$ \\
            \hline
            $\tiada{}^{\diamond}$  \cite{li2022tiada}  & $L,\mu$ & \XSolidBrush  & $\mathds{1}_{Y}$ & \XSolidBrush & N/A & N/A & -- & M2 & $\cO(\kappa^{10} \epsilon^{-2})$ \\
            \hline
            {\texttt{SGDA-B} \cite{xu2024stochastic}}  & $L$ & \CheckmarkBold & \CheckmarkBold & \CheckmarkBold & \XSolidBrush & $\cO(L\kappa^2\epsilon^{-2})$ & -- & M2 & $\cO({L\kappa^2\log(\kappa)(B_0+L\cD_y^2)\epsilon^{-2}})$ \\
            \hline
            {\agdap{} (\msf{True})}  & $L$ & \CheckmarkBold & \CheckmarkBold & \CheckmarkBold & \CheckmarkBold & $\cO(1)$ & $\cO(\log(\kappa))$ & M2 & $\cO({L\kappa^4(B_0+L\log(\kappa)\cD_y^2)\epsilon^{-2}})$ \\
            \hline
            {\agdap{} (\msf{False})}  & $L$ & \CheckmarkBold & \CheckmarkBold & \CheckmarkBold & \CheckmarkBold & $\cO(1)$ & -- & M2 & $\cO({L\kappa^4(B_0+ L\kappa\cD_y^2)\epsilon^{-2}})$ \\
            \hline
            {\agdap{} (\msf{True})}  & $L,\mu$ & \CheckmarkBold & \CheckmarkBold & \CheckmarkBold & \CheckmarkBold & $\cO(1)$ & $\cO(\log(\cR))$ & M2 & $\cO({\bar L\bar{\kappa}^4(B_0+\rv{\bar L}\log(\cR)\cD_y^2)\epsilon^{-2}})$ \\
            \hline
            {\agdap{} (\msf{False})}  & $L,\mu$ & \CheckmarkBold & \CheckmarkBold & \CheckmarkBold & \CheckmarkBold & $\cO(1)$ & -- & M2 & $\cO({\bar L\bar{\kappa}^4(B_0+ \rv{\tilde\mu\cR^3}\cD_y^2)\epsilon^{-2}})$ \\
            \thickhline
        \end{tabular}
        }
        \caption{\na{Comparison of related work. The column \rv{“\textbf{Agnostic}” indicates the parameters, $L$ and/or $\mu$, the method does \textit{not} need to know.}} 
        %and/or the weak convexity modulus. 
        In the columns related to \rv{$g$ and $h$}, we indicate whether the method can handle closed convex functions in the minimax formulation of \eqref{eq:main-problem} \rv{through prox operations}. In the ``\textbf{Step-size search}" column, we specify whether the method uses a step-size seach (backtracking) mechanism; while \tiada{} and \neada{} are agnostic to $L$ \rv{and $\mu$}, they adopt (monotonically decreasing) AdaGrad based adaptive step sizes, i.e., their step-size selection is not based on a step-size search scheme. In the ``\textbf{Nonmonotone step-size search}" column, we state whether the method can exploit local Lipschtz structure through employing a \textit{non-monotone} step size sequence. In the ``\textbf{\# of $\grad f$ calls per backtracking iter.}" column, we listed the number of $\grad f$ calls required to decide whether to accept or reject a candidate step-size pair $(\tau,\sigma)$ for each backtracking iteration. If the method does not call a maximization solver to inexactly solve $\max_{\by}\cL(\bar\bx,\by)$ for a given $\bar\bx$, we use ``--" to indicate it \rv{within the \textbf{\# of calls to max-solver} column}; otherwise, we state the number of calls required for the maximization solver. ``\textbf{Metric}" column indicates the metric used for defining $\epsilon$-stationarity, i.e., {(\textbf{M1}): $\bx_\epsilon$ such that $\norm{\cG^\tau(\bx_\epsilon)}\leq \epsilon$ for some $\tau>0$, (\textbf{M2}): $(\bx_\epsilon,\by_\epsilon)$ such that \rv{$\norm{G^{\tau,\sigma}(\bx_\epsilon,\by_\epsilon)}\leq \epsilon$ for some $\tau,\sigma>0$.} The definitions of $\cG^\tau$ and $G^{\tau,\sigma}$ are provided %at the beginning of  \textbf{Preliminaries: Assumptions and Definitions} of 
        in~\cref{sec:preli}.} \na{In the ``\textbf{Complexity}" column we give the gradient complexity of the method to compute an $\epsilon$-stationary point, where $B_0=F(x_0)-F^*$ denotes the primal suboptimality of the initial point with $F(\cdot)=\max_{\by\in\cY}\cL(\cdot,\by)$, $\cD_y$ is the diameter of the dual domain, $\cL^*=\inf\{\cL(\bx,\by):\ \bx\in\cX,~\by\in\cY\}$, and $G_0=\sup_{\by\in\cY} \hat\cL(\bx_0,\by;\bz_0)-\inf_{\bx\in\cX}\hat\cL(\bx,\by_0;\bz_0)$ for $\hat\cL(\bx,\by;\bz_0)=\cL(\bx,\by)+L\norm{\bx-\bz_0}^2$ denoting the primal-dual gap of the initial point with respect to a regularized coupling function.} \rv{For \agdap{}, for the case $\mu$ is not known, $\bar\kappa\triangleq \bar L/\bar\mu$ where $\bar L  \triangleq \rv{\cR\;\tilde l}$ and $\bar \mu \triangleq \rv{\tilde\mu/\cR}$ with $\rv{\cR \triangleq \max\{\tilde\mu/\mu,~L/\tilde l,~1\}}$ --here, $\tilde\mu\geq \mu$ and $\tilde l\leq L$ denote the initial estimates of the concavity modulus $\mu$ and the local Lipschitz constant at $\bz_0$, respectively; the discussion in Remark~\ref{remark:mu-discussion-with-max-solver} implies that whenever the initial parameters satisfy $\tilde\mu=\Theta(\sqrt{L\mu})$ and $\tilde l=\Theta(\sqrt{L\mu})$, we have $\cR=\Theta(\sqrt{\kappa})$, and the term $\tilde\mu \cR^3$ in the bound for \msf{false} is $\cO(L\kappa)$.}\\ \na{\textbf{Table Notes.} $\diamond$ The %complexity 
        bounds 
        %of these methods 
        with their explicit dependence on $\kappa$ and $L$ are derived in \cref{sec:complexity-others}.} \rv{For \neada{}, when $\mu$ is unknown $a=1$, and when $\mu$ is known, $a=0.5$.}}
    \label{table:related_work}
    \end{table*}
}%

For a special case of \eqref{eq:main-problem} with %$h(\cdot)=0$
$h(\cdot)=\mathbf{1}_{Y}(\cdot)$, i.e., the indicator function of a closed convex set $Y$, \sa{and \textit{bilinear} coupling of the form} \na{$f(\bx,\by)=s(\bx)-r(\by)+\fprod{A\bx,\by}$ such that $s$ and $r$ are  smooth functions with $r$ being strongly convex,} a triple-loop method is proposed in~\cite{dvurechensky2017gradient}. 
\na{In the outer loop, primal iterate $\bx_t$ is updated using the step sizes satisfying a backtracking condition based on the primal function, \na{i.e., $F(\cdot)=g(\cdot)+\max_{\by\in Y}f(\cdot,\by)$}; one searches for an admissible primal step size using the middle loop, which repeats until the backtracking condition holds, and for each middle-loop iteration, the inner maximization problem $\max_{\by\in\cY} f(\bx_t,\by)$ is solved inexactly using the inner loop. The gradient complexity of the proposed method is $\tilde\cO(L\kappa^{b}\epsilon^{-2})$ for computing an $\epsilon$-stationary point in terms of metric \textbf{(M1)}, \rv{where $b=2$ if $\mu$ is unknown, and $b=1.5$ if $\mu$ is known}.}
% the inner maximization problem \sa{$\max_{y\in Y} f(x,y)$} is solved inexactly %\sa{over a closed convex set $Y\subset \cY$} 
% and the backtracking steps are implemented for the primal function; \sa{the gradient complexity of the proposed method is \na{$\tilde\cO(L\kappa^{1.5}\epsilon^{-2})$} for computing an $\epsilon$-stationary point in terms of metric (\textbf{M1}).} 

For the more general case of WCSC minimax problems when the coupling function $f$ is \textit{not} bilinear, \na{\sgdab{}~\cite{xu2024stochastic}, \neada~\cite{yang2022nest} and \tiada~\cite{li2022tiada} algorithms are recently proposed. Except for \sgdab{}, which is discussed in detail in~\cref{sec:goals}, %both of which
the other two, i.e., \neada{} and \tiada{}, are based on AdaGrad stepsizes~\cite{duchi2011adaptive} and {these methods do not use backtracking/stepsize search}.} \neada{}, similar to \cite{dvurechensky2017gradient}, is a %double
multi-loop method for solving \eqref{eq:main-problem} with $g(\cdot)=0$ and $h(\cdot)=\mathbf{1}_{Y}(\cdot)$; %, i.e., the indicator function of a closed convex set $Y$; 
in the inner loop, a strongly concave problem is solved inexactly using AdaGrad method, of which solution is then used to compute an inexact gradient for the primal objective function, followed by an AdaGrad update for the primal variable $\bx$. Moreover, %it
\neada{} can compute $(\bx_\epsilon,\by_\epsilon)$ such that \sa{$\norm{\grad_x f(\bx_\epsilon,\by_\epsilon)}\leq \epsilon$ and $\norm{\by_\epsilon-\by^*(\bx_\epsilon)}\leq \epsilon$ within $\cO(L^4\rv{\kappa^5}\epsilon^{-2}\log(1/\epsilon))$ gradient complexity} \rv{when $L$ and $\mu$ are both unknown}, where $\by^*(\cdot)=\argmax_{\by\in Y} f(\cdot,\by)$. On the other hand, \tiada{} is a single-loop scheme that can be considered as an extension of AdaGrad~\cite{duchi2011adaptive} to solve minimax problems in a similar spirit to two-time scale \gda{} algorithms. \tiada{} can compute $(\bx_\epsilon,\by_\epsilon)$ such that \sa{$\norm{\grad f(\bx_\epsilon,\by_\epsilon)}\leq \epsilon$ within $\cO(\sa{\kappa^{10}}\epsilon^{-2})$ 
gradient calls.
%\nsa{In the appendix, show the derivation of complexity statements for \tiada{} and \neada{}.} 
The main reason for $\cO(\kappa^{10})$ \na{constant} arising in the complexity bound is because of how \tiada{} adapts to the required time-scale separation. Indeed, as shown in \cite{li2022tiada}, the ratio of their stepsizes $\sigma_t/\tau_t$ is lower bounded by an increasing sequence, i.e.,  $\frac{\sigma_t}{\tau_t}\geq \frac{\sigma_0}{\tau_0}(v_{t+1}^y)^{\alpha-\beta}$, since $\alpha>\beta>0$, and $\{v_t^y\}$ is a monotonically increasing sequence --see the details on \tiada{} stepsize choice given in \cref{sec:goals}.}
% \begin{equation}
%     \frac{\tau_t}{\sigma_t} = \frac{\tau_0/ \max\{v_{t+1}^x, v_{t+1}^y\}^{\alpha}}{ \sigma_0/ (v_{t+1}^y)^{\beta}}\leq 
%     %\frac{\tau_0/ (v_{t+1}^y)^{\alpha}}{ \sigma_0/ (v_{t+1}^y)^{\beta}} = 
%     \frac{\tau_0}{\sigma_0 (v_{t+1}^y)^{\alpha-\beta}},
% \end{equation}}%
% as $v_t^y$ is the sum of previous gradient norms and is increasing. 
% As we discuss before, their step sizes will possibly diminish to 0 very quickly.
% \nsa{Discuss Sec. 2.1 in \tiada{} paper. Talk about the step size rule and its connection to time-scale separation.}
The complexity for \neada{} and \tiada{} for the deterministic WCSC problems are provided in \cref{table:related_work}. \rv{In~\cite{yang2022nest} and~\cite{li2022tiada},} the analyses of these algorithms are given for \textit{smooth} \sa{WCSC} problems with $g(\cdot)=0$ and $h(\cdot)=\mathbf{1}_{Y}(\cdot)$; and it is not clear whether their analyses can be extended to the non-smooth setting we consider in~\eqref{eq:main-problem}. \sa{Furthermore, for smooth %WCWC 
problems, extragradient~(EG) methods \sa{with backtracking are proposed} in~\cite{lee2021fast,pethick2023escaping}; but, they make additional assumptions \sa{(\textit{stronger} than smoothness of $f$, e.g., negative co-monotonicity, or requiring the (weak) Minty Variational Inequality (MVI) corresponding to $\grad f$ to have a solution)} that in general \sa{do} not hold for WCSC problems \sa{we consider} in the form of \eqref{eq:main-problem}.}
\looseness=-10

\subsection{Adaptive algorithms on a toy example}
%\xtodo{this subsection is repeated and not necessary?}
\sa{In \cref{toy-example}, \rv{assuming $\mu$ is known,} we test two adaptive algorithms \rv{that are agnostic to $L$,} \agdap{} (ours) and \tiada{}, on a toy quadratic WCSC problem, and compare them against \agda{}~\cite{boct2020alternating} (which requires knowing $L$) as a benchmark. 
% \qsr{And in this toy example, we assume that $\mu$ is known for fair comparisons.}
We plotted the dual and primal step sizes and the gradient norm (as a measure of stationarity) against the number of gradient calls. %we can see that even though the adaptive algorithm \cite{li2022tiada} for nonconvex settings is able to converge eventually, they need to ensure that the step sizes must be always diminishing 
As shown in \cref{fig:Q-mu1-dual-stepsize}, $\sigma_t$ stays significantly larger than $1/L$ for \agdap{} while it diminishes quickly below $1/L$ for \tiada{}; moreover, in Figure~\ref{fig:Q-mu1-primal-stepsize}, we also observe that $\tau_t$ %slowly decreases as a step function 
\na{quickly increases and stays over $\frac{1}{L\kappa^2}$ for \agdap, while it decreases very quickly for \tiada{} similar to $\sigma_t$ behavior, and stabilizes at a value lower than \agdap{}.} %and their convergence speed is highly dependent on the previous gradients and initialization, which are hard to modify. But for our methods, our step sizes 
For \agdap, unlike \tiada{}, the step sizes $(\tau_t,\sigma_t)$ are not necessarily decreasing at every step, \na{i.e., while step size sequences for \tiada{} are monotonically decreasing by construction, the general trend we observe for \agdap{} is that they increase and become stable at a large value.} Indeed, in designing \agdap{}{} we try to exploit the local smoothness of $f$ through estimating the local Lipshitz constant at $(\bx_t,\by_t)$, which can be much smaller than the global Lipschitz constant $L$ --hence, providing more information about the landscape and allowing for relatively large step sizes. That is why we observe rapid convergence for \agdap{}{} compared to \agda{} and \tiada{}.} 
%Here, we skip the comparisons between GDA-B and \neada{}, since the inner loop of \neada{} is unknown for us, i.e., we cannot access to the dual stepsizes. For the primal stepsizes, it will show as the same as \tiada{}. Also, \neada{} is a double-loop algorithm.
\begin{figure}[h]
   \centering
   \begin{subfigure}[b]{0.33\textwidth}
       \centering       
       \includegraphics[width=\textwidth]{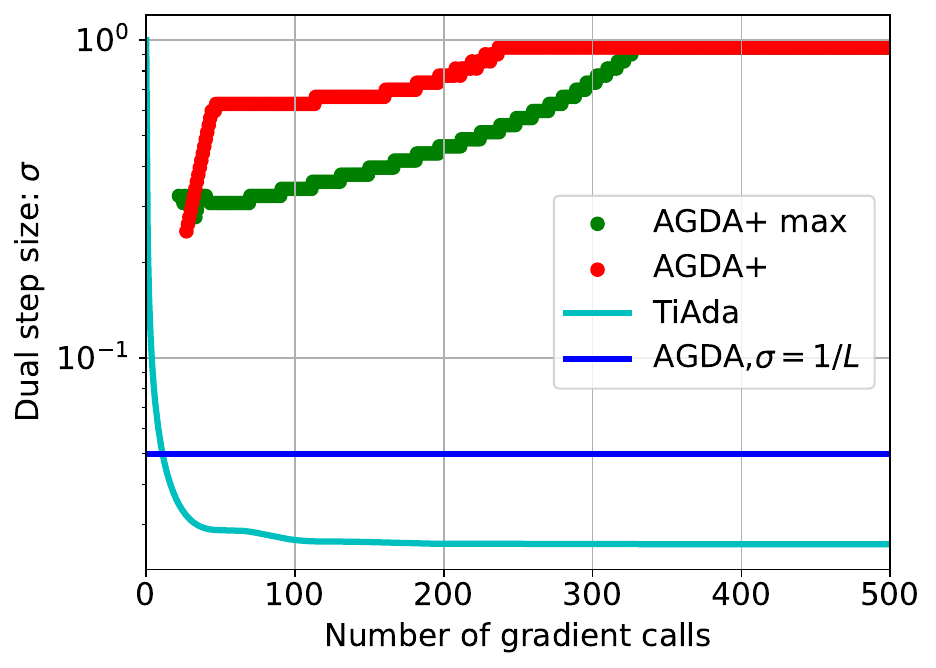}
       \caption{Dual step size: $\sigma$}
       \label{fig:Q-mu1-dual-stepsize}  
   \end{subfigure}
   \begin{subfigure}[b]{0.33\textwidth}
       \centering         
       \includegraphics[width=\textwidth]{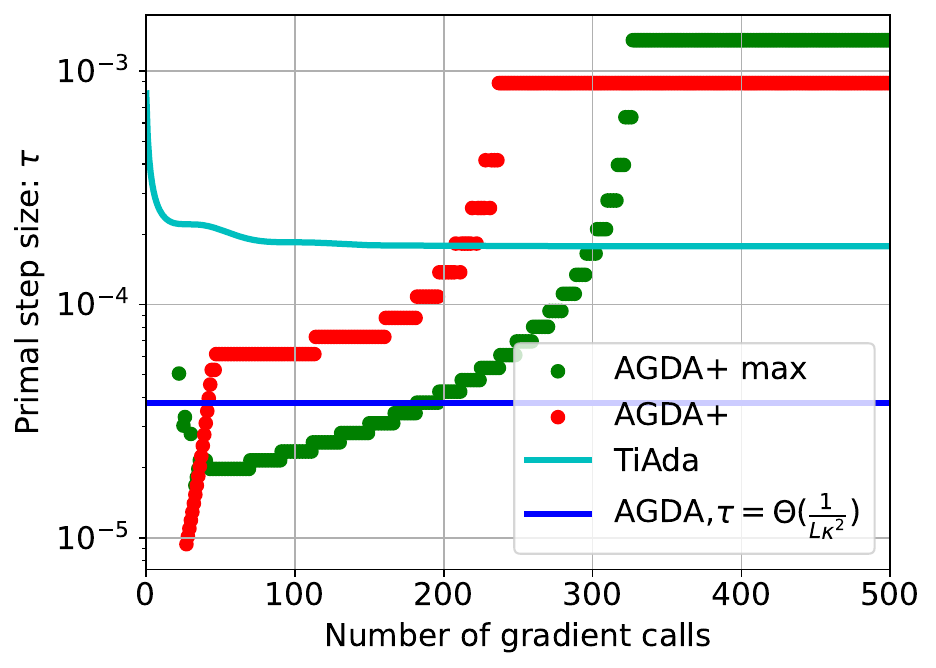}
        \caption{Primal step size: $\tau$}
        \label{fig:Q-mu1-primal-stepsize} 
   \end{subfigure}
   %    \begin{subfigure}[b]{0.24\textwidth}
   %     \centering         
   %     \includegraphics[width=\textwidth]{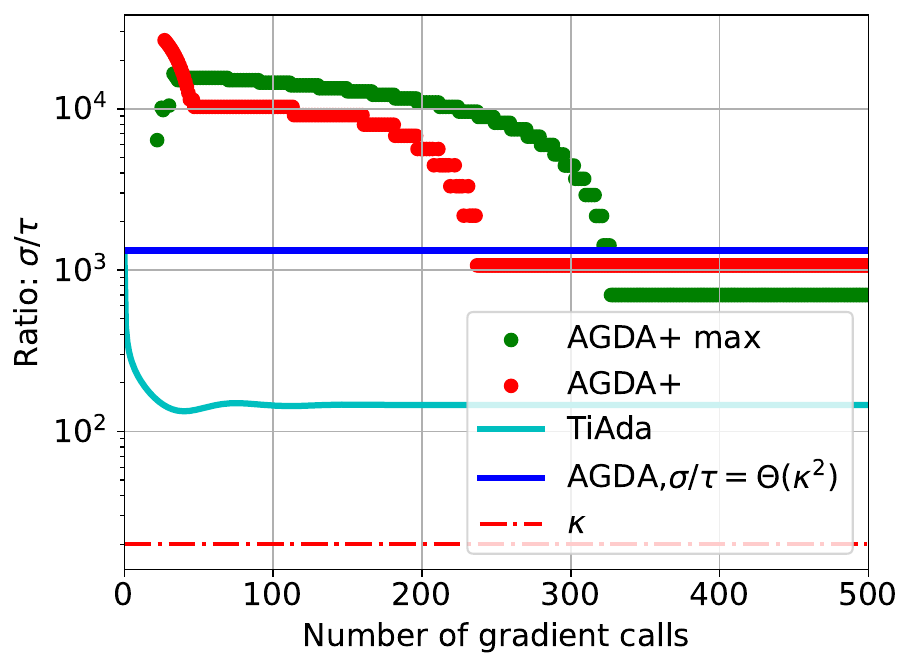}
   %      \caption{Ratio: $\sigma/\tau$}
   %      \label{fig:Q-mu1-ratio-stepsize} 
   % \end{subfigure}
   \begin{subfigure}[b]{0.33\textwidth}
       \centering       
       \includegraphics[width=\textwidth]{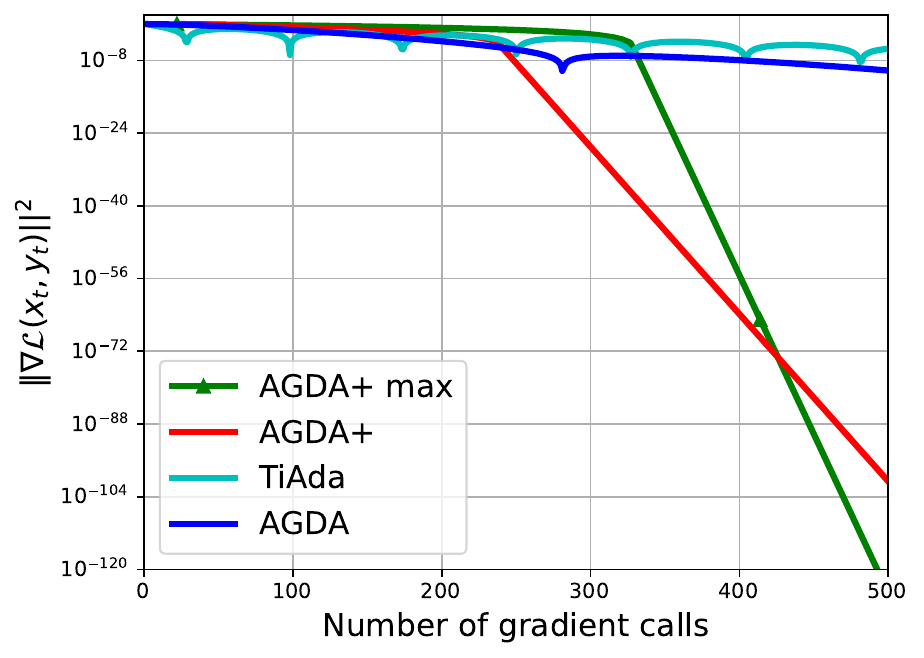}
       \caption{Convergence}
       \label{fig:Q-mu1-convergence}  
    \end{subfigure}
\caption{\na{Primal-dual step size sequences and convergence behavior comparison of \agdap{} against \texttt{TiAda}~\cite{li2022tiada} and \texttt{AGDA}~\cite{boct2020alternating} on a toy minimax problem which is a slightly modified version of \cite[Eq.~(2)]{li2022tiada}: $\min_{x\in\reals}\max_{y\in\reals}\cL(x,y) = -\frac{L}{2}x^2 + Lxy - \frac{1}{2}y^2 $ with $L=20$. All three methods are initialized from the point $(x,y) = (1,0.01)$. \agdap{} has two different implementations: one is labeled as \texttt{AGDA+max} corresponding to \msf{True} in~\cref{alg:agda}, and the other is simply labeled as \agdap{} corresponding to \msf{False}.} \na{The benchmark stepsizes, i.e., $\sigma=\Theta(1/L)$ and $\tau=\Theta(\frac{1}{L\kappa^2})$ are the largest %deterministic 
step sizes with a guaranteed convergence when $L$ \rv{and $\mu$ are} known, see~\cite{boct2020alternating,lin2020gradient}.}}
\label{toy-example}
\end{figure}
\vspace*{-5mm}

\section{Goal of this paper and its contributions}  
\label{sec:goals}
Many efficient first-order primal-dual algorithms have been proposed \sa{for solving WCSC minimax problems of the form in~\eqref{eq:main-problem},} e.g.,~\cite{lu2020hybrid,boct2020alternating,chen2022accelerated,xu2023unified}; \sa{however, except for~\cite{li2022tiada,yang2022nest,xu2024stochastic}, all of these algorithms require the value of \textit{global} Lipschitz constant $L$ corresponding to $\grad f$ \rv{and the concavity modulus $\mu$ corresponding to $f(\bx,\cdot)$ }%to be able to 
so that one can properly set the step sizes needed for the convergence theory to hold.} %\nsa{Add the discussion from the proposal about $L$ is generally unknown, or in case it is known using the global constant leads to conservative step sizes.} 
\sa{Although requiring that $L$ {and $\mu$} are known is a relatively standard assumption when deriving rate statements for first-order schemes, it should be emphasized that in a variety of practical ML applications, %these constants
$L$ {and/or $\mu$} may not be readily available and computing {them} might be challenging; moreover, even if \rv{$L$ is} known, the use of global constants can often lead to conservative step sizes, thereby causing slow convergence in practice. An effective alternative to address this challenge involves adopting line-search or backtracking approaches. These approaches leverage \textit{local} smoothness for determining the step sizes, which leads to significantly larger \na{steps}.} %sizes.}
%\xtodo{I rewrite it by my color.}
%\xz{The requirement regarding $L$ is frequently encountered in the development of rate statements for first-order methodologies. It should be recognized, though, that in a variety of practical applications, obtaining these constants might be challenging, or the use of global constants can often lead to unduly conservative step sizes, thereby slowing down the convergence process. An effective alternative to address this challenge involves the integration of line-search or backtracking approaches. These approaches leverage local Lipschitz constants, enabling more precise and efficient determination of step sizes.}
%or some strong assumptions, like Lipschitz constants and the uniform bounds for the stochastic gradients.

\sa{To the best of our knowledge, the only adaptive algorithms designed specifically for the WCSC minimax problems with non-bilinear coupling and agnostic to 
%generally unknown Lipschitz constant 
the smoothness parameter $L$} are \tiada{}~\cite{li2022tiada}, \neada{}~\cite{yang2022nest} and \sgdab~\cite{xu2024stochastic}; \sa{that said, their %experimental results 
practical performance is not on par with their desirable theoretical properties (see Figures~\ref{fig:Q},~\ref{fig:sin-Q} and~\ref{fig:DRO}). \na{The practical problems with these methods arise especially when the local Lipschitz constant fluctuates over the problem domain since none of these methods can adopt \textit{non-monotone} step size sequences exploiting the local topology. For \tiada{}~\cite{li2022tiada} and \neada{}~\cite{yang2022nest}, we also observe practical issues in convergence to a stationary point also when the condition number of the problem $\kappa\triangleq L/\mu\gg 1$ and/or the global Lipschitz constants $L\gg 1$;} indeed, this may be due to large $\cO(1)$ constants with high powers of $\kappa$ and $L$ hidden in their complexity bounds.} %Even though they give the theoretical convergence analysis and use the decaying stepsizes, they were unable to explore the property of nonconvex minimax problems thoroughly. Since their methods are highly dependent on the initialization and during each step, they cannot find the appropriate time-scale between the primal and dual step sizes. The reason is that their step sizes are highly counted on the randomly generated gradient estimates, which leads to extremely slow convergence rate. 
\sa{Both of these algorithms adopt AdaGrad~\cite{duchi2011adaptive} type step size sequences, and while this choice makes the algorithm agnostic to $L$ and $\mu$,
%\qstodo{I think only Tiada is agnostic to both $L$ and $\mu$} 
it %also potentially
may also cause practical performance issues: since initial iterates are potentially far from stationarity (with gradient norms not close to $0$), %in the initial phase of the algorithm the step sizes diminish to $0$ very quickly, 
the step sizes tend to shrink rapidly in the early stages of the algorithm,
as they are inversely proportional to the %partial
cumulative sum of gradient norms %over
from all %past
previous iterations. These quickly decaying step sizes cause slow convergence in practice (see \cref{toy-example}).}
\na{Recently, Pethick et al.~\cite{pethick2023escaping} propose a backtracking method for solving \eqref{eq:main-problem} assuming that $f$ is $L$-smooth (without assuming concavity in $y$) and that the  weak Minty variational inequality~(MVI) corresponding to~\eqref{eq:main-problem} has a solution; and they show that the limit points of the iterate sequence belongs to the zero-set of the operator defining the weak MVI without providing any complexity guarantee for the adaptive algorithm agnostic to $L$. It is crucial to emphasize that for nonconvex minimax problems in the form of~\eqref{eq:main-problem} since the operator defining the weak MVI is non-monotone, assuming $\nabla f$ is $L$-Lipschitz does not imply that the corresponding weak MVI has a solution; indeed, the weak MVI does not necessarily have a solution even if we further assume $f$ is (strongly) concave in $\by$. Therefore, the adaptive method in~\cite{pethick2023escaping} does not apply to the WCSC setting we consider in this paper since the corresponding weak MVIs may not have a solution.}

\na{Acknowledging the reasons stated in~\cite{yang2022nest,li2022tiada} explaining %a need of 
the need for novel GDA-based algorithms that are agnostic to $L$, and that are tailored for nonconvex WCSC minimax problems, in a recent work~\cite{xu2024stochastic} the authors have proposed the \sgdab{} %that is also agnostic to $L$ and it 
which incorporates dual stepsize backtracking with a proximal GDA method to compute near stationary point of \eqref{eq:main-problem}. \rv{Assuming $\mu$ is known,} it has been shown in~\cite{xu2024stochastic} that \sgdab{} can compute an $\epsilon$-stationary point for \eqref{eq:main-problem} with an upper complexity bound of $\cO(L\kappa^2\log(\kappa)\epsilon^{-2})$. To briefly summarize, \sgdab{} is a double-loop method: in the outer iteration $k\in\N$, given an estimate $L_k\geq \mu$ of $L$, one sets $\sigma_k=1/L_k$ and $\tau_k=\rho\mu^2\sigma_k^3$ for $\rho=(\sqrt{13}-1)/24\in(0,1)$, and executes \eqref{eq:gda} for $T=\cO(\frac{1}{\tau_k\epsilon^2})=\cO(L\kappa^2/\epsilon^2)$ prox-GDA iterations (inner iterations) such that $(\tau_t,\sigma_t)=(\tau_k,\sigma_k)$ is kept constant for $t=1,\ldots,T$, and at the end of $T$ inner iterations, a backtracking condition is checked; if the condition holds, \sgdab{} terminates, otherwise a new outer iteration starts with a larger estimate of the global Lipschitz constant $L_{k+1}=L_k/\gamma$ for some $\gamma\in(0,1)$. It is established in \cite{xu2024stochastic} that the backtracking condition holds in $\cO(\log(\kappa))$ outer iterations, which leads to $\cO(L\kappa^2\log(\kappa)/\epsilon^{-2})$ gradient complexity bound. These results for a GDA-based algorithm agnostic to $L$ are sound and on par with \cite{lin2020gradient,boct2020alternating} that require knowing $L$ to set constant primal-dual stepsizes $(\tau,\sigma)$ with convergence guarantees; that said, there are two significant shortcomings of \sgdab{}: \textit{(i)} the sequences $\{\tau_k\}$ and $\{\sigma_k\}$ are \textit{monotonically decreasing}; hence, \sgdab{} cannot exploit the local smoothness structure as \agdap{} can, and \textit{(ii)} between two backtracking iterations \sgdab{} requires $\cO(\frac{1}{\tau_k\epsilon^2})$ gradient calls, which can be as large as $\cO(L\kappa^2/\epsilon^2)$ in the worst case; on the other hand, \agdap{} only requires $\cO(1)$ gradient calls between two backtracking iterations \rv{(indeed, $2$ gradients calls on average, see~\cref{thm:avg--backtrack-time})} --hence, it is more robust to wrong estimates of $L$ as it can quickly correct it.}

\sa{From another perspective, it has been established that having an appropriate \textit{time-scale separation} between the primal ($\tau$) and dual ($\sigma$) step sizes is shown to be \textit{sufficient} for ensuring the convergence of single-loop gradient based methods on WCSC problems, e.g., for the constant step size regime $(\tau_t,\sigma_t)=(\tau,\sigma)$, \na{both} \cite{lin2020gradient,boct2020alternating} require $\sigma/\tau=\Omega(\kappa^2)$ and $\sigma=\cO(1/L)$ for the (proximal) %gradient ascent descent~(GDA) 
\na{GDA} method. More recently, \cite{li2022convergence} establish that $\sigma/\tau=\Omega(\kappa)$ is necessary and sufficient for local convergence of %\gda{}
GDA to a Stackelberg equilibrium on smooth weakly convex-weakly concave (WCWC) minimax problems, where $\kappa$ denotes the local condition number for $\by$.} %That said
\rv{On the other hand}, \sa{as also mentioned in~\cite{li2022tiada}, \na{if $(\tau_t,\sigma_t)$ within a single-loop method for solving WCSC minimax problems is naively chosen using} adaptive step sizes designed for minimization algorithms, e.g., AdaGrad~\cite{duchi2011adaptive}, Adam~\cite{KingBa15}, \na{the resulting primal-dual stepsize sequence does not necessarily} adapt to the sensitive \textit{time-scale} ratio for minimax problems; hence, directly using these methods does not guarantee convergence --see the simple quadratic problem given as a counter example in~\cite{yang2022nest}.} 

\sa{Existing work has partially addressed the aforementioned issue for the smooth case, i.e., $g=h=0$, in two different ways: \textit{(i)} using a double-loop algorithm \na{\neada{}} in which given $\bx_t$, a strongly concave maximization problem $\max_{\by\in\cY} f(\bx_t,\by)$ is inexactly solved using AdaGrad to get $\by_{t+1}$, and then the primal variable $\bx_{t+1}=\bx_t-\tau_t\grad_{\bx} f(\bx_t,\by_{t+1})$ is computed using an AdaGrad step size $\tau_t$; \textit{(ii)} alternatively, as a single-loop method, \tiada{} uses Jacobi-type GDA updates, i.e., in \eqref{eq:y-update} instead of {$\grad_{\by} f(\bx_{t+1},\by_{t})$}, the partial gradient $\grad_{\by} f(\bx_t,\by_t)$ is used with 
\begin{equation}
\label{eq:tiada_step}
\sigma_t=\sigma_0/(v_{t+1}^y)^\beta,\qquad \tau_t=\tau_0/(\max\{v_{t+1}^x, v_{t+1}^y\})^\alpha,
\end{equation}
where $\alpha, \beta\in(0,1)$ such that $\alpha>\beta$ and $v^x_{t+1}=v^x_t+\norm{\grad_{\bx}f(\bx_t,\by_t)}^2$ and $v^y_{t+1}=v^y_t+\norm{\grad_{\by}f(\bx_t,\by_t)}^2$. This choice implies that $\{\sigma_t/\tau_t\}$ is increasing %\nearrow \infty$ 
and $\{\sigma_t\}$ is decreasing; %\searrow 0$; 
hence, the step size ratio might eventually increase above the desirable threshold for convergence, i.e., $\Theta(\kappa)$. That said, with this update strategy, it may well be the case that $\limsup_t \sigma_t/\tau_t\gg \kappa^2$, and $\limsup_t \sigma_t$ can be close to $0$, much smaller than $1/L$; %one does not need both $\tau_t$ and $\sigma_t$ diminish to $0$; 
therefore, it is possible that this strategy may lead to very conservative step sizes.} In contrast to \tiada{}'s %diminishing
decreasing step size choice, we are interested in adaptively choosing step sizes such that $\sigma=\Omega(1/L)$ and 
%\xtodo{Our $\tau$ is in fact $1/(L\kappa^4)$}
$\tau_t=\Omega(1/(L\kappa^4))$ while ensuring $\sigma_t/\tau_t=\Theta(\kappa^4)$, which would simultaneously allow us to achieve better gradient complexity in terms of the $\kappa$ dependence and observe much faster convergence in practice.
%\qstodo{In \tiada{}, $\sigma_t / \tau_t$ is bounded by a nondecreasing sequence, it doesn't mean that $\sigma_t / \tau_t$ is nondecreasing, right? This part violates the toy example, figure 1.}
\sa{  
%compared to \tiada{}'s 
%as $\sigma_t/\tau_t\nearrow\infty$ and $\sigma_t\searrow 0$ for \tiada{}
%decreasing stepsize choice. 
%It is worth noting that 
Compared to $\sigma=\Theta(1/L)$ and $\sigma/\tau=\Theta(\kappa^2)$ conditions required for convergence in \cite{lin2020gradient,boct2020alternating}, \agdap{} imposes a larger ratio, i.e., $\sigma_t/\tau_t=\Theta(\kappa^4)$; but, this is the price we pay as \agdap{} is agnostic to the parameters $L$ and $\mu$ while \cite{lin2020gradient,boct2020alternating} are not. Indeed, in~\cref{sec:proof_overview}, we briefly explain why our complexity bound of $\cO(\na{L}\kappa^4\epsilon^{-2})$ for the (proximal) GDA with backtracking, i.e., \agdap, has an extra $\kappa^2$ factor when compared to $\cO(L\kappa^2\epsilon^{-2})$ complexity derived for \gda{} and \agda{} in~\cite{lin2020gradient,boct2020alternating}. \na{Neither for \neada{} nor \tiada{}, the explicit dependence on $\kappa$ and $L$ of their complexity bounds are not stated in respective papers; to properly compare their bounds with ours, in \cref{sec:complexity-others} we derive \neada{} and \tiada{} complexities explicitly showing their dependence on $\kappa$ and $L$ --see also \cref{table:related_work}.}}

\sa{To briefly summarize, for solving nonconvex--strongly concave minimax problems as in~\eqref{eq:main-problem}, we propose \agdap{} through combining prox GDA updates in~\eqref{eq:gda} with \na{a \textit{nonmonotone} backtracking procedure}. Given the current iterate $(\bx_t,\by_t)$, to design an adaptive, non-diminishing step size sequence satisfying the necessary time-scale ratio through backtracking (without requiring to know neither $L$ nor $\mu$), we %need to 
come up with easy-to-check backtracking conditions based on some \textit{local} smoothness of $f$ around $(\bx_t,\by_t)$. Our objective is to use backtracking on the dual step size ($\sigma_t$) and choose the primal step size ($\tau_t$) adaptively until the backtracking condition is satisfied for $(\tau_t,\sigma_t)$. Then $(\bx_{t+1},\by_{t+1})$ is computed \na{according to %the prox \gda{} within 
the proximal GDA} update rule in~\eqref{eq:gda}.}
\vspace*{-3mm}
\paragraph{Our contributions:} %\rv{Suppose $L\gg 1$ and $0<\mu\ll 1$. For the simplicity of $\cO(1)$ constants, suppose $L=\Theta(1/\mu)$.} 
\begin{enumerate}[label=\roman*., nosep]%[topsep=0pt, itemsep=-1ex]
    \item \sa{We propose a %single-loop 
    proximal GDA algorithm with backtracking, \agdap, for solving deterministic WCSC minimax problems as in \eqref{eq:main-problem}. To the best of our knowledge, this is the \textit{first} %one-step
    \na{\textit{nonmonotone}} backtracking %technique
    framework with complexity guarantees for this general class of structured minimax problems, and \na{it requires at most $3$ gradient calls on average between two backtracking iterations (see~\cref{thm:avg--backtrack-time}).} %while the existing methods can only applied to certain special cases or need the possible unknown parameters; see more details in the related work. 
    Compared to the existing adaptive methods for WCSC minimax problems, which use %diminishing
    monotonically decreasing step size sequence, our method can take larger steps, 
    non-diminishing to $0$ with a guarantee that
    $\liminf_t \sigma_t=\Theta(1/L)$ and $\limsup_t \sigma_t/\tau_t$ is finite, leading to better $\cO(1)$ complexity constants and better empirical performance.} 
    %see more details in \cref{toy-example}.}
    \item %We show that the output of
    \sa{\agdap{} %is 
    can generate} an $\epsilon$-stationary point \sa{(in terms of a metric based on Nash equilibrium)} %under 1st order SP optimality metric for the objective function 
    with $\cO(\sa{L}\kappa^4 B_0%L^4 
    \epsilon^{-2})$ %iterations.
    \sa{gradient complexity}, where $B_0=F(x_0)-F^*$ denotes the primal suboptimality of the initial point \rv{--for the case $\mu$ is unknown, see the discussion in \cref{remark:mu-discussion-with-max-solver,remark:mu-discussion-without-max-solver}}. This complexity result is optimal in terms of \sa{its} dependence on $\epsilon$. \sa{Compared to another single-loop method \tiada{} and a double-loop method \neada{}, of which gradient complexities are $\cO(\kappa^{10} \epsilon^{-2})$ and $\cO(L^4\rv{\kappa^{5}}\epsilon^{-2}\log(1/\epsilon))$, the $\cO(1)$ constant of \agdap{} gradient complexity has a better dependence on the problem parameters $\kappa$ and $L$}.
    %\qstodo{should we use "best" rather than "better"?}
    \item \sa{Neither \neada{} nor \tiada{} can handle closed convex functions $g$ and $h$; hence, \agdap{} is the first adaptive proximal method for solving WCSC minimax problems of the form in~\eqref{eq:main-problem} \na{that can adapt to the fluctuations of local Lipschitz constants over the problem domain using nonmonotone step sizes.} \na{More precisely, in contrast to \sgdab{} which only estimates the global Lipschitz constant (that is why \sgdab{} keeps the step size constant for $T=\cO(L\kappa^2/\epsilon^2)$ inner iterations before checking the backtracking condition), \agdap{} keeps track of two estimates: one for the \textit{local} Lipchitz constant corrsponding to the current iterate and the other one for the \textit{global} constant, which allows the method to adapt to the local topology.}}
    \item {Finally, we demonstrate the efficiency of \agdap{}  on a %distributionally robust learning 
    \sa{DRO} problem over a multi-layer perceptron.}
\end{enumerate}
\vspace*{-3mm}
\paragraph{Notation.}\na{Throughout the paper, $\mathbb{N}$ denotes the set of nonnegative integers, and $\mathbb{N}_+\triangleq\mathbb{N}\setminus\{0\}$; moreover, let $\bar\N\triangleq\N\cup\{+\infty\}$. Whenever we use extended reals for comparison, we assume that (i) $L'<\infty$ for any $L'\in\reals$, (ii) $+\infty=+\infty$ and $+\infty\not <+\infty$. Given $n\in\N$, $[n]\triangleq\{k\in\N:~k\leq n\}$. We also adopted the Landau notation, i.e., for $f,g:\reals_+\to\reals_+$, we use $f=\cO(g)$ and $f=\Omega(g)$ if there exists some $C>0$ and $n_0\in\reals_+$ such that $f(n)\leq C g(n)$ and $f(n)\geq C g(n)$, respectively, for all $n\geq n_0$; moreover, if $f=\cO(g)$ and $f=\Omega(g)$, then we use $f=\Theta(g)$.}
\vspace*{-2mm}
\begin{algorithm}[h]
\caption{\texttt{\agdap{}}}
{\small
\begin{algorithmic}[1]
\STATE{\bf Input:} \na{$(\bx_0,\by_0)\in \mathcal{X}\times\cY$},\quad $\zeta,\tilde{\mu},\tilde l>0\quad \mbox{s.t.}\quad \tilde l>\tilde\mu$,\quad  $\gamma_0,\gamma\in(0,1)$,\quad $r\in\mathbb{N}_+$ 
\STATE{\bf Initialize:}  
\na{$\gamma_1\gets\gamma^r$,\quad $\gamma_2\gets\gamma$, \quad %$\tilde l\gets \xzf{\tilde{\mu}}/\gamma_2$,\quad
$l_0\gets \tilde l$,\quad $L_0\gets \tilde l$,\quad  ${\mu_0\gets \tilde{\mu}}$,\quad $\underline{\mu}\gets
\begin{cases}
    \mu,\ \text{if}\ \mu\ \text{is known}\\
    0,\ \text{otherwise}
\end{cases}$} 
%\STATE \na{{\bf Compute} $\by_0:\quad \cL(\bx_0,\by_0)+\zeta\geq \max_{\by\in\cY}\cL(\bx_0,\by)$} \alglinelabel{algeq:y0}
\STATE $t\gets 0$,\quad \na{$l_t^\circ\gets l_t$,}\quad \na{$\texttt{flag}\gets\texttt{False}$}
\LOOP
    \IF{$\texttt{max\_solver}==\texttt{True}$ \textbf{and} $\texttt{flag}==\texttt{False}$}
    \STATE \na{{\bf Compute} $\by_t:\quad \cL(\bx_t,\by_t)+\zeta\geq \max_{\by\in\cY}\cL(\bx_t,\by)$} \alglinelabel{algeq:haty-max}
    \ELSIF{$\texttt{max\_solver}==\texttt{False}$}
    \STATE $\hat\by_t\gets\prox{h/L_t}\big(\by_t+\frac{1}{L_t}\grad_y f(\bx_{t},\by_t)\big)$, \quad $d_t\gets\norm{\hat\by_t-\by_t}$ \alglinelabel{algeq:haty-d}
    \STATE $\by_t\gets\hat\by_t$  \alglinelabel{algeq:haty-y}
    \ENDIF
    \STATE $\texttt{flag}\gets\texttt{True}$
\WHILE{${l}_t \leq {L}_t$}
\STATE $\sigma_{t} \gets  1/l_{t}$
\STATE $\tau_t \gets
\frac{(1-\gamma_0)}{l_t}\left({4+\frac{1}{\gamma_2}} +{\frac{4(1-\sigma_{t} {\mu_t})(2-\sigma_{t}{\mu_t})(15L_{t}-8{\mu_t})L_{t}^3}{\mu_t^4}} \right)^{-1}$
\alglinelabel{algeq:step-size}
\STATE \na{$\tilde\bx_{t+1}\gets\prox{\tau_t g}\big(\bx_t-\tau_t\grad_{\bx} f(\bx_t,\by_t)\big)$} 
\STATE \na{$\tilde\by_{t+1}\gets\prox{\sigma_t h}\big(\by_t+\sigma_t\grad_y f(\tilde\bx_{t+1},\by_t)\big)$} \alglinelabel{algeq:y-update}
\IF{ \sa{$\texttt{flag}==\texttt{True}$}}
\IF{$\texttt{max\_solver}==\texttt{True}$}
\STATE $\Delta_{t}\gets \frac{2\zeta}{{\mu_t}}$,\quad\sa{$\Lambda_t\gets \zeta + \sqrt{\frac{{2}\zeta}{{\mu_t}}}~L_t\norm{\by_t -\na{\tilde\by_{t+1}}}$} \alglinelabel{algeq:delta_def1}
\ELSE
\STATE \na{$\Delta_t\gets \min\{(1+\frac{2L_t}{{\mu_t}})^2 d_t^2,~\bar\cD_y^2\}$,\quad $\Lambda_t\gets 2d_t L_t\norm{\by_t -\na{\tilde\by_{t+1}}}$} \alglinelabel{algeq:delta_def2}
\ENDIF
\STATE $R_t\gets 0$
\ENDIF
\IF{\na{$\texttt{BacktrackCond}(t)==\texttt{True}$} \alglinelabel{algline:check-agda+} } 
    \STATE \na{$\bx_{t+1}\gets\tilde\bx_{t+1}$, \quad $\by_{t+1}\gets\tilde\by_{t+1}$}
    \STATE {${l}_{t+1} \gets \na{\max\{\gamma_2 l_{t}, \tilde l\}}$,\quad \na{$l_{t+1}^\circ\gets l_{t+1}$},\quad $L_{t+1}\gets L_t$,\quad {$\mu_{t+1}\gets\mu_t$} \alglinelabel{algeq:l-update}}
    \STATE $C_t\gets\frac{(1-\sigma_{t} {\mu_t})(2-\sigma_{t} {\mu_t} )}{\sigma_{t} {\mu_t}} \frac{L_{t}^2}{\mu_t^2} \tau_{t}^2$
    \STATE $\Delta_{t+1}\gets (1-{\mu_t} \sigma_{t}/2)\Delta_t + C_t \na{\|G^{\tau_t}_x(\bx_t,\by_t)\|^2}$, 
    \alglinelabel{algeq:delta-update}
    %\STATE \sa{$\Lambda_{t+1} \gets 3(2\Delta_{t} + \Delta_{t+1})/\sigma_t$}
    \STATE \na{$\Lambda_{t+1} \gets
    6l_{t}(\Delta_{t+1}+2\Delta_t)-8{\mu_t}\Delta_t$} \alglinelabel{algeq:Lambda-t>tn}
    \STATE \na{$R_{t+1}\gets 2\tau_{t}^2l_{t}\left\|G^{\tau_{t}}_x(\bx_{t},\by_{t})\right\|^2-\sigma_{t}^2{\mu_t}\left\|{G^{\sigma_{t}}_y(\bx_{t+1},\by_{t})}\right\|^2$}
    \alglinelabel{algline:Rt} 
    \STATE  $t\gets t+ 1$,\quad \sa{$\texttt{flag}\gets\texttt{False}$}
\ELSE
    \STATE ${l}_t \gets {l}_t/\gamma_2$
\ENDIF
\ENDWHILE
\STATE $L_t\gets L_t/\gamma_1$,\quad {$\mu_t\gets \max\{\mu_t \gamma_1, \underline{\mu}\}$}
\ENDLOOP
\STATE \textbf{Recommended initialization:} $\gamma_0=1e-3,\ \gamma =0.95,\ r=2,\ \rv{\tilde l=\tilde{\mu}/\gamma}$.
\end{algorithmic}}%
\label{alg:agda}
\end{algorithm}
%\vspace*{-3mm}

\section{The \agdap{}~method}\vspace*{-3mm}
In this section, we propose \agdap, as outlined in \cref{alg:agda},
which uses the following backtracking conditions:\\
% \todo[inline]{Old 5(a): $\tau_t(1 - \na{2}\tau_t l_t ) \norm{G^{\tau_t}_x (\bx_t, \by_t)}^2 + \sigma_t \norm{G^{\sigma_t}_y (\bx_{t},\by_t)}^2\leq
%         \na{\Lambda_t+4(3l_t-2\mu)\Delta_t}+\cL(\bx_t, \by_t) - \na{\cL(\tilde\bx_{t+1},\tilde\by_{t+1})}-\frac{\sigma_t^2 \mu}{2} \norm{G^{\sigma_t}_y (\na{\tilde\bx_{t+1}},\by_t)}^2$}
% \todo[inline]{The tight form of (5a) should have its first term as $\Big(\tau_t - \Big(2+\frac{l_{t-1}}{l_t}\Big)\tau_t^2 l_t
%             \Big)\norm{G_x^{\tau_t}(\bx_t,\by_t)}^2$}
\boxed{
\begin{minipage}{.15\textwidth}
\boxed{\cond{t}:}\quad
\end{minipage}
\begin{minipage}{.85\textwidth}
\vspace*{-3mm}
\begin{subequations}
\label{eq:backtrack}
\begin{align}
        \na{\small
        \begin{aligned}
         \MoveEqLeft[2]\Big(\tau_t - \Big(2+\frac{1}{\gamma}\Big)\tau_t^2 l_t
            \Big)\norm{G_x^{\tau_t}(\bx_t,\by_t)}^2 + {\sigma_t}\norm{{G^{\sigma_t}_y(\bx_{t},\by_t)}}^2+
            \sigma_{t}^2\frac{{\mu_t}}{2}\left\|{G^{\sigma_{t}}_y(\tilde\bx_{t+1},\by_{t})}\right\|^2 
            \\
            &
        \leq  \Lambda_t+4(3l_t-2{\mu_t})\Delta_t+\cL(\bx_t, \by_t) - \cL(\tilde\bx_{t+1},\tilde\by_{t+1})+R_t
    \end{aligned}} \label{eq:ls-conv}\\
    \na{\small
    \begin{aligned}
        \MoveEqLeft[2] f(\tilde\bx_{t+1},\by_{t}) + \langle \grad_y f(\tilde\bx_{t+1}, \by_{t}), \tilde\by_{t+1} - \by_{t} \rangle\leq f(\tilde\bx_{t+1}, \tilde\by_{t+1})+ \frac{l_{t}}{2} \|\tilde\by_{t+1} - \by_{t}\|^2
    \end{aligned}} \label{eq:Lyy-1}\\
    {\small
    \begin{aligned}
    \MoveEqLeft[2]\|\nabla_y f\left(\na{\tilde\bx_{t+1},\tilde\by_{t+1}}\right)-\nabla_y f\left(\na{\tilde\bx_{t+1}}, \by_{t}\right)\| \leq  l_t\left\|\na{\tilde\by_{t+1}}-\by_{t}\right\|
    \end{aligned}} \label{eq:Lyy-2}\\
    {\small
    \begin{aligned}
    \MoveEqLeft[2]\norm{G^{\sigma_t}_y(\tilde\bx_{t+1},\by_{t})}^2 \leq 2\left(\tfrac{4(1-\sigma_t{\mu_t})}{\sigma^2_{t}}+2l_t^2 \right) \Delta_t + 2l_t^2\tau_t^2\norm{G_x^{\tau_t}(\bx_{t},\by_{t})}^2,
    \end{aligned}} \label{eq:final-ls-xt+1yt}
\end{align}
\end{subequations}%
\hspace{-25mm} where $\Lambda_t$ and \na{$R_t$} are defined within \cref{alg:agda}.
\end{minipage}}\\[2mm]
%where $\Lambda_t$ and \na{$R_t$} are defined within \cref{alg:agda}.
\begin{remark}
\rv{Given $\tilde l,\tilde\mu>0$ such that $\tilde l>\tilde \mu$, we have $l_t \geq \tilde l > \tilde\mu$ and $\mu_t \geq \tilde\mu$ for $t\geq0$; hence, $\mu_t \sigma_t = \frac{\mu_t}{l_t} < \frac{\mu_t}{\tilde\mu}\leq 1$.}
\end{remark}
%If $\cD_y$ is known, then in \cref{alg:agda}, we set $\bar D_y = \cD_y$; otherwise, we set $\bar D_y = \infty$ for the case $\cD_y$ is unknown. %we consider two cases for \texttt{correction=False}: i) when $\cD_y$ is known, set $\bar D_y = \cD_y$; ii) when $\cD_y$ is unknown, set $\bar D_y = \infty$.
%\vspace*{-2mm} %\xtodo{Note: Algorithm line reference needs correction}
% Following that, we will provide a detailed discussion of the specific backtracking conditions, elaborating on their importance and the implications they hold within the context of the algorithm.

\na{\agdap{}, displayed in \cref{alg:agda}, takes \ms{} flag as an input. If \msf{True}, one needs to \textit{inexactly} solve a strongly concave maximization problem every time the global Lipschitz constant estimate $L_t$ increases, i.e., whenever local Lipschitz constant estimate $l_t$ goes beyond the global estimate $L_t$, according to \agdap{}, one increases $L_t$, decreases $\mu_t$, and replaces the current $\by_t$ iterate by a $\zeta$-optimal solution of $\max_{\by\in \cY} \cL(\bx_t,\by)$. Later in \cref{thm:avg--backtrack-time}, we show that the number of such subproblem solves is bounded above by \rv{$\cO(\log(\cR))$ where
$\cR \triangleq \max\{\tilde\mu/\mu,~L/\tilde l,~1\}$.} This technique %benefits us to
helps us in getting a \rv{slightly} better gradient complexity bound by %decreasing 
reducing the dual error $\norm{\by_t-\by^*(\bx_t)}$ when $L_t\leq L$. On the other hand, when \msf{False}, \rv{the implementation becomes much simpler as} \agdap{} does not rely on dual error reduction mechanism through inexactly solving maximization subproblems; instead, it 
%uses a crude 
\rv{employs a bound on the dual error that is} based on a simple gradient update.}

\na{In the first result, we establish that \agdap{} is well-defined.}
\begin{theorem}
    \label{thm:backtrack}
    \na{For all $t\in\mathbb{N}$, the backtracking condition in~\eqref{eq:backtrack} holds in finite time, i.e., \emph{\shortalgline{\ref{algline:check-agda+}}} in \agdap{} is checked finitely many times for each $t$; hence, \agdap{} is well defined for \emph{$\texttt{max\_solver}\in\{\texttt{True}, \texttt{False}\}$.}} 
\end{theorem}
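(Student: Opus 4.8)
The plan is to fix an arbitrary value of the counter $t\in\N$ and show that \cond{t} cannot be evaluated infinitely often before it returns \texttt{True}; since the counter advances only when \cond{t} holds, this is the claim. Inside the inner \texttt{WHILE} loop one always has $l_t\le L_t$; a failed check replaces $l_t$ by $l_t/\gamma_2$ (with $\gamma_2=\gamma\in(0,1)$), and each exit of the loop (forced as soon as $l_t$ overtakes $L_t$) replaces $L_t$ by $L_t/\gamma_1$ and $\mu_t$ by $\max\{\gamma_1\mu_t,\underline\mu\}$ in the outer \texttt{LOOP} (with $\gamma_1=\gamma^r\in(0,1)$). Hence, if \cond{t} were evaluated infinitely often for this $t$, the loop would be exited infinitely often, so $l_t\uparrow+\infty$, $L_t\uparrow+\infty$ (the two climb in interleaved fashion), $\sigma_t=1/l_t\downarrow0$, $\tau_t\downarrow0$, and $\mu_t$ would decrease to its floor $\underline\mu$ ($=\mu$ when $\mu$ is known, $=0$ otherwise). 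Moreover it is enough to look at the infinitely many evaluations that occur with \texttt{flag}$=$\texttt{True}, because for those $R_t=0$ and $\Delta_t,\Lambda_t$ have just been (re)set from $\zeta$-optimality of $\by_t$ (when \msf{True}) or from the capped prox-gradient residual $d_t\le\cD_y$ on the strongly concave subproblem $\max_\by\{f(\bx_t,\by)-h(\by)\}$ (when \msf{False}), the latter keeping $\by_t$ bounded. Thus it suffices to produce finite thresholds $\bar l,\bar L$ and a bound $\bar\mu$, depending only on $L,\mu$ (\cref{ASPT:lipshiz gradient}), $\cD_y$ (\cref{aspt:bounded-Y}) and $\gamma,\gamma_0,r,\zeta$, beyond which (i.e.\ $l_t\ge\bar l$, $L_t\ge\bar L$, $\mu_t\le\bar\mu$) all four inequalities in \eqref{eq:backtrack} hold; that contradicts the supposed infinitude and proves that \agdap{} is well defined for \ms$\in\{\texttt{True},\texttt{False}\}$.

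The two smoothness conditions are immediate from \cref{ASPT:lipshiz gradient}: for $l_t\ge L$ the descent lemma applied to $f(\tilde\bx_{t+1},\cdot)$ gives \eqref{eq:Lyy-1} and $L$-Lipschitzness of $\grad_y f$ gives \eqref{eq:Lyy-2}. For \eqref{eq:final-ls-xt+1yt}, the $\by$-update gives $G^{\sigma_t}_y(\tilde\bx_{t+1},\by_t)=(\tilde\by_{t+1}-\by_t)/\sigma_t$, and nonexpansiveness of $\prox{\sigma_t h}$ together with the $\bx$-update give $\|\tilde\by_{t+1}-\by_t\|\le\sigma_t\|G^{\sigma_t}_y(\bx_t,\by_t)\|+\sigma_t L\tau_t\|G^{\tau_t}_x(\bx_t,\by_t)\|$; squaring, using $l_t\sigma_t=1$, and bounding $\|G^{\sigma_t}_y(\bx_t,\by_t)\|$ by a multiple of $\|\by_t-\by^*(\bx_t)\|$ (another prox estimate, comparing $\by_t$ with the fixed point $\by^*(\bx_t)$) reduces \eqref{eq:final-ls-xt+1yt} to a coefficient comparison against $\Delta_t$; this holds because $\Delta_t$ is designed to dominate $\|\by_t-\by^*(\bx_t)\|^2$ once $\mu_t$ is at (or near) $\underline\mu$ — via $\zeta$-optimality plus $\mu$-strong concavity of $\cL(\bx_t,\cdot)$ versus $\Delta_t=2\zeta/\mu_t$ when \msf{True}, and via $\mu$-strong concavity together with a bound of the form $\|\by_t-\by^*(\bx_t)\|\le C(1+L_t/\mu_t)d_t$ versus $\Delta_t=\min\{(1+2L_t/\mu_t)^2d_t^2,\cD_y^2\}$ when \msf{False}.

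The crux is the descent inequality \eqref{eq:ls-conv}. On its left side, the step-size rule of \agdap{} forces $\tau_t l_t\le(1-\gamma_0)\bigl(4+\tfrac1{\gamma_2}\bigr)^{-1}$, hence $(2+\tfrac1\gamma)\tau_t l_t<1-\gamma_0$, so the coefficient of $\|G^{\tau_t}_x(\bx_t,\by_t)\|^2$ equals $\tau_t\bigl(1-(2+\tfrac1\gamma)\tau_t l_t\bigr)\ge\gamma_0\tau_t>0$ for every $l_t$. On its right side, with \texttt{flag}$=$\texttt{True} we have $R_t=0$, while $\Lambda_t\ge0$ and $4(3l_t-2\mu_t)\Delta_t\ge0$ (since $l_t>\mu_t$ and $\Delta_t,\Lambda_t\ge0$), so it remains to lower-bound $\cL(\bx_t,\by_t)-\cL(\tilde\bx_{t+1},\tilde\by_{t+1})$. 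Splitting this into the primal part $\cL(\bx_t,\by_t)-\cL(\tilde\bx_{t+1},\by_t)$ and the dual part $\cL(\tilde\bx_{t+1},\by_t)-\cL(\tilde\bx_{t+1},\tilde\by_{t+1})$: prox-optimality of $\tilde\bx_{t+1}$ with the descent lemma for $f(\cdot,\by_t)$ (constant $l_t\ge L$) makes the primal part at least $\tfrac{\tau_t}{2}(1-l_t\tau_t)\|G^{\tau_t}_x(\bx_t,\by_t)\|^2$, while prox-optimality of $\tilde\by_{t+1}$ with \eqref{eq:Lyy-1} (already verified) and $L$-smoothness bound the negative dual part by a multiple of $\tfrac1{l_t^2}\|G^{\sigma_t}_y(\tilde\bx_{t+1},\by_t)\|^2$, which by \eqref{eq:final-ls-xt+1yt} is itself controlled by $\Delta_t$ and $\tau_t^2\|G^{\tau_t}_x(\bx_t,\by_t)\|^2$. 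The numerical content of the $\tau_t$-formula — in particular the factor $\tfrac{4(1-\sigma_t\mu_t)(2-\sigma_t\mu_t)(15L_t-8\mu_t)L_t^3}{\mu_t^4}$, which bounds how much a single prox-GDA step can amplify the dual error captured by $\Delta_t$ — is exactly what makes these residual contributions fit inside $\Lambda_t+4(3l_t-2\mu_t)\Delta_t$ once $l_t,L_t$ are large and $\mu_t$ is near $\underline\mu$, so that \eqref{eq:ls-conv} holds; the degenerate cases $\Delta_t=0$ or $G^{\tau_t}_x(\bx_t,\by_t)=G^{\sigma_t}_y(\bx_t,\by_t)=0$ (the current iterate already (near-)stationary) only make \eqref{eq:ls-conv} easier. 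Assembling the four estimates produces $\bar l,\bar L,\bar\mu$ and hence the contradiction. I expect this last step — extracting \eqref{eq:ls-conv} from \eqref{eq:Lyy-1}–\eqref{eq:Lyy-2}, the prox inequalities, and strong concavity in $\by$ alone, while carefully tracking the coupled behavior of $\Delta_t,\mu_t,\tau_t$ through the recursions of \agdap{} — to be the main obstacle.
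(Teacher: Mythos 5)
Your overall skeleton is fine and even streamlines the paper's argument slightly: since every failed check inflates $l_t$ and every exit of the inner \texttt{WHILE} loop inflates $L_t$ and deflates $\mu_t$, for a fixed $t$ all but finitely many evaluations of \cond{t} occur with \texttt{flag} equal to \texttt{True}, $l_t\ge L$ and $\mu_t\le\mu$, so it suffices to prove the condition must hold in that terminal state --- this is exactly the paper's CASE 1b (\cref{thm:ls-hold-main-tn}); the paper additionally proves a guarantee for $t>\hat t_{N(t)}$ by induction, which your contradiction argument lets you skip. Your treatment of \eqref{eq:Lyy-1}, \eqref{eq:Lyy-2}, of \eqref{eq:final-ls-xt+1yt} via $\delta_t\le\Delta_t$, and of the primal part and the $\gamma_0\tau_t$ coefficient are all in line with \cref{lemma:complicated-ls-hold,lemma:Delta-Bound-tn-true,lemma:descent_x-part1}.

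The genuine gap is your handling of the dual part of \eqref{eq:ls-conv}, i.e.\ of the ascent $\cL(\tilde\bx_{t+1},\tilde\by_{t+1})-\cL(\tilde\bx_{t+1},\by_t)$, which you claim to control by ``prox-optimality of $\tilde\by_{t+1}$ with \eqref{eq:Lyy-1} and $L$-smoothness'' by a multiple of $\tfrac{1}{l_t^2}\|G^{\sigma_t}_y(\tilde\bx_{t+1},\by_t)\|^2$, while discarding $\Lambda_t\ge 0$ as slack. Both points fail. Prox-optimality of $\tilde\by_{t+1}$ only gives a subgradient of $h$ at $\tilde\by_{t+1}$, hence a \emph{lower} bound on $h(\by_t)-h(\tilde\by_{t+1})$; what you need to cap the ascent is an \emph{upper} bound on $h(\by_t)-h(\tilde\by_{t+1})$, and for a general closed convex $h$ this is only available through the way $\by_t$ was just reset: the $\zeta$-optimality of $\by_t$ (\cref{lemma:y0-bound}) when \ms{} is \texttt{True}, or the optimality condition of the $1/L_t$ prox reset, inequality \eqref{eq:h-subgradient-2}, when it is \texttt{False}. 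These anchored inequalities are precisely what generate $\Lambda_t$ in \eqref{eq:ls-conv}; the term is not disposable slack but the error that makes the descent estimate true. The claimed magnitude is also wrong: with $g=0$, $h$ the indicator of $[0,\infty)$ and $f(\bx,\by)=-\tfrac{\mu}{2}(\by-a)^2$, one prox-gradient ascent step from $\by=0$ increases $f-h$ by $\tfrac{\mu}{2}a^2\sigma\mu(2-\sigma\mu)=\Theta(\sigma)\|G^{\sigma}_y\|^2$, a factor $l_t$ larger than your $\sigma_t^2\|G^{\sigma_t}_y\|^2$ budget, and such a term cannot be absorbed by the Lagrangian decrease alone. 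The workable route is the one in \cref{lemma:complicated-true-tn}: combine $\mu$-strong concavity of $f(\tilde\bx_{t+1},\cdot)$ with the anchored inequality at $\by_t$ (producing $\Lambda_t$), then Young's inequality with \eqref{eq:LS-2}, and finally \eqref{eq:LS-xtyt}--\eqref{eq:LS-xt+1yt} together with $\delta_t\le\Delta_t$ so that the leftover terms fit exactly into the budget $4(3l_t-2\mu_t)\Delta_t+\Lambda_t$; without this mechanism your final ``assembling the four estimates'' step does not go through.
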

\begin{proof}
See \cref{appendix:ls_condition}.
\end{proof}

\na{We next discuss the motivation, rationale underlying \agdap{} and the associated challenges with it.} %the development of this algorithm.
\subsection{Motivation for \agdap{}~and challenges}  %algorithm construction}
\sa{Compared to the \textit{simple} backtracking approaches used for nonconvex minimization problems~\cite{ beck2014introduction}, %where the convergence of iterates at a given time step $t$ depends solely on parameters from the preceding steps, the scenario is 
designing a backtracking scheme for %nonconvex-(strongly) concave 
WCSC minimax problems is notably harder and requires an intricate analysis. Indeed, consider the minimization problem 
$-\infty<p^*\triangleq\min_{\bx\in\cX} p(\bx)$ where $p(\cdot)$ is $L$-smooth, and let $\{\bx_t\}_{t\geq 0}$ be generated according to $\bx_{t+1} = \bx_t - \tau_t \grad p(\bx_t)$ for $\tau_t>0$ satisfying 
\begin{equation}\label{eq:descent-min}
    {\small
    \sa{\frac{\tau_t}{2}\|\grad p(\bx_t)\|^2} \leq  p(\bx_t)-p(\bx_{t+1}).}%
\end{equation}
Due to well-known descent inequality for smooth functions~\cite{nesterov_convex}, \eqref{eq:descent-min} is guaranteed to hold for all $\tau_t \in (0, 1/L]$; therefore, one can trivially use backtracking on $\tau_t$ until \eqref{eq:descent-min} holds, i.e., given some $\widetilde{L}>0$ and $\gamma\in(0,1)$, for each $t\geq 0$, one can initialize $\tau_t\gets 1/\widetilde{L}$, and shrinks $\tau_t\gets\gamma\tau_t$ until \eqref{eq:descent-min} holds, which requires at most $\cO(\log(L/\widetilde{L}))$ backtracking iterations for each $t\geq 0$. Thus, summing up \eqref{eq:descent-min} from $t=0$ to $T-1$ yields 
%that
{\small
\begin{equation*}
   %T\frac{\gamma}{2L}
   \min_{t=0,\ldots, T-1}\|\grad p(\bx_t)\|^2 \leq
   %\sum_{t=0}^{T-1}  \frac{\tau_t}{2}\|\grad p(\bx_t)\|^2 \leq  
   \frac{2L}{\gamma}(p(\bx_0)- p^*)/T, %p(\bx_{T})
\end{equation*}}%
which 
%implies the convergence of the algorithm under certain assumptions.
follows from $\tau_t\geq \gamma/L$ for $t\geq 0$.
%Therefore, one can simply search a proper $\sigma_t$ for each step such that \cref{eq:decent-min} holds. 
The important point here is that given $\bx_t$, the choice of $\tau_t$ using \eqref{eq:descent-min} is independent of previous steps $\{\tau_i\}_{i=0}^{t-1}$; moreover, the future steps $\{\tau_i\}_{i>t}$ do not impose any restrictions on $\tau_t$ choice.} 
%searches of $\{\sigma_t\}_{t\geq0}$ are independent for each $t$.

Compared to the naive backtracking method discussed above for smooth minimization, backtracking methods for \textit{convex-concave} minimax problems \cite{hamedani2021primal,jiang2022generalized} and their analyses are more complicated; that said, given the current iterate $(\bx_t,\by_t)$, the admissible primal-dual step sizes $(\tau_t,\sigma_t)$ can still be chosen independent of the past and future step sizes.
However, %in the case of nonconvex-(strongly) concave 
this is not the case for WCSC minimax problems; indeed, the analyses provided
%current analytical framework presented 
in the literature for GDA-type methods, e.g., \cite{lin2020gradient,boct2020alternating}, suggest that, given $(\bx_t,\by_t)$ at iteration $t$, $(\tau_t,\sigma_t)$ should be chosen incorporating the effect of future iterates $\{(\bx_i,\by_i)\}_{i=t+1}^{\infty}$ along with the future step sizes $\{\sigma_i,\tau_i\}_{i=t+1}^{\infty}$. %significantly influence the convergence analysis for the preceding sequence $\{\bx_i,\by_i\}_{i\leq t}$. Specifically,
More precisely, for given $T\in\N_+$, the existing analysis in the literature provide guarantees of the %following 
form:
\begin{equation*}
{\small
   \sum_{t=0}^T \nu_t\Big(\{\tau_i,\sigma_i\}^{T}_{i= t}\Big){\|G^{\tau_t,\sigma_t}(\bx_t,\by_t)\|^2}= \cO(1),}%
\end{equation*}
where $\nu_t(\cdot)$ is a function of current and future step sizes. Because of this complicated form, choosing $\{\tau_i,\sigma_i\}^{T}_{i=0}$ such that $0<\nu_t\big(\{\tau_i,\sigma_i\}^{T}_{i= t}\big)=\Theta(1)$ for $t=0,\ldots,T$ is not trivial at all if $L$ is \textit{not known} --existing results go around this problem through choosing constant step sizes $(\tau_t,\sigma_t)=(\tau,\sigma)$ for $t\geq 0$ such that $\sigma=\Theta(1/L)$ and $\tau=\Theta(1/(L\kappa^2))$, assuming \rv{$L$ and $\mu$ are known.}
%As a result, the searches of $\{\tau_t,\sigma_t\}_{t\geq 0}$ are not independent any more. Therefore, exerting control over future step sizes becomes preferable within this framework. 
%Indeed,

\sa{To overcome the difficulty due to future affecting the past, in \agdap, stated in Algorithm \ref{alg:agda}, the variable $L_t$ is strategically introduced to %manage 
account for the effect of future stepsizes, while $l_t$ represents the local Lipschitz constant that we seek for \na{using a nonmonotone backtracking procedure.} %that is sought. 
%To our knowledge, it remains an open question whether it is possible to derive a descent equation suitable for use as a backtracking condition, without necessitating consideration of future updating information.
%The existence of $l_t$ notably enhances Algorithm \ref{alg:agda}, facilitating the identification of a smaller local Lipschitz constant even when operating under a larger control parameter $L_t$. 
Since $L$ and $\mu$ are not known (hence, $\kappa$), in \agdap, we propose tracking the time-scale ratio in an adaptive manner by setting $\sigma_t=1/l_t$ and imposing the ratio
{\small $\sigma_t/\tau_t=\na{\frac{4}{1-\gamma_0}\Big(1+\frac{1}{4\gamma_2} + (1-{\mu_t}/l_t)(2-{\mu_t}/l_t)\frac{(15L_t-8\mu_t)L_{t}^3}{\mu_t^4}\Big)=\cO\Big(\frac{L_t^4}{\mu_t^4}\Big)}$}, which involves $\mu_t,l_t$ and $L_t$, i.e., see 
%establishes an appropriate nonlinear time ratio for $\sigma_t$ and $\tau_t$, as stated in 
\algline{\ref{algeq:step-size}}.} %in \cref{alg:agda}. 
%It's important to note that even if we set $l_t$ equal to $L_t$ directly, without any search process, 
\sa{In an alternative implementation, one can set $l_t=L_t$, i.e., replace $l_t$ with $L_t$ everywhere in \agdap, and backtrack $L_t$ through checking \eqref{eq:backtrack} rather than $l_t$ \na{--this corresponds to choosing $r=1$; hence, $\gamma_1=\gamma_2$.} For this alternative, the algorithm statement becomes simpler while retaining its theoretical convergence guarantee. However, this approach forfeits the flexibility offered by being able to search for local Lipschitz constants, and leads to a strictly decreasing $\sigma_t$ and $\tau_t$, potentially causing worse practical performance. 
%which in turn can lead to suboptimal performance in practical applications. 
Therefore, we adopted the double search as in \cref{alg:agda}, i.e., both $L_t$ and $l_t$, as this approach \na{can exploit local smoothness leading to better performance compared to searching for $L_t$ alone, i.e., potentially leading to non-monotonic step sizes through adapting to decreases in local smoothness constants along the trajectory of the iterate sequence.}}

%Another significant challenge pertains to the computational feasibility of backtracking conditions in practical scenarios. 
\sa{Designing a backtracking condition that is easy-to-check in practice is a significant challenge due to reasons explained above, \na{i.e., mainly because of future step sizes affecting the past ones in the existing analysis, and of the requirement for imposing time-scale separation without knowing $L$ \rv{and/or $\mu$}}. Indeed, the majority of existing algorithmic analysis for WCSC minimax problems rely on 
%Much of the current analysis of nonconvex-(strongly) concave mini-problems is heavily dependent on 
terms that are either computationally expensive to evaluate or intractable in practice such as 
%For instance, a commonly used convergence metric is 
the gradient mapping \rv{$\cG^\tau(\cdot)$ or the gradient for Moreau Envelope of the primal function, i.e., $\grad F_\lambda(\cdot)$, where $F_\lambda(\bx)\triangleq\min_{\bw\in\cX}F(\bw)+\frac{1}{2\lambda}\norm{\bw-\bx}^2$,} or quantities such as $\delta_t\triangleq\norm{\by_t-\by^*(\bx_t)}^2$, e.g., see~\cite{lin2020gradient,boct2020alternating,yang2022faster,chen2022accelerated}. It should be emphasized that a method being parameter agnostic to $L$ and $\mu$ due to its use of adaptive step sizes does not immediately imply that the method is suitable for backtracking; for instance, %\cite{yang2022nest} 
while \tiada{}~\cite{li2022tiada} is an adaptive method providing guarantees on 
%tractable measures such as $G(\bx,\by)$
an easy-to-compute metric $\frac{1}{T}\sum_{t=0}^{T-1}\norm{\grad f(\bx_t,\by_t)}^2$, some crucial parts of its analysis %some of their analyses 
still depend on the primal function $F(\bx)$; hence, it is not trivial to modify this method to incorporate it within a backtracking framework. On the other hand, while some existing methods such as \cite{xu2023unified} are more suitable for backtracking, their complexity bounds have suboptimal $\kappa$-dependence even if $L$ is known --see~\cref{table:related_work}.} 

\sa{\na{In this paper,} we have addressed all these issues and proposed, for the first time in the literature, a first-order method with \na{a \textit{nonmonotone} step-size search/backtracking procedure} (agnostic to the \textit{global} smoothness parameter $L$ {and to the convexity modulus $\mu$}) for computing \na{approximate stationary points} of WCSC minimax problems with provable complexity guarantees.
%in our analysis, achieving lower convergence complexity.
Similar to the existing analyses, $\delta_t$ also arises in our convergence proof, and we need to control this quantity.
%Indeed, for the sole untractable term $\delta_t$, which represents the dual optimality error, we have devised an alternative, 
To bound $\{\delta_t\}$, we constructed an auxiliary control sequence $\{\Delta_t\}$ such that, \textit{unlike} $\delta_t$, $\Delta_t$ can be easily computed in practice, and %Detailed discussion on this will be provided in the subsequent sections \tbd.
we argued that $\delta_t\leq \Delta_t$ whenever $L_t\geq L$ {and $\mu_t \leq \mu$}. 
%While $\Delta_t$ serves as a measure to control the dual optimality gap, we establish that a sufficient condition for this control is ensuring $L_t\geq L$. However, the relationship between $\delta_t$ and $\Delta_t$ remains unclear when $L_t\leq L$. 
Throughout \na{the runtime of \agdap{}}, the estimate for the global constant $L_t$ always %denotes 
provides an upper bound on the local Lipschitz constant $l_t$. Since every time the backtracking condition in~\eqref{eq:backtrack} fails we increase our estimate using $l_t\gets l_t/\gamma_2$, at some point we may end up $l_t/\gamma_2>L_t$, which immediately implies that $L_t<L$. Whenever this happens, we need to correct $\Delta_t$ bound by resetting $\by_t$ \na{using one of two different ways: \textit{(i)} if \msf{True}, then for a given tolerance $\zeta>0$, $\by_t$ is reset to} an inexact solution of the inner-max problem such that $ \cL(\bx_t,\by_t)+\zeta\geq F(\bx_t)$, i.e., see \algline{\ref{algeq:haty-max}} and \na{this would imply that $\delta_t\leq\frac{2\zeta}{\rv{\mu_t}}=\Delta_t$} --one can easily argue that such corrections can only happen %$\cO(\log_{1/\gamma_1}(\kappa))$ 
\rv{$\cO(\log(\cR))$ times, where
$\cR \triangleq \max\{\tilde\mu/\mu,~L/\tilde l,~1\}$,} and for each correction, the inexact maximizer can be computed within $\cO(\sqrt{\kappa}\log(\cD_y^2/\zeta))$ gradient calls without requiring $L$ when $\mu$ is known, e.g., see~\cite{calatroni2019backtracking,rebegoldi2022scaled} -- otherwise, the gradient complexity can be bounded by $\cO(\kappa\log(\cD_y^2/\zeta))$ when both $\mu$ and $L$ are unknown; \na{alternatively, \textit{(ii)} if \msf{False}, then $\by_t$ is reset to $\hat\by_t=\prox{\frac{1}{L_t} h}\big(\by_t+\frac{1}{L_t}\grad_y f(\bx_{t},\by_t)\big)$ and one can use a crude bound on $\delta_t$, i.e., if $\cD_y^2$ is available, then one can set $\Delta_t=\cD_y^2$; otherwise, one can still bound $\delta_t$ by exploiting the strong concavity of $f(\bx_t,\cdot)$ leading to the bound $\delta_t\leq (1+\frac{2L_t}{\mu_t})^2 \norm{\hat\by_t-\by_t}^2=\Delta_t$. %Indeed, within the alternative implementation, i.e., using crude bounds on $\delta_t$,
\rv{To compare these two alternatives, consider the scenario where $\mu$ and $\cD_y$ are known, and we adopt the initialization $\tilde\mu=\mu$ and $\tilde l=\mu/\gamma$,} e.g., for DRO problem in~\eqref{eq:dro-problmm} the inner maximization is solved over a unit simplex and $\mu>0$ is the value of the regularizer set by the practitioner. \rv{Under this setting, for the case \msf{False},} one does not need to solve any inner max problem, and the corresponding gradient complexity of \agdap{} is $\cO(L\kappa^4(B_0+\kappa L\cD_y^2)\epsilon^{-2})$; on the other hand, 
for \msf{True}, we require additional $\cO(\sqrt{\kappa}\log(\kappa)\log(1/\epsilon))$ gradient calls in total for solving at most $\log_{1/\gamma_1}(\kappa)$ inner max problems with accuracy $\zeta=\epsilon^2$, and this corrective action \rv{slightly} improves the gradient complexity of \agdap{}~to $\cO(L\kappa^4(B_0+\log(\kappa) L\cD_y^2)\epsilon^{-2})$, where $B_0=F(\bx_0)-F^*$ denotes the initial suboptimality.}}
\subsection{Convergence guarantees for \agdap{}}
% \qsr{In this section, we denote that \xzf{$\bar L  \triangleq \max\{L, \frac{{\tilde\mu \tilde l}}{ \mu}\}$, $\bar \mu \triangleq \max\{\underline{\mu}/\gamma_1, \min\{\mu, \frac{{\tilde\mu}\tilde l}{ L}\}\}$, $\tilde\kappa\triangleq\max\{\frac{\bar{L}}{\tilde l}, \frac{\tilde \mu}{\bar\mu}\}$} and $\bar{\kappa}=\frac{\bar L}{\bar \mu}$.}\\
% On average each \agdap{}~iteration needs at most $3$ backtracking/step-size search iterations. 
%rather than $\cO(\log_{1/\gamma_2}^2(\kappa))$. 
%\xtodo{T should be $T=\Omega(\log^2(\kappa))$, when $T$ is not infinity, the average is $\leq 4$. If $T\rightarrow\infty$, the average is 2.}
%\xtodo{\cref{thm:avg--backtrack-time} only need \cref{ASPT:lipshiz gradient}}
\begin{theorem}\label{thm:avg--backtrack-time}
    % Suppose \cref{ASPT:lipshiz gradient} holds. %\cref{ASPT:lipshiz gradient,aspt:bounded-Y,aspt:primal_lb} hold. 
    % \sa{Let $I_t$ denote the number of backtracking iterations required at iteration $t$ of \agdap. Then $\frac{1}{T}\sum_{t=0}^{T-1}I_t\xz{\leq 4}$ for all $T=\Omega(\log^{\xz{2}}(\kappa))$ independent of $\epsilon>0$. Moreover, $\lim_{T\to\infty}\frac{1}{T}\sum_{t=0}^{T-1}I_t=2$.}
    % %Then the average times for checking backtracking conditions of \cref{alg:agda} is 2.
    \na{Suppose \cref{ASPT:lipshiz gradient} hold. Throughout an \agdap{} run, $L_t$ increases {and $\mu_t$ decreases} at most $\lceil \frac{1}{r}\log_{1/\gamma}(\rv{\cR})\rceil$ many times, where $\rv{\cR \triangleq \max\{\tilde\mu/\mu,~L/\tilde l,~1\}}$. Moreover, {let $I_t$ denote the number of backtracking iterations required by \agdap{} at iteration $t\in\N$, i.e., the number of times \emph{\cond{t}} is checked within iteration $t$ for \cref{alg:agda}. Then $\frac{1}{T}\sum_{t=0}^{T-1}I_t\rv{\leq} 2+\frac{\log_{1/\gamma}(%\qsr{\frac{\bar L}{\tilde\mu}}
    \rv{\cR})}{T+1}$ for all $T\in\N$; hence, $\frac{1}{T}\sum_{t=0}^{T-1}I_t\leq 3$ for all $T\geq \lfloor \log_{1/\gamma}(%\kappa
    \rv{\cR})\rfloor$} %Finally, 
    and $\lim_{T\rightarrow \infty}  \frac{1}{T+1}\sum_{t=0}^T I_t \rv{\leq} 2$.}
\end{theorem}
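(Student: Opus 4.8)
The plan is to derive both statements from the elementary geometric bookkeeping of how \agdap{} updates the triple $(l_t,L_t,\mu_t)$, feeding in only the quantitative content behind \cref{thm:backtrack}. First I would record the mechanics. The pair $(L_t,\mu_t)$ is modified only at the line executed right after the inner \texttt{while} loop, and only when that loop exits because $l_t>L_t$; call each such occurrence an \emph{event}. An event replaces $L_t$ by $L_t/\gamma_1=L_t\gamma^{-r}$ and $\mu_t$ by $\max\{\gamma_1\mu_t,\underline\mu\}$, so after the $k$-th event $L_t=\tilde l\,\gamma^{-rk}$ and $\mu_t=\max\{\tilde\mu\,\gamma^{rk},\underline\mu\}$. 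Within the processing of a fixed index $t$, $l_t$ is nondecreasing: it is multiplied by $\gamma_2^{-1}=\gamma^{-1}>1$ on each failed test of \cond{t}, while the event-triggered resets of $L_t$, $\mu_t$ and $\by_t$ leave $l_t$ untouched. Let $I_t$ be the number of times \cond{t} is tested at index $t$, and $l_t^\circ$ the value of $l_t$ on entry to index $t$, so $l_0^\circ=\tilde l$ and $l_{t+1}^\circ=\max\{\gamma\,l_t^{\mathrm{fin}},\tilde l\}$ where $l_t^{\mathrm{fin}}$ is the value of $l_t$ at which \cond{t} finally succeeds. Then $I_t\ge1$, exactly $I_t-1$ tests fail at index $t$, and
\[ l_t^{\mathrm{fin}}=l_t^\circ\,\gamma^{-(I_t-1)},\qquad \tilde l\le l_t^{\mathrm{fin}}\le L_t , \]
the last inequality because \cond{t} is tested only while $l_t\le L_t$.

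For the first statement I would revisit the proof of \cref{thm:backtrack} to extract a uniform threshold: there is a constant $\bar L=\Theta(L)$ with $\bar L\le\cR\,\tilde l$ such that, once $L_t\ge\bar L$ and $\mu_t\le\mu$, every subsequent test \cond{t} succeeds before $l_t$ can exceed $L_t$. This rests on the smoothness estimates behind \eqref{eq:Lyy-1}–\eqref{eq:final-ls-xt+1yt} — which hold once $l_t$ is above the local Lipschitz constant of $f$, hence above $L$ — together with the invariant $\delta_t\le\Delta_t$ (for $\delta_t=\|\by_t-\by^*(\bx_t)\|^2$), which is valid precisely in the regime $L_t\ge L$, $\mu_t\le\mu$. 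Hence no further event occurs once $L_t\ge\bar L$ and $\mu_t\le\mu$. By the geometric formulas above, $L_t\ge\bar L$ holds after at most $\lceil\tfrac{1}{r}\log_{1/\gamma}(\bar L/\tilde l)\rceil$ events, and $\mu_t\le\mu$ holds after $0$ events when $\mu$ is known and after at most $\lceil\tfrac{1}{r}\log_{1/\gamma}(\tilde\mu/\mu)\rceil$ events otherwise; since $\bar L/\tilde l\le\cR$, $\tilde\mu/\mu\le\cR$, and $\max\{\lceil a\rceil,\lceil b\rceil\}=\lceil\max\{a,b\}\rceil$, the number of events — i.e.\ the number of times $L_t$ increases and, with it, $\mu_t$ decreases — is at most $\lceil\tfrac{1}{r}\log_{1/\gamma}\cR\rceil$. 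In particular $L_t\le L_{\max}:=\tilde l\,\gamma^{-r\lceil\frac{1}{r}\log_{1/\gamma}\cR\rceil}$ throughout, so $l_t^{\mathrm{fin}}\le L_{\max}$ for all $t$.

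For the second statement I would run an amortized argument on the potential $\phi_t:=\log_{1/\gamma}l_t^{\mathrm{fin}}$. From $l_t^{\mathrm{fin}}=l_t^\circ\gamma^{-(I_t-1)}$ and $l_{t+1}^\circ=\max\{\gamma\,l_t^{\mathrm{fin}},\tilde l\}\ge\gamma\,l_t^{\mathrm{fin}}$ we get $\log_{1/\gamma}l_{t+1}^\circ\ge\phi_t-1$, hence $\phi_{t+1}=\log_{1/\gamma}l_{t+1}^\circ+(I_{t+1}-1)\ge\phi_t+I_{t+1}-2$, i.e.\ $I_{t+1}\le\phi_{t+1}-\phi_t+2$; and $\phi_0=\log_{1/\gamma}\tilde l+(I_0-1)$, i.e.\ $I_0=\phi_0-\log_{1/\gamma}\tilde l+1$. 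Telescoping,
\[ \sum_{t=0}^{T-1}I_t\;\le\;\bigl(\phi_0-\log_{1/\gamma}\tilde l+1\bigr)+\sum_{t=1}^{T-1}\bigl(\phi_t-\phi_{t-1}+2\bigr)\;=\;2T-1+\log_{1/\gamma}\!\bigl(l_{T-1}^{\mathrm{fin}}/\tilde l\bigr), \]
and since $l_{T-1}^{\mathrm{fin}}\le L_{\max}$ with $\log_{1/\gamma}(L_{\max}/\tilde l)=r\lceil\tfrac{1}{r}\log_{1/\gamma}\cR\rceil$, the right-hand side is $2T+\log_{1/\gamma}\cR+\cO(1)$, from which the claimed $\tfrac{1}{T}\sum_{t=0}^{T-1}I_t\le2+\tfrac{\log_{1/\gamma}\cR}{T+1}$ follows after a careful treatment of the floor $l_{t+1}^\circ=\tilde l$ (which only helps, as $\phi_t\ge\log_{1/\gamma}\tilde l$ always) and of the constant $L_{\max}$. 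The two corollaries follow at once: the bound is $\le3$ as soon as $\log_{1/\gamma}\cR\le T+1$, i.e.\ for $T\ge\lfloor\log_{1/\gamma}\cR\rfloor$; and since $l_T^{\mathrm{fin}}\le L_{\max}$ stays bounded while the normalization diverges, $\limsup_{T\to\infty}\tfrac{1}{T+1}\sum_{t=0}^{T}I_t\le2$.

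The main obstacle is the quantitative threshold behind the first statement — pinning the constant $\bar L=\Theta(L)$ (and the companion condition $\mu_t\le\mu$) for which \cond{t} is guaranteed to pass while $l_t\le L_t$, precisely enough that the event count equals $\lceil\tfrac{1}{r}\log_{1/\gamma}\cR\rceil$ rather than being larger by an additive $\cO(r)$. This is a sharpening of \cref{thm:backtrack} and requires chasing the constants in the derivation of \eqref{eq:backtrack}, in particular verifying that the auxiliary sequence $\{\Delta_t\}$ of \cref{alg:agda} dominates $\{\delta_t\}$ whenever $L_t\ge L$ and $\mu_t\le\mu$. Everything in the second statement is then routine bookkeeping, the only care needed being to handle event-rounds that occur inside a single index $t$ (during which $l_t$ keeps accumulating while $\by_t$ is reset) and the flooring $l_{t+1}^\circ=\max\{\gamma\,l_t^{\mathrm{fin}},\tilde l\}$ consistently in the telescoping.
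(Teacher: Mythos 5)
Your proposal is correct and, at its core, follows the same route as the paper. The first statement is exactly the content of the paper's Lemma~\ref{cor:bound-tn-kn}: the event count is bounded by $\lceil\log_{1/\gamma_1}\cR\rceil = \lceil\tfrac{1}{r}\log_{1/\gamma}\cR\rceil$ because once $\gamma_1^{-k}\geq\cR$ both $L_t\geq L$ and $\mu_t\leq\mu$ hold, after which \cref{thm:ls-hold-main-tn} and \cref{cor:ls-hold-main-t>tn} guarantee \cond{t} succeeds before $l_t$ can exceed $L_t$. For the second statement, your potential $\phi_t=\log_{1/\gamma}l_t^{\mathrm{fin}}$ is just a repackaging of the paper's direct telescoping of $I_t = 1+\log_{1/\gamma_2}(l_t/l_t^\circ)$ via $l_{t+1}^\circ\geq\gamma_2 l_t$; the two sums cancel in the same way.

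Two small points worth flagging. First, in your sketch of the threshold you write ``$\bar L=\Theta(L)$'' — this is only accurate when $\mu$ is known; in general the threshold is governed by the single counter $k$ via $\gamma_1^{-k}\geq\cR$, which simultaneously enforces $L_t\geq L$ and $\mu_t\leq\mu$, and the paper's $\bar L=\cR\,\tilde l$ can be much larger than $L$ when $\tilde\mu/\mu$ dominates. Second, your terminal bound uses $l_{T-1}^{\mathrm{fin}}\leq L_{\max}=\tilde l\gamma^{-r\lceil\frac{1}{r}\log_{1/\gamma}\cR\rceil}$, i.e.\ $\log_{1/\gamma}(l_{T-1}^{\mathrm{fin}}/\tilde l)\leq\log_{1/\gamma}\cR+(r-1)$, so after the telescope you land with an additional additive $\cO(r)$ inside the numerator. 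The paper avoids this by proving the sharper statement $l_t\leq\bar L/\gamma_2$ for all $t$ (see Lemma~\ref{cor:bound-tn-kn}): the $l_t$ value at the moment the inner \texttt{while} exits never exceeds the \emph{previous} $L_t/\gamma_2$, and that previous $L_t$ is at most $\bar L$, so $l_t$ is confined to $[\tilde l,\bar L/\gamma_2]$ rather than $[\tilde l,L_{\max}]$. This refinement is what removes the spurious $r$ and matches the theorem's constant exactly. The discrepancy you leave is $\cO(r/T)$ and vanishes asymptotically, and you honestly flag that the constant ``needs care,'' so this is a tightness gap rather than an error in the argument.
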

\begin{proof}
See \cref{sec:backtrack-bound}.
\end{proof}
\sa{Now we %will show 
provide the main convergence result for \agdap.} 
% \begin{theorem}%[Complexity result for $\mu>0$]
% \label{thm:convergence-K-flag-true-main}
% Suppose \cref{ASPT:lipshiz gradient,aspt:bounded-Y,aspt:primal_lb} hold. %and $\mu>0$. 
% Given any $\bx_0\in\dom g$ and $\epsilon>0$,
% %and $\{\bx_t,\by_t,\tilde\by_t\}_{t\geq 0}$ are generated by \cref{alg:agda} with correction==True.
% \agdap{}\ can generate $(\bx_\epsilon,\by_\epsilon)$ such that \qsr{$\norm{ G^{\tau,\sigma}(\bx_\epsilon,\by_\epsilon)} \leq \epsilon$} within $\cO(\kappa^4 L^2 \epsilon^{-2})$ 
% %and $\cO(\kappa^5 L \epsilon^{-2})$ 
% \agdap{}~iterations. 
% %when \texttt{correction} is set to \texttt{True} and \texttt{False}, respectively.
% Moreover, each \agdap{}~iteration requires at most $\log_{1/\gamma_2}^2(\kappa)$ backtracking iterations.
% \end{theorem}
% \begin{proof}
% See 
% %\cref{thm:ls-hold-main}
% \tbd and \cref{cor:complexity-flag-true}.
% \end{proof}
\begin{theorem}
\label{thm:main}
    \na{Suppose \cref{ASPT:lipshiz gradient,aspt:bounded-Y,aspt:primal_lb} hold. For \agdap{}, displayed in~\cref{alg:agda},} then
\begin{equation}
\label{eq:complexity-bound-final}
{\small
\begin{aligned}
        \MoveEqLeft\sum_{t=0}^{T}\gamma_0\tau_t\norm{G_x^{\tau_t}(\bx_t,\by_t)}^2
        + {\sigma_t}\norm{G_y^{\sigma_t}(\bx_t,\by_t)}^2 
        + \frac{{\mu_t}\sigma_t^2}{2}\norm{G_y^{\sigma_t}(\bx_{t+1},\by_t)}^2\\ 
        &\leq \cL(\bx_0,\by_0)-F^* 
        + \rv{\Big(\Big\lceil \frac{1}{r}\log_{\frac{1}{\gamma}}(\cR)\Big\rceil+1\Big)\Big({\Big(\bar{L}/\gamma^r+\rv{L}\Big)\cD_y^2}+\bar\Lambda\Big)+60\tilde l\cdot\frac{\tilde l}{\tilde \mu}\frac{{\cR}^{2+\varsigma}}{1-{\gamma^{(2+\varsigma)r}}}\bar\Delta}\triangleq\Gamma(\bx_0,\by_0,\gamma_0,\gamma,r),
    \end{aligned}}%
\end{equation}
holds for all $T\in\N_+$ with $\varsigma=0$ if $\mu$ is known, and otherwise with $\varsigma=1$; moreover, $\bar L  \triangleq 
    %\max\{L, \frac{{\tilde{\mu}}}{\mu}\tilde{l}\}
    \rv{\cR\;\tilde l}$ and $\bar \mu \triangleq \max\{\underline{\mu}/\gamma_1,~\rv{\tilde\mu/\cR}
    %\min\{\mu, \frac{\tilde l}{L}{\tilde{\mu}}\}
    \}$ with $\rv{\cR \triangleq \max\{\tilde\mu/\mu,~L/\tilde l,~1\}}$, and
\begin{align*}
\bar{\Lambda}\triangleq
        \begin{cases}
           \zeta +\frac{\bar L}{\sqrt{\bar\mu}}\cD_y\sqrt{\frac{2\zeta}{\gamma^{3r}}}, &  \text{if}\; \emph{\msf{True}},\\
           \frac{2}{\gamma^r}{\bar{L}}\cD^2_y, &\text{if}\; \emph{\msf{False}}.
        \end{cases};\quad
\bar{\Delta}\triangleq
        \begin{cases}
           \frac{2\zeta}{{\gamma^r}}\frac{1}{\bar{\mu}}, &  \text{if}\; \emph{\msf{True}},\\
           \min\{(1+\frac{2}{\gamma^{2r}}\rv{\frac{\bar L}{\bar \mu}})\cD_y,~\overline{\cD}_y\}^2, &\text{if}\; \emph{\msf{False}}.
        \end{cases}
\end{align*}
\end{theorem}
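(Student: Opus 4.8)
The plan is to turn the backtracking condition~\eqref{eq:backtrack}, which by~\cref{thm:backtrack} holds at every accepted iterate, into a one-step descent inequality, and then telescope it along the whole trajectory, handling separately the finitely many iterates at which $\by_t$ is reset. \emph{Step 1 (one-step inequality).} First I would use the step-size rule of \cref{alg:agda}: it sets $\tau_t l_t=(1-\gamma_0)/D_t$, where $D_t$ is the parenthesized quantity in the $\tau_t$-update and $D_t\ge 4+\tfrac1{\gamma_2}=4+\tfrac1\gamma$, so $\big(2+\tfrac1\gamma\big)\tau_t l_t\le 1-\gamma_0$ and hence $\tau_t-\big(2+\tfrac1\gamma\big)\tau_t^2 l_t\ge \gamma_0\tau_t$. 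Since an accepted step has $\bx_{t+1}=\tilde\bx_{t+1}$, $\by_{t+1}=\tilde\by_{t+1}$, inequality~\eqref{eq:ls-conv} gives
\begin{equation*}
\begin{aligned}
&\gamma_0\tau_t\norm{G_x^{\tau_t}(\bx_t,\by_t)}^2+\sigma_t\norm{G_y^{\sigma_t}(\bx_t,\by_t)}^2+\tfrac{\mu_t\sigma_t^2}{2}\norm{G_y^{\sigma_t}(\bx_{t+1},\by_t)}^2\\
&\qquad\le\; \cL(\bx_t,\by_t)-\cL(\bx_{t+1},\by_{t+1})+\Lambda_t+4(3l_t-2\mu_t)\Delta_t+R_t,
\end{aligned}
\end{equation*}
and, more importantly, a \emph{slack} of size $\big(\tau_t-(2+\tfrac1\gamma)\tau_t^2 l_t-\gamma_0\tau_t\big)\norm{G_x^{\tau_t}(\bx_t,\by_t)}^2$ is kept in reserve. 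The auxiliary conditions~\eqref{eq:Lyy-1}--\eqref{eq:final-ls-xt+1yt} and the invariant $\delta_t\le\Delta_t$ (valid once $L_t\ge L$, $\mu_t\le\mu$) are needed only to justify that~\eqref{eq:ls-conv} holds (i.e.\ inside \cref{thm:backtrack}); they do not reappear in the telescoping.

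\emph{Step 2 (telescoping within a phase).} I would call one execution of the body of the outer \texttt{LOOP} a \emph{phase}; during a phase $l_t\le L_t$ holds, $L_t$ and $\mu_t$ are frozen (write $L^{\flat},\mu^{\flat}$), $\by$ is reset at most once, namely at the first iterate $t_0$ of the phase, and all recursions of \cref{alg:agda} apply verbatim for the later iterates. The key algebraic identity, obtained by expanding the $\Lambda_{t+1}$-update, is $\Lambda_{t+1}=6l_t\Delta_{t+1}+4(3l_t-2\mu^{\flat})\Delta_t$. Summing the Step~1 inequality over a phase $\{t_0,\dots,t_f-1\}$, the $\cL$-terms telescope, the $-\sigma_s^2\mu^{\flat}\norm{G_y^{\sigma_s}(\bx_{s+1},\by_s)}^2$ pieces of $R_{s+1}$ over-cancel the matching $G_y$-terms on the left, and the $\Lambda/\Delta$ contributions collapse so that each $\Delta_t$ is left with a coefficient at most $30L^{\flat}-16\mu^{\flat}$ (using $l_\cdot\le L^{\flat}$). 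Feeding in the recursion $\Delta_{t+1}=(1-\tfrac{\mu^{\flat}}{2l_t})\Delta_t+C_t\norm{G_x^{\tau_t}(\bx_t,\by_t)}^2$, the contraction factor $\le 1-\tfrac{\mu^{\flat}}{2L^{\flat}}$ makes the geometric tails sum to $\le\tfrac{2L^{\flat}}{\mu^{\flat}}$, so the total coefficient in front of each $\norm{G_x^{\tau_t}(\bx_t,\by_t)}^2$ produced by the telescoping is at most $\tfrac{2L^{\flat}}{\mu^{\flat}}(30L^{\flat}-16\mu^{\flat})C_t+2\tau_t^2 l_t$. A direct computation then shows that, in the worst case $l_t=L^{\flat}$, this is \emph{exactly} the slack reserved in Step~1 --this is precisely why the $\tau_t$-denominator $D_t$ carries the term $\tfrac{4(1-\sigma_t\mu_t)(2-\sigma_t\mu_t)(15L_t-8\mu_t)L_t^3}{\mu_t^4}$-- so every $\norm{G_x^{\tau_t}}^2$ term is absorbed. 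What remains is a per-phase estimate of the form
$\sum_{t=t_0}^{t_f-1}(\text{left side of }\eqref{eq:complexity-bound-final})\le \cL(\bx_{t_0},\by_{t_0})-\cL(\bx_{t_f},\by_{t_f})+\Lambda_{t_0}+c\,\tfrac{(L^{\flat})^2}{\mu^{\flat}}\,\Delta_{t_0}$
for an absolute constant $c$ (one may take $c=60$), with $\Lambda_{t_0},\Delta_{t_0}$ given by the reset formulas in \cref{alg:agda}.

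\emph{Step 3 (summing over phases, reset costs, final bound).} By~\cref{thm:avg--backtrack-time}, $L_t$ is increased --equivalently, a phase ends-- at most $N\triangleq\lceil\tfrac1r\log_{1/\gamma}(\cR)\rceil$ times, so at most $N+1$ phases intersect the first $T$ iterations, and after $N$ increases $L_t\ge L$ and $\mu_t\le\mu$. Chaining the per-phase estimates, the $\cL$-differences telescope globally except for the jump incurred when $\by$ is reset at each phase start: using~\cref{aspt:bounded-Y}, for $\texttt{max\_solver}=\texttt{True}$ the reset gives $\cL(\bx_{t_0},\by_{t_0})\ge F(\bx_{t_0})-\zeta$ and $\delta_{t_0}\le 2\zeta/\mu_{t_0}$, while for $\texttt{max\_solver}=\texttt{False}$ one prox-gradient ascent step with stepsize $1/L_{t_0}$ has a controlled effect and $\delta_{t_0}\le(1+2L_{t_0}/\mu_{t_0})^2 d_{t_0}^2$; in both cases each jump is $O\big((\bar L/\gamma^r+L)\cD_y^2+\bar\Lambda\big)$ and $\Lambda_{t_0}\le\bar\Lambda$. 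The residuals $c\,\tfrac{(L^{\flat})^2}{\mu^{\flat}}\Delta_{t_0}\le c\,\tfrac{(L^{\flat})^2}{\mu^{\flat}}\bar\Delta$ summed over the $\le N+1$ phases form a geometric series in the ratios $\gamma^{\pm r}$ by which $L^{\flat}$ grows and $\mu^{\flat}$ shrinks between consecutive phases, which evaluates to the term $60\,\tilde l\,\tfrac{\tilde l}{\tilde\mu}\,\tfrac{\cR^{2+\varsigma}}{1-\gamma^{(2+\varsigma)r}}\bar\Delta$ --with $\varsigma=0$ when $\mu$ is known, since then $\mu^{\flat}\ge\underline{\mu}=\mu$ stays bounded below, and $\varsigma=1$ otherwise, since then $\mu^{\flat}$ keeps decaying down to $\bar\mu\approx\tilde\mu/\cR$ and costs an extra power of $\cR$. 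Finally, by \cref{def:max-function}, \cref{aspt:bounded-Y} and $L$-smoothness of $f$, $\cL(\bx_{T+1},\by_{T+1})\ge F(\bx_{T+1})-O(L\cD_y^2)$, and invoking \cref{aspt:primal_lb} gives $\ge F^*-O(L\cD_y^2)$; collecting all pieces yields~\eqref{eq:complexity-bound-final}.

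\emph{Main obstacle.} I expect Step~2 to be the crux: making the three interlocking control sequences $\{\Delta_t\},\{\Lambda_t\},\{R_t\}$ telescope while keeping every gradient-map term they generate within the reserved slack $\tau_t-(2+\tfrac1\gamma)\tau_t^2 l_t-\gamma_0\tau_t$ of~\eqref{eq:ls-conv}. This is exactly what pins down the elaborate step-size formula, and the accounting is delicate because the contraction factor $1-\mu_t/(2l_t)$ depends on the current, \emph{nonmonotone}, local estimate $l_t$ and the coefficients are reset at phase boundaries. A secondary difficulty is carrying out the phase-boundary bookkeeping uniformly over $\texttt{max\_solver}\in\{\texttt{True},\texttt{False}\}$ and tracking, in the $\mu$-unknown case, the extra power of $\cR$ generated by the geometric decay of $\mu_t$.
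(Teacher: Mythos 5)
Your proposal is correct and follows essentially the same route as the paper's own proof: the backtracking condition together with the step-size rule yields the per-iteration descent with a reserved slack (the paper's \cref{lemma:complicated-imply-short-true-forproof}), the $\Delta_t$-recursion with contraction $1-\mu_t\sigma_t/2$, the coefficient $30L_{\hat t_n}-16\mu_t$ and the geometric tail $2L_{\hat t_n}/\mu_{\hat t_n}$ are telescoped phase-by-phase exactly as in \cref{lemma:partial-telescop-lemma-flag-true}, and the phase count $\lceil\tfrac1r\log_{1/\gamma}(\cR)\rceil+1$, the bounds $\Lambda_{\hat t_n}\le\bar\Lambda$, $\Delta_{\hat t_n}\le\bar\Delta$ and the geometric series producing $60\,\tilde l\,\tfrac{\tilde l}{\tilde\mu}\,\cR^{2+\varsigma}/(1-\gamma^{(2+\varsigma)r})\,\bar\Delta$ reproduce \cref{thm:convergence-ineq-flag-true} and the concluding argument. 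The only spot where your sketch is looser than the paper is the reset and final-iterate corrections: the bound $(\bar L/\gamma^r+L)\cD_y^2$ on $F(\bx)-\cL(\bx,\tilde\by)$ (and on each phase-boundary jump) does not follow from $L$-smoothness and \cref{aspt:bounded-Y} alone when $h\neq 0$, but requires the prox-gradient characterization of $\tilde\by$ as in the paper's \cref{lemma:F-common-gap}.
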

\begin{proof}
    See~\cref{sec:conv-complexity-wcsc-agda}.
\end{proof}
% \xzf{
% \begin{lemma}\label{lemma:bound-kn-mut-Lt}
%     $k_n\leq \max\{\log_{1/\gamma_1}(\frac{\tilde{\mu}}{\mu}), \log_{1/\gamma_1}(\frac{\gamma_2L}{\tilde \mu})\}$, thus, $\mu_t\geq \bar{\mu}\triangleq \min\{\mu\gamma_1, \frac{\tilde{\mu}^2}{\gamma_2 L}\}$, and $L_t\leq \bar{L}\triangleq\max\{L/\gamma_1, \frac{\tilde{\mu}^2}{\gamma_2\mu}\}$. $\xzrr{\cR}=\max\{\tilde{\mu}/\mu, L/\tilde{\mu}\}$, $\bar{\kappa}=\bar{L}/\bar{\mu}$.
% \end{lemma}
% }
% \xzf{
% \begin{remark}
%     Suppose $L\geq 1/\mu$, then $\bar{\mu}=1/L$ and $\bar{L}=L$, and $\xzrr{\cR} = L$, $\bar{\kappa}=L^2\geq \kappa$.
% \end{remark}
% }
%\xtodo{Shall we only use $\varsigma$ for $\tilde \kappa$, and just set $\varsigma=1$ for $\gamma$ part? The current notation is a little bit confused for readers. }
\begin{corollary}\label{cor:complexity-flag-true}
\na{Suppose \cref{ASPT:lipshiz gradient,aspt:bounded-Y,aspt:primal_lb} hold, and \emph{\msf{true}}. Given arbitrary $\bx_0\in\dom g$, $\zeta>0$, $\gamma_0,\gamma\in(0,1)$ and $r\in\N_+$, let $\{(\bx_t,\by_t)\}_{t\in\N}$ denote the iterate sequence  generated by \cref{alg:agda} for the parameter choice $\tilde l=\tilde \mu/\gamma$. Then, for any given $\epsilon>0$,} $(\bx_\epsilon,\by_\epsilon)=(\bx_{t^*_\epsilon},\by_{t^*_\epsilon})$ is $\epsilon$-stationary in the sense of Definition~\ref{def:eps-stationarity} with $t^*_\epsilon\triangleq\argmin\{\norm{G^{\tau_t,\sigma_t}(\bx_t,\by_t)}:\ 0\leq t\leq T_\epsilon\}$ for \na{any} $T_\epsilon\in\N_+$ such that 
% \todo{the simplified complexity is $\frac{\bar{L}\bar{\kappa}^4}{\epsilon^{2}}$}
\begin{equation}
\label{eq:iter_complexity-true}
{\small
    % \begin{aligned}
    %         & T_\epsilon = \Omega\left(
    %  \frac{\cL(\bx_0,\by_0) - F^* 
    %  %\xqs{+(\frac{1}{\gamma_1} + \frac{3}{2})L\cD_y^2} 
    %  + \zeta
    %  \frac{\tilde{\mu}\xzrr{\cR}^{(2+\varsigma)}}{\bar{\mu}}
    %   /(\gamma^{(4+\varsigma)-(\xzf{2+\varsigma})r}(1-\gamma^{(2+\varsigma)r})) + \frac{1}{r}\log_{1/\gamma}(\xzrr{\cR}) \Big(\xzf{\frac{\bar{L}\cD_y}{\gamma^{3r/2}}}
    %  \max\Big\{\cD_y,~\sqrt{\frac{2\zeta}{\xzf{\bar{\mu}}}}\Big\} + L\cD_y^2 + \zeta 
    %  \Big)}{\xzf{\gamma^{8r+1}}\gamma_0(1-\gamma_0)}
    % \cdot\frac{\bar{L}\bar{\kappa}^4}{\epsilon^2}\right).
    % \end{aligned}}%
    \begin{aligned}
            & T_\epsilon = \Omega\left(
     \frac{\cL(\bx_0,\by_0) - F^*  
     + \zeta \frac{\tilde \mu}{\bar{\mu}}
     {\cR}^{(2+\varsigma)}
      /(\gamma^{r+2}(1-\gamma^{(2+\varsigma)r})) + \frac{1}{r}\log_{1/\gamma}({\cR}) \Big({\frac{\bar{L}\cD_y}{\gamma^{r}}}
     \max\Big\{\cD_y,~\sqrt{\frac{\zeta}{{\gamma^r \bar{\mu}}}}\Big\} + L\cD_y^2 + \zeta 
     \Big)}{{\gamma^{8r+1}}\gamma_0(1-\gamma_0)}
    \cdot\frac{\bar{L}\bar{\kappa}^4}{\epsilon^2}\right),
    \end{aligned}}%
\end{equation}
\rv{where $\varsigma=0$ if $\mu$ is known, and otherwise  $\varsigma=1$.} 
%Then, $(\bx_\epsilon,\by_\epsilon)$ is $\epsilon$-stationary in the sense of Definition~\ref{def:eps-stationarity}. %,i.e., $\norm{G(\bx_\epsilon,\by_\epsilon)} \leq \epsilon$.
Moreover, \agdap{} requires checking the backtracking condition in \eqref{eq:backtrack} at
most $3 T_\epsilon$ times. Thus, choosing $\zeta = \cO(\epsilon^2)$ implies that the total gradient complexity can be bounded by
\begin{equation}
\label{eq:grad_complexity}
\cO\left({\tilde \mu \bar{\kappa}^5{\cR}^{2+\varsigma} }+\log(\cR)\rv{\kappa^{(1+\varsigma)/2}}\log\Big(\frac{1}{\epsilon}\Big)+\frac{{\bar{L}^{3/2}\bar{\kappa}^{9/2}}\log({\cR})\cD_y}{\epsilon}+\frac{\cL(\bx_0,\by_0)-F^*+{\bar L}\cD_y^2\log(\cR)}{\epsilon^2}{\bar{L}\bar{\kappa}^4}\right);
\end{equation}
hence, for $0<\epsilon=\cO\Big(
%\frac{\sqrt{\xzf{\bar{L}}\cD_y^2}}{\kappa}
{\sqrt{\bar \mu}\cD_y}
\Big)$, the total gradient complexity can be bounded by $\cO\Big(\frac{\cL(\bx_0,\by_0)-F^*+{\bar L}\cD_y^2\log(\cR)}{\epsilon^2}{\bar L\bar{\kappa}^4}\Big)$. 
%\xtodo{I am not sure this $\epsilon=\cO(\sqrt{\bar\mu}\cD_y)$ is correct or not because I get a different value from arxiv version. (This comment is stil valid 11/7/2024)}
\end{corollary}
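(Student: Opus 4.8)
\textbf{Proof proposal for Corollary~\ref{cor:complexity-flag-true}.}
The plan is to invoke the summable descent-type inequality of Theorem~\ref{thm:main} and convert it into an iteration complexity bound by lower-bounding the per-iteration stepsizes in terms of $\bar L$, $\bar\mu$, and $\bar\kappa$. First I would observe that since \msf{true} and we run with $\tilde l=\tilde\mu/\gamma$, by Theorem~\ref{thm:avg--backtrack-time} the estimate $L_t$ increases at most $\lceil\frac1r\log_{1/\gamma}(\cR)\rceil$ times and likewise $\mu_t$ decreases at most that many times; consequently, for all $t$ we have uniform two-sided bounds $\tilde l\le l_t\le L_t\le \bar L/\gamma^r$ and $\bar\mu\le \mu_t\le\tilde\mu$, where $\bar L=\cR\,\tilde l$ and $\bar\mu=\max\{\underline\mu/\gamma_1,\tilde\mu/\cR\}$. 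Plugging these into \algline{\ref{algeq:step-size}} — where $\sigma_t=1/l_t$ and $\sigma_t/\tau_t=\cO(L_t^4/\mu_t^4)$ — yields $\sigma_t=\Omega(1/\bar L)$ and $\tau_t=\Omega\big(\gamma^{8r+1}(1-\gamma_0)\gamma_0\,/(\bar L\,\bar\kappa^4)\big)$, i.e. both stepsizes are bounded below by an explicit $\Theta(1)$ constant times the relevant powers of the parameters. Since the left-hand side of~\eqref{eq:complexity-bound-final} dominates $\sum_{t=0}^T \gamma_0\tau_t\,\|G^{\tau_t}_x\|^2 + \sigma_t\|G^{\sigma_t}_y(\bx_t,\by_t)\|^2$, and $\min_{0\le t\le T}\|G^{\tau_t,\sigma_t}(\bx_t,\by_t)\|^2$ is bounded by the average, dividing through by $(T+1)$ times the smallest stepsize coefficient gives $\min_t\|G^{\tau_t,\sigma_t}(\bx_t,\by_t)\|^2 = \cO\big(\Gamma(\bx_0,\by_0,\gamma_0,\gamma,r)\,\bar L\,\bar\kappa^4 / ((T+1)\gamma^{8r+1}\gamma_0(1-\gamma_0))\big)$.

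Next I would expand $\Gamma$ using its definition in Theorem~\ref{thm:main} specialized to the \msf{true} branch: the $\bar\Lambda$ term becomes $\zeta + (\bar L/\sqrt{\bar\mu})\cD_y\sqrt{2\zeta/\gamma^{3r}}$ and the $\bar\Delta$ term becomes $2\zeta/(\gamma^r\bar\mu)$, so that $\Gamma = \cL(\bx_0,\by_0)-F^* + (\lceil\frac1r\log_{1/\gamma}\cR\rceil+1)\big((\bar L/\gamma^r+L)\cD_y^2 + \zeta + (\bar L\cD_y/\sqrt{\bar\mu})\sqrt{2\zeta/\gamma^{3r}}\big) + 60\,\tilde l\,\frac{\tilde l}{\tilde\mu}\frac{\cR^{2+\varsigma}}{1-\gamma^{(2+\varsigma)r}}\cdot\frac{2\zeta}{\gamma^r\bar\mu}$. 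Collecting the $\log_{1/\gamma}(\cR)$ factor and writing $\max\{\cD_y,\sqrt{\zeta/(\gamma^r\bar\mu)}\}$ for the worst of the two $\cD_y$-type contributions recovers exactly the bracketed numerator in~\eqref{eq:iter_complexity-true}; setting that bound on $\min_t\|G^{\tau_t,\sigma_t}\|^2$ to $\le\epsilon^2$ and solving for $T$ gives the stated $T_\epsilon=\Omega(\cdots)$. I would then use Theorem~\ref{thm:avg--backtrack-time} again: $\sum_{t=0}^{T_\epsilon-1}I_t\le (2+\frac{\log_{1/\gamma}\cR}{T_\epsilon+1})(T_\epsilon+1)\le 3T_\epsilon$ (using $T_\epsilon\ge\lfloor\log_{1/\gamma}\cR\rfloor$), so the backtracking condition~\eqref{eq:backtrack} is checked at most $3T_\epsilon$ times, and since each backtracking iteration costs $\cO(1)$ gradient calls (by the $\cO(1)$-per-iteration design, cf. the $\grad f$-call column of Table~\ref{table:related_work}), the gradient cost of the main loop is $\cO(T_\epsilon)$.

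To finish I would account for the auxiliary max-solves. Choosing $\zeta=\cO(\epsilon^2)$, there are at most $\lceil\frac1r\log_{1/\gamma}\cR\rceil=\cO(\log\cR)$ corrective inner-maximization calls, each solvable to accuracy $\zeta$ within $\cO(\sqrt\kappa\log(\cD_y^2/\zeta))$ gradient calls when $\mu$ is known and $\cO(\kappa\log(\cD_y^2/\zeta))$ when it is not — i.e. $\cO(\kappa^{(1+\varsigma)/2}\log(1/\epsilon))$ each — contributing the $\log(\cR)\,\kappa^{(1+\varsigma)/2}\log(1/\epsilon)$ term. Substituting $\zeta=\cO(\epsilon^2)$ into the expression for $T_\epsilon$ splits the numerator into an $\epsilon$-independent part (yielding the $\tilde\mu\bar\kappa^5\cR^{2+\varsigma}$ term from the $60\,\tilde l\,\frac{\tilde l}{\tilde\mu}$ piece), an $\cO(1/\epsilon)$ part (from the $\sqrt\zeta=\cO(\epsilon)$ cross term, giving $\bar L^{3/2}\bar\kappa^{9/2}\log(\cR)\cD_y/\epsilon$), and the leading $\cO(1/\epsilon^2)$ part $(\cL(\bx_0,\by_0)-F^*+\bar L\cD_y^2\log\cR)\bar L\bar\kappa^4/\epsilon^2$; adding the max-solver term gives~\eqref{eq:grad_complexity}. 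Finally, for $\epsilon=\cO(\sqrt{\bar\mu}\,\cD_y)$ the $1/\epsilon^2$ term dominates the $1/\epsilon$ and $\cO(1)$ terms (after checking $\bar L^{3/2}\bar\kappa^{9/2}\cD_y/\epsilon \le \bar L^2\bar\kappa^4\cD_y^2/\epsilon^2$ reduces to $\epsilon\le\sqrt{\bar\mu}\,\cD_y\cdot(\text{const})$ using $\bar\kappa=\bar L/\bar\mu$), collapsing~\eqref{eq:grad_complexity} to $\cO\big((\cL(\bx_0,\by_0)-F^*+\bar L\cD_y^2\log\cR)\bar L\bar\kappa^4/\epsilon^2\big)$. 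The main obstacle I anticipate is the bookkeeping in the second step: correctly tracking how the $\gamma$-powers, the $\cR$-powers, and the $\bar\mu$ versus $\tilde\mu$ distinctions propagate from the raw bound~\eqref{eq:complexity-bound-final} through the stepsize lower bounds into the precise constant displayed in~\eqref{eq:iter_complexity-true}, especially reconciling the $\varsigma\in\{0,1\}$ dichotomy with the two regimes of inner-solver cost — the analysis itself is routine given Theorems~\ref{thm:avg--backtrack-time} and~\ref{thm:main}, but matching every exponent is delicate.
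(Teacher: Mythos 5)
Your proposal is correct and follows essentially the same route as the paper's own proof: it converts the bound of Theorem~\ref{thm:main} into an iteration complexity via the uniform stepsize lower bounds $\gamma_0\tau_t=\Omega\big(\gamma_0(1-\gamma_0)\gamma^{8r+1}/(\bar L\bar\kappa^4)\big)$ from \cref{cor:bound-tn-kn}, then invokes Theorem~\ref{thm:avg--backtrack-time} for the $3T_\epsilon$ backtracking checks and adds the $\cO(\log(\cR)\,\kappa^{(1+\varsigma)/2}\log(1/\zeta))$ cost of the $\cO(\log\cR)$ inner max-solves with $\zeta=\cO(\epsilon^2)$, exactly as the paper does. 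The only nit is that $\gamma_0$ belongs to the combined coefficient $\gamma_0\tau_t$ rather than to $\tau_t$ itself, which does not affect the argument.
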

\begin{remark}\label{remark:mu-discussion-with-max-solver}
\rv{When $\mu$ is known, setting $\tilde\mu=\underline{\mu}=\mu$ and 
$\tilde l=\mu/\gamma$ implies that $\cR=\max\{1,\gamma\kappa\}$; hence, $\bar L=\max\{\mu/\gamma, L\}$, and $\bar\mu=\mu/\gamma^r$. Therefore, for any $\epsilon>0$ such that $\epsilon=\cO(\sqrt{\mu}\cD_y)$, choosing $\zeta=\cO(\epsilon^2)$, the total gradient complexity can be bounded by $\cO\Big(\frac{\cL(\bx_0,\by_0)-F^*+{ L}\cD_y^2\log(\kappa)}{\epsilon^2}{ L {\kappa}^4}\Big)$.} %$\cO(\frac{L^2\kappa^4\log(\kappa)\cD_y^2}{\epsilon^2})$.

\rv{On the other hand, for the case $\mu>0$ is \textit{unknown}, treating the initial parameter estimates, $\tilde \mu$ and $\tilde l$, as $\cO(1)$ constants implies that $\cR=\cO(\max\{1/\mu,L\})$; hence, $\bar{L}=\cO(\max\{1/\mu,L\})$ and $\bar\mu=\cO(\min\{\mu,1/L\})$. Thus, $\bar \kappa =\cO(\max\{L^2,1/\mu^2\})$ in the worst case scenario. Note that when $\mu L=\cO(1)$, we have $\bar\kappa=\kappa$. That said, in practice one can choose $\tilde \mu$ and $\tilde l$ better than blindly setting them to some arbitrary constants. Indeed, for any $x',x''\in\dom g$ and $y',y''\in\dom h$, it holds that $\mu\leq \mu(z',z'')\triangleq\fprod{\grad_y f(x',y')-\grad_y f(x',y''), y'-y''}/\norm{y'-y''}^2$ and
{\small $$L(z',z'')\triangleq\max\{\fprod{\grad f(z')-\grad f(z''), z'-z''}/\norm{z'-z''}^2,~\norm{\grad f(z')-\grad f(z'')}^2/\fprod{\grad f(z')-\grad f(z''), z'-z''}\}\leq L,$$}%
where $z'=(x',y')$ and $z''=(x'',y'')$. Now consider setting $\tilde \mu=\sqrt{\mu(z',z'')L(z',z'')}$ and $\tilde l=\tilde\mu/\gamma$; thus, we have $\tilde \mu=c \sqrt{\mu L}$ and $\tilde l=c \sqrt{\mu L}/\gamma$ for some $c>0$. By sampling $z'$ and $z''$ more than once, one can get a better estimate of $\sqrt{\mu L}$. Whenever $c=\cO(1)$, i.e., $c$ does not depend on $L$ or $\mu$, this choice of $\tilde \mu$ and $\tilde l$ implies that $\rv{\cR \triangleq \max\{c\sqrt{\kappa},~\frac{\gamma}{c}\sqrt{\kappa},~1\}}=\cO(\sqrt{\kappa})$; hence, $\bar L  = \rv{\cR\;\tilde l}=\cO(L)$ and $\bar \mu\geq \rv{\tilde\mu/\cR}=\Omega(\mu)$, which implies that $\bar\kappa=\cO(\kappa)$.}
    % \xzrr{Assuming $\mu$ is unknown and $\tilde \mu,\tilde l$ are $\cO(1)$ constants, it follows that $\cR=\cO(\max\{1/\mu,L\})$, $\bar{L}=\cO(\max\{1/\mu,L\})$ and $\bar\mu=\cO(\min\{\mu,1/L\})$, thus $\bar \kappa =\cO(\max\{L^2,1/\mu^2\})$ and the total gradient complexity is $\tilde\cO(\frac{\max\{L,1/\mu\}^{10}\log(\max\{L,1/\mu\})\cD_y^2}{\epsilon^2})$.}
\end{remark}
\begin{proof}
\na{According to \agdap{}, %in \cref{alg:agda}, 
\rv{$\gamma_0\in (0,1)$} and $\tau_t<\sigma_t$ for all $t\in\N$; therefore, using \eqref{eq:complexity-bound-final}, we immediately get 
\begin{equation}
    \label{eq:complexity-intermediate-step}
    \min_{t\in\N}\{\norm{G^{\rv{\tau_t,\sigma_t}}(\bx_t,\by_t)}:\ 0\leq t\leq T\}\leq \epsilon,\quad\forall~T\in\N:\ \frac{\Gamma(\bx_0,\by_0,\gamma_0,\gamma,r)}{\gamma_0\sum_{t=0}^{T}\tau_t}\leq \epsilon^2.
\end{equation}}%
Moreover, \na{according to \cref{cor:bound-tn-kn} and initialization of \agdap{}, we have $%\xzf{\tilde{\mu}}/\gamma
\rv{\tilde l} \leq l_t
\leq {\bar{L}/\gamma}$, $%\xzf{\tilde{\mu}/\gamma}
\rv{\tilde l}\leq L_t\leq{\bar{L}/\gamma^r}$ and {$\gamma^r \bar\mu\leq \mu_t\leq \tilde\mu$} for all $t\in\N$; therefore,} our choice of $\tau_t,\sigma_t$ implies that $\gamma/\bar{L}\leq\sigma_t\leq
%\frac{\gamma}{\xzf{\tilde\mu}}
\rv{1/\tilde l}$ and
{\small
\begin{equation*}
    % \xzf{
    % \tau_t\geq \xzf{\frac{1-\gamma_0}{4+\frac{1}{\gamma_2} +60~
    % {(1-\frac{1}{\gamma^{r+1}\bar{\kappa}})(2-\frac{1}{\gamma^{r+1}\bar{\kappa}})}\frac{\bar{\kappa}^4}{\gamma^{8r}}}\cdot \frac{\gamma}{\bar{L}}},
    % }
    \rv{
    \tau_t\geq {\frac{1-\gamma_0}{4+\frac{1}{\gamma} +60~
    {(1-\frac{\gamma^{r+1}}{\bar{\kappa}})(2-\frac{\gamma^{r+1}}{\bar{\kappa}})}\frac{\bar{\kappa}^4}{\gamma^{8r}}}\cdot \frac{\gamma}{\bar{L}}};
    }
\end{equation*}}%
%and we also have $\sigma_t/\tau_t\geq 5$; 
hence, for all $t\in\N$, we have
%\begin{equation}\label{eq:bound-coeff-of-grad}
    $\gamma_0\tau_t \geq \frac{1}{125}{\gamma_0(1-\gamma_0){\gamma^{8r+1}}/(\bar{L}\bar{\kappa}^4})$.
%\end{equation}

\na{Then, the bound in \eqref{eq:complexity-intermediate-step} leads to the iteration complexity bound in~\eqref{eq:iter_complexity-true}. Moreover, since $1/(r\gamma^r)\geq 1$ for all $r\in\N+$ and $\gamma\in(0,1)$, we have $T_\epsilon\geq \log_{1/\gamma}(\rv{\cR})$. Therefore, \cref{thm:avg--backtrack-time} implies that \agdap{} checks the backtracking condition in~\eqref{eq:backtrack} at most $3T_\epsilon$ many times, which require $\cO(T_\epsilon)$ gradient calls, excluding the gradient calls requires to compute $\by_{\hat t_n}$ for $n=0,\ldots, N(T)$, where $N(T)$ denotes the number of \agdap{} iterations $t\leq T$ in which $L_t$ increases and $\mu_t$ decreases --according to~\cref{thm:avg--backtrack-time}, $N(T)=\cO(\log(\rv{\cR}))$ uniformly for all $T\in\N$. Recall that one needs to compute $\by_{\hat t_n}$ such that $\cL(\bx_{\hat t_n},\by_{\hat t_n})\geq F(\bx_{\hat t_n})-\zeta$ for some fixed $\bx_{\hat t_n}$. %Given the strong concavity modulus $\mu>0$, 
When $\mu$ is known, there are backtracking methods~\cite{calatroni2019backtracking,rebegoldi2022scaled} for solving these strongly concave maximization problems to compute $\zeta$-optimal solution $\by_{\hat t_n}$ without requiring to know $L$, and their gradient complexity can be bounded by $\cO(\sqrt{\kappa}\log(1/\zeta))$ \rv{--for the case $L$ and $\mu$ are both unknown, the same complexity bound holds for the smooth case ($h(\cdot)=0$) to compute an $\zeta$-optimal solution in terms of gradient norm~\cite{lan2023optimal}; \rv{that said, when $h(\cdot)\neq 0$, the complexity bound to compute $\by_{\hat t_n}$ such that $\cL(\bx_{\hat t_n},\by_{\hat t_n})\geq F(\bx_{\hat t_n})-\zeta$ for some fixed $\bx_{\hat t_n}$ is $\cO(\kappa\log(1/\zeta))$.}} 
Thus, the gradient complexity of \agdap{} can be bounded by $\cO(T_\epsilon+ N(T_\epsilon)\rv{\kappa^{(1+\varsigma)/2}}\log(1/\zeta))$ \rv{with $\varsigma=0$ for the case $\mu$ is known, and $\varsigma=1$ otherwise}, which leads to \eqref{eq:grad_complexity}. Finally, it can be trivially checked that for 
$
\epsilon=\cO\Big(
%\sqrt{L\cD_y^2}/\kappa
{\sqrt{\bar \mu}\cD_y}
\Big)
$, the last term in \eqref{eq:grad_complexity} is the dominant term.}
\end{proof}
\begin{remark}
    \na{Since $\zeta>0$ can be chosen small without causing significant computational burden, ignoring the $\zeta$ related terms, the dominant terms in \eqref{eq:iter_complexity-true} are $\frac{1}{\gamma^{{8}r+1}\gamma_0(1-\gamma_0)}\Big(\cL(\bx_0,\by_0)-F^*+\frac{\log(\cR){\bar L}\cD_y^2}{\rv{\gamma^{r}}\log(1/\gamma^r)}\Big)\frac{{\bar L\bar{\kappa}^4}}{\epsilon^2}$. If it is known that $\log(\cR)L\cD_y^2\gg \cL(\bx_0,\by_0)-F^*$, then the complexity bound is dominated by
    $\frac{\rv{10}}{\gamma_0(1-\gamma_0)}\frac{\log(\cR)\rv{\bar L}\cD_y^2}{\gamma^{\rv{10 r}}\log(1/\gamma^{\rv{10r}})}\frac{{\bar L\bar{\kappa}^4}}{\epsilon^2}$ since $r\geq 1$. Therefore, to minimize the bound, one can choose $\gamma_0=1/2$ and $\gamma=\frac{1}{\sqrt[\rv{10r}]{e}}$, which maximizes $\gamma^{10 r}\log(1/\gamma^{10 r})$, e.g., choosing $r=2$ leads to $\gamma\approx {0.95}$ with overall gradient complexity $\cO({\bar{L}^2\cD_y^2\bar{\kappa}^4\log(\cR)/\epsilon^2})$.}
\end{remark}
\begin{corollary}\label{cor:complexity-flag-false}
\na{Suppose \cref{ASPT:lipshiz gradient,aspt:bounded-Y,aspt:primal_lb} hold, and \emph{\msf{false}}. Given arbitrary $(\bx_0,\by_0)\in\dom g\times\dom h$, $\gamma_0,\gamma\in(0,1)$ and $r\in\N_+$, let $\{(\bx_t,\by_t)\}_{t\in\N}$ 
%denote the iterate sequence 
be generated by \cref{alg:agda}. Then, for any given $\epsilon>0$,} $(\bx_\epsilon,\by_\epsilon)\triangleq(\bx_{t^*_\epsilon},\by_{t^*_\epsilon})$ is $\epsilon$-stationary in the sense of Definition~\ref{def:eps-stationarity} with \na{$t^*_\epsilon\triangleq\argmin\{\norm{G^{\tau_t,\sigma_t}(\bx_t,\by_t)}:\ 0\leq t\leq T_\epsilon\}$} for \na{any} $T_\epsilon\in\N_+$ such that 
\begin{equation}
\label{eq:iter_complexity-false}
    \begin{aligned}
            & T_\epsilon = \Omega\left(
     \frac{\cL(\bx_0,\by_0) - F^* 
      + \frac{\log_{\frac{1}{\gamma}}(\cR)+1}{r\gamma^r}{\bar L}\cD_y^2+\frac{\rv{\tilde \mu}
      %\gamma^{\xzf{(2+\varsigma)r-(4+\varsigma)}}
      }{\gamma^2(1-\gamma^{{(2+\varsigma)r}})}{{\cR}^{(2+\varsigma)}}\cdot\min\{(1+\frac{2}{\gamma^{{2r}}}{\bar{\kappa}})\cD_y,~\overline{\cD}_y\}^2}{\gamma^{{8}r+1}\gamma_0(1-\gamma_0)}
    \cdot\frac{{\bar L\bar{\kappa}^4}}{\epsilon^2}\right),
    \end{aligned}
\end{equation}
\rv{where $\varsigma=0$ if $\mu$ is known, and $\varsigma=1$ otherwise.}
%\na{Then, $(\bx_\epsilon,\by_\epsilon)$ is $\epsilon$-stationary in the sense of Definition~\ref{def:eps-stationarity}.} %, i.e., $\norm{G(\bx_\epsilon,\by_\epsilon)} \leq \epsilon$.
Moreover, \agdap{} requires checking the backtracking condition in \eqref{eq:backtrack} at most $3 T_\epsilon$ times. Thus, for the case $\cD_y$ is known, i.e., $\overline{\cD}_y=\cD_y$, the total gradient complexity can be bounded by \rv{$\cO\Big(\Big(\cL(\bx_0,\by_0) - F^*+\Big[\log(\cR)\bar L+\tilde \mu{\cR}^{2+\varsigma}\Big]\cD_y^2\Big)\frac{{\bar L\bar{\kappa}^4}}{\epsilon^2}\Big)$}; on the other hand, if $\cD_y$ is unknown, i.e., $\overline{\cD}_y=+\infty$, then total gradient complexity can be bounded by \rv{$\cO\Big(\Big(\cL(\bx_0,\by_0) - F^*+\Big[\log(\cR)\bar L+\tilde \mu{\cR}^{2+\varsigma}\bar\kappa^2\Big]\cD_y^2\Big)\frac{{\bar L\bar{\kappa}^4}}{\epsilon^2}\Big)$}.
%$\cO\Big((\cL(\bx_0,\by_0) - F^*+\xzf{\tilde{\mu}\xzrr{\cR}^{2+\varsigma}\bar{\kappa}^2\cD_y^2})\frac{\qsr{\bar L\bar{\kappa}^4}}{\epsilon^2}\Big)$.
\end{corollary}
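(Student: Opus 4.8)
The plan is to imitate, almost verbatim, the proof of \cref{cor:complexity-flag-true}, re-instantiating every $\cR$-, $\bar L$-, $\bar\mu$-, $\bar\Lambda$- and $\bar\Delta$-dependent quantity in its \msf{False} form. First I would start from \cref{thm:main}. Since $\gamma_0\in(0,1)$ and the rule on line~\ref{algeq:step-size} forces $\tau_t<\sigma_t$ for every $t\in\N$ (because $(1-\gamma_0)<1$ and the reciprocal factor there is $<1$), each summand of the left-hand side of~\eqref{eq:complexity-bound-final} is at least $\gamma_0\tau_t\norm{G^{\tau_t,\sigma_t}(\bx_t,\by_t)}^2$, using $\norm{G^{\tau_t,\sigma_t}}^2=\norm{G_x^{\tau_t}}^2+\norm{G_y^{\sigma_t}}^2$ and dropping the nonnegative $\tfrac{\mu_t\sigma_t^2}{2}\norm{G_y^{\sigma_t}(\bx_{t+1},\by_t)}^2$ term. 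Hence $\gamma_0\big(\sum_{t=0}^{T}\tau_t\big)\min_{0\le t\le T}\norm{G^{\tau_t,\sigma_t}(\bx_t,\by_t)}^2\le\Gamma(\bx_0,\by_0,\gamma_0,\gamma,r)$, so $(\bx_{t^*_\epsilon},\by_{t^*_\epsilon})$ is $\epsilon$-stationary in the sense of \cref{def:eps-stationarity} as soon as $\Gamma/(\gamma_0\sum_{t=0}^{T_\epsilon}\tau_t)\le\epsilon^2$.

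Second, I would make this explicit via a uniform lower bound on $\tau_t$. By \cref{cor:bound-tn-kn} and the \agdap{} initialization — for \msf{False} one only needs $\tilde l>\tilde\mu$ — one has $\tilde l\le l_t\le\bar L/\gamma$, $\tilde l\le L_t\le\bar L/\gamma^r$ and $\gamma^r\bar\mu\le\mu_t\le\tilde\mu$ for all $t$, so $\sigma_t=1/l_t\ge\gamma/\bar L$; feeding the worst-case triple $(l_t,L_t,\mu_t)$ into the (monotone) rate expression of line~\ref{algeq:step-size} yields, exactly as in the \msf{True} case, $\tau_t\ge\tfrac{1}{125}(1-\gamma_0)\gamma^{8r+1}/(\bar L\bar\kappa^4)$, whence $\gamma_0\sum_{t=0}^{T}\tau_t\ge\tfrac{1}{125}\gamma_0(1-\gamma_0)\gamma^{8r+1}(T+1)/(\bar L\bar\kappa^4)$. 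Plugging the \msf{False} forms $\bar\Lambda=\tfrac{2}{\gamma^r}\bar L\cD_y^2$ and $\bar\Delta=\min\{(1+\tfrac{2}{\gamma^{2r}}\bar\kappa)\cD_y,\overline{\cD}_y\}^2$ (with $\varsigma=0$ if $\mu$ is known and $\varsigma=1$ otherwise) into the definition of $\Gamma$ and solving $\Gamma/(\gamma_0\sum\tau_t)\le\epsilon^2$ for $T$ reproduces~\eqref{eq:iter_complexity-false}.

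Third, for the gradient-call budget I would argue as in \cref{cor:complexity-flag-true}: since $1/(r\gamma^r)\ge1$, the bound~\eqref{eq:iter_complexity-false} forces $T_\epsilon\ge\log_{1/\gamma}(\cR)$, so \cref{thm:avg--backtrack-time} applies and the condition~\eqref{eq:backtrack} is checked at most $3T_\epsilon$ times, each check costing $\cO(1)$ evaluations of $\grad f$ by the design of \cond{t}. In the \msf{False} branch there are no inner maximization subproblems; the only other gradient work is the single $\grad_y f$ on line~\ref{algeq:haty-d}, performed once per increase of $L_t$, i.e. at most $\lceil\tfrac1r\log_{1/\gamma}(\cR)\rceil=\cO(\log\cR)$ times by \cref{thm:avg--backtrack-time}. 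Hence the total gradient complexity is $\cO(T_\epsilon)$; substituting $\varsigma$ and $\overline{\cD}_y\in\{\cD_y,+\infty\}$ into~\eqref{eq:iter_complexity-false} and using $\cL(\bx_0,\by_0)-F^*\le F(\bx_0)-F^*$ gives the two displayed bounds — the $\overline{\cD}_y=+\infty$ case picking up an extra $\bar\kappa^2$ factor inherited from $\bar\Delta=(1+\tfrac{2}{\gamma^{2r}}\bar\kappa)^2\cD_y^2=\Theta(\bar\kappa^2)\cD_y^2$.

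The genuinely hard work is already discharged by \cref{thm:main}, whose proof must relate the \emph{computable} control sequence $\{\Delta_t\}$ (updated on line~\ref{algeq:delta-update}) to the \emph{uncomputable} dual error $\delta_t=\norm{\by_t-\by^*(\bx_t)}^2$ under the \msf{False} reset $\hat\by_t=\prox{h/L_t}(\by_t+\tfrac1{L_t}\grad_y f(\bx_t,\by_t))$, i.e. establish $\delta_t\le\Delta_t$ once $L_t\ge L$ and $\mu_t\le\mu$, and propagate this through backtracking iterations. Relative to that, the corollary is bookkeeping; inside it the only delicate points I anticipate are verifying monotonicity of the line-\ref{algeq:step-size} rate expression in $(l_t,L_t,\mu_t)$ so that the $\tau_t$ lower bound survives at the extreme values, and carrying the $\mu$-known versus $\mu$-unknown dichotomy consistently through $\cR,\bar L,\bar\mu,\bar\kappa$ and $\varsigma$.
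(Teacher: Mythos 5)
Your proposal is correct and follows essentially the same route as the paper, which itself proves this corollary by repeating the argument of \cref{cor:complexity-flag-true} with the \texttt{max\_solver}=\texttt{False} instantiations of $\bar\Lambda$, $\bar\Delta$ and the absence of inner maximization calls. The steps you spell out — bounding $\min_t\norm{G^{\tau_t,\sigma_t}}^2$ via \cref{thm:main}, the uniform lower bound $\tau_t\geq\Theta(\gamma^{8r+1}/(\bar L\bar\kappa^4))$ from \cref{cor:bound-tn-kn}, the $3T_\epsilon$ backtracking count from \cref{thm:avg--backtrack-time}, and the extra $\bar\kappa^2$ factor when $\overline{\cD}_y=+\infty$ — are exactly the bookkeeping the paper intends.
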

\begin{proof}
    \na{The proof follows from the same arguments used in~\cref{cor:complexity-flag-true}; therefore, it is omitted.}
\end{proof}
\begin{remark}\label{remark:mu-discussion-without-max-solver}
    Using the arguments in \cref{remark:mu-discussion-with-max-solver}, one can argue that when $\mu$ is \textit{known}, by setting $\tilde\mu=\underline{\mu}=\mu$ and $\tilde l=\mu/\gamma$, the total gradient complexity is bounded by 
    %$\cO(\frac{L^2\kappa^{7+\zeta}\cD_y^2}{\epsilon^2})$.
    \rv{$\cO\Big(\Big(\cL(\bx_0,\by_0) - F^*+L\cD_y^2\kappa\Big)\frac{{L{\kappa}^4}}{\epsilon^2}\Big)$ when $\cD_y$ is known; and by $\cO\Big(\Big(\cL(\bx_0,\by_0) - F^*+L\cD_y^2\kappa^3\Big)\frac{{L{\kappa}^4}}{\epsilon^2}\Big)$ when $\cD_y$ is unknown.}
    %assuming $\mu$ is unknown, the total gradient complexity is $\cO(\frac{\max\{L,1/\mu\}^{15+\zeta}\cD_y^2}{\epsilon^2})$.
\end{remark}
% \begin{remark}
%     \xzrr{With \texttt{max solver = true}, we achieve an improvement of $\kappa^{3+\zeta}$ when $\mu$ is known, and $\max\{L, 1/\mu\}^{5+\zeta}$ when $\mu$ is unknown.}
% \end{remark}
%\sa{Above two results imply that for $\epsilon\in(0,\cO(\kappa^2 L))$, i.e., this would imply $T=\Omega(\log^2(\kappa))$, the gradient complexity of \agdap{}~is $\sum_{t=0}^{T-1}I_t=\cO(\kappa^4L^2\epsilon^{-2})$.}
% \begin{corollary}\label{cor:ncmc}%[Complexity result for $\mu=0$]
% Suppose \cref{ASPT:lipshiz gradient,aspt:bounded-Y,aspt:primal_lb} hold, \sa{except we assume $\mu=0$.} Given any $\bx^0\in\dom g$, $\epsilon>0$. \agdap, \sa{applied to %\cref{q:wcmc} 
% $\min_{\bx}\max_{\by}\hat\cL(\bx,\by)$ such that $\hat\cL(\bx,\by)=\cL(\bx,\by)- \frac{\hat{\mu}}{2} \norm{\by-\hat{\by}}^2$ with $\hat{\mu}=\frac{\epsilon}{2\cD_y}$, is guaranteed to generate $(\bx_\epsilon,\by_\epsilon)$ such that
% \qsr{$\norm{ G^{\tau,\sigma}(\bx_\epsilon,\by_\epsilon)} \leq \epsilon$} within $\cO\Big(L^6\cD_y^4\epsilon^{-6}\Big)$ \agdap{} iterations, 
% %with \texttt{correction=True}, 
% %where each iteration 
% each requiring $%\bar{\ell}\triangleq\lceil
% \cO(\log_{\frac{1}{\gamma_1}}(L\cD_y/\epsilon))$ %\rceil$ 
% backtracking iterations.}
% \end{corollary}
% \begin{proof}
% See \cref{thm:wcmc-complexity}.    
% \end{proof}
%\xtodo{The complexity results and proofs for both false and true are good to me.}
\subsection{Overview of the proof and some remarks}
\label{sec:proof_overview}
%\nsa{Xuan, here explain why we get extra $\kappa^2$}
\na{For the sake of simplicity of the argument, suppose that $B_0\triangleq F(\bx_0)-F^*\gg \log(\kappa)L\cD_y^2$, \rv{the convexity modulus $\mu$ is known} and consider the smooth case, i.e., $g =0$. Then, according to \cref{cor:complexity-flag-true}, the upper complexity bound for \agdap{} is 
%$\cO\Big(\frac{F(\bx_0)-F^*+L\cD_y^2\log(\kappa)}{\epsilon^2}L\kappa^4\Big)$.
$\cO(L\kappa^4B_0\epsilon^{-2})$.} \sa{Here we are going to explain why our complexity bound  %$\cO(L^2\kappa^4\epsilon^{-2})$ for the (proximal) \gda{} with backtracking, i.e., \agdap, 
 has an extra %$L\kappa^2$
 \na{$\kappa^2$} factor when compared to $\cO(L\kappa^2B_0\epsilon^{-2})$ 
 %$\cO\Big(\frac{L\kappa^2(F(\bx_0)-F^*)+\kappa L^2\cD_y^2}{\epsilon^2}\Big)$ 
 complexity derived for \gda{} and \agda{} in~\cite{lin2020gradient,boct2020alternating}.} 
 %For the simplicity of argument, consider the smooth case, i.e., $g =0$.
 The analyses in both ~\cite{lin2020gradient,boct2020alternating} use constant step sizes, i.e., $(\tau_t,\sigma_t)=(\tau,\sigma)$, and are built upon establishing a descent result for the primal function $F(\bx)=\max_{\by\in\cY}f(\bx,\by)-h(\by)$. In particular, Lemma C.5 in \cite{lin2020gradient} implies that for all $t\geq 0$,
\begin{equation}\label{eq:decent-lemma-primal-example}
    \Theta(\tau)\|\grad F(\bx_t)\|^2\leq F(\bx_t) - F(\bx_{t+1}) + \Theta(\tau L^2)\delta_t,\quad \hbox{where}\quad \delta_t=\norm{\by_t-\by^*(\bx_t)}^2.
\end{equation}
\sa{Moreover, using the bound for $\delta_t$ in \cite[Equation (C.9)]{lin2020gradient}, one can get $\sum_{t=0}^T\delta_t=\cO(\kappa \cD_y^2)+ \Theta(\tau^2 \kappa^4) \sum_{t=0}^T\|\grad F(\bx_t)\|^2$; hence, after summing up \eqref{eq:decent-lemma-primal-example}, using the $\sum_{t=0}^T\delta_t$ bound in it, and then dividing both sides by $\tau$, one obtains
\begin{equation*}\label{eq:decent-lemma-primal-example2}
{\small
\begin{aligned}
           &\Theta(1-\tau^2 L^2\kappa^4)\sum_{t=0}^T \|\grad F(\bx_t)\|^2\leq \frac{F(\bx_0) - F^*}{\tau} + \sa{\cO(\kappa L^2 \cD_y^2)};
           %\\
           %&+ \sum_{t=0}^T\Theta(\tau_t^3 L^2\kappa^4)\|\grad F(\bx_t)\|^2 + \sa{\cO(\tau \kappa L^2 D^2)}
\end{aligned}}%
\end{equation*}
thus, setting $\tau=\cO(1/(L\kappa^2))$ implies that one can get $\sum_{t=0}^T\|\grad F(\bx_t)\|^2\leq \epsilon^2$ for \na{$T=\cO(L \kappa^2 B_0 \epsilon^{-2})$}.}
%\qstodo{should we use $F^*$ or $\underline{F}$? Xuan: $F^*$}
\sa{On the other hand, if one wants to extend this analysis to incorporate \textit{backtracking}, the %corresponding 
resulting backtracking condition %involves
requires checking inequalities involving evaluation of $F$ and $\grad F$, \na{i.e., since $g=0$, we have $F=\Phi$ and $\grad F=\grad \Phi$;} however, computing either of these quantities requires solving an optimization problem to exact optimality, which is not feasible in practice. Even if a relaxed condition %involving 
based on an inexact %minimizer
\na{maximizer} can be derived, this would still imply that the resulting algorithm be a double-loop one, i.e., combining nested outer and inner loops \na{--there would be $\cO(L\kappa^2\epsilon^{-2})$ outer iterations and for each outer iteration, one needs to call for a maximization solver to compute approximations for $F$ and $\grad F$ at a given point $\bx$.} The majority of double-loop methods iterates over the inner loop until a sufficient condition for some key inequality, needed for the convergence proof, to hold; however, since these sufficient conditions %checked is 
are often \textit{not necessary} for this key inequality to hold, the resulting algorithm may be wasting precious computing resources unnecessarily to obtain a certificate for the key inequality through verifying a sufficient condition.}  

\sa{A viable alternative is to design a backtracking condition involving the coupling function $f$ and $\grad f$, which are directly available through the first-order oracle.} 
%rather than the primal function $F$ and $\grad F$, which requires solving an optimization problem.
\na{In our analysis, we consider the monotonically increasing sequence $\{\hat{t}_n\}_{n\geq 0}\subset\N$ denoting all the discrete-time instances at which the quantity $L_t$ increases (formally stated in Definition~\ref{def:tn}) with the convention that $\hat t_0=0$. In \cref{cor:bound-tn-kn} in the appendix, we show that during the runtime of \agdap, there can %happen only 
can be at most $\bar N=\cO(\log_{\frac{1}{\gamma_1}}(\kappa))$ many such discrete time points; moreover, for any $n\in[\bar N]$,  
the definition of $\{\hat t_n\}$ implies that $\hat t_n<\hat t_{n+1}$ and $L_t = L_{\hat{t}_n}$ for all $t\in\mathbb{N}$ such that $t\in [\hat{t}_{n},\hat{t}_{n+1})$.} 

\na{In \cref{lemma:complicated-imply-short-true-forproof} provided in the appendix, using an alternative control on $\delta_t=\norm{\by_t-\by^*(\bx_t)}^2$, namely $\Delta_t$, which can be easily computed as in \cref{alg:agda}, we establish that}
%we obtain
\begin{equation}\label{eq:decent-lemma-f-example}
{\small
\begin{aligned}
        \na{\Theta\Big(\tau_t\Big(1-\frac{\tau_t}{\sigma_t}\Big)\Big)}\|G_x^{\tau_t} (\bx_t,\by_t)\|^2 +  %\Theta(\sigma_t^2\mu)
        \na{\sigma_t}{\|G_y^{\sigma_t}(\bx_t,\by_t)\|^2}\leq \na{\cL(\bx_t,\by_t) - \cL(\bx_{t+1},\by_{t+1}) + S_t},\quad\forall~t\in\N,
\end{aligned}}%
\end{equation}
\na{where $S_t=\Theta\Big(\frac{\Delta_t}{\sigma_t}\Big) + \Lambda_t$ if $t=\hat t_n$ for some $n\in[\bar N]$, and $S_t=\cO\Big(\frac{\Delta_t}{\sigma_t}+\frac{\Delta_{t-1}}{\sigma_{t-1}}+\frac{\tau_{t-1}^2}{\sigma_{t-1}}\norm{G_x^{\tau_{t-1}}(\bx_{t-1},\by_{t-1})}^2\Big)$ if $t\in(\hat t_n,~\hat t_{n+1})$ for some $n\in[\bar N]$, where $\Lambda_t$ given in \texttt{lines}~\ref{algeq:delta_def1} and \ref{algeq:delta_def2} of \cref{alg:agda} satisfies $\Lambda_{\hat t_n}=\cO(\sqrt{2\zeta/\mu}L_{\hat t_n}\cD_y)$ when \msf{True} and $\Lambda_{\hat t_n}=\cO(L_{\hat t_n}\cD_y^2)$ when \msf{False}. Thus, for any given $n\in[\bar N]$, summing \eqref{eq:decent-lemma-f-example} over $t\in [\hat t_n, \hat t_{n+1})$, we get }
\begin{equation}\label{eq:decent-lemma-f-example-2}
{\small
\begin{aligned}
        \MoveEqLeft\na{\sum_{t=\hat t_n}^{\hat t_{n+1}-1}\Big[\Theta\Big(\tau_t\Big(1-\frac{\tau_t}{\sigma_t}\Big)\Big)}\|G_x^{\tau_t} (\bx_t,\by_t)\|^2 +  %\Theta(\sigma_t^2\mu)
        \na{\sigma_t}{\|G_y^{\sigma_t}(\bx_t,\by_t)\|^2}\Big]\\
        &\leq \na{\cL(\bx_{\hat t_n},\by_{\hat t_n}) - \cL(\bx_{\hat t_{n+1}},\by_{\hat t_{n+1}}) + \Lambda_{\hat t_n}+\sum_{t=\hat t_n}^{\hat t_{n+1}-1}\Theta\Big(\frac{\Delta_t}{\sigma_t}\Big)+\cO(L\cD_y^2)},
\end{aligned}}%
\end{equation}
where the last $\cO(L\cD_y^2)$ term is to account for the fact that $\by_{\hat t_{n+1}}$ is computed using either \shortalgline{\ref{algeq:haty-max}} or \shortalgline{\ref{algeq:haty-y}} of \cref{alg:agda}, rather than \shortalgline{\ref{algeq:y-update}}.
\na{According to \agdap, when $t\in(\hat t_n,\hat t_{n+1})$ for some $n\in[\bar N]$, we have
${\Delta_t}=B_{t-1}{\Delta_{t-1}}+{C_{t-1}}\norm{G_x^{\tau_{t-1}}(\bx_{t-1},\by_{t-1})}^2$ for $B_{t-1}=1-\mu\sigma_{t-1}/2\in(0,1)$; this recursion implies that $\sum_{t=\hat t_n}^{\hat t_{n+1}-1}\Theta\Big(\frac{\Delta_t}{\sigma_t}\Big)=\cO\Big(\sum_{t=\hat t_n}^{\hat t_{n+1}-1}\frac{\tau_t^2}{\sigma_t}\kappa^4\norm{G_x^{\tau_t}(\bx_t,\by_t)}^2+\frac{L_{\hat t_n}}{\mu}\Delta_{\hat t_n}\Big)$. Thus, using the previous bound on $\sum_{t=\hat t_n}^{\hat t_{n+1}-1}\Theta\Big(\frac{\Delta_t}{\sigma_t}\Big)$ within \eqref{eq:decent-lemma-f-example-2}, 
for any $T\in\N$ such that $T\in(\hat t_{\bar n},\hat t_{\bar n+1})$ for some $\bar n\in[\bar N]$, we get
\begin{equation*}
{\small
    \begin{aligned}
         \sum_{t=0}^T\Big[\Theta\Big(\tau_t\Big(1-\kappa^4\frac{\tau_t}{\sigma_t}\Big)\Big)\|G_x^{\tau_t}(\bx_t,\by_t)\|^2 +  \sigma_t\|G_y^{\sigma_t}(\bx_t,\by_t)\|^2\Big]
        \leq F(\bx_0) - F^* + \cO\Big((\bar n+1)L\cD_y^2+\sum_{n=0}^{\bar n}\Big(\Lambda_{\hat t_n}+\frac{L_{\hat t_n}^2}{\mu}\Delta_{\hat t_n}\Big)\Big),
    \end{aligned}}%
\end{equation*}
for details of the upper bound, see~\eqref{eq:complexity-bound} and~\eqref{eq:first-bound-Xi-K} in the appendix.
Note that $\sum_{n=0}^{\bar{n}}L_{\hat t_n}\leq\sum_{n=0}^{\bar N}\frac{\mu}{\gamma_2\gamma_1^n}=\cO(L)$ and $\sum_{n=0}^{\bar{n}}L^2_{\hat t_n}\leq\sum_{n=0}^{\bar N}\frac{\mu^2}{\gamma_2^2\gamma_1^{2n}}=\cO(L^2)$.
For \msf{True}, $\Lambda_{\hat t_n}=\cO(\sqrt{\frac{2\zeta}{\mu}}L_{\hat t_n}\cD_y)$ and $\Delta_{\hat t_n}=\frac{2\zeta}{\mu}$; while for \msf{False}, $\Lambda_{\hat t_n}=\cO(L_{\hat t_n}\cD_y^2)$ and $\Delta_{\hat t_n}\leq \cD_y^2$ when $\cD_y$ is known. Therefore, when \msf{True}, $\sum_{n=0}^{\bar n}\Big(\Lambda_{\hat t_n}+\frac{L_{\hat t_n}^2}{\mu}\Delta_{\hat t_n}\Big)=\cO\Big(\sqrt{\frac{2\zeta}{\mu}}\cD_y\sum_{n=0}^{\bar{n}}L_{\hat t_n}\Big)+\cO\Big(\frac{2\zeta}{\mu^2}\sum_{n=0}^{\bar n}L_{\hat t_n}^2\Big)$; consequently, setting $\tau_t = \cO(\frac{\sigma_t}{\kappa^4})$ and $\zeta=\cO(\epsilon^2)$ implies that for 
$T=\Omega\Big(L\kappa^4\Big(F(\bx_0)-F^*+\log(\kappa)L\cD_y^2\Big)\epsilon^{-2}\Big)$, we have $\sum_{t=0}^T \norm{G^{\tau_t,\sigma_t}(\bx_t,\by_t)}^2\leq \epsilon^2$ 
--since $\sigma_t=\Theta(1/L)$ by construction. On the other hand, for \msf{False}, $\sum_{n=0}^{\bar n}\Big(\Lambda_{\hat t_n}+\frac{L_{\hat t_n}^2}{\mu}\Delta_{\hat t_n}\Big)=\cO(L\kappa\cD_y^2)$; consequently, setting $\tau_t = \cO(\frac{\sigma_t}{\kappa^4})$ implies that for 
$T=\Omega\Big(L\kappa^4\Big(F(\bx_0)-F^*+\kappa L\cD_y^2\Big)\epsilon^{-2}\Big)$, we have $\sum_{t=0}^T \norm{G^{\tau_t,\sigma_t}(\bx_t,\by_t)}^2\leq \epsilon^2$.}

\section{Numerical experiments}
The experiments are conducted on a PC with 3.6 GHz Intel Core i7 CPU and NVIDIA
RTX2070 GPU. \sa{We consider two problem classes: %quadratic problems 
WCSC minimax problems with synthetic data, and Distributionally Robust Optimization~(DRO) with real data. 
We tested \agdap{} against two adaptive algorithms, \tiada~\cite{li2022tiada} and \sgdab~\cite{xu2024stochastic}, and we used \gda~\cite{lin2020gradient}, \agda~\cite{boct2020alternating} and \smagda~\cite{yang2022faster} as benchmark. On our test problems \tiada{} has constantly performed better than \texttt{NeAda} --similar to the comparison results in \cite{li2022tiada}; this is why we only report results for \tiada{} and \sgdab{}, which are the main competitor of \agdap{} as parameter agnostic algorithms for WCSC minimax problems.} {The code is available at \href{https://github.com/XuanZhangg/AGDA-Plus.git}{https://github.com/XuanZhangg/AGDA-Plus.git}.}
%distributionally robust optimization, and fair classification.  
\paragraph{\sa{Algorithm parameter settings.}} %For algorithms in other paper, to make 
To have a fair comparison, we test %them 
all the methods using step sizes with theoretical guarantees, and \rv{we assumed that the concavity modulus $\mu>0$ is known for all the experiments.} Indeed,
according to \cite{lin2020gradient,boct2020alternating}, 
%letting $\tau$ and $\sigma$ be the primal and the dual step sizes, 
we set $\tau=\Theta(\frac{1}{L\kappa^2})$, $\sigma=\Theta(\frac{1}{L})$ for both \gda{} and \agda. For \smagda, we set their parameters according to~\cite[Theorem 4.1]{yang2022faster}, \na{i.e., $\tau=\frac{1}{3L}$, $\sigma=\frac{1}{144L}$, $p=2L$, and $\beta=\frac{\sigma\mu}{1200}$.} For \tiada, we set $\alpha=0.6$ and $\beta = 0.4$ as recommended in~\cite{li2022tiada}, %the paper,
and tune the initial stepsizes $\tau_0$ and $\sigma_0$ from $\{100, 10, 1, 0.1, 0.01\}$ and set $v^x_0=v^y_0=1$ --see~\eqref{eq:tiada_step} for \tiada{} step size rule. For \texttt{SGDA-B}, we set the max iteration budget per contraction, $T$, to $4000$ for DRO problem in~\eqref{eq:dro-problmm} and $10,000$ for other problems, i.e., the total gradient complexity would be $T\log_{\frac{1}{\gamma}}(\kappa)$ where $\gamma\in(0,1)$ is the contraction parameter --see \cref{sec:goals} for the details of \sgdab{}, and {we set the contraction parameter as $\gamma=0.95$ and $\gamma=0.9$ for the DRO and synthetic problems, respectively.} For \agdap, we set $\gamma_0=10^{-3}$, $\gamma=0.95$, and $r=2$, \na{i.e., $\gamma_2=\gamma=0.95$ and $\gamma_1=\gamma^2=0.9025$.} 
\subsection{\na{Synthetic data: Quadratic WCSC problems}}
\label{sec:random-data}
%We first test on the regularized bilinear SP problem defined as
\sa{Consider the quadratic WCSC minimax problem with of the following form:\
\begin{equation} \label{eq:bilinear-problem}
{\small
    \cL(\bx,\by) = \na{\frac{1}{2}}\bx^\top Q\bx
    +
    \bx^\top A \by - \frac{\mu_y}{2}\|\by\|^2,}%
\end{equation}
where 
%$A$ and $Q$ are matrices in 
$A\in\reals^{m\times n}$, $Q\in\mathbb{R}^{m\times m}$,  and $\mu_y>0$.
This class of problems include \emph{Polyak-Lojasiewicz game} \cite{chen2022faster,zhang2023jointly}, \emph{image processing}~\cite{chambolle2011first}, and \emph{robust regression}~\cite{xu2008robust}.}

\sa{In our tests, we set $m=n=30$, $\mu_y=1$ and randomly generate $A$ and $Q$ such that $A = V\Lambda_A V^{-1}$ and $Q = V\Lambda_Q V^{-1}$, where $V\in\mathbb{R}^{30\times30}$ is an %randomly generated 
orthogonal matrix, 
%in $\mathbb{R}^{30\times30}$ and 
$\Lambda_A$ and $\Lambda_Q$ are diagonal matrices.
%with equal differences 
%and normalized it according to 
We set $\Lambda_Q = \frac{\Lambda^0_Q}{\|\Lambda^0_Q\|_2}\cdot L$ for $L\in\{5,10,20\}$, where $\Lambda^0_Q$ is a random diagonal matrix with diagonal elements being 
sampled uniformly at random from the interval $[-1, 1]$ \na{and $\norm{\Lambda^0_Q}_2$ denotes the spectral norm of $\Lambda^0_Q$; therefore, $\norm{Q}_2=L$.}} %Additionally, 
We choose $\Lambda_A$ such that $\Lambda_Q + \frac{1}{\mu_y}\Lambda_A^2 \succeq 0$ and $\norm{\Lambda_Q}_2\geq\norm{\Lambda_A}_2$. 
Given  $\bx$, we can compute $\by^*(\bx) = \frac{1}{\mu_y}A^\top \bx$; consequently, the primal function $F(\bx) = \na{\frac{1}{2}}\bx^\top (Q+\frac{1}{\mu_y} A A^\top) \bx$ and it is lower bounded by zero. Thus, $\cL$ is weakly convex-strongly concave such that it is $L$-smooth in $(\bx,\by)$. 
%We test all algorithms with  $\mu_y=1$ and $L\in\{2,5,10\}$  and set $\gamma_1=0.9,\gamma_2=0.95,\gamma_0=10^{-3}$ for \agdap.
% In~\cref{fig:Q-kappa-5} and \cref{fig:Q-kappa-10}, 
In~\cref{fig:Q},
we plot $\|\grad \cL(\bx_t,\by_t)\|^2/\|\grad \cL(\bx_0,\by_0)\|^2$ against the number of gradient calls, $t\in\N$, in the
x-axis. {The results are obtained for 10 simulations, initialized from i.i.d. random points $(\bx_0,\by_0)$ such that each entry of these vectors follow a uniform distribution taking values between $-100$ and $100$}. We run each algorithm until 10,000 iterations or it computes $(\bx_{t^*},\by_{t^*})$ such that $\norm{\grad\cL(\bx_{t^*},\by_{t^*})}^2/\norm{\grad\cL(\bx_{0},\by_{0})}^2\leq 1e-6$, whichever comes first. We observe that both backtracking methods, i.e., \sgdab{} and \agdap{}, outperform the others on this test, possibly because %and are not sensitive to the condition number. That said, \agdap{} 
they can %efficiently search the local Lipshitz constant
\na{adaptively determine the largest step sizes %leading to 
achieving convergence to stationarity,} and %achieve
exhibit stable and fast performance when compared to AdaGrad-based method \tiada{} and others using constant stepsize based on the global Lipschitz constant $L$, i.e., \smagda{}, \agda{}, and \gda{}. \na{In our tests, we observe that along the trajectory of \sgdab{}, the average local Lipschitz constants do not oscillate significantly and the average value is relatively high compared to the average local Lipschitz constant observed along the \agdap{} trajectory; therefore, checking once every $T$ gradient descent ascent steps helps \sgdab{} adopt relatively larger step size that is constant for $T$ iterations compared to step size values adopted by \agdap{}, and that is why \sgdab{} exhibits a faster convergence behavior on this test. To verify this observation, in the next section, we test both methods on quadratic WCSC minimax problems with sinusoidal perturbations causing significant oscillation in local Lipschitz constant values.}
\begin{figure*}[h]
     % \centering
     % \begin{subfigure}[b]{0.24\textwidth}
     %     \centering
     %    \includegraphics[width=\textwidth]{figure/exp:deterministic/Q_stdx_0_stdy_0_muy_1_kappa_2_b_1_z.pdf}
     %     \caption{$\kappa=2,\lambda=0$}
     %     \label{fig:Q-kappa-2}
     % \end{subfigure}
\begin{subfigure}[b]{0.33\textwidth}
         \centering
\includegraphics[width = \textwidth]{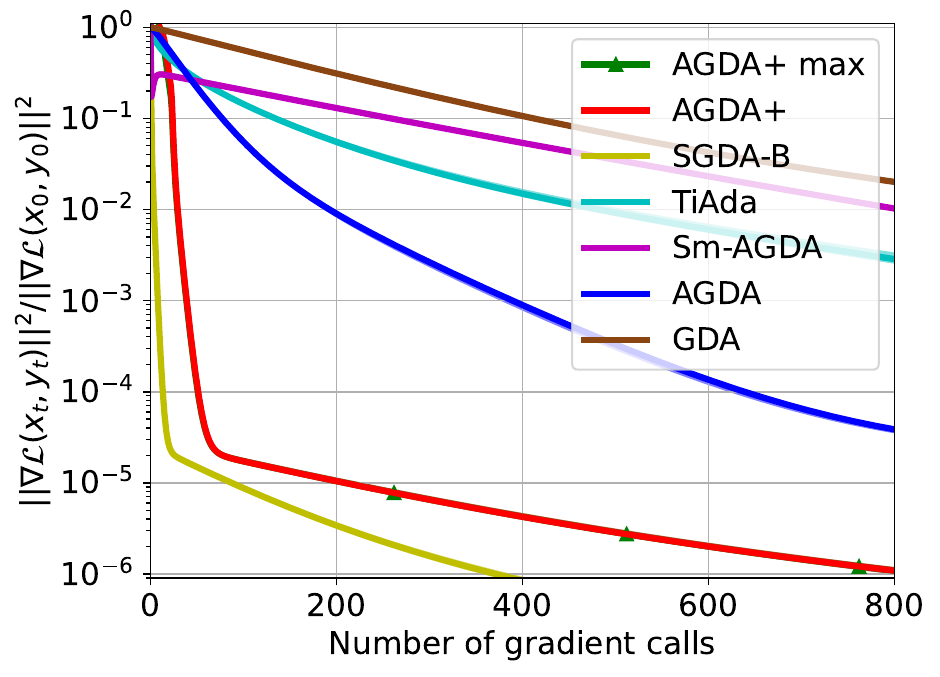}
         \caption{$\kappa=5$}
         \label{fig:Q-kappa-5}
     \end{subfigure}
     \begin{subfigure}[b]{0.319\textwidth}
         \centering
\includegraphics[width = \textwidth]{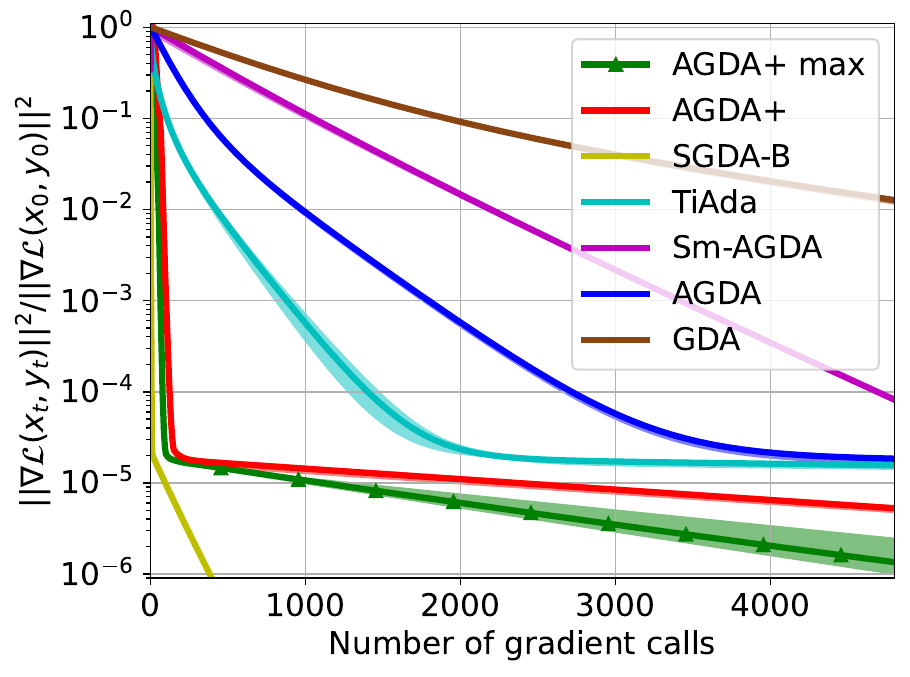}
         \caption{$\kappa=10$}
         \label{fig:Q-kappa-10}
     \end{subfigure}
     \begin{subfigure}[b]{0.338\textwidth}
         \centering
\includegraphics[width = \textwidth]{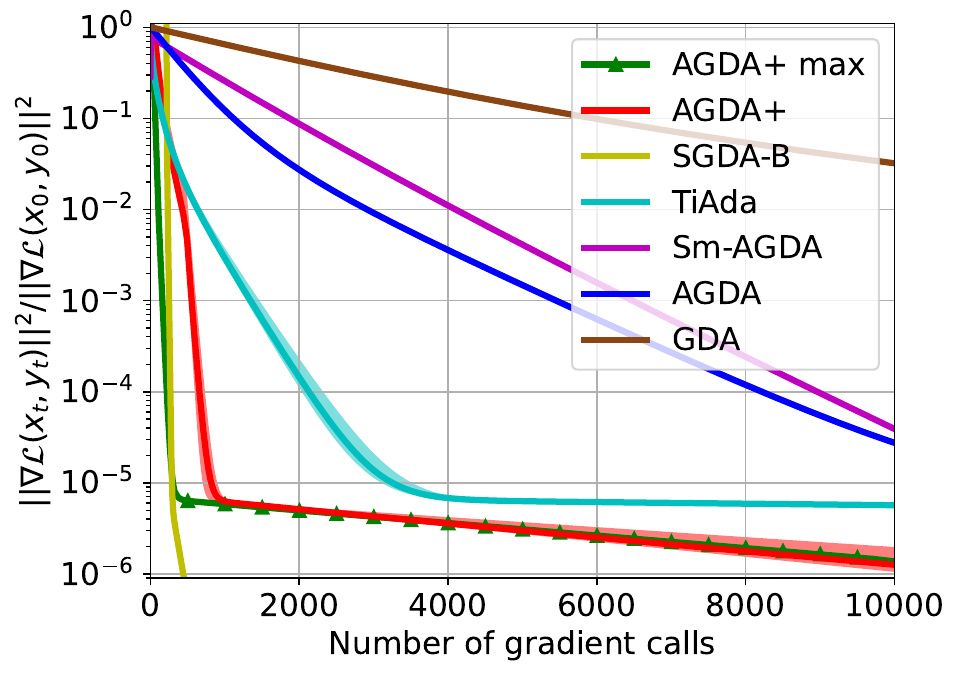}
         \caption{$\kappa=20$}
         \label{fig:Q-kappa-20}
     \end{subfigure}
%      \begin{subfigure}[b]{0.24\textwidth}
%          \centering
% \includegraphics[width = \textwidth]{figure/exp:deterministic/Q_stdx_0_stdy_0_muy_1_kappa_50_b_1_z.pdf}
%          \caption{$\kappa=50$}
%          \label{fig:Q-kappa50-sto}
%      \end{subfigure}
     \\
     \begin{subfigure}[b]{0.33\textwidth}
         \centering
\includegraphics[width = \textwidth]{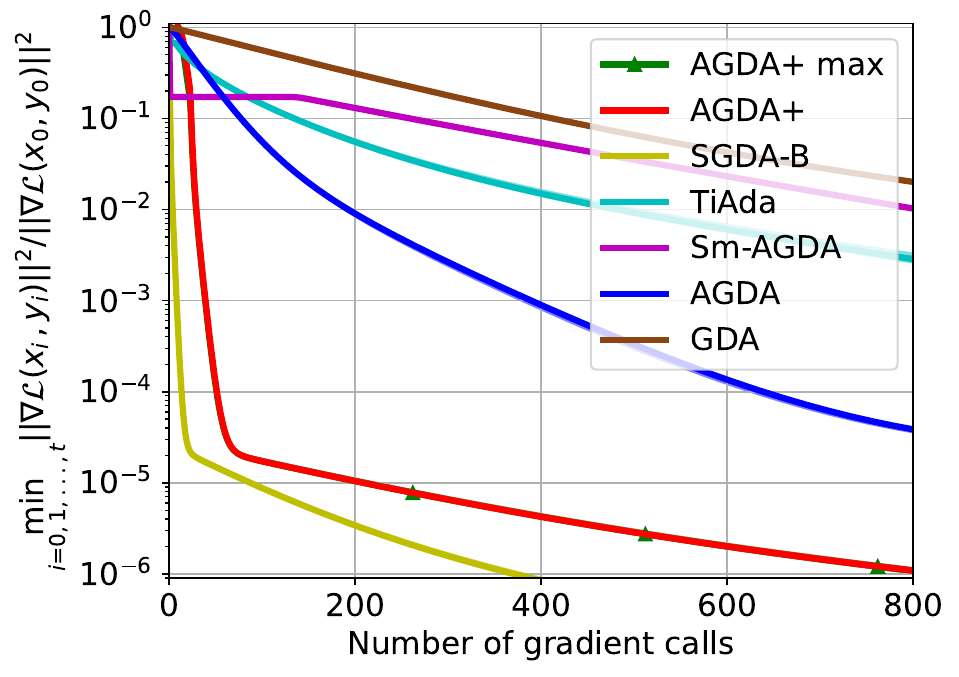}
         \caption{$\kappa=5$}
         \label{fig:Q-kappa-5-last}
     \end{subfigure}
     \begin{subfigure}[b]{0.32\textwidth}
         \centering
\includegraphics[width = \textwidth]{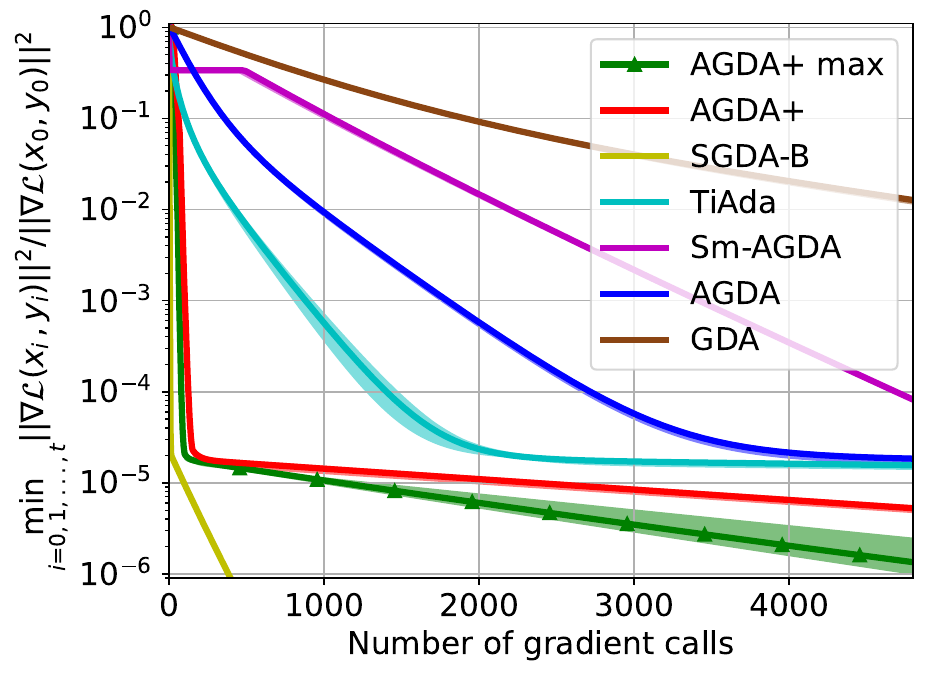}
         \caption{$\kappa=10$}
         \label{fig:Q-kappa-10-last}
     \end{subfigure}
     \begin{subfigure}[b]{0.338\textwidth}
         \centering
\includegraphics[width = \textwidth]{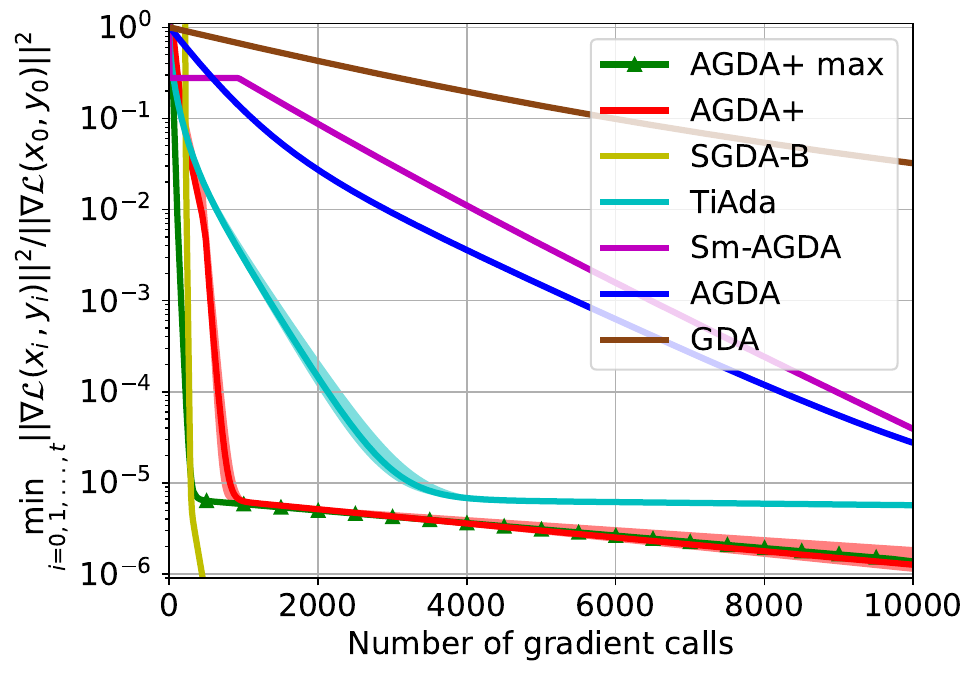}
         \caption{$\kappa=20$}
         \label{fig:Q-kappa-20-last}
     \end{subfigure}
%      \begin{subfigure}[b]{0.24\textwidth}
%          \centering
% \includegraphics[width = \textwidth]{figure/exp:deterministic/Q_stdx_0_stdy_0_muy_1_kappa_50_b_1_z_last.pdf}
%          \caption{$\kappa=50$}
%          \label{fig:Q-kappa50-sto}
%      \end{subfigure}
\caption{
Comparison of \agdap{} against \texttt{SGDA-B}, \tiada,  \smagda, \agda, and \gda,  and~on the quadratic WCSC minimax problem stated in~\eqref{eq:bilinear-problem} using $10$ simulation runs. Solid lines indicate the average over 10 runs and shaded regions around the solid lines denote the range statistic. The plots in the top row display the last-iterate performance while the plots in the bottom row display the best-iterate performance.
}
\label{fig:Q}
\centering
\end{figure*}
\begin{figure*}[h]
    \centering
    \includegraphics[width=0.32\textwidth]{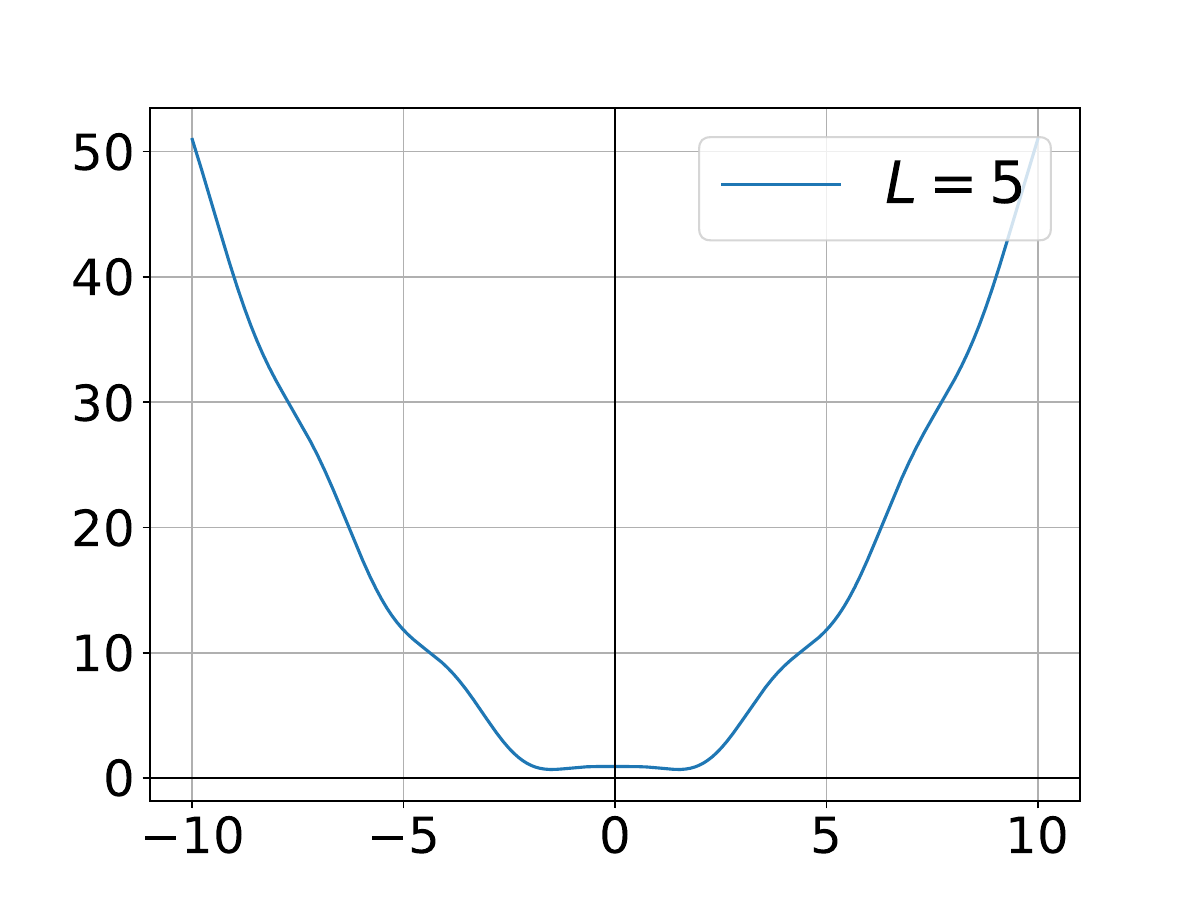}
    \includegraphics[width=0.32\textwidth]{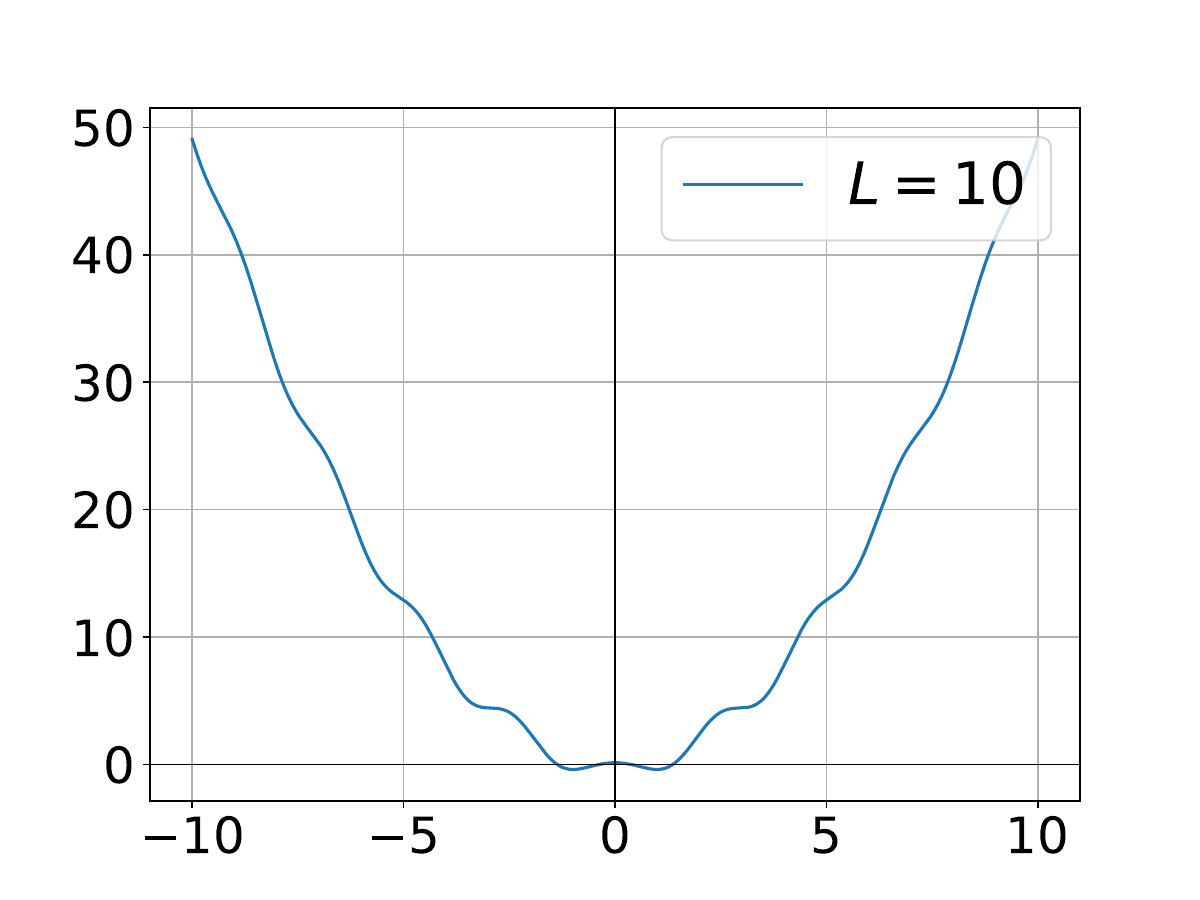}
        \includegraphics[width=0.32\textwidth]{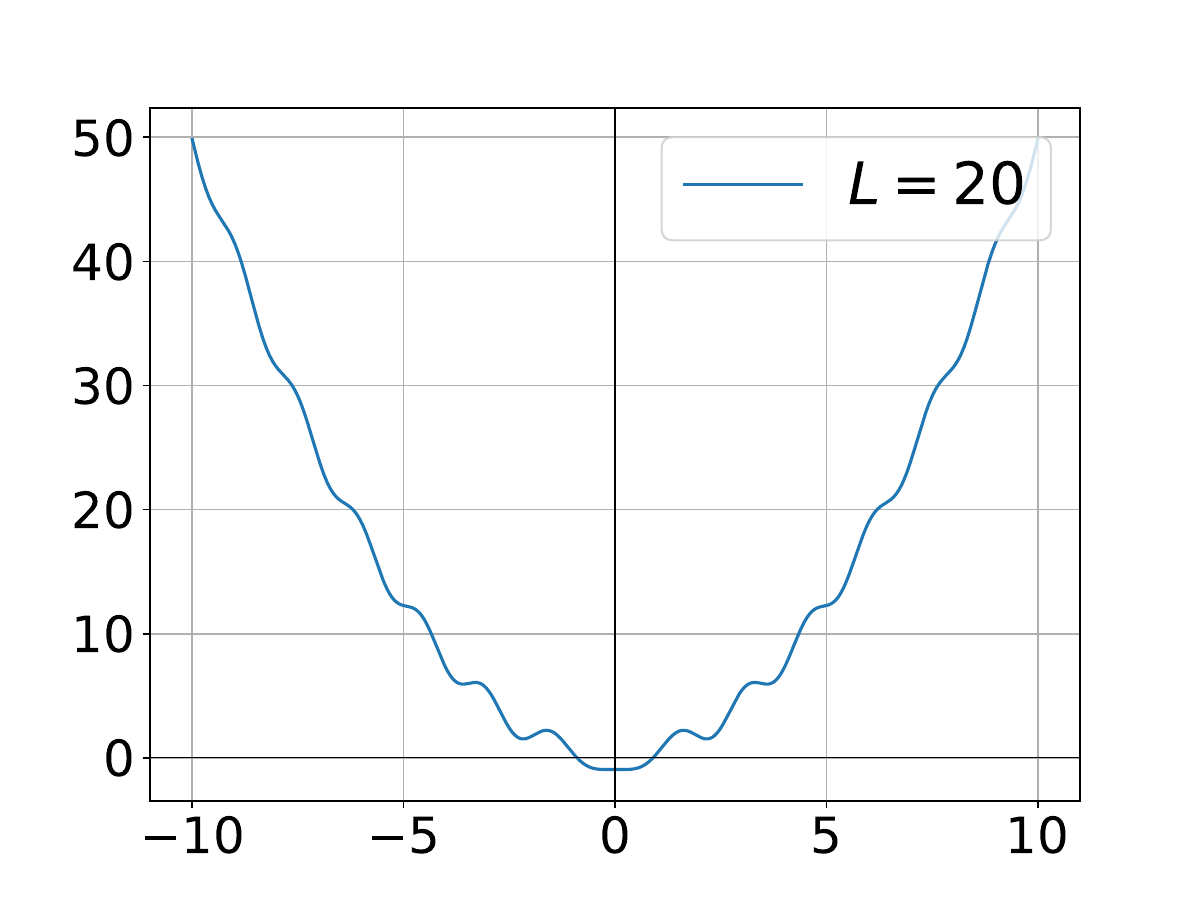}
    \caption{ $s(x)=x^2+\sin(\sqrt{L-1}\cdot\sqrt{x^2+1})$}
    \label{fig:sin-toy}
    \vspace*{-3mm}
\end{figure*}
\subsection{\na{Synthetic data: Quadratic WCSC problems with sinusoidal perturbation}}
\label{sec:sin-random-data}
We next test the same set of methods on a quadratic WCSC problem with sinusoidal perturbation:
\begin{equation} \label{eq:sin-bilinear-problem}
    \cL(\bx,\by) = \sin(\sqrt{L-1}\cdot{\sqrt{\|\bx\|^2+1})}+\frac{1}{2}\bx^\top Q\bx
    +
    \bx^\top A \by - \frac{\mu_y}{2}\|\by\|^2,
\end{equation}
where 
%$A$ and $Q$ are matrices in 
$A\in\reals^{m\times n}$, $Q\in\mathbb{R}^{m\times m}$,  and $\mu_y>0$. Indeed, for this class of problems, gradients exhibit high fluctuations, which pose a significant challenge to optimization methods. We have included a toy example in \cref{fig:sin-toy} to illustrate this. Let $s(\bx)=\sin(\sqrt{L-1}\cdot\sqrt{\norm{\bx}^2+1})$ and $q(\bx)=\frac{1}{2}\bx^\top Q\bx$; hence, using the notation from \eqref{eq:main-problem}, $\cL(\bx,\by)=f(\bx,\by)=s(\bx)+q(\bx)+\frac{1}{2}\bx^\top A\by-\frac{\mu_y}{2}\norm{\by}^2$. It can be easily checked that $\grad s(\cdot)$ is $(L-1)$-Lipschitz. In our tests, we set $m=n=30$, $\mu_y=1$, and pick $L\in\{5,10,20\}$. Furthermore, the matrix $A$ and $Q$ are generated %by the almost the same way as that for problem (\ref{eq:bilinear-problem}) 
\na{as in~\cref{sec:random-data} except for $\Lambda_Q$ selection; indeed, we set $\Lambda_Q = \frac{\Lambda^0_Q}{\|\Lambda^0_Q\|_2}$, and choose $\Lambda_A$ such that $\Lambda_Q + \frac{1}{\mu_y}\Lambda_A^2 \succeq 0$ and $\norm{\Lambda_Q}_2\geq\norm{\Lambda_A}_2$. Thus, $\cL=f$ is $L$-smooth since $s(\cdot)$ and $q(\cdot)$ are smooth with constants $L-1$ and $1$, respectively, and $\norm{A}_2\leq \norm{Q}_2=1$. 
Since $\by^*(\bx) = \frac{1}{\mu_y}A^\top \bx$ as in \cref{sec:random-data}, the primal function $F(\bx) = s(\bx)+\frac{1}{2}\bx^\top (Q+\frac{1}{\mu_y} A A^\top) \bx$ and $F(\cdot)\geq -1$ as $s(\bx)\geq -1$ for all $\bx$.}  
% randomly generate $A$ and $Q$such that $A = V\Lambda_A V^{-1}$ and $Q = V\Lambda_Q V^{-1}$, where $V\in\mathbb{R}^{30\times30}$ is an %randomly generated 
% orthogonal matrix, 
% %in $\mathbb{R}^{30\times30}$ and 
% $\Lambda_A$ and $\Lambda_Q$ are diagonal matrices.
% We set $\Lambda_Q = \frac{\Lambda^0_Q}{\|\Lambda^0_Q\|_2}$, where $\Lambda^0_Q$ is a random diagonal matrix with diagonal elements being 
% sampled uniformly at random from the interval $[-1, 1]$.
% We choose $\Lambda_A$ such that $\Lambda_Q + \frac{1}{\mu_y}\Lambda_A^2 \succeq 0$ and $\norm{\Lambda_Q}_2\geq\norm{\Lambda_A}_2$. Those conditions imply that $\cL$ is 
% WSCS and $L$-smooth. 
% Moreover, given  $\bx$, we can compute $\by^*(\bx) = \frac{1}{\mu_y}A^\top \bx$. Consequently, the primal function $F(\bx) =\frac{1}{2} \bx^\top (Q+\frac{1}{\mu_y} A A^\top) \bx$ and it is lower bounded by zero. 
In~\cref{fig:sin-Q},
we plot $\|\grad \cL(\bx_t,\by_t)\|^2/\|\grad \cL(\bx_0,\by_0)\|^2$ against the number of gradient calls, $t\in\N$, in the
x-axis. {The results are obtained for 10 simulations, initialized from i.i.d. random points $(\bx_0,\by_0)$ such that each entry of these vectors follow a uniform distribution taking values between $-100$ and $100$}. We run each algorithm until 10,000 iterations or it computes $(\bx_{t^*},\by_{t^*})$ such that $\norm{\grad\cL(\bx_{t^*},\by_{t^*})}^2/\norm{\grad\cL(\bx_{0},\by_{0})}^2\leq 1e-7$, whichever comes first. We observe that \agdap{} outperforms the others on this test, and 
it can efficiently search the local Lipshitz constant and achieve stable and fast performance.

\begin{figure*}[h]
     % \centering
     % \begin{subfigure}[b]{0.24\textwidth}
     %     \centering
     %    \includegraphics[width=\textwidth]{figure/exp:deterministic/Q_stdx_0_stdy_0_muy_1_kappa_2_b_1_z.pdf}
     %     \caption{$\kappa=2,\lambda=0$}
     %     \label{fig:Q-kappa-2}
     % \end{subfigure}
\begin{subfigure}[b]{0.33\textwidth}
         \centering
\includegraphics[width = \textwidth]{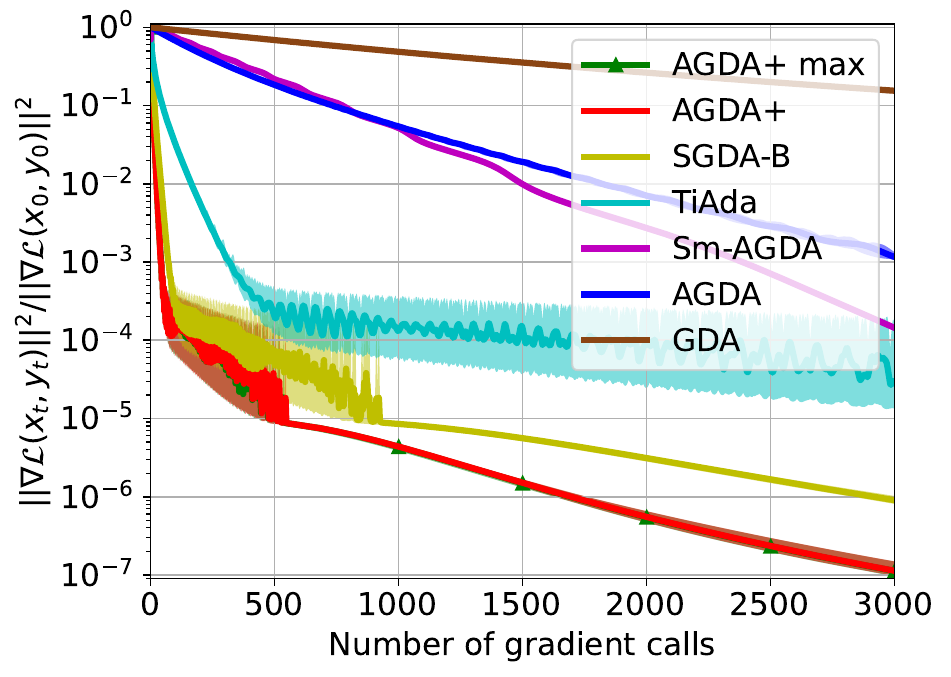}
         \caption{$\kappa=5$}
         \label{fig:sin-Q-kappa-5}
     \end{subfigure}
     \begin{subfigure}[b]{0.32\textwidth}
         \centering
\includegraphics[width = \textwidth]{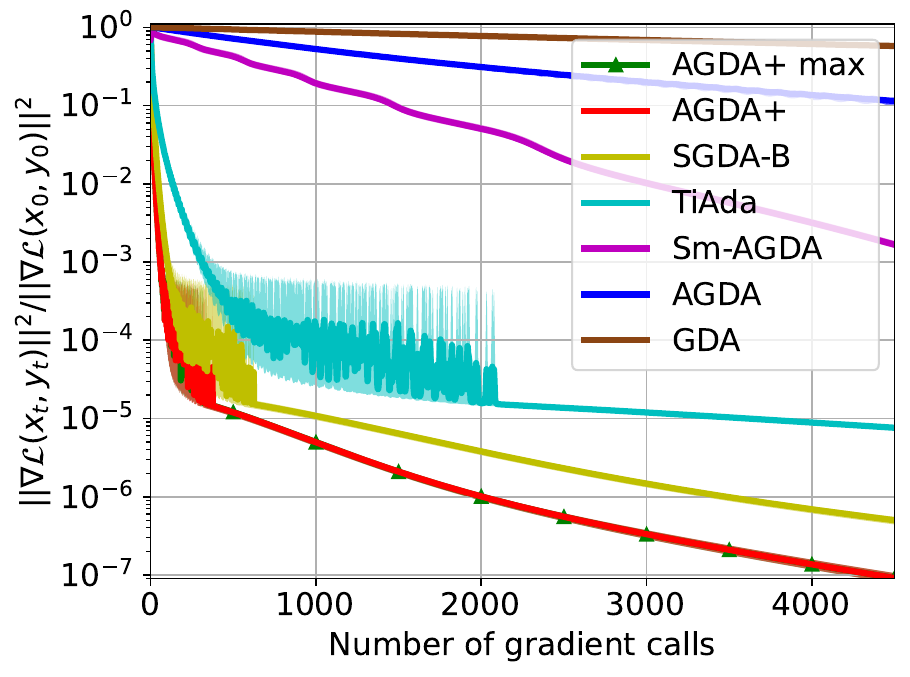}
         \caption{$\kappa=10$}
         \label{fig:sin-Q-kappa-10}
     \end{subfigure}
     \begin{subfigure}[b]{0.34\textwidth}
         \centering
\includegraphics[width = \textwidth]{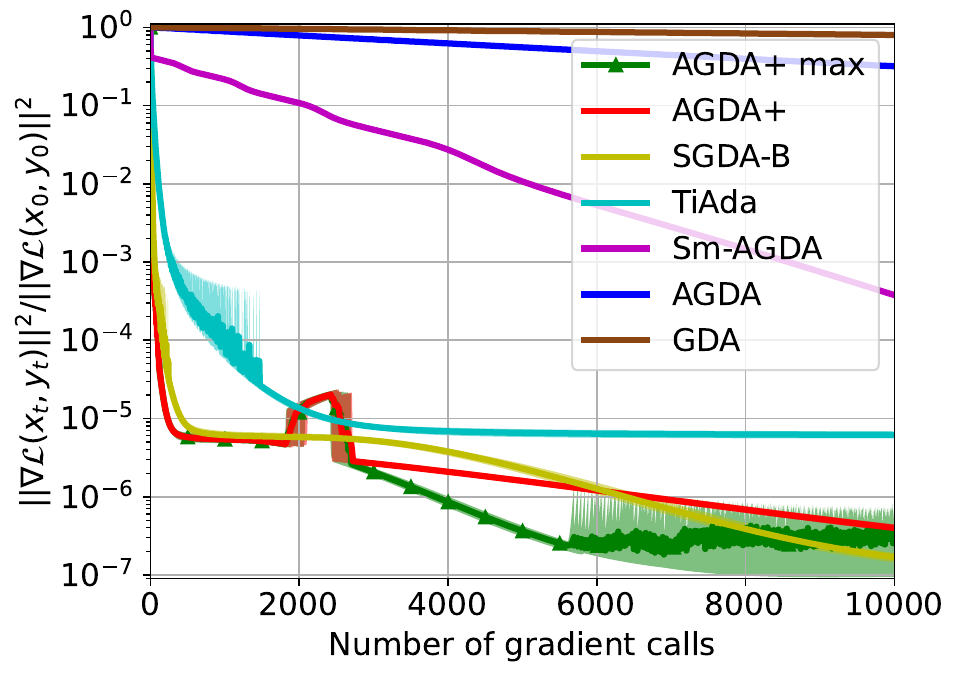}
         \caption{$\kappa=20$}
         \label{fig:sin-Q-kappa-20}
     \end{subfigure}
%      \begin{subfigure}[b]{0.24\textwidth}
%          \centering
% \includegraphics[width = \textwidth]{figure/exp:deterministic/sin_Q_stdx_0_stdy_0_muy_1_kappa_50_b_1_z.pdf}
%          \caption{$\kappa=50$}
%          \label{fig:sin-Q-kappa50-sto}
%      \end{subfigure}
     \\
     \begin{subfigure}[b]{0.33\textwidth}
         \centering
\includegraphics[width = \textwidth]{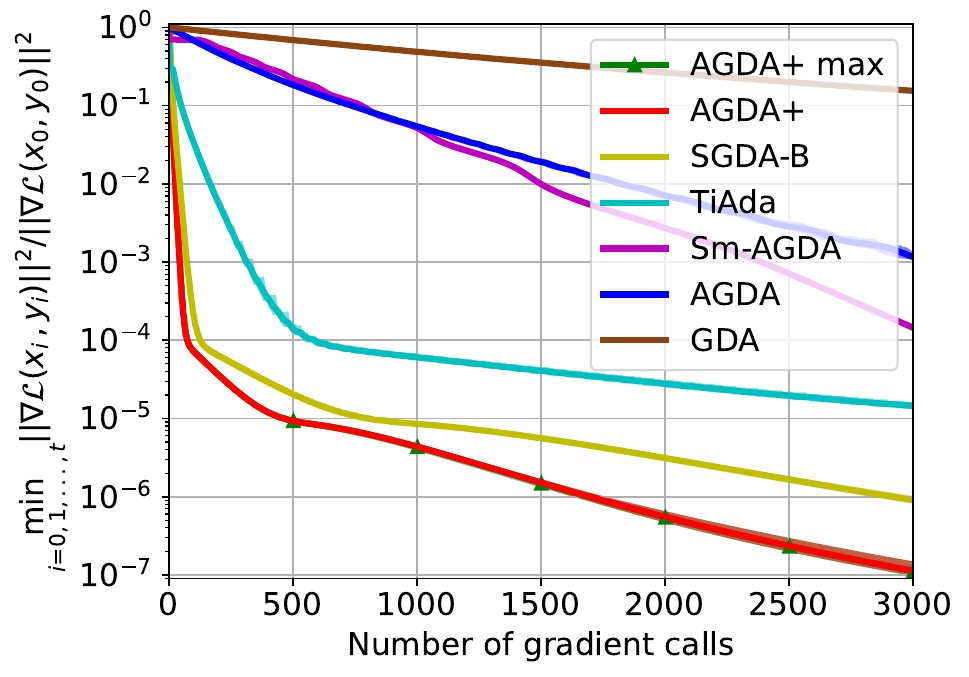}
         \caption{$\kappa=5$}
         \label{fig:sin-Q-kappa-5-last}
     \end{subfigure}
     \begin{subfigure}[b]{0.32\textwidth}
         \centering
\includegraphics[width = \textwidth]{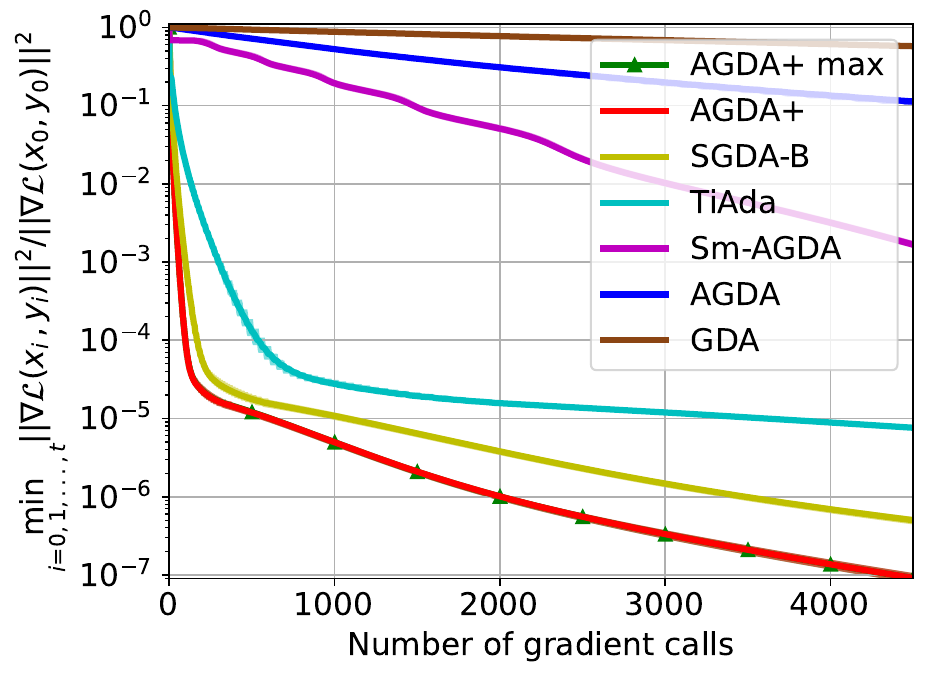}
         \caption{$\kappa=10$}
         \label{fig:sin-Q-kappa-10-last}
     \end{subfigure}
     \begin{subfigure}[b]{0.34\textwidth}
         \centering
\includegraphics[width = \textwidth]{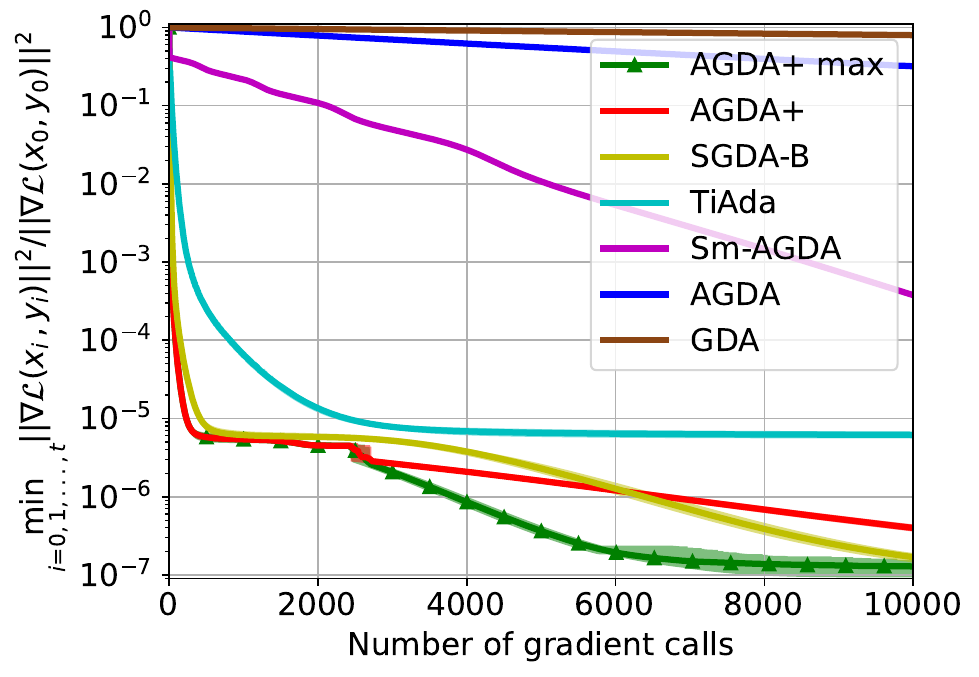}
         \caption{$\kappa=20$}
         \label{fig:sin-Q-kappa-20-last}
     \end{subfigure}
%      \begin{subfigure}[b]{0.24\textwidth}
%          \centering
% \includegraphics[width = \textwidth]{figure/exp:deterministic/Q_stdx_0_stdy_0_muy_1_kappa_50_b_1_z_last.pdf}
%          \caption{$\kappa=50$}
%          \label{fig:Q-kappa50-sto}
%      \end{subfigure}
\caption{
Comparison of \agdap{} against \sgdab{}, \tiada,  \smagda, \agda, and \gda{} on the quadratic WCSC minimax problem with sinusoidal perturbation stated in~\eqref{eq:sin-bilinear-problem} using $10$ simulation runs. Solid lines indicate the average over 10 runs and shaded regions around the solid lines denote the range statistic. The plots in the top row display the last-iterate performance while the plots in the bottom row display the best-iterate performance.}
\label{fig:sin-Q}
\centering
\end{figure*}
\subsection{\na{Real Data:} Distributionally Robust Optimization~(DRO) Problem} %with Neural Network}
%Next, we test on a distributionally robust optimization 
Consider the DRO problem 
\cite{namkoong2016stochastic},
\begin{equation}
{\small
\begin{aligned}
        \label{eq:dro-problmm}
        \min_{\bx\in\mathbb{R}^m} \max_{\by\in%\mathbb{R}^n
        \sa{S^n}}f(\bx,\by)=\sum_{i=1}^{n}y_i \ell_i(p(\ba_i;\bx),b_i) -r(\by), 
        %\\
        %s.t. &\quad \sum_{i=1}^{n}y_i=1,\;y_i\geq0,\; \forall\;i
\end{aligned}}%
\end{equation}
\sa{where $S^n=\{\by\in\reals^n:\ \sum_{i=1}^{n}y_i=1,\;y_i\geq0,\; i=1,\ldots,n\}$,} $\ba_i\in\reals^d$, and $b_i\in\{1,-1\}$ are binary classification paired data and label. \na{We adopted a slightly modified version of} the robust neural network training problem from \cite{zhang2022sapd+,Deng2022}. The function $p:\mathbb{R}^d\rightarrow \mathbb{R}^1$ denotes three-layer perceptron neural network. We use the binary logistic loss function
\sa{${\ell_i}(\bx,b_i)=\ln(1+\exp(-b_i \bx))$} and adopt \sa{the regularizer %function is defined as
$r(\by)=\frac{\mu}{2}\|\by - \mathbf{1}/n\|^2$ which makes sure that the worst case distribution will not be too far from the uniform distribution} --here, $\mathbf{1}$ denotes the vector with all entries equal to one, and we set $\mu=0.01$. 
%and $g(\by)$ is to control the weight $\by$ near the uncertainty set around the uniform distribution.
We %perform experiments 
conduct the experiment on the data set
% $i)$ \verb+a9a+ with $n = 32561$, $d = 123$;
%$i)$ 
\verb+gisette+ with $n = 6000$ and $d = 5000$, 
% $ii)$ \verb+sido0+ with $n = 12678$, $d = 4932$.
% The dataset
% \verb+sido0+ is obtained from
% Causality Workbench\footnote{http://www.causality.inf.ethz.ch/challenge.php?page=datasets} and
%\texttt{gisette} 
which can be downloaded from LIBSVM repository.\footnote{https://www.csie.ntu.edu.tw/~cjlin/libsvmtools/datasets/binary.html} We normalized the data set by letting $\ba_i = \frac{\ba_i - \min_j{a_{ij}}\cdot\mathbf{1}}{\max_{j}{a_{ij}}-\min_j{a_{ij}}}$, where $\ba_i = (a_{ij})_{j=1}^d$ is the $i$-th data for $i=1,\ldots,n$.

\paragraph{Parameter setting.} We used the full data set for all algorithms. We estimated $L$ 
%To estimate the problem parameters for other algorithms,
deriving a lower bound for it; indeed, since $\frac{\partial f}{\partial \by\partial \bx} =
 [-\frac{b_i\exp(-b_i p(\ba_i;\bx))}{1+\exp(-b_i p(\ba_i;\bx))}\frac{\partial p(\ba_i;\bx)}{\partial \bx}]_{i=1}^{n}$, 
 %we can indeed estimate a lower bound of $L$, i.e., 
 we have $L\geq \tilde{L}=\|\frac{\partial f}{\partial \by\partial \bx}\mid_{(\bx,\by)=(\bx_0,\by_0)}\|_2$, where $\bx^0$ is generated by Xaiver method for initializing deep neural networks \cite{glorot2010understanding} and $\by_0=\mathbf{1}/n$. {Since $L$ is unknown, we used $\tilde L$ to set the step sizes for \gda{}, \agda{}, and \smagda.} 
 %using lower bound of $L$ is fair for other algorithms}.  
 Indeed, we obtain $\tilde L\approx 0.0532$, thus $\kappa\approx 5.32$. \gda{} and \agda{} worked well with $\tilde L$; however, \smagda~sequence diverged so we removed it from the comparison.
 \na{In \cref{fig:DRO}, we observe that \agdap{} is competitive against the other methods we tested; this is mainly because \agdap{} is robust to the changes in the local Lipschitz constant over the problem domain and can efficiently use the local Lipshitz constant estimates to set relatively larger step sizes compared the other methods tested, which cannot use non-nonotonic step sizes. We also observe that throughout the run time, \agdap{} exhibit relatively lower range statistics over 10 runs than the other adaptive algorithms \sgdab{} and \tiada{}. Furthermore, the squared norm of gradient map values for the last iterate, $\norm{G(\bx_t,\by_k)}^2$, exhibits less fluctuations for \agdap{} when compared to \sgdab{}.}  
 %at the early stage and consistently enjoys fast convergence properties even after other algorithm gets slower.

 \begin{figure}[h]
     \centering
         \centering
\includegraphics[width = 0.33\textwidth]{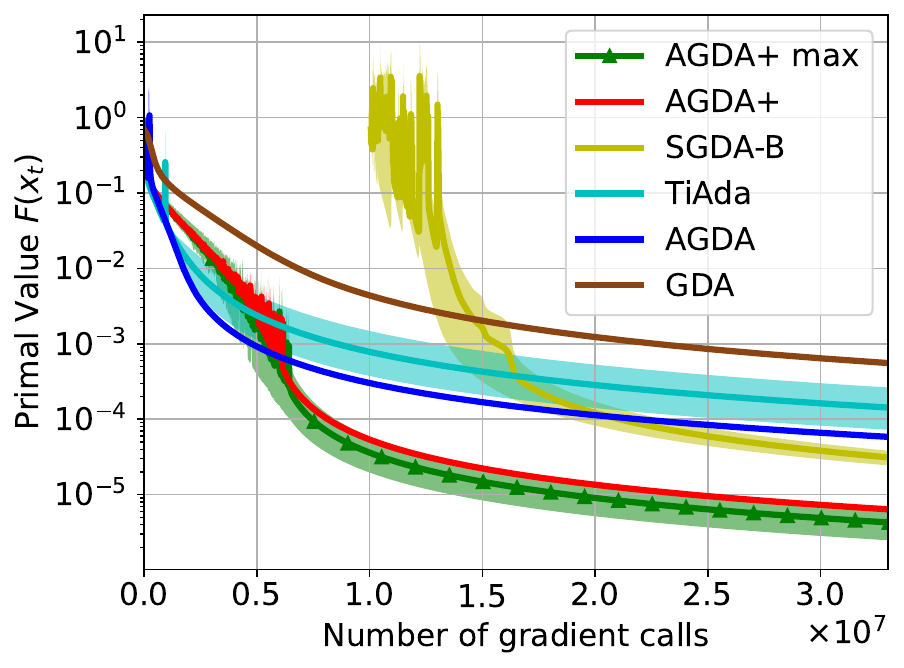}
\includegraphics[width = 0.33\textwidth]{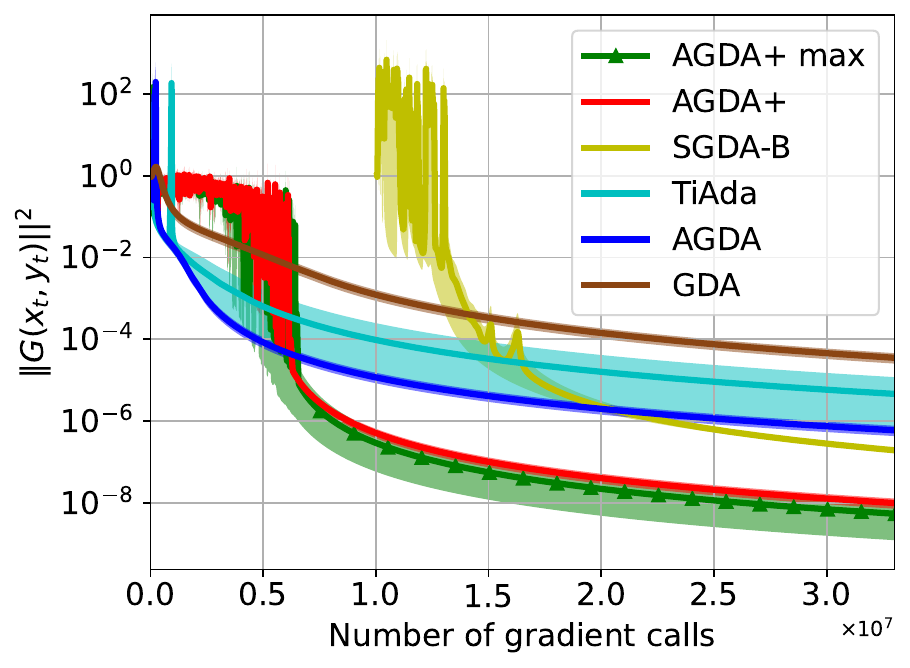}
\includegraphics[width = 0.33\textwidth]{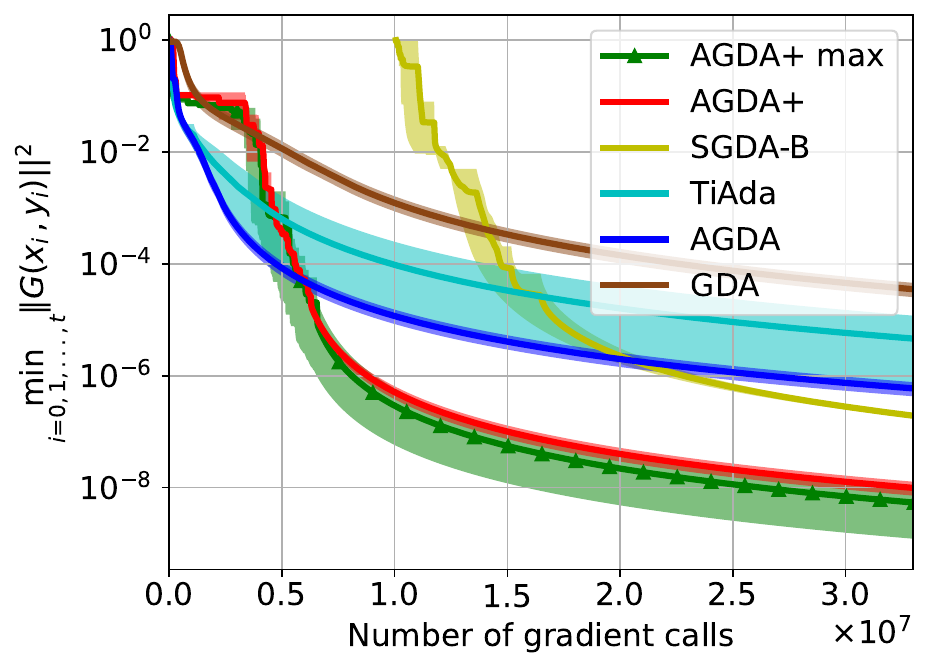}
% \includegraphics[width=0.24\textwidth]{figure/exp:deterministic/gisette_muy_0_01_kappa_1_b_6000_acc.pdf}
% \\
% \includegraphics[width = 0.3\textwidth]{figure/exp:sto/gisette_muy_0_01_kappa_1_b_200_loss.pdf}
% \includegraphics[width = 0.3\textwidth]{figure/exp:sto/gisette_muy_0_01_kappa_1_b_200_z.pdf}
% \includegraphics[width=0.3\textwidth]{figure/exp:sto/gisette_muy_0_01_kappa_1_b_200_acc.pdf} 
% \includegraphics[width=0.225\textwidth]{figure/exp:deterministic/gisette_muy_0_01_kappa_1_b_6000_acc.pdf}
% \includegraphics[width=0.23\textwidth]{figure/exp:sto/gisette_muy_0_01_kappa_1_b_200_acc.pdf} 
% \includegraphics[width = 0.23\textwidth]{figure/exp:deterministic/gisette_muy_0_01_kappa_1_b_6000_loss.pdf}\\
% \includegraphics[width = 0.235\textwidth]{figure/exp:sto/gisette_muy_0_01_kappa_1_b_200_loss.pdf}
% \includegraphics[width = 0.238\textwidth]{figure/exp:deterministic/gisette_muy_0_01_kappa_1_b_6000_z.pdf}
% \includegraphics[width = 0.23\textwidth]{figure/exp:sto/gisette_muy_0_01_kappa_1_b_200_z.pdf}
\caption{
Comparison of \agdap{} against \sgdab{}, \tiada, \agda, and \gda{} on the DRO problem stated in~\eqref{eq:dro-problmm} using $10$ simulation runs on the data set
\texttt{gisette} with $n = 6,000$ and $d = 5,000$. ``Primal value" denotes $F(\bx)=\max_{\by\in S^n} f(\bx,\by)$. %The batch sizes used are $6000$. 
Solid lines indicate the average over 10 runs and shaded regions around the solid lines denote the range statistic.
% and $200$, respectively, i.e., %we test on deterministic problems to verify our analysis, and stochastic problems for practice. 
% (left) uses deterministic gradients and (right) uses stochastic gradients.
%One epoch means one complete pass of the data set.
}
\label{fig:DRO}
\end{figure}

%  \begin{figure}[h]
%      \centering
%          \centering
%         \includegraphics[width=0.23\textwidth]{figure/exp:sto/gisette_muy_0_01_kappa_1_b_200_acc.pdf}
% \includegraphics[width = 0.23\textwidth]{figure/exp:sto/gisette_muy_0_01_kappa_1_b_200_loss.pdf}
% \includegraphics[width = 0.23\textwidth]{figure/exp:sto/gisette_muy_0_01_kappa_1_b_200_z.pdf}
% \caption{
% Comparison of \agdap{} against \gda, \agda, and \tiada~on the DRO problem in~\eqref{eq:dro-problmm} using $10$ simulation runs. ``Train error" denotes the fraction of wrong prediction, and ``loss" denotes $F(\bx)=\max_{\by\in\cY}\cL(\bx,\by)$.
% }
% \label{fig:DRO}
% \centering
% \end{figure}

\section{Conclusions}
In this paper, we considered deterministic double-regularized nonconvex strongly-concave minimax problems.
%This setting arises in many applications ranging from distributionally robust learning to GANs. 
We proposed a new method \agdap, which achieves the optimal iteration complexity of $\cO(\epsilon^2)$ with the best $\cO(1)$ constant among the existing adaptive gradient methods agnostic to Lipschitz constant $L$ \rv{and/or concavity modulus $\mu$}, i.e., \tiada{} and \neada{}; moreover, our adaptive step sizes are not necessarily monotonically decreasing, thus \agdap{} exhibits faster convergence behavior in practice when compared to the others using monotonic step size sequences. To the best of our knowledge, \agdap{} is the first adaptive gradient method that can exploit the local structure of the problem through adopting a non-monotone step size search scheme.
%We also showed that our \agdap{} can be adapted to the WCMC scenario.%Finally, we provided numerical experiments demonstrating that \agdap{} can achieve a state-of-the-art performance on distributionally robust learning arising in ML.

\bibliography{refs}
\bibliographystyle{plain}

\appendix
\onecolumn

\section{\rv{Essential definitions for the convergence proof of} \agdap{}}
\label{sec:general-alg}
We first provide some %useful
definitions that are essential for the proof. %that relate to important timings for running the algorithm.
\begin{defn}\label{def:tn} 
\na{Let $\{\hat{t}_n\}_{n\in\N}\subset\bar\N$ be a sequence such that 
\na{$\hat{t}_0=0$. For any fixed $n\in\N$, if $L_{\hat{t}_n}<+\infty$, then
$\hat{t}_{n+1} = \min\{t\in \bar\N:\ L_t>L_{\hat{t}_{n}}\}$} with the convention that $L_\infty\triangleq +\infty$; otherwise, if $L_{\hat{t}_n}=+\infty$, then $\hat t_{n+1}=\infty$.}
\end{defn}
%\xtodo{We don't need $\mu_t$ in the definition.}
%\xtodo{Should it be $t\in[\hat{t}_n, \hat{t}_{n+1})$}
\begin{defn}\label{def:N-k}
    Define ${N(t)}\triangleq\max\{n\in \mathbb{N}\xzr{:}\;t\geq \hat{t}_n\}$ for any $t\in \mathbb{N}$, and \na{$\bar N\triangleq \max\{n\in\N:\ L_{\hat t_n}<\infty\}$.}
    %integer $t\geq 0$.
\end{defn}
\begin{remark}
    The sequence $\{\hat{t}_n\}_{n\geq 0}$ denotes all the discrete-time instances at which the quantity $L_t$ increases {and $\mu_t$ decreases}.
    \na{The definition %of $\{\hat t_n\}_{n\geq 0}$ 
    implies that $\hat t_n<\hat t_{n+1}$ for all %$n\geq 0$
    $n\in[\bar N]$; moreover, $L_t = L_{\hat{t}_n}$ {and $\mu_t = \mu_{\hat{t}_n}$} for all $t\in\mathbb{N}$ such that $t\in [\hat{t}_{n},\hat{t}_{n+1})$.}  
\end{remark}
\begin{remark}
\na{For all $t\in \mathbb{N}$, it holds that} {$L_{t}=L_{\hat{t}_{N(t)}}$ {and $\mu_t = \mu_{\hat{t}_{N(t)}}$}.} 
%for all integer $t\geq 0$, where $\{\hat{t}_n\}$ are defined in Definition~\ref{def:tn}.
\end{remark}
%\xtodo{$\tilde y$ needs to be defined.}
% \begin{remark}
%     To be used in the proof, we define an auxiliary sequence $\{\tilde \by_t\}_{t\geq 1}$ such that %$\tilde \by_0=\by_0$ and 
%     for all $t\in\mathbb{N}$ we set $\tilde \by_{t+1}=\by_{t+1}$ right after $\by_{t+1}$ is computed in \algline{8}. The auxiliary sequence is never stored/computed in practice. Note that $\tilde \by_t = \by_t$ for all $t\notin\{ \hat{t}_n\}_{n\geq 0}$, and for $t\in\{ \hat{t}_n\}_{n\geq 0}$ since we reset $\by_t$ as in~\algline{4}, it is possible that $\tilde \by_t \neq \by_t$.
%     % For \texttt{correction==False}, $\tilde \by_t=\by_t$ for all $t\geq 0$. On the other hand, for \texttt{correction==True}, $\tilde \by_t = \by_t$ only when $t\notin\{ \hat{t}_n\}_{n\geq 0}$. 
%     %where $\{ \hat{t}_n\}$ are  defined in Definition~\ref{def:tn}.
% \end{remark}
%\xtodo{I think it is better to use $L_{t_n} = \frac{\mu}{\gamma_1^{k_n}}$ because part of our analysis bases on that.}
\begin{defn}\label{def:kn}
\na{Let $\cT\triangleq\{\hat t_n:\ n\in[\bar{N}]\}$ and $\cK\triangleq\{k_n:\ n\in[\bar{N}]\}$, where  $\mathbb{N}\ni k_n\triangleq\log_{1/\gamma_1}\Big(L_{\hat t_n}/\tilde l\Big)$ for all $n\in [\bar N]$.}
\end{defn}
%\qstodo{not sure about the definition of $k_n$ here}
\begin{remark}
\label{rem:L_tn}
Note that \na{$L_{t} = \tilde l/\gamma_1^{k_{N(t)}}$}
%=\tilde l/\gamma^{r k_{N(t)}}$}  
{and $\mu_t = \max\{\tilde\mu \gamma_1^{k_{N(t)}}, \underline{\mu}\}$} for all $t\in\mathbb{N}$. 
%since $\gamma_1=\gamma_2^r$ for some $r\in\mathbb{N}_+$ and $\gamma_2=\gamma\in(0,1)$. 
\rv{When $\mu$ is known, the initial estimate $\tilde\mu$ is set to $\mu$, and from the definition of $\underline{\mu}$ given in Algorithm~\ref{alg:agda}, we get $\mu_t=\mu$ always holds for all $t\in\mathbb{N}$.}
\end{remark}
%\todo{Xuan:should it be $\gamma_2^{rK_{N(t)}}$? And we also should say $\gamma_2 = \gamma_1^r$?}
%\nsa{Note that $\gamma_2=\gamma$}
{Now}, we {recall an important result}, frequently utilized {in the literature for} the convergence analysis of first-order algorithms \na{on WCSC problems.}
\begin{lemma}[Lemma 4.1 and Proposition 4.1 in~\cite{boct2020alternating}]
%[Proposition 1 \cite{chen2021proximal}]
\label{lem:primal_lipschitz}
Under \cref{ASPT:lipshiz gradient}, %,aspt:primal_lb} 
$\by^*(\cdot):\mathbb{R}^d \to \mathbb{R}^m$ is $\kappa$-Lipschitz and $\Phi(\cdot):\mathbb{R}^d \to \mathbb{R}$ is $(\kappa+1)L$-smooth, \na{where $y^*(\cdot)$ and $\Phi(\cdot)$ are defined in Definitions~\ref{def:opt-response} and \ref{def:max-function}, respectively.}  
\end{lemma}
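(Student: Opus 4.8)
The plan is to prove the two claims separately, each resting only on the first-order optimality characterization of $\by^*(\bx)$, the monotonicity of $\partial h$, the $\mu$-strong concavity of $f(\bx,\cdot)$, and the $L$-Lipschitzness of $\grad f$ from \cref{ASPT:lipshiz gradient}. First, for each fixed $\bx\in\dom g$ the map $\by\mapsto f(\bx,\by)-h(\by)$ is proper, closed, and $\mu$-strongly concave, hence admits a unique maximizer $\by^*(\bx)$ (\cref{def:opt-response}) characterized by the inclusion $\grad_\by f(\bx,\by^*(\bx))\in\partial h(\by^*(\bx))$.

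For the Lipschitz bound on $\by^*$, fix $\bx_1,\bx_2\in\dom g$ and set $\by_i\triangleq\by^*(\bx_i)$. Combining the inclusions $\grad_\by f(\bx_i,\by_i)\in\partial h(\by_i)$, $i=1,2$, with monotonicity of $\partial h$ yields $\fprod{\grad_\by f(\bx_1,\by_1)-\grad_\by f(\bx_2,\by_2),\,\by_1-\by_2}\ge 0$. Subtracting and adding $\grad_\by f(\bx_2,\by_1)$ and using the $\mu$-strong concavity of $f(\bx_2,\cdot)$, i.e., $\fprod{\grad_\by f(\bx_2,\by_1)-\grad_\by f(\bx_2,\by_2),\,\by_1-\by_2}\le-\mu\norm{\by_1-\by_2}^2$, gives $\mu\norm{\by_1-\by_2}^2\le\fprod{\grad_\by f(\bx_1,\by_1)-\grad_\by f(\bx_2,\by_1),\,\by_1-\by_2}\le L\norm{\bx_1-\bx_2}\,\norm{\by_1-\by_2}$, where the last step is Cauchy--Schwarz together with $L$-Lipschitzness of $\grad f$. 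Dividing through by $\mu\norm{\by_1-\by_2}$ gives $\norm{\by^*(\bx_1)-\by^*(\bx_2)}\le\kappa\norm{\bx_1-\bx_2}$ with $\kappa=L/\mu$.

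For the smoothness of $\Phi$ (\cref{def:max-function}), the first step is an envelope-type argument showing that $\Phi$ is differentiable with $\grad\Phi(\bx)=\grad_\bx f(\bx,\by^*(\bx))$: since $\Phi(\bx')\ge f(\bx',\by^*(\bx))-h(\by^*(\bx))$ while $\Phi(\bx)=f(\bx,\by^*(\bx))-h(\by^*(\bx))$, the $h$-terms cancel and the descent inequality applied to the $L$-smooth function $f(\cdot,\by^*(\bx))$ gives $\Phi(\bx')-\Phi(\bx)-\fprod{\grad_\bx f(\bx,\by^*(\bx)),\bx'-\bx}\ge-\tfrac{L}{2}\norm{\bx'-\bx}^2$; symmetrically, using $\Phi(\bx)\ge f(\bx,\by^*(\bx'))-h(\by^*(\bx'))$ and then adding and subtracting $\grad_\bx f(\bx,\by^*(\bx'))$ together with the just-proved Lipschitz continuity of $\by^*$ gives the matching upper bound $\tfrac{L}{2}\norm{\bx'-\bx}^2+L\kappa\norm{\bx'-\bx}^2$, so $|\Phi(\bx')-\Phi(\bx)-\fprod{\grad_\bx f(\bx,\by^*(\bx)),\bx'-\bx}|\le(\tfrac12+\kappa)L\norm{\bx'-\bx}^2$, which identifies $\grad\Phi(\bx)=\grad_\bx f(\bx,\by^*(\bx))$. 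With the gradient formula in hand, for any $\bx_1,\bx_2$ we write $\grad\Phi(\bx_1)-\grad\Phi(\bx_2)=[\grad_\bx f(\bx_1,\by^*(\bx_1))-\grad_\bx f(\bx_2,\by^*(\bx_1))]+[\grad_\bx f(\bx_2,\by^*(\bx_1))-\grad_\bx f(\bx_2,\by^*(\bx_2))]$ and bound the two brackets by $L\norm{\bx_1-\bx_2}$ and $L\norm{\by^*(\bx_1)-\by^*(\bx_2)}\le L\kappa\norm{\bx_1-\bx_2}$, respectively; hence $\norm{\grad\Phi(\bx_1)-\grad\Phi(\bx_2)}\le(\kappa+1)L\norm{\bx_1-\bx_2}$.

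I expect the main obstacle to be the envelope step, i.e., establishing differentiability of $\Phi$ and the identity $\grad\Phi=\grad_\bx f(\cdot,\by^*(\cdot))$: the textbook Danskin theorem is phrased for smooth objectives over a compact index set, whereas here the inner problem contains the nonsmooth regularizer $-h$ and $\cY$ need not be compact. The way around this is exactly as sketched above --- strong concavity forces $\by^*(\bx)$ to be the \emph{unique} maximizer, which makes the two one-sided estimates valid with the $h$-terms cancelling in every difference quotient, so only smoothness of $f$ in $\bx$ and Lipschitzness of $\by^*$ enter. The remaining manipulations are routine applications of monotonicity of $\partial h$, strong concavity of $f(\bx,\cdot)$, and the $L$-Lipschitz estimate for $\grad f$.
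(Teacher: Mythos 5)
Your proof is correct and follows the standard argument: the paper itself does not prove this lemma but cites Lemma~4.1 and Proposition~4.1 of Bo\c{t} and B\"ohm (2020), and your derivation (monotonicity of $\partial h$ plus strong concavity for the Lipschitz bound on $\by^*$, then the two-sided envelope estimate to identify $\grad\Phi=\grad_\bx f(\cdot,\by^*(\cdot))$ followed by the triangle-inequality split) is essentially the argument used in that reference. The careful handling of differentiability of $\Phi$ via the explicit sandwich --- rather than invoking a compactness-dependent Danskin theorem --- is exactly the right move given the nonsmooth regularizer $h$ and the possibly unbounded domain.
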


\section{Proof of \cref{thm:backtrack}} %{Backtracking analysis for \agdap{}}
\label{appendix:ls_condition}
\na{In this section, we show that \agdap{} is well defined, in the sense that for all $t\geq 0$, the backtracking condition \cond{t} given in \eqref{eq:backtrack} holds within a finite number of checks. We first give some technical lemmas.}
% \xz{In this section, we first propose several fundamental conditions as stated in \cref{eq:ls-all}. We shows that  i) \cref{eq:ls-all} will hold when $L_t\in [L, L/\gamma_1]$ and ${l}_t \in [L,L_t]$ in \cref{lemma:complicated-ls-hold};  ii) \cref{eq:ls-all} holding implies \cref{eq:backtrack} holding in \cref{lemma:complicated-imply-short-ls-flag-true-part-1,lemma:complicated-imply-short-ls-flag-false-part-1}. Then we conclude  \cref{thm:ls-hold-main}.
% Following that, we show the proof of \cref{thm:avg--backtrack-time}.
% %the average times for checking \cref{eq:ls-conv} for each $t\geq 0$ is.
% % which implies the backtrack iteration complexity of \agdap{} has the same order of iteration complexity of \agdap{} in the long run, i.e., as $T$ large enough. Furthermore, the iteration complexity refers to how many times we accepted the $(x_t,y_t)$ and backtrack iteration complexity refers to how many times we check the condition \cref{eq:ls-conv}.
% }

\subsection{Technical lemmas}
In this section, we give some technical results that hold for all $t\in\mathbb{N}$.
\begin{lemma}\label{lemma:complicated-ls-hold}
    \na{Under Assumption~\ref{ASPT:lipshiz gradient}, for any iteration $t\in\mathbb{N}$, at the time when the backtracking condition \emph{\cond{t}} in \eqref{eq:backtrack} is checked during the execution of 
    \emph{\shortalgline{\ref{algline:check-agda+}}} of \cref{alg:agda}, 
    %or \emph{\shortalgline{\ref{algline:check-sagda+}}} of \cref{alg:sagda+}, 
    if %$L_t\in [L, L/\gamma_1]$ and ${l}_t \in [L,L_t]$, 
    \na{$l_t\geq L$} {and $\mu_t \in(0,\mu]$,} then all the inequalities below are guaranteed to hold, \na{where $\delta_t\triangleq\norm{\by_{t}-\by^*(\bx_t)}^2$,}}
\begin{subequations}\label{eq:ls-all}
\begin{equation}
    \label{eq:LS-1}
    \begin{aligned}
        &f(\tilde\bx_{t+1},\by_{t})
        -\fprod{\grad_x f(\bx_{t},\by_{t}),~\tilde\bx_{t+1}-\bx_{t}} 
        \leq  f(\bx_{t},\by_{t}) + \frac{l_t}{2} \|\tilde\bx_{t+1}-\bx_{t}\|^2,
    \end{aligned}
\end{equation}
\begin{equation}
    \label{eq:LS-2}
    \norm{\grad_y f(\na{\tilde\bx_{t+1}},\by_{t}) - \grad_y f(\bx_{t},\by_{t})} \leq l_t \norm {\na{\tilde\bx_{t+1}}-\bx_{t}},
\end{equation}
\begin{equation}
\label{eq:LS-Lyy}
    \begin{aligned}
         & f(\tilde\bx_{t+1},\by_{t}) + \langle \grad_y f(\tilde\bx_{t+1}, \by_{t}),~ \tilde\by_{t+1} - \by_{t} \rangle \leq f(\tilde\bx_{t+1}, \tilde\by_{t+1})+ \frac{l_{t}}{2} \|\tilde\by_{t+1} - \by_{t}\|^2,
         %, \forall s,\; s.t.\;t_{N(t)}\leq s\leq t
    \end{aligned}
\end{equation}
\begin{equation}\label{eq:LS-7}
    \|\nabla_y f\left(\tilde\bx_{t+1}, \tilde\by_{t+1}\right)-\nabla_y f\left(\tilde\bx_{t+1}, \by_{t}\right)\| \leq  l_t\left\|\tilde\by_{t+1}-\by_{t}\right\|,
\end{equation}
% \begin{equation}
%     \label{eq:LS-xt+1yt}
%     \norm{G_y(\bx_{t},\by_{t})}^2 \leq \left(\frac{4}{\sigma^2_{t}}+2l_t^2\right) \Delta_t;
% \end{equation}
\begin{equation}
    \label{eq:LS-xtyt}
    \norm{G^{\sigma_t}_y(\bx_{t},\by_{t})}^2 \leq \left(\frac{4(1-\sigma_t{\mu_t})}{\sigma^2_{t}}+2l_t^2 \right) \delta_t, 
\end{equation}
\begin{equation}
    \label{eq:LS-xt+1yt}
    \norm{G^{\sigma_t}_y(\tilde\bx_{t+1},\by_{t})}^2 \leq 2\left(\frac{4(1-\sigma_t{\mu_t})}{\sigma^2_{t}}+2l_t^2 \right) \delta_t + 2l_t^2\tau_t^2\norm{G_x^{\tau_t}(\bx_{t},\by_{t})}^2.
\end{equation}
\end{subequations}
\end{lemma}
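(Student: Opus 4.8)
The plan is to verify the six inequalities in two waves. Inequalities \eqref{eq:LS-1}--\eqref{eq:LS-7} are immediate consequences of $f$ being $L$-smooth (Assumption~\ref{ASPT:lipshiz gradient}) together with the fact that the running estimate satisfies $l_t\ge L$; inequalities \eqref{eq:LS-xtyt}--\eqref{eq:LS-xt+1yt} require comparing the dual gradient map with the distance $\delta_t=\norm{\by_t-\by^*(\bx_t)}^2$ and invoking the nonexpansiveness of the proximal operator together with the $\mu$-strong concavity of $f(\bx_t,\cdot)$. Throughout, note that at the moment \cond{t} is checked, the candidate iterates $\tilde\bx_{t+1},\tilde\by_{t+1}$ have been computed from the current $(\tau_t,\sigma_t)$ with $\sigma_t=1/l_t$, and the only structural facts used are $l_t\ge L$ and $\mu_t\in(0,\mu]$.

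For the first wave I would argue as follows. Inequality \eqref{eq:LS-1} is the descent lemma applied to the $L$-smooth map $\bx\mapsto f(\bx,\by_t)$ at the points $\bx_t$ and $\tilde\bx_{t+1}$, after replacing $L$ by the larger $l_t$. Inequalities \eqref{eq:LS-2} and \eqref{eq:LS-7} follow because $\norm{\grad_y f(\cdot,\cdot)-\grad_y f(\cdot,\cdot)}\le\norm{\grad f(\cdot,\cdot)-\grad f(\cdot,\cdot)}\le L\,\norm{(\cdot,\cdot)-(\cdot,\cdot)}$; in each case the two arguments differ in only one block, so the right-hand side collapses to $L$ times the norm of that block's difference, which is $\le l_t$ times the same quantity. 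Inequality \eqref{eq:LS-Lyy} is again the descent lemma, this time for the $L$-smooth function $\by\mapsto -f(\tilde\bx_{t+1},\by)$ evaluated at $\by_t$ and $\tilde\by_{t+1}$; rearranging and replacing $L$ by $l_t$ gives exactly the claimed form. None of these uses the relation between $l_t$ and $L_t$ or the precise formula for $\tau_t$.

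For \eqref{eq:LS-xtyt}, write $\by^\star=\by^*(\bx_t)$ and note that $\by^\star$ is the fixed point of $\by\mapsto\prox{\sigma_t h}(\by+\sigma_t\grad_y f(\bx_t,\by))$, i.e.\ $G^{\sigma_t}_y(\bx_t,\by^\star)=0$. Setting $u=\by_t+\sigma_t\grad_y f(\bx_t,\by_t)$ and $u^\star=\by^\star+\sigma_t\grad_y f(\bx_t,\by^\star)$, one has $\sigma_t G^{\sigma_t}_y(\bx_t,\by_t)=\bigl(\prox{\sigma_t h}(u)-\prox{\sigma_t h}(u^\star)\bigr)+(\by^\star-\by_t)$. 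By nonexpansiveness of the prox, $\norm{\prox{\sigma_t h}(u)-\prox{\sigma_t h}(u^\star)}\le\norm{u-u^\star}$; expanding $\norm{u-u^\star}^2$ and using that $f(\bx_t,\cdot)$ is $\mu$-strongly concave with $\mu\ge\mu_t$, so $\fprod{\grad_y f(\bx_t,\by_t)-\grad_y f(\bx_t,\by^\star),\by_t-\by^\star}\le-\mu_t\delta_t$, that $\grad f$ is $L$-Lipschitz, and that $\sigma_t=1/l_t$ with $l_t\ge L$ so $\sigma_t^2L^2\le 1$, yields $\norm{u-u^\star}^2\le 2(1-\sigma_t\mu_t)\delta_t$. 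A triangle inequality followed by $(a+b)^2\le 2(a^2+b^2)$ then gives $\sigma_t^2\norm{G^{\sigma_t}_y(\bx_t,\by_t)}^2\le 2\bigl(2(1-\sigma_t\mu_t)+1\bigr)\delta_t$, which is \eqref{eq:LS-xtyt} after dividing by $\sigma_t^2$ and using $1/\sigma_t^2=l_t^2$. For \eqref{eq:LS-xt+1yt} I compare $G^{\sigma_t}_y(\tilde\bx_{t+1},\by_t)$ with $G^{\sigma_t}_y(\bx_t,\by_t)$: scaled by $\sigma_t$, their difference is the difference of two prox evaluations whose arguments differ by $\sigma_t\bigl(\grad_y f(\tilde\bx_{t+1},\by_t)-\grad_y f(\bx_t,\by_t)\bigr)$, so nonexpansiveness and \eqref{eq:LS-2} bound it by $\sigma_t l_t\norm{\tilde\bx_{t+1}-\bx_t}=\sigma_t l_t\tau_t\norm{G^{\tau_t}_x(\bx_t,\by_t)}$, since $\tilde\bx_{t+1}-\bx_t=-\tau_t G^{\tau_t}_x(\bx_t,\by_t)$; combining with \eqref{eq:LS-xtyt} via $\norm{a}^2\le 2\norm{b}^2+2\norm{a-b}^2$ delivers \eqref{eq:LS-xt+1yt}.

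The only genuinely delicate point is matching the constants in \eqref{eq:LS-xtyt}: the term $4(1-\sigma_t\mu_t)/\sigma_t^2$ must come out of the strong-concavity bound on $\norm{u-u^\star}^2$ with $\mu$ downgraded to the running estimate $\mu_t$, while the term $2l_t^2$ is precisely the price of the $(a+b)^2\le 2(a^2+b^2)$ split together with $\sigma_t=1/l_t$; one must also invoke $l_t\ge L$ in exactly the right place ($\sigma_t^2L^2\le 1$) so that the quadratic-in-$\sigma_t$ term is absorbed rather than surviving as a cross term. Everything else is routine use of the descent lemma and prox nonexpansiveness, and makes no use of the stepsize formula on \shortalgline{\ref{algeq:step-size}} or of the loop guard $l_t\le L_t$ beyond what the hypotheses $l_t\ge L$, $\mu_t\in(0,\mu]$ already provide.
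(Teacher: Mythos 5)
Your proposal is correct and follows essentially the same route as the paper: \eqref{eq:LS-1}--\eqref{eq:LS-7} from the descent lemma and $L$-Lipschitz continuity of $\grad f$ with $l_t\geq L$, and \eqref{eq:LS-xtyt}--\eqref{eq:LS-xt+1yt} by comparing with the fixed point $\by^*(\bx_t)$ (resp.\ with $G^{\sigma_t}_y(\bx_t,\by_t)$) via prox nonexpansiveness, the $\mu$-strong concavity downgraded to $\mu_t$, and the $(a+b)^2\le 2(a^2+b^2)$ split with $\sigma_t=1/l_t$ absorbing $\sigma_t^2L^2\le 1$. The constants you track match the paper's bound exactly, so no gap.
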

% \begin{remark}
% \cref{lemma:complicated-ls-hold} holds regardless of whether \texttt{correction} is set to \texttt{True} or \texttt{False}.
% \end{remark}
\begin{proof}
\na{When %\cref{alg:agda} is checking at \shortalgline{\ref{algline:check-agda+}} 
%either of these algorithms 
\agdap{} checks the backtracking condition in \eqref{eq:backtrack} at time step $t$, if $l_t\geq L$, then} 
%$L_t\in [L, L/\gamma_1]$ and ${l}_t\in [L,L_t]$, 
the conditions \eqref{eq:LS-1}, \eqref{eq:LS-2}, \eqref{eq:LS-Lyy} and \eqref{eq:LS-7} naturally hold \na{since $\grad f$ is $L$-Lipschitz according to \cref{ASPT:lipshiz gradient}.} 
%and \xz{the backtracking condition, i.e., \cref{eq:Lyy-1,eq:Lyy-2}} for $s\leq t$. 
Therefore, we only need to show \cref{eq:LS-xt+1yt,eq:LS-xtyt} also hold \na{whenever $l_t\geq L$ {and $\mu_t \in(0,\mu]$}.}

\na{Let $\by^*_t\triangleq \by^*(\bx_t)$, since $G^{\sigma_t}_y(\bx_{t},\by_{t}^*)=\mathbf{0}$, it follows that for $l_t\geq L$, we have}
% \begin{equation}\label{eq:bound-Gy2delta}
% \begin{aligned}
%         \norm{G_y (\bx_{t},\by_{t})}^2 =&
%         \norm{G_y (\bx_{t},\by_{t}) - G_y (\bx_{t},\by^*(\bx_{t}))}^2 
%         \\
%         = &
%         \norm{\grad_y f(\bx_{t+1},\by_{t+1}) - \grad_y f(\bx_{t+1},\by^*(\bx_{t+1}))}^2 
%         \\
%         \leq & L_{t,3}^2\delta_{t+1}
% \end{aligned}
% \end{equation}
\begin{equation}\label{eq:bound-Gy2delta}
    \begin{aligned}
    \MoveEqLeft[2]\norm{G^{\sigma_t}_y(\bx_{t},\by_{t})}^2 =\norm{G^{\sigma_t}_y(\bx_{t},\by_{t}) - G^{\sigma_t}_y(\bx_{t},\by_{t}^*)}^2\\
        =& \frac{1}{\sigma_{t}^2} \norm{\prox{\sigma_{t} h}\Big(\by_{t}+\sigma_{t}\grad_y f(\bx_{t},\by_{t})\Big)-\prox{\sigma_{t}h}\Big(\by_{t}^*+\sigma_{t}\grad_y f(\bx_{t},\by_{t}^*)\Big)+\by_{t}^*-\by_{t}}^2\\
        \leq &\frac{1}{\sigma_{t}^2}\Big(2\norm{\by_{t}-\by_{t}^*+\sigma_{t}\grad_y f(\bx_{t},\by_{t})-\sigma_{t}\grad_y f(\bx_{t},\by_{t}^*)}^2 + 2\norm{\by_{t}^*-\by_{t}}^2 \Big)\\
        \leq &\Big(\frac{4(1-\sigma_t{\mu_t})}{\sigma_{t}^2}+2l_t^2\Big)\delta_{t},
    \end{aligned}
\end{equation}
% \begin{equation}
%     \begin{aligned}
%         &\norm{G_y(\bx_{t+1},\by_{t+1})}^2 =\norm{G_y(\bx_{t+1},\by_{t+1}) - G_y(\bx_{t+1},\by_{t+1}^*)}^2\\
%         &= \frac{1}{\sigma_{t+1}^2} \norm{\prox{\sigma_{t+1} h}\Big(y_{t+1}+\sigma_{t+1}\grad_y f(\bx_{t+1},\by_{t+1})\Big)\\
%         &\quad-\prox{\sigma_{t+1}h}\Big(\by_{t+1}^*+\sigma_{t+1}\grad_y f(\bx_{t+1},\by_{t+1}^*)\Big)+\by_{t+1}^*-\by_{t+1}}^2\\
%         &\leq \frac{1}{\sigma_{t+1}^2}\Big(2\norm{\by_{t+1}-\by_{t+1}^*+\sigma_{t+1}\grad_y f(\bx_{t+1},\by_{t+1})-\sigma_{t+1}\grad_y f(\bx_{t+1},\by_{t+1}^*)}^2 + 2\norm{\by_{t+1}^*-\by_{t+1}}^2 \Big)\\
%         &\leq \Big(\frac{6}{\sigma_{t+1}^2}+2L_3^2\Big)\delta_{t+1}
%     \end{aligned}
% \end{equation}
where we use the nonexpansive property of $\prox{\sigma_t h}(\cdot)$ for the first inequality, \na{and we use \cref{ASPT:lipshiz gradient} 
for the last inequality --more precisely, the \na{strong} concavity of $f(\bx_t,\cdot)$, i.e., $\fprod{\grad_y f(\bx_{t},\by_{t})-\grad_y f(\bx_{t},\by^*_{t}),~\by_{t}-\by_{t}^*}\leq -\mu\norm{\by_{t}-\by_{t}^*}^2$ and {$\mu_t\in(0,\mu]$}, and $L$-Lipschitz continuity of $\grad f$. Hence, \eqref{eq:LS-xtyt} holds.} 
% Moreover, because $L_t\in [L,L/\gamma_1]$ and \cref{eq:LS-Lyy} holds, \cref{lemma:Delta-Bound} implies that
% $
%     \delta_{t}\leq \Delta_{t}
% $, \cref{eq:bound-Gy2delta} further implies that
% \xzr{\begin{equation}\label{eq:bound-Gy2.5delta}
%         \norm{G^{\sigma_t}_y(\bx_{t},\by_{t})}^2 \leq \Big(\frac{4}{\sigma_{t}^2}+2l_t^2\Big)\Delta_{t}.
% \end{equation}}

Similarly, \na{for $l_t\geq L$}, we can obtain that
%\xzr
{\begin{equation}\label{eq:bound-Gy3delta}
    \begin{aligned}
         \MoveEqLeft[2]\|G^{\sigma_t}_y(\na{\tilde\bx_{t+1}},\by_t)-G^{\sigma_t}_y(\bx_{t},\by_t)\|^2 
        \\
        = & \frac{1}{\sigma_{t}^2} \norm{\prox{\sigma_{t} h}\Big(\by_{t}+\sigma_{t}\grad_y f(\na{\tilde\bx_{t+1}},\by_{t})\Big)-\prox{\sigma_{t}h}\Big(\by_{t}+\sigma_{t}\grad_y f(\bx_{t},\by_{t})\Big)}^2\\
        \leq &\norm{\grad_y f(\na{\tilde\bx_{t+1}},\by_{t})-\grad_y f(\bx_{t},\by_{t})}^2 
        \leq  l^2_t \|\na{\tilde\bx_{t+1}}-\bx_{t}\|^2
        = l^2_t\tau_t^2 \|G_x^{\tau_t}(\bx_t,\by_t)\|^2.
    \end{aligned}
\end{equation}
Furthermore, using the simple bound, %following inequality 
\begin{equation}\label{eq:bound-Gy4delta}
    \begin{aligned}  \|G^{\sigma_t}_y(\tilde\bx_{t+1},\by_t)\|^2 \leq 2\|G^{\sigma_t}_y(\tilde\bx_{t+1},\by_t)-G^{\sigma_t}_y(\bx_{t},\by_t)\|^2 + 2\|G^{\sigma_t}_y(\bx_{t},\by_t)\|^2,
    \end{aligned}
\end{equation}
together with \cref{eq:bound-Gy2delta,eq:bound-Gy3delta} implies that \cref{eq:LS-xt+1yt} holds, which completes the proof.
%\cref{eq:ls-all} is guaranteed to hold.
}
\end{proof}

% In the following part, we will show \cref{eq:ls-all} holding implies \cref{eq:backtrack} holding for two cases: i)\texttt{correction==True} and ii)\texttt{correction==False}.
% \subsubsection{Proof of \cref{thm:ls-hold-main}: case 1 (\texttt{correction==True})}
% In this section, we will show when $t=0$ and checking at \shortalgline{18} of {alg:agda}, \cref{thm:ls-hold-main} holds under certain conditions.

\begin{remark}
    \na{The conditions in \cref{eq:LS-1,eq:LS-2,eq:LS-Lyy,eq:LS-7} are clearly related to the local Lipschitz constants for $\grad_x f(\cdot, \by_t)$, $\grad_y f(\cdot, \by_t)$ and $\grad_y f(\tilde\bx_{t+1}, \cdot)$ in the region 
    %\xtodo{Should the third term be $\grad_y f(\tilde \bx_{t+1},\cdot)$?}
    $[\bx_t, \tilde\bx_{t+1}]\times [\by_t, \tilde\by_{t+1}]$.
    %Indeed, 
    %the term $\Delta_t$ is designed to upper bound the dual optimality error $\delta_t$.
    Moreover, since $G_y^{\sigma_t}(\bx_t,\by^*(\bx_t))=\textbf{0}$, the condition in \cref{eq:LS-xtyt} %can also be interpreted as a form of 
    is also related to a local Lipshitz structure, i.e., \cref{eq:LS-xtyt} holds if and only if $\norm{G^{\sigma_t}_y(\bx_{t},\by_{t})-G^{\sigma_t}_y(\bx_t,\by^*(\bx_t))} \leq \left(\frac{4(1-\sigma_t{\mu_t})}{\sigma^2_{t}}+2l_t^2\right)^{1/2} \norm{\by_t-\by^*(\bx_t)}$.} 
    %Further discussion about the relation between $\delta_t$ and $\Delta_t$ is provided in \tbd.
\end{remark}
\begin{lemma}[Descent lemma on $\bx$]\label{lemma:descent_x-part1}
Suppose {Assumption \ref{ASPT:lipshiz gradient}
and \cref{eq:LS-1}} hold. 
%\xzr{When \cref{alg:agda} is checking the backtracking condition in \eqref{eq:backtrack} at \shortalgline{\ref{algline:check-agda+}} or \cref{alg:sagda+} is checking the backtracking condition in \eqref{eq:backtrack} at \shortalgline{18}} at time step $t\geq 0$. Then
\na{For any iteration $t\in\mathbb{N}$, at the time when the backtracking condition \emph{\cond{t}} in \eqref{eq:backtrack} is checked during the execution of 
\emph{\shortalgline{\ref{algline:check-agda+}}} of \cref{alg:agda}, 
%or \emph{\shortalgline{\ref{algline:check-sagda+}}} of \cref{alg:sagda+}, 
it holds that}
\begin{equation}\label{eq:descent_x_inner-part1}
{%\small
    \begin{aligned}
      \MoveEqLeft\Big(\tau_{t} - \frac{\tau_{t}^2l_{t}}{2}\Big)\|G_x^{\tau_t}(\bx_t,\by_t)\|^2
      \\ &\leq  %g(\bx_t)+f(\bx_t,\by_t)- f(\tilde\bx_{t+1},\tilde\by_{t+1}) -g(\tilde\bx_{t+1})
      \na{\cL(\bx_t,\by_t)-\cL(\tilde\bx_{t+1},\tilde\by_{t+1})+h(\by_t)-h(\tilde\by_{t+1})}
      +f(\tilde\bx_{t+1},\tilde\by_{t+1})- f(\tilde\bx_{t+1},\by_t).
    \end{aligned}}%
\end{equation}
\end{lemma}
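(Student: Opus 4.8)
The plan is to first simplify the right-hand side and then reduce the claim to a standard proximal-gradient descent inequality. Writing $\cL=g+f-h$ and expanding, the terms $h(\by_t)$, $h(\tilde\by_{t+1})$ and $f(\tilde\bx_{t+1},\tilde\by_{t+1})$ all cancel, so the inequality to be established is exactly
\begin{equation*}
\Big(\tau_t-\tfrac{\tau_t^2 l_t}{2}\Big)\norm{G_x^{\tau_t}(\bx_t,\by_t)}^2\leq g(\bx_t)+f(\bx_t,\by_t)-g(\tilde\bx_{t+1})-f(\tilde\bx_{t+1},\by_t).
\end{equation*}
Throughout I will abbreviate $\bg\triangleq\grad_x f(\bx_t,\by_t)$ and $G\triangleq G_x^{\tau_t}(\bx_t,\by_t)$, and use the identity $\tilde\bx_{t+1}=\bx_t-\tau_t G$ coming from \cref{def:prox}.

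\textbf{Step 1 (prox inequality).} Since $\tilde\bx_{t+1}=\prox{\tau_t g}\big(\bx_t-\tau_t\bg\big)$ and the objective $\bw\mapsto g(\bw)+\tfrac{1}{2\tau_t}\norm{\bw-(\bx_t-\tau_t\bg)}^2$ is $\tfrac{1}{\tau_t}$-strongly convex with minimizer $\tilde\bx_{t+1}$, evaluating at $\bw=\bx_t$ gives
\begin{equation*}
g(\tilde\bx_{t+1})+\tfrac{1}{2\tau_t}\norm{\tilde\bx_{t+1}-\bx_t+\tau_t\bg}^2+\tfrac{1}{2\tau_t}\norm{\bx_t-\tilde\bx_{t+1}}^2\leq g(\bx_t)+\tfrac{\tau_t}{2}\norm{\bg}^2.
\end{equation*}
Substituting $\bx_t-\tilde\bx_{t+1}=\tau_t G$, expanding $\norm{\tau_t\bg-\tau_t G}^2$, and cancelling the common $\tfrac{\tau_t}{2}\norm{\bg}^2$ term, this collapses to
\begin{equation*}
g(\tilde\bx_{t+1})\leq g(\bx_t)+\tau_t\fprod{\bg,G}-\tau_t\norm{G}^2.
\end{equation*}

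\textbf{Step 2 (local descent on $f$) and combine.} By hypothesis \eqref{eq:LS-1}, and again using $\tilde\bx_{t+1}-\bx_t=-\tau_t G$,
\begin{equation*}
f(\tilde\bx_{t+1},\by_t)\leq f(\bx_t,\by_t)-\tau_t\fprod{\bg,G}+\tfrac{l_t\tau_t^2}{2}\norm{G}^2.
\end{equation*}
Adding this to the inequality from Step 1, the $\pm\tau_t\fprod{\bg,G}$ terms cancel and we obtain $g(\tilde\bx_{t+1})+f(\tilde\bx_{t+1},\by_t)\leq g(\bx_t)+f(\bx_t,\by_t)-\big(\tau_t-\tfrac{l_t\tau_t^2}{2}\big)\norm{G}^2$, which is precisely the reduced inequality above; undoing the Step~0 rewriting of the right-hand side yields \eqref{eq:descent_x_inner-part1}. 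There is no real obstacle here; the only point requiring care is keeping the $\tfrac{1}{2\tau_t}\norm{\bx_t-\tilde\bx_{t+1}}^2$ term from strong convexity of the prox objective, since it is exactly this term that supplies the $-\tau_t\norm{G}^2$ needed for the cross terms with \eqref{eq:LS-1} to cancel and leave the clean coefficient $\tau_t-\tfrac{\tau_t^2 l_t}{2}$.
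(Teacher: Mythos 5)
Your proof is correct and follows essentially the same route as the paper: the paper applies the subgradient optimality condition for $\prox{\tau_t g}$ together with convexity of $g$ (equivalent to your strong-convexity-of-the-prox-objective step), combines it with \eqref{eq:LS-1}, and then adds and subtracts $f(\tilde\bx_{t+1},\tilde\by_{t+1})$, which is just the reverse of your Step~0 cancellation of the $h$ and $f(\tilde\bx_{t+1},\tilde\by_{t+1})$ terms. The differences are purely presentational.
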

% \begin{remark}
% \cref{lemma:descent_x-part1} holds regardless of whether \texttt{correction} is set to \texttt{True} or \texttt{False}.
% \end{remark}
\begin{proof}
\na{From the update rule for $\tilde\bx_{t+1}$, we have $\frac{1}{\tau_t}(\bx_t-\tilde\bx_{t+1}) -\grad_x f(\bx_t,\by_t)\in\partial g(\tilde\bx_{t+1})$. Since $g$ is convex, we get
\begin{equation*}
    g(\tilde\bx_{t+1})+\fprod{\frac{1}{\tau_t}(\bx_t-\tilde\bx_{t+1}) -\grad_x f(\bx_t,\by_t),~\bx_t-\tilde\bx_{t+1}} \leq g(\bx_t).
\end{equation*}
Combining it with \cref{eq:LS-1} yields that
\begin{equation}
     g(\tilde\bx_{t+1})+f(\tilde\bx_{t+1},\by_{t})+\frac{1}{\tau_t}\norm{\bx_t-\tilde\bx_{t+1}}^2\leq g(\bx_t)+f(\bx_{t},\by_{t})+\frac{l_{t}}{2}\| \tilde\bx_{t+1}-\bx_t \|^2.
\end{equation}
Finally, using the fact $\frac{1}{\tau_t}(\bx_t-\tilde\bx_{t+1})=G_x^{\tau_t}(\bx_t,\by_t)$, we get
\begin{equation}\label{eq:use_ls1-temp-part1}
    \begin{aligned}
      \Big(\tau_{t} - \frac{\tau_{t}^2l_{t}}{2}\Big)\|G_x^{\tau_t}(\bx_t,\by_t)\|^2\leq &g(\bx_t)+f(\bx_t,\by_t) - f(\tilde\bx_{t+1},\by_{t}) -g(\tilde\bx_{t+1}),
    \end{aligned}
\end{equation}
and the desired inequality is obtained after adding and subtracting $f(\tilde\bx_{t+1},\tilde\by_{t+1})$ to the rhs of the inequality above.}
\end{proof}

\subsection{Case analysis for the proof}
\na{Our objective is to show that for every $t\in\mathbb{N}$, the backtracking condition \cond{t} stated in \eqref{eq:backtrack} holds after checking \algline{\ref{algline:check-agda+}} finitely many times. Hence, for any $t\in\mathbb{N}$ given, we consider four exhaustive cases depending on the current values of $L_t,{\mu_t}$ and $\hat t_{N(t)}$ at the time we check \algline{\ref{algline:check-agda+}}. %It is important to 
Note that within the same iteration $t$, one may check \algline{\ref{algline:check-agda+}} multiple times with different values of $\hat t_{N(t)}$, $l_t, {\mu_t}$ and $L_t$:~\footnote{\na{Every time \cond{t} in \eqref{eq:backtrack} fails to hold, we increase the value of $l_t$ within the same iteration $t$. Note that one may have $t>\hat t_{N(t)}$ for the first time %\eqref{eq:backtrack}
\cond{t} is checked within iteration $t$; however, if \eqref{eq:backtrack} fails to hold consecutively multiple times, then one may get $l_t>L_t$ eventually after increasing the value of $l_t$ for each time \eqref{eq:backtrack} fails, in which case, according to \agdap{} one \rv{decreases the value of $\mu_t$} and increases the value of $L_t$, leading to the case $t=\hat t_{N(t)}$ for the same iteration $t$.}}
\begin{enumerate}[align=left]
    \item[\textbf{CASE 1:} $t=\hat t_{N(t)}$.] We consider two subcases, i.e.,\\[1mm] 
    (\textbf{CASE 1a}) $L_t<L$ or {$\mu_t>\mu$}, and (\textbf{CASE 1b}) $L_t\geq L$ and {$\mu_t \in (0,\mu]$};
    \item[\textbf{CASE 2:} $t>\hat t_{N(t)}$.] We consider two subcases, i.e.,\\[1mm] 
    (\textbf{CASE 2a}) $L_t<L$  or {$\mu_t>\mu$}, and (\textbf{CASE 2b}) $L_t\geq L$  and {$\mu_t \in (0,\mu]$}. 
\end{enumerate}}%

\subsubsection{\na{Analysis of CASE 1b: $t=\hat t_{N(t)}$ and $L_t\geq L$ and {$\mu_t \in (0,\mu]$}}}
\label{sec:agda-t=tn}
%{\na{Induction proof for \cref{alg:agda} (Base case: $t= \hat{t}_n$ for some $n\in\mathbb{N}$)}}
\na{For given $t\in\mathbb{N}$, if one has $t=\hat t_{N(t)}$, then either $t=0$ %it means that 
or the value of $L_t$ has increased within iteration $t$, i.e., $L_t>L_{t-1}$. Since $\hat t_0=0$, in either case, there exists some $n\in\mathbb{N}$ such that $t=\hat t_n$. %In this section, 
Here we first show that \cond{t} in \eqref{eq:backtrack} \textit{always} holds when it is checked for $t=\hat t_n$ such that $l_t\geq L$ and \rv{$\mu_t\leq \mu$}. Next, building on this result, we argue that \cond{t} will hold after checking \algline{\ref{algline:check-agda+}} finitely many times if CASE 1b holds for the given iteration $t\in\mathbb{N}$. The results in this section depend on the value of \ms{} flag.}
\begin{lemma}\label{lemma:Delta-Bound-tn-true}
 \na{Suppose Assumption \ref{ASPT:lipshiz gradient} holds, and there exists $n\in\mathbb{N}$ such that $t=\hat t_n$ at the time when the backtracking condition \emph{\cond{t}} in \eqref{eq:backtrack} is checked, during the execution of \emph{\shortalgline{\ref{algline:check-agda+}}} in \cref{alg:agda}, for iteration $t$. If \emph{\msf{True}}, then it holds that $\delta_{t}\leq \Delta_{t}=\frac{2\zeta}{{\mu_{t}}}$; otherwise, for the case \emph{\msf{False}}, one has $\delta_{t}\leq \Delta_{t}=\min\Big\{\Big(1+\frac{2 L_{t}}{{\mu_{t}}}\Big) d_t,~\bar \cD_y\Big\}^2$ %for $n\geq 1$ 
 whenever $L_t\geq L$ and $\mu_t \in (0,\mu]$
 %and $\delta_{\hat t_n}\leq \Delta_{\hat t_n}=\frac{2\zeta}{\mu}$ for $n=0$, 
 where $d_t$ is defined in \emph{\algline{\ref{algeq:haty-d}}}, and $\bar \cD_y=\cD_y$ if $\cD_y<+\infty$ is known; otherwise, $\bar\cD_y$ is set to $+\infty$ within \agdap{}.} 
\end{lemma}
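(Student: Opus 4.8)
The plan is to split on the value of the \texttt{max\_solver} flag, in each case exploiting exactly how $\by_t$ is overwritten just before \cond{t} is first checked at a time step with $t=\hat t_n$. The first thing to pin down is the bookkeeping: by the structure of \cref{alg:agda}, when $t=\hat t_n$ (equivalently, when $L_t$ has just been increased, or $t=0$) the loop re-enters its body with $\texttt{flag}=\texttt{False}$, so $\by_t$ is reset --- to a point with $\cL(\bx_t,\by_t)+\zeta\geq\max_{\by\in\cY}\cL(\bx_t,\by)$ when \msf{True} (\shortalgline{\ref{algeq:haty-max}}), or, writing $\by_t'$ for the value prior to the reset, to $\hat\by_t=\prox{h/L_t}\!\big(\by_t'+\tfrac{1}{L_t}\grad_y f(\bx_t,\by_t')\big)$ with $d_t=\|\hat\by_t-\by_t'\|$ when \msf{False} (\texttt{lines}~\ref{algeq:haty-d}--\ref{algeq:haty-y}) --- after which $\texttt{flag}$ becomes $\texttt{True}$, so the ``$\texttt{IF flag}==\texttt{True}$'' block has already assigned $\Delta_t=\tfrac{2\zeta}{\mu_t}$ (resp.\ $\Delta_t=\min\{(1+\tfrac{2L_t}{\mu_t})d_t,\bar\cD_y\}^2$) in the \msf{True} (resp.\ \msf{False}) case. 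It thus remains to bound $\delta_t=\|\by_t-\by^*(\bx_t)\|^2$ by these expressions, using $\mu_t\in(0,\mu]$ (and, for the \msf{False} case, also $L_t\geq L$).

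For \msf{True}, I would note that $\cL(\bx_t,\cdot)=g(\bx_t)+f(\bx_t,\cdot)-h(\cdot)$ is $\mu$-strongly concave under \cref{ASPT:lipshiz gradient} --- $g(\bx_t)$ is a constant, $f(\bx_t,\cdot)$ is $\mu$-strongly concave, and $-h$ is concave --- with unique maximizer $\by^*(\bx_t)$. The quadratic-growth inequality at the maximizer of a strongly concave function then gives $\tfrac{\mu}{2}\delta_t\leq\cL(\bx_t,\by^*(\bx_t))-\cL(\bx_t,\by_t)\leq\zeta$, hence $\delta_t\leq\tfrac{2\zeta}{\mu}\leq\tfrac{2\zeta}{\mu_t}=\Delta_t$, the last step using $\mu_t\leq\mu$.

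For \msf{False}, the plan is to control $\|\by_t'-\by^*(\bx_t)\|$ by $d_t$ and then add $d_t$ back via $\|\by_t-\by^*(\bx_t)\|=\|\hat\by_t-\by^*(\bx_t)\|\leq d_t+\|\by_t'-\by^*(\bx_t)\|$. From the prox optimality condition $L_t(\by_t'-\hat\by_t)+\grad_y f(\bx_t,\by_t')\in\partial h(\hat\by_t)$, the optimality condition $\grad_y f(\bx_t,\by^*(\bx_t))\in\partial h(\by^*(\bx_t))$ for $\by^*(\bx_t)$, and monotonicity of $\partial h$, I obtain $\fprod{L_t(\by_t'-\hat\by_t)+\grad_y f(\bx_t,\by_t')-\grad_y f(\bx_t,\by^*(\bx_t)),~\hat\by_t-\by^*(\bx_t)}\geq 0$. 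Setting $\ba\triangleq\by_t'-\by^*(\bx_t)$, $\bb\triangleq\hat\by_t-\by_t'$ (so $\|\bb\|=d_t$), expanding this inner product, discarding the nonpositive term $-L_t\|\bb\|^2$, and then invoking $\mu$-strong concavity of $f(\bx_t,\cdot)$ (to bound $\fprod{\grad_y f(\bx_t,\by_t')-\grad_y f(\bx_t,\by^*(\bx_t)),\ba}\leq-\mu\|\ba\|^2$), $L$-Lipschitzness of $\grad_y f(\bx_t,\cdot)$, and $L\leq L_t$, I expect to reach $\mu\|\ba\|^2\leq (L_t+L)d_t\|\ba\|\leq 2L_t d_t\|\ba\|$, i.e.\ $\|\by_t'-\by^*(\bx_t)\|\leq\tfrac{2L_t}{\mu}d_t\leq\tfrac{2L_t}{\mu_t}d_t$; hence $\sqrt{\delta_t}\leq(1+\tfrac{2L_t}{\mu_t})d_t$. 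The other half of the $\min$ is immediate: since $\by_t=\hat\by_t\in\dom h$ and $\by^*(\bx_t)\in\dom h$, we get $\delta_t\leq\cD_y^2$ whenever $\bar\cD_y=\cD_y<\infty$, while $\bar\cD_y=+\infty$ makes that term vacuous; combining, $\delta_t\leq\min\{(1+\tfrac{2L_t}{\mu_t})d_t,\bar\cD_y\}^2=\Delta_t$.

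The main obstacle I anticipate is this \msf{False} bound: one must juggle the merely $(-\mu)$-``monotone'' operator $\grad_y f(\bx_t,\cdot)$, the monotone $\partial h$, and a single prox step so that the constant comes out to be exactly $1+\tfrac{2L_t}{\mu_t}$. In particular, the sign bookkeeping that lets the $-L_t\|\bb\|^2$ term be dropped, and the use of $L_t\geq L$ to turn $L_t+L$ into $2L_t$, together with $\mu_t\leq\mu$ in the final division, are precisely where the hypotheses of the case under analysis are consumed, and lining these inequalities up cleanly is the delicate part; everything else is routine.
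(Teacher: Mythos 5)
Your proof is correct and follows essentially the same route as the paper's: in the \msf{True} case the quadratic-growth bound $\tfrac{\mu}{2}\delta_t\leq\zeta$ from strong concavity of $\cL(\bx_t,\cdot)$, and in the \msf{False} case the bound $\norm{\by_t'-\by^*(\bx_t)}\leq\tfrac{2L_t}{\mu_t}d_t$ followed by the triangle inequality and the trivial $\bar\cD_y$ cap. The only difference is that where the paper obtains that key \msf{False}-case inequality by citing Nesterov's Eq.~(2.2.15) for the gradient mapping, you derive the equivalent estimate $\mu\norm{\ba}\leq(L_t+L)d_t\leq 2L_t d_t$ directly from the prox optimality condition, monotonicity of $\partial h$, strong concavity, and $L\leq L_t$ --- a self-contained substitute that consumes exactly the same hypotheses and yields the same constant.
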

\begin{proof}
    \na{For the case \msf{True}, the proof of $\delta_t\leq\Delta_t$ directly follows from \cref{lemma:y0-bound} since according to \shortalgline{\ref{algeq:haty-max}} %and \shortalgline{\ref{algeq:haty-y}} 
    in \cref{alg:agda}, $\by_t$ satisfies $\cL(\bx_t,\by_t)+\zeta\geq \max_{\by\in\cY}\cL(\bx_t,\by)$ for $t=\hat t_n$ and we have $\Delta_{\hat t_n}=\frac{2\zeta}{{\mu_{t}}}$ according to \algline{\ref{algeq:delta_def1}} {and $\mu_t \in (0,\mu]$}.} 
    %Moreover, for $t=0$ when \msf{False}, the same result holds because $\by_0$ is set the same way also for this case.
    %that $\delta_t\leq \Delta_t = \frac{2\zeta}{\mu}$.
    
     \na{For the case \msf{False}, %when $t=\hat{t}_n$, i.e., $n>0$,
     according to \algline{\ref{algeq:delta_def2}},
     we have $\Delta_t=\min\{(1+\frac{2 L_{t}}{{\mu_{t}}}) d_t^2,~\bar \cD_y^2\}$ where $d_t=\norm{\hat\by_{t} - \by_{t}}$ is computed before we set $\by_t\gets\hat\by_t$ in \shortalgline{\ref{algeq:haty-y}}. Since $\hat\by_{t} = \prox{h/L_{t}} (\by_{t} + \frac{1}{L_{t}} \grad_y f(\bx_{t},\by_{t}))$, it follows from \cite[Eq.~(2.2.15)]{nesterov_convex} that  $\frac{{\mu_{t}}}{2}\norm{\by_{t} - \by^*(\bx_t)}^2\leq \langle{L_t(\hat\by_t-\by_t)}, \by_{t} - \by^*(\bx_t)\rangle$ holds whenever $L_t\geq L$ and {$\mu_t \in (0,\mu]$}, which further implies ${\norm{\by_{t} - \by^*(\bx_t)}} \leq {\frac{2 L_{t}}{{\mu_{t}}} \norm{\hat\by_{t} - \by_{t}}}$. Therefore, using triangular inequality, we get $\norm{\hat\by_t-\by^*(\bx_t)}\leq\norm{\hat\by_t-\by_t}+\norm{\by_t-\by^*(\bx_t)}\leq (1+\frac{2L_t}{{\mu_{t}}})d_t$; consequently, after resetting $\by_t$ to $\hat\by_t$ in \shortalgline{\ref{algeq:haty-y}}, we get the following $\delta_t$ bound: $\delta_t=\norm{\by_t-\by^*(\bx_t)}\leq (1+\frac{2L_t}{{\mu_{t}}})d_t$. 
     %On the other hand, recall that when $\cD_y$ is known, we set $\bar \cD_y = \cD_y$; otherwise, $\bar \cD_y=+\infty$. Thus, we also have
     Furthermore, $\delta_t\leq \bar\cD_y^2$ trivially holds from the definition of $\overline{D}_y$; hence, the proof is complete.} 
\end{proof}

\begin{lemma}\label{lemma:complicated-true-tn}
Suppose \cref{ASPT:lipshiz gradient} holds. For iteration $t\in\mathbb{N}$,
\na{if there exists $n\in\mathbb{N}$ such that $t=\hat t_n$ and $l_t\geq L$ and {$\mu_t \in (0,\mu]$} at the time when the backtracking condition \emph{\cond{t}} in \eqref{eq:backtrack} is checked, during the execution of \emph{\shortalgline{\ref{algline:check-agda+}}} in \cref{alg:agda}, then it holds that}
    \begin{equation}
    \label{eq:alternative_ls_appendix-true-part1-tn}
        \begin{aligned}
        \MoveEqLeft\Big(\tau_t - \frac{\tau_t^2 l_t}{2} - \frac{3\sigma_t \tau_t^2l_t^2}{2}\Big)\norm{G^{\tau_t}_x(\bx_t,\by_t)}^2 + {\sigma_t}\norm{G^{\sigma_t}_y(\bx_{t},\by_t)}^2\na{+\sigma_{t}^2\frac{{\mu_t}}{2}\left\|{G^{\sigma_{t}}_y(\tilde\bx_{t+1},\by_{t})}\right\|^2}\\
        \leq& \cL(\bx_t,\by_t) - \na{\cL(\tilde\bx_{t+1},\tilde\by_{t+1})} + 2 \Big(\frac{4(1-\sigma_t{\mu_t})}{\sigma_t}+2\sigma_tl_t^2\Big) \Delta_t + \na{\Lambda_t},
        \end{aligned}
    \end{equation}
    where $\Lambda_t=\zeta + L_t \norm{\by_t-\na{\tilde\by_{t+1}}} \sqrt{\frac{2\zeta}{{\mu_{t}}}}$ if \emph{\msf{True}}, and $\Lambda_t=2d_t L_t \norm{\by_t-\na{\tilde\by_{t+1}}}$ if \emph{\msf{False}}. \na{Therefore, for $\sigma_t=1/l_t$, \eqref{eq:alternative_ls_appendix-true-part1-tn} implies that \eqref{eq:ls-conv} holds with $R_t=0$.}
\end{lemma}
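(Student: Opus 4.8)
The plan is to chain three estimates that are already available in this excerpt and then match constants. Since we are in the regime $l_t\ge L$ and $\mu_t\in(0,\mu]$, \cref{lemma:complicated-ls-hold} supplies the descent inequality \eqref{eq:LS-1}, the dual descent inequality \eqref{eq:LS-Lyy}, and the gradient-map bounds \eqref{eq:LS-xtyt}--\eqref{eq:LS-xt+1yt}; moreover, \cref{lemma:Delta-Bound-tn-true} gives $\delta_t\le\Delta_t$ with $\Delta_t$ equal to the quantity set in line~\ref{algeq:delta_def1} (\msf{True}) or line~\ref{algeq:delta_def2} (\msf{False}). I would start from the primal descent bound \eqref{eq:descent_x_inner-part1} of \cref{lemma:descent_x-part1}: since $h(\by_t)-h(\tilde\by_{t+1})+f(\tilde\bx_{t+1},\tilde\by_{t+1})-f(\tilde\bx_{t+1},\by_t)=\cL(\tilde\bx_{t+1},\tilde\by_{t+1})-\cL(\tilde\bx_{t+1},\by_t)$, it reads $\big(\tau_t-\tfrac{\tau_t^2l_t}{2}\big)\|G_x^{\tau_t}(\bx_t,\by_t)\|^2\le \cL(\bx_t,\by_t)-\cL(\tilde\bx_{t+1},\tilde\by_{t+1})+\Xi_t$, where $\Xi_t\triangleq\cL(\tilde\bx_{t+1},\tilde\by_{t+1})-\cL(\tilde\bx_{t+1},\by_t)\ge 0$ is the gain of the proximal ascent step.

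Next I would control $\Xi_t$. Combining the optimality condition of the prox-ascent update defining $\tilde\by_{t+1}$, the descent inequality \eqref{eq:LS-Lyy}, and the $\mu_t$-strong concavity of $f(\tilde\bx_{t+1},\cdot)$ yields, for $\sigma_t=1/l_t$, the three-point inequality $\cL(\tilde\bx_{t+1},\by)-\cL(\tilde\bx_{t+1},\tilde\by_{t+1})\le\big(\tfrac{1}{2\sigma_t}-\tfrac{\mu_t}{2}\big)\|\by-\by_t\|^2-\tfrac{1}{2\sigma_t}\|\by-\tilde\by_{t+1}\|^2$ for every $\by$; taking $\by=\by_t$ gives $\Xi_t\le\big(\sigma_t-\tfrac{\mu_t\sigma_t^2}{2}\big)\|G_y^{\sigma_t}(\tilde\bx_{t+1},\by_t)\|^2$, so that $\big(\tau_t-\tfrac{\tau_t^2l_t}{2}\big)\|G_x^{\tau_t}(\bx_t,\by_t)\|^2+\tfrac{\mu_t\sigma_t^2}{2}\|G_y^{\sigma_t}(\tilde\bx_{t+1},\by_t)\|^2\le \cL(\bx_t,\by_t)-\cL(\tilde\bx_{t+1},\tilde\by_{t+1})+\sigma_t\|G_y^{\sigma_t}(\tilde\bx_{t+1},\by_t)\|^2$, which already produces the third term on the left of \eqref{eq:alternative_ls_appendix-true-part1-tn}. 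The term $\Lambda_t$ enters when relating the \emph{reset} iterate $\by_t$ to the true maximizer(s) $\by^*(\bx_t)$ (and $\by^*(\tilde\bx_{t+1})$): because $\by_t$ is only an approximate maximizer of $\cL(\bx_t,\cdot)$ — a $\zeta$-maximizer when \msf{True}, or a single prox-gradient point $\hat\by_t$ when \msf{False} — the suboptimality gap, controlled through $\sqrt{\delta_t}\le\sqrt{\Delta_t}$, appears paired with $\|\tilde\by_{t+1}-\by_t\|=\sigma_t\|G_y^{\sigma_t}(\tilde\bx_{t+1},\by_t)\|$ in a cross term of the form $L_t\|\by_t-\tilde\by_{t+1}\|\sqrt{2\zeta/\mu_t}$ (plus a standalone $\zeta$), i.e.\ exactly the $\Lambda_t$ of line~\ref{algeq:delta_def1}; in the \msf{False} case the analogous cross term $2d_tL_t\|\by_t-\tilde\by_{t+1}\|$ comes from the one-step prox-gradient analysis of $\hat\by_t$ used in \cref{lemma:Delta-Bound-tn-true} (with $\Delta_t$ additionally capped by $\bar\cD_y^2$, which only strengthens $\delta_t\le\Delta_t$).

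Finally, I would dispose of the remaining dual gradient-map terms: use \eqref{eq:LS-xtyt} to bound $\sigma_t\|G_y^{\sigma_t}(\bx_t,\by_t)\|^2$ by a multiple of $\delta_t$, use \eqref{eq:LS-xt+1yt} to bound the residual $\sigma_t\|G_y^{\sigma_t}(\tilde\bx_{t+1},\by_t)\|^2$ by a multiple of $\delta_t$ plus a multiple of $\tau_t^2\|G_x^{\tau_t}(\bx_t,\by_t)\|^2$, and then replace $\delta_t$ by $\Delta_t$; collecting everything with $\sigma_t=1/l_t$ and $\mu_t\sigma_t=\mu_t/l_t<1$ (the Remark after \eqref{eq:backtrack}) makes the $\Delta_t$-coefficient collapse to $2\big(\tfrac{4(1-\sigma_t\mu_t)}{\sigma_t}+2\sigma_tl_t^2\big)$ and moves the leftover primal gradient-map mass onto the coefficient $\tfrac{3\sigma_t\tau_t^2l_t^2}{2}$ that gets transferred to the left-hand side — here the smallness of $\tau_t$ from line~\ref{algeq:step-size} (whose denominator carries the factor $(15L_t-8\mu_t)L_t^3/\mu_t^4$) is what guarantees no positive $\|G_x^{\tau_t}\|^2$ mass overflows. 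This is \eqref{eq:alternative_ls_appendix-true-part1-tn}. The implication to \eqref{eq:ls-conv} with $R_t=0$ is then a constant comparison: with $\sigma_t=1/l_t$ one has $\tau_t-\tfrac{\tau_t^2l_t}{2}-\tfrac{3\sigma_t\tau_t^2l_t^2}{2}=\tau_t-2\tau_t^2l_t\ge\tau_t-\big(2+\tfrac1\gamma\big)\tau_t^2l_t$ and $2\big(\tfrac{4(1-\sigma_t\mu_t)}{\sigma_t}+2\sigma_tl_t^2\big)=12l_t-8\mu_t=4(3l_t-2\mu_t)$, so \eqref{eq:alternative_ls_appendix-true-part1-tn} is termwise at least as strong as \eqref{eq:ls-conv} with $R_t=0$.

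I expect the main obstacle to be the bookkeeping in the last two steps: cleanly separating which cross terms become $\Lambda_t$ and which are folded back into the dual gradient-map bounds \eqref{eq:LS-xtyt}--\eqref{eq:LS-xt+1yt}, and carefully tracking the $\|G_x^{\tau_t}(\bx_t,\by_t)\|^2$ coefficients through every substitution so that, after transferring $\tfrac{3\sigma_t\tau_t^2l_t^2}{2}\|G_x^{\tau_t}\|^2$ to the left, nothing positive in $\|G_x^{\tau_t}\|^2$ remains on the right — the exact form of $\tau_t$ in line~\ref{algeq:step-size} is what makes this close.
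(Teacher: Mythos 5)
Your control of the ascent gain $\Xi_t=\cL(\tilde\bx_{t+1},\tilde\by_{t+1})-\cL(\tilde\bx_{t+1},\by_t)$ is where the argument breaks. The three-point prox inequality you invoke, evaluated at $\by=\by_t$, reads $\cL(\tilde\bx_{t+1},\by_t)-\cL(\tilde\bx_{t+1},\tilde\by_{t+1})\leq-\tfrac{1}{2\sigma_t}\norm{\by_t-\tilde\by_{t+1}}^2$, i.e.\ it yields the \emph{lower} bound $\Xi_t\geq\tfrac{\sigma_t}{2}\norm{G^{\sigma_t}_y(\tilde\bx_{t+1},\by_t)}^2$, not the upper bound $\Xi_t\leq(\sigma_t-\tfrac{\mu_t\sigma_t^2}{2})\norm{G^{\sigma_t}_y(\tilde\bx_{t+1},\by_t)}^2$ that your chain relies on. Worse, that upper bound is false in general for nonsmooth $h$ unless one uses that the reset point $\by_t$ is (approximately) dual optimal: take $h(y)=M|y|$, $f(\bx,y)=-\tfrac{\mu}{2}y^2$, $\by_t=y_0>0$ and $M$ large, so the prox-ascent step lands at $0$; then $\Xi_t\approx My_0$ while $\sigma_t\norm{G^{\sigma_t}_y(\tilde\bx_{t+1},\by_t)}^2=y_0^2/\sigma_t$, and the ratio is unbounded. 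The only thing that rules this out in \agdap{} is precisely the approximate optimality of the reset $\by_t$, and exploiting it is what produces $\Lambda_t$: the paper's proof upper-bounds $\Xi_t$ by combining $\mu_t$-strong concavity of $f(\tilde\bx_{t+1},\cdot)$ with the surrogate subgradient inequality $h(\by_t)-h(\tilde\by_{t+1})\leq\fprod{-\grad_y f(\bx_t,\by_t),\,\tilde\by_{t+1}-\by_t}+\Lambda_t$ (from \cref{lemma:y0-bound} when \msf{True}, and from the prox-optimality of $\hat\by_t$ plus Lipschitzness when \msf{False}), so that only the gradient difference $\grad_y f(\tilde\bx_{t+1},\by_t)-\grad_y f(\bx_t,\by_t)$ remains, handled by Young's inequality and \eqref{eq:LS-2}, leaving a residual $\tfrac{\sigma_t}{2}\norm{G^{\sigma_t}_y(\tilde\bx_{t+1},\by_t)}^2$ that is then sent through \eqref{eq:LS-xt+1yt}. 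In other words, $\Lambda_t$ is a surrogate-subgradient error at $\by_t$, not (as you describe) a byproduct of comparing $\by_t$ with $\by^*(\bx_t)$; that comparison is the role of $\Delta_t$ via \cref{lemma:Delta-Bound-tn-true}. Since your main chain never uses the subgradient-type information at $\by_t$, the appearance of $\Lambda_t$ in your sketch is unaccounted for and the key inequality is unproved.

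Two secondary points. First, even granting your $\Xi_t$ bound, your bookkeeping bounds the full $\sigma_t\norm{G^{\sigma_t}_y(\tilde\bx_{t+1},\by_t)}^2$ via \eqref{eq:LS-xt+1yt} and $\sigma_t\norm{G^{\sigma_t}_y(\bx_t,\by_t)}^2$ via \eqref{eq:LS-xtyt}, which gives a $\Delta_t$-coefficient of $3\big(\tfrac{4(1-\sigma_t\mu_t)}{\sigma_t}+2\sigma_tl_t^2\big)$ and a primal spill of $2\sigma_t l_t^2\tau_t^2\norm{G_x^{\tau_t}(\bx_t,\by_t)}^2$, rather than the $2(\cdot)$ and $\tfrac{3}{2}\sigma_t l_t^2\tau_t^2$ in \eqref{eq:alternative_ls_appendix-true-part1-tn}; the paper keeps only half of the $(\tilde\bx_{t+1},\by_t)$ gradient-map term because the other half is cancelled against the strong-concavity term. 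Second, no smallness of $\tau_t$ from \shortalgline{\ref{algeq:step-size}} is needed here: the lemma does not assert positivity of the coefficient $\tau_t-\tfrac{\tau_t^2l_t}{2}-\tfrac{3\sigma_t\tau_t^2l_t^2}{2}$, so nothing has to be "made to close" by the step-size formula at this stage (that only matters later in the telescoping). Your final constant comparison showing that \eqref{eq:alternative_ls_appendix-true-part1-tn} implies \eqref{eq:ls-conv} with $R_t=0$ when $\sigma_t=1/l_t$ is correct.
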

\begin{proof}
First, note that $L_{ \hat{t}_n}\geq L$ \na{(since $L_{\hat t_n}\geq l_{\hat t_n}$ always holds by construction of \agdap{} and $l_{\hat t_n}\geq L$ by the hypothesis).} \na{Suppose that \msf{True}. Under this setting, since $\by_t$ satisfies $\cL(\bx_t,\by_t)+\zeta\geq \max_{\by\in\cY}\cL(\bx_t,\by)$ for $t=\hat t_n$ together with $L_t\geq L$ and {$\mu_t \in (0,\mu]$},} %When $t= \hat{t}_{N(t)}$, 
it follows from \cref{lemma:y0-bound} that for $\Lambda_t=\zeta + L_t\norm{\by_{t}- \tilde\by_{t+1}}\sqrt{\frac{2\zeta}{{\mu_{t}}}}$, one has
\begin{equation}
\label{eq:h-subgradient}
     h(\by_{t})-h(\tilde\by_{t+1}) \leq \langle - \grad_y f(\bx_{t},\by_{t}) ,  \tilde\by_{t+1} - \by_{t} \rangle + \na{\Lambda_t}.
\end{equation}
On the other hand, for the case \msf{False}, the definition of $\hat\by_t$ implies that $\grad_y f(\bx_t,\by_t)-L_t(\hat\by_t-\by_t)\in\partial h(\hat\by_t)$; thus, using the convexity of $h$, we get %for all $\by\in\cY$ 
\begin{equation}
\label{eq:h-subgradient-2}
    \begin{aligned}
        h(\hat\by_t)-h(\by)
        &\leq\fprod{-\grad_y f(\bx_t,\by_t)+L_t(\hat\by_t-\by_t),~\by-\hat\by_t}\\
        &=\fprod{- \grad_y f(\bx_{t},\hat\by_{t}),~\by-\hat\by_{t}}+\fprod{\grad_y f(\bx_t,\hat\by_t)-\grad_y f(\bx_t,\by_t)+L_t(\hat\by_t-\by_t),~\by-\hat\by_t}\\
        &\leq \fprod{- \grad_y f(\bx_{t},\hat\by_{t}),~\by-\hat\by_{t}}+2 d_t L_t\norm{\by-\hat\by_t},\quad\forall~\by\in\cY,
    \end{aligned}
\end{equation}
where in the last inequality we used that $\grad_y f$ is Lipschitz with constant $L\leq L_t$ and $d_t\triangleq \norm{\hat\by_t-\by_t}$. Therefore, after resetting $\by_t$ to $\hat\by_t$ in \shortalgline{\ref{algeq:haty-y}}, substituting $\by=\tilde\by_{t+1}$ in \eqref{eq:h-subgradient-2} implies \eqref{eq:h-subgradient} with $\Lambda_t=2 d_t L_t\norm{\by_t-\tilde\by_{t+1}}$. 
%\xtodo{Does $\Lambda_t$ has a typo here? $\by$ should be $\tilde \by_{t+1}$?}
Thus, the effect of \ms{} shows up in $\Lambda_t$, and the rest of the analysis is common for both cases. 

Since $l_t\geq L$ \rv{and $\mu_t \in(0,\mu]$}, it follows from \cref{lemma:complicated-ls-hold} that \eqref{eq:ls-all} holds for $t=\hat t_n$. Thus, \na{using $f(\bx, \cdot)$ being $\mu$-strongly concave %w.r.t. $\by$ 
for any given $\bx \in \dom g$ {and $\mu_t \in (0,\mu]$},} we can further obtain that
\begin{equation}\label{eq:separate-gap-boundary0-part1-tn}
\begin{aligned}
\MoveEqLeft f\left(\tilde\bx_{t+1}, \tilde\by_{t+1}\right)-h(\tilde\by_{t+1})-f\left(\tilde\bx_{t+1}, \by_{t}\right)+h(\by_{t}) \\
\leq & \left\langle\nabla_y f\left(\tilde\bx_{t+1},\by_{t}\right)- \grad_y f(\bx_{t},\by_{t}),~\tilde\by_{t+1}-\by_{t}\right\rangle-\frac{{\mu_t}}{2}\left\|\tilde\by_{t+1}-\by_{t}\right\|^2 +\na{\Lambda_t}\\
%+\zeta + L\norm{\by_{t}- \tilde\by_{t+1}}\sqrt{\frac{2\zeta}{\xzf{\mu_t}}}\\
\leq & \frac{\sigma_{t}{l}_{t}^2}{2}\|\tilde\bx_{t+1} -\bx_{t}\|^2 + \frac{1}{2\sigma_{{t}}}\|\tilde\by_{t+1}-\by_{t}\|^2 {-\frac{{\mu_t}}{2}\norm{\tilde\by_{t+1}-\by_{t}}^2} +\na{\Lambda_t}
%+\zeta + L\norm{\by_{t}- \tilde\by_{t+1}}\sqrt{\frac{2\zeta}{\xzf{\mu_t}}}
\\
{=}  &\frac{\sigma_{t}{l}_{t}^2\tau^2_{t}}{2}\|G^{\tau_t}_x(\bx_{t},\by_{t})\|^2 + \frac{\sigma_{t}}{2}\|{G^{\sigma_{t}}_y(\tilde\bx_{t+1},\by_{t})}\|^2-\sigma_{t}^2\frac{{\mu_t}}{2}\left\|{G^{\sigma_{t}}_y(\tilde\bx_{t+1},\by_{t})}\right\|^2 +\na{\Lambda_t}
%+\zeta + L\norm{\by_{t}- \tilde\by_{t+1}}\sqrt{\frac{2\zeta}{\xzf{\mu_t}}}
\\
\leq  &\frac{{3}\sigma_{t}{l}_{t}^2\tau^2_{t}}{2}\|G^{\tau_t}_x(\bx_{t},\by_{t})\|^2 + \sigma_{t}\left(\frac{{4}(1-\sigma_t{\mu_t})}{\sigma_{t}^2}+{2}{l}_{t}^2\right)\Delta_{t}-\sigma_{t}^2\frac{{\mu_t}}{2}\left\|{G^{\sigma_{t}}_y(\tilde\bx_{t+1},\by_{t})}\right\|^2 +\na{\Lambda_t},
%+\zeta + \xzr{L_{t}}\norm{\by_{t}- \tilde\by_{t+1}}\sqrt{\frac{2\zeta}{\xzf{\mu_t}}},
\end{aligned}
\end{equation}
\xqs{where the second inequality is due to Young's inequality and \cref{eq:LS-2}, \na{the equality follows from Definition~\ref{def:prox},} and the last inequality is due to \cref{lemma:Delta-Bound-tn-true}, and \cref{eq:LS-xt+1yt}}. Then adding $\sigma_t\|G_y^{\sigma_{t}}(\bx_{t},\by_{t})\|$ to both sides of 
\cref{eq:separate-gap-boundary0-part1-tn}, and using \cref{eq:LS-xtyt,lemma:Delta-Bound-tn-true} yields that
\begin{equation*}
\begin{aligned}
\MoveEqLeft-{\frac{{3}\sigma_t{l}_{t}^2\tau_{t}^2}{2}}\|G_x^{\tau_t}(\bx_{t},\by_{t})\|^2+{\sigma_{t}\left\|G^{\sigma_{t}}_y(\bx_{t},\by_{t})\right\|^2}+\na{\sigma_{t}^2\frac{{\mu_t}}{2}\left\|{G^{\sigma_{t}}_y(\tilde\bx_{t+1},\by_{t})}\right\|^2}\\
\leq &f\left(\tilde\bx_{t+1}, \by_{t}\right)-f\left(\tilde\bx_{t+1}, \tilde\by_{t+1}\right) + h(\tilde\by_{t+1})  -h(\by_{t}) + {\sigma_t}\left(\frac{{8}(1-\sigma_t{\mu_t})}{\sigma_{t}^2}+{4}{l}_{t}^2\right)\Delta_{t}+\na{\Lambda_t}.
%\zeta +\xqs{L_t}\norm{\by_t -\tilde\by_{t+1}}\sqrt{\frac{2\zeta}{\xzf{\mu_t}}}.
\end{aligned}
\end{equation*}
\na{Finally, combining the above inequality with \cref{eq:descent_x_inner-part1} completes the proof.}
\end{proof}
\begin{theorem}\label{thm:ls-hold-main-tn}
    Under \cref{ASPT:lipshiz gradient}, \na{for iteration $t\in\mathbb{N}$, %the backtracking condition 
    \emph{\cond{t}} holds if there exists $n\in\mathbb{N}$ such that $t=\hat t_n$, $l_t\geq L$ and {$\mu_t \in (0,\mu]$} at the time when %\emph{\cond{t}} in 
    \eqref{eq:backtrack} is checked, during the execution of \emph{\shortalgline{\ref{algline:check-agda+}}} in \cref{alg:agda}.}
\end{theorem}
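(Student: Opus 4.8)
The plan is to verify, one by one, the four inequalities \eqref{eq:ls-conv}--\eqref{eq:final-ls-xt+1yt} that together form \cond{t}, by assembling the technical lemmas already proved, with essentially no further computation. I would first record what the hypothesis buys us: since $t=\hat t_n$ for some $n\in\N$, at the moment \cond{t} is checked the algorithm has just run \algline{\ref{algeq:delta_def1}} (if \msf{True}) or \algline{\ref{algeq:delta_def2}} (if \msf{False}) followed by $R_t\gets 0$, so the quantities $\Delta_t,\Lambda_t$ stored in \cref{alg:agda} are exactly those appearing in \cref{lemma:complicated-true-tn,lemma:Delta-Bound-tn-true} and $R_t=0$; moreover $\sigma_t=1/l_t$ by construction, and since we are inside the \texttt{while} loop we have $L_t\geq l_t\geq L$, together with $\mu_t\in(0,\mu]$.

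The two Lipschitz-type inequalities come for free: \eqref{eq:Lyy-1} and \eqref{eq:Lyy-2} are literally \eqref{eq:LS-Lyy} and \eqref{eq:LS-7}, which hold by \cref{lemma:complicated-ls-hold} because $l_t\geq L$ and $\mu_t\in(0,\mu]$. For \eqref{eq:final-ls-xt+1yt}, \cref{lemma:complicated-ls-hold} supplies \eqref{eq:LS-xt+1yt}, i.e.\ the same bound but with $\delta_t=\norm{\by_t-\by^*(\bx_t)}^2$ in place of $\Delta_t$; \cref{lemma:Delta-Bound-tn-true} gives $\delta_t\leq\Delta_t$ (using $L_t\geq L$ in the \msf{False} branch); and the coefficient $2\big(4(1-\sigma_t\mu_t)/\sigma_t^2+2l_t^2\big)$ is nonnegative since $\sigma_t\mu_t=\mu_t/l_t<1$, so enlarging $\delta_t$ to $\Delta_t$ preserves the inequality and yields \eqref{eq:final-ls-xt+1yt}.

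For the remaining inequality \eqref{eq:ls-conv}, I would simply invoke \cref{lemma:complicated-true-tn}, whose hypotheses are exactly the present ones and whose conclusion --- after substituting $\sigma_t=1/l_t$, which collapses the left coefficient of $\norm{G_x^{\tau_t}(\bx_t,\by_t)}^2$ to $\tau_t-2\tau_t^2 l_t\geq\tau_t-(2+\tfrac1\gamma)\tau_t^2 l_t$ and matches the $\Delta_t$ coefficient $4(3l_t-2\mu_t)$ on the right --- is precisely that \eqref{eq:ls-conv} holds with $R_t=0$; since the algorithm has indeed set $R_t=0$, \eqref{eq:ls-conv} holds. This exhausts all four inequalities, so \cond{t} is satisfied.

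I do not expect a genuine obstacle inside this theorem: the real work is hidden in \cref{lemma:complicated-true-tn} (the descent estimate with the auxiliary control $\Delta_t$) and in \cref{lemma:complicated-ls-hold}. Within the present argument the only thing requiring care is the bookkeeping --- confirming that at the instant \cond{t} is checked with $t=\hat t_n$ the stored values of $R_t$, $\Delta_t$, $\Lambda_t$ coincide with those in the lemma hypotheses, and that the constant-matching after setting $\sigma_t=1/l_t$ goes through; note that \cref{lemma:complicated-true-tn} already covers both the \msf{True} and \msf{False} forms of $\Lambda_t$, so no case split on \ms{} is needed at this stage.
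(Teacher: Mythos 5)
Your proof is correct and takes essentially the same approach as the paper: invoke Lemma~\ref{lemma:complicated-ls-hold} for \eqref{eq:Lyy-1}, \eqref{eq:Lyy-2} and the $\delta_t$-version of \eqref{eq:final-ls-xt+1yt}, Lemma~\ref{lemma:Delta-Bound-tn-true} to upgrade $\delta_t$ to $\Delta_t$, and Lemma~\ref{lemma:complicated-true-tn} for \eqref{eq:ls-conv} with $R_t=0$. The paper's proof is a one-liner citing exactly these three lemmas; your version merely spells out the coefficient bookkeeping (e.g.\ $\tau_t-2\tau_t^2 l_t\geq\tau_t-(2+\tfrac1\gamma)\tau_t^2 l_t$ after substituting $\sigma_t=1/l_t$) that Lemma~\ref{lemma:complicated-true-tn} already records in its final sentence.
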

\begin{proof}
    \na{Since $t=\hat t_n$ for some $n\in\mathbb{N}$, $l_{\hat t_n}\geq L$ {and $\mu_t \in (0,\mu]$}, the conditions for {\cref{lemma:complicated-ls-hold,lemma:Delta-Bound-tn-true,lemma:complicated-true-tn}} are satisfied; therefore, \eqref{eq:ls-conv} follows from \cref{lemma:complicated-true-tn}, and \cref{eq:Lyy-1,eq:Lyy-2,eq:final-ls-xt+1yt} directly follow from \cref{lemma:complicated-ls-hold} and \cref{lemma:Delta-Bound-tn-true}.} 
    %and substituting $\sigma_t=\frac{1}{l_t}$ within \cref{eq:alternative_ls_appendix-true-part1-tn} implies \eqref{eq:ls-conv} for $t=\hat t_n$.
\end{proof}

\begin{corollary}
\label{cor:tn-finite}
    Suppose \cref{ASPT:lipshiz gradient} holds. \na{For $t\in\mathbb{N}$, if there exists $n\in\mathbb{N}$ such that $t=\hat t_n$, $L_t\geq L$ {and $\mu_t \in (0,\mu]$} at the time when the backtracking condition \emph{\cond{t}} in \eqref{eq:backtrack} is checked, then %\eqref{eq:backtrack} 
\emph{\cond{t}} will hold after executing \emph{\algline{\ref{algline:check-agda+}}} at most $\log_{1/\gamma}(L_t/l^\circ_t)+1$ many times for the given iteration $t\in\mathbb{N}$.}
\end{corollary}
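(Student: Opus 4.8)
The plan is to combine \cref{thm:ls-hold-main-tn} with an elementary geometric-growth argument for the local estimate $l_t$. Recall that \cref{thm:ls-hold-main-tn} asserts: whenever \shortalgline{\ref{algline:check-agda+}} is executed for an iteration $t$ with $t=\hat t_n$ for some $n\in\mathbb{N}$, $l_t\geq L$ and $\mu_t\in(0,\mu]$, the backtracking condition \cond{t} in~\eqref{eq:backtrack} evaluates to \texttt{True}. In the present case (\textbf{CASE 1b}) the hypotheses $t=\hat t_{n}$ and $\mu_t\in(0,\mu]$ remain in force as the iteration proceeds --- indeed $\mu_t$ is only ever decreased (via $\mu_t\gets\max\{\gamma_1\mu_t,\underline{\mu}\}$), so it stays in $(0,\mu]$, and $t$ stays of the form $\hat t_{n'}$ after any further increase of $L_t$ --- while $L_t\geq L$ holds and $L_t$ is nondecreasing. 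Hence the only executions of \shortalgline{\ref{algline:check-agda+}} that can return \texttt{False} are those made with $l_t<L$, and it suffices to bound how many such executions can occur before $l_t$ enters the regime $l_t\geq L$.

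First I would record how $l_t$ evolves. Within iteration $t$, $l_t$ changes only through the assignment $l_t\gets l_t/\gamma_2$ in the \texttt{ELSE} branch following \shortalgline{\ref{algline:check-agda+}}, which is executed precisely when \cond{t} fails; since $\gamma_2=\gamma\in(0,1)$, each failed check multiplies $l_t$ by $1/\gamma>1$. Thus $l_t$ is nondecreasing in the number of checks, starts at its initial value $l_t^\circ$ (assigned at initialization for $t=0$ and in \shortalgline{\ref{algeq:l-update}} for $t\geq1$), and equals $l_t^\circ\gamma^{-k}$ after $k$ failures; in particular $l_t\geq L$ once $k\geq\log_{1/\gamma}(L/l_t^\circ)$, and then the next execution of \shortalgline{\ref{algline:check-agda+}} returns \texttt{True}. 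Since $l_t\leq L_t$ is maintained throughout the \texttt{WHILE} loop and $L_t\geq L$, the number of powers $l_t^\circ\gamma^{-k}$ at which \cond{t} is actually checked is at most $\lfloor\log_{1/\gamma}(L_t/l_t^\circ)\rfloor+1$; if one of these checks succeeds we are done within that many checks, which is $\leq\log_{1/\gamma}(L_t/l_t^\circ)+1$.

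The one subtlety to be careful about is that the discrete sequence $\{l_t^\circ\gamma^{-k}\}$ might overshoot $L_t$ while still below $L$, so that the \texttt{WHILE} loop exits before \cond{t} ever holds. If that happens, the bottom of the outer loop sets $L_t\gets L_t/\gamma_1$ with $\gamma_1=\gamma^r\leq\gamma_2$; writing $L_t^{\mathrm{old}}$ for the pre-update value, the current $l_t$ equals $l_t^{\mathrm{prev}}/\gamma_2$ for some $l_t^{\mathrm{prev}}\leq L_t^{\mathrm{old}}$, hence $l_t\leq L_t^{\mathrm{old}}/\gamma_2\leq L_t^{\mathrm{old}}/\gamma_1=L_t$, so the \texttt{WHILE} loop is re-entered; moreover $l_t>L_t^{\mathrm{old}}\geq L$, so the immediately following execution of \shortalgline{\ref{algline:check-agda+}}, performed with $l_t\geq L$, $\mu_t\in(0,\mu]$ and $t$ now of the form $\hat t_{n+1}$, returns \texttt{True} by \cref{thm:ls-hold-main-tn}. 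Accounting for this at most one extra pass, the total count is still bounded by $\log_{1/\gamma}(L_t/l_t^\circ)+1$ with $L_t$ denoting its value at the terminating check, because each increase of $L_t$ multiplies it by $\gamma^{-r}\geq\gamma^{-1}$.

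The hard part is exactly this last bookkeeping: I expect the main obstacle to be verifying carefully that the interaction between the \texttt{WHILE} guard $l_t\leq L_t$ and the geometric growth of $l_t$ never lets \agdap{} leave the regime $L\leq l_t\leq L_t$ without \cond{t} being satisfied, and that the check-counts over the (at most two) relevant \texttt{WHILE}-loop passes collapse to $\log_{1/\gamma}(L_t/l_t^\circ)+1$. The remaining ingredients are elementary.
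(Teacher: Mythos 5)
Your argument is correct and follows essentially the same route as the paper: \cref{thm:ls-hold-main-tn} guarantees that any check performed with $l_t\geq L$ (and $\mu_t\in(0,\mu]$, $t=\hat t_n$) succeeds, and the \texttt{WHILE} guard $l_t\leq L_t$ together with the geometric updates $l_t\gets l_t/\gamma_2$ yields the count $\log_{1/\gamma}(L_t/l^\circ_t)+1$. The only difference is that the paper disposes of your ``overshoot'' subtlety at once: since $\gamma_1=\gamma_2^{r}$ and $l_0=L_0=\tilde l$, the quantities $l_t$ and $L_t$ always lie on the same geometric grid $\{\tilde l\,\gamma^{-k}\}_{k\in\mathbb{N}}$, so the failing sequence reaches $l_t=L_t\;(\geq L)$ exactly and the loop can never exit before \cond{t} holds, which renders your extra pass (and the attendant reinterpretation of $L_t$ as its value at the terminating check) unnecessary.
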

\begin{proof}
    \na{First, note that every time \eqref{eq:backtrack} does not hold, according to \agdap{} one increases the value of $l_t$ using the update $l_t\gets l_t/\gamma_2$; second, since $\gamma_1=\gamma_2^r$ for some integer $r\geq 1$, and we initialize both $l_0$ and $L_0$ at the same number \rv{$\tilde l$ %={\tilde\mu}/\gamma_2$,
    such that $\tilde l>\tilde \mu$}, it follows that checking \eqref{eq:backtrack} can fail at most $\log_{1/\gamma_2}(L_t/l^\circ_t)$ 
    %many times at which point 
    --if that happens, one would have $l_t=L_t$. Finally, whenever $l_t=L_t$, since $L_t\geq L$ {and $\mu_t \in (0,\mu]$}, \cref{thm:ls-hold-main-tn} implies that \cond{t} must hold.}
\end{proof}

\subsubsection{\na{Analysis of CASE 2b: $t>\hat t_{N(t)}$ and $L_t\geq L$ and {$\mu_t \in (0,\mu]$}}}
%{\na{Induction proof for \cref{alg:agda} (Inductive case)}}
\na{Let $n\in\mathbb{N}$ be such that $N(t)=n$. According to the definition of $N(t)$, one has $L_t=L_s$ {and $\mu_t = \mu_s$} for $s\in [\hat t_n, t]$; hence, $L_t\geq L$ {and $\mu_t \leq \mu$} imply that $L_s\geq L$ {and $\mu_s \leq \mu$} for all $s\in [\hat t_n, t]$. Here we first show that \cond{t} in \eqref{eq:backtrack} always holds when it is checked for $t>\hat t_{N(t)}$ such that $l_t\geq L$ \rv{and $\mu_t \leq \mu$}. Next, building on this result, we argue that for the given iteration $t\in\mathbb{N}$ if CASE 2b is true, then \cond{t} will hold after checking \algline{\ref{algline:check-agda+}} finitely many times. To this aim, we first show that $\delta_t\leq \Delta_t$ whenever $L_t\geq L$ {and $\mu_t \in (0,\mu]$}.} 
%$\delta_s\leq \Delta_s$ for all $s\in[\hat t_n, t]$ using induction.
\begin{lemma}\label{lemma:Delta-Bound-t>tn-agda}
Under \cref{ASPT:lipshiz gradient}, \na{let $t\in\mathbb{N}$ such that $t>\hat t_{N(t)}$ and suppose that %\eqref{eq:backtrack} held for iteration $t-1$ 
\emph{\cond{t-1}} and $\delta_{t-1}\leq \Delta_{t-1}$ hold. If $L_t\geq L$ and {$\mu_t \in (0,\mu]$} at the time when
%the backtracking condition in \eqref{eq:backtrack} 
\emph{\cond{t}} is checked during the execution of \emph{\shortalgline{\ref{algline:check-agda+}}} in \cref{alg:agda} for iteration $t$, then 
%$\delta_{s}\leq \Delta_{s}$ for all $s\in[\hat t_{N(t)}, t]$
$\delta_t\leq \Delta_t$.}
\end{lemma}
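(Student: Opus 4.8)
\textbf{Proof proposal for Lemma~\ref{lemma:Delta-Bound-t>tn-agda}.}

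The plan is to show the recursion $\delta_t \leq (1-\mu_t\sigma_{t-1}/2)\delta_{t-1} + C_{t-1}\|G_x^{\tau_{t-1}}(\bx_{t-1},\by_{t-1})\|^2$ whenever $L_{t-1}\geq L$ and $\mu_{t-1}\in(0,\mu]$, which, combined with the inductive hypothesis $\delta_{t-1}\leq\Delta_{t-1}$ and the update rule for $\Delta_t$ in \shortalgline{\ref{algeq:delta-update}}, will yield $\delta_t\leq\Delta_t$. Since $t>\hat t_{N(t)}$ means no reset of $\by_{t}$ occurred at iteration $t-1$, i.e., $(\bx_t,\by_t)=(\tilde\bx_t,\tilde\by_t)$ was computed via the proximal GDA updates in \salgline{\ref{algeq:y-update}} using stepsizes $(\tau_{t-1},\sigma_{t-1})$, so $\by_t = \prox{\sigma_{t-1}h}(\by_{t-1}+\sigma_{t-1}\grad_y f(\bx_t,\by_{t-1}))$ and $\bx_t = \prox{\tau_{t-1}g}(\bx_{t-1}-\tau_{t-1}\grad_x f(\bx_{t-1},\by_{t-1}))$. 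Also note $L_t=L_{t-1}$ and $\mu_t=\mu_{t-1}$ because $t-1,t\in[\hat t_{N(t)},t]$ lie in the same block, so the hypotheses $L_t\geq L$, $\mu_t\in(0,\mu]$ transfer to index $t-1$.

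First I would bound $\delta_t = \|\by_t-\by^*(\bx_t)\|^2$ by triangle-splitting through $\by^*(\bx_{t-1})$, or more directly by using the standard one-step contraction estimate for a proximal gradient ascent step on the $\mu$-strongly-concave, $L$-smooth problem $\max_\by f(\bx_t,\by)-h(\by)$: since $\by_t$ is one proximal step from $\by_{t-1}$ with stepsize $\sigma_{t-1}=1/l_{t-1}\leq 1/L$ (using $l_{t-1}\geq L$, which holds because $l_{t-1}\geq\tilde l$ and... actually the relevant bound is $\sigma_{t-1}\leq 1/L$, which needs $l_{t-1}\geq L$ — this is guaranteed since at the end of the successful iteration $t-1$ we had $l_{t-1}\geq L$ by a case hypothesis, or I invoke the fact that the backtracking conditions \eqref{eq:Lyy-1}--\eqref{eq:Lyy-2} held at $t-1$, giving a local Lipschitz bound $l_{t-1}$ that is effective for the contraction even if $l_{t-1}<L$). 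The cleaner route: combine strong concavity $\frac{\mu_t}{2}\|\by_{t-1}-\by^*(\bx_t)\|^2 \leq \langle \grad_y f(\bx_t,\by_{t-1}), \by^*(\bx_t)-\by_{t-1}\rangle - (f(\bx_t,\by^*(\bx_t))-f(\bx_t,\by_{t-1}))$ style inequalities with the prox-step optimality and the descent condition \eqref{eq:Lyy-1} to get $\|\by_t-\by^*(\bx_{t-1})\|^2 \leq (1-\mu_t\sigma_{t-1})\|\by_{t-1}-\by^*(\bx_{t-1})\|^2$ (note: here I use $\by^*$ at the \emph{old} primal point $\bx_{t-1}$ because the ascent step used $\grad_y f(\bx_t,\by_{t-1})$, so I actually need $\by^*(\bx_t)$; I will track this carefully). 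Then account for the primal move: $\|\by^*(\bx_t)-\by^*(\bx_{t-1})\|\leq \kappa\|\bx_t-\bx_{t-1}\| = \kappa\tau_{t-1}\|G_x^{\tau_{t-1}}(\bx_{t-1},\by_{t-1})\|$ via \cref{lem:primal_lipschitz}, and combine the two with Young's inequality tuned so the cross term is absorbed, producing exactly the factor $(1-\mu_t\sigma_{t-1}/2)$ on $\delta_{t-1}$ and the coefficient $C_{t-1}=\frac{(1-\sigma_{t-1}\mu_t)(2-\sigma_{t-1}\mu_t)}{\sigma_{t-1}\mu_t}\frac{L_{t-1}^2}{\mu_t^2}\tau_{t-1}^2$ on the gradient-map term, matching \shortalgline{\ref{algeq:delta-update}}.

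The main obstacle I anticipate is getting the constants to line up exactly with the algebraic form of $C_{t-1}$ prescribed in \cref{alg:agda} — in particular the appearance of $L_{t-1}^2/\mu_t^2$ rather than $\kappa^2$, and the precise $(1-\sigma\mu)(2-\sigma\mu)$ factor, which dictate a specific choice of Young's-inequality weight (something like splitting $\|a+b\|^2\leq (1+\theta)\|a\|^2+(1+1/\theta)\|b\|^2$ with $\theta = \mu_t\sigma_{t-1}/(2(1-\mu_t\sigma_{t-1}))$ so that $(1-\mu_t\sigma_{t-1})(1+\theta)=1-\mu_t\sigma_{t-1}/2$). One must also be careful that the contraction estimate for the prox-ascent step truly holds with parameter $\mu_t\leq\mu$ and stepsize $1/l_{t-1}$ when $l_{t-1}$ may be smaller than $L$ within a failed backtracking sub-iteration — here I would invoke that $\delta_t$ is only evaluated/needed at the moment \cond{t} is checked and that the relevant inequalities used are \eqref{eq:LS-xtyt}-type bounds plus \eqref{eq:Lyy-1} which are exactly what backtracking enforces; alternatively, since the lemma hypothesis is $L_t\geq L$ hence $l_t$ can be pushed up to at least $L$, the bound $\sigma_{t-1}=1/l_{t-1}$ with $l_{t-1}\geq L$ may be assumed without loss. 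Finally, after establishing $\delta_t \leq (1-\mu_t\sigma_{t-1}/2)\delta_{t-1}+C_{t-1}\|G_x^{\tau_{t-1}}(\bx_{t-1},\by_{t-1})\|^2$, the conclusion $\delta_t\leq\Delta_t$ is immediate from $\delta_{t-1}\leq\Delta_{t-1}$, monotonicity of the affine map in $\delta_{t-1}$, and the definition of $\Delta_t$ in \shortalgline{\ref{algeq:delta-update}}.
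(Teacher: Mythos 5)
Your proposal is correct and follows essentially the same route as the paper, which packages your one-step recursion as Lemma~\ref{lemma:deltat} (prox-ascent contraction toward $\by^*(\bx_t)$ via strong concavity together with the backtracking inequality \eqref{eq:Lyy-1} at $t-1$, then the $\kappa$-Lipschitzness of $\by^*(\cdot)$ with exactly your Young weight $\alpha=\tfrac{1}{2}\,\mu_{t-1}\sigma_{t-1}/(1-\mu_{t-1}\sigma_{t-1})$), after which $\delta_t\leq\Delta_t$ follows from $\delta_{t-1}\leq\Delta_{t-1}$ and \shortalgline{\ref{algeq:delta-update}}. One caution: resolve your stepsize worry via \eqref{eq:Lyy-1} (as in Lemma~\ref{cor:str-concavity}) rather than "assume $l_{t-1}\geq L$ without loss," since the nonmonotone search may genuinely have $l_{t-1}<L$; with $\sigma_{t-1}=1/l_{t-1}$ the $(1-\sigma_{t-1}l_{t-1})$ term vanishes anyway, so nothing more is needed.
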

\begin{proof}
\na{Note that since $L_t\geq L$, {$\mu_t \in (0,\mu]$} and %\eqref{eq:backtrack} held for iteration $t-1$
\cond{t-1} holds by the hypothesis, we can conclude that
    \begin{equation}
        \delta_{t}\leq B_{t-1}\delta_{t-1} + C_{t-1}\|G_x^{\tau_{t-1}}(\bx_{t-1},\by_{t-1})\|^2\leq  B_{t-1}\Delta_{t-1} + C_{t-1}\|G_x^{\tau_{t-1}}(\bx_{t-1},\by_{t-1})\|^2 \triangleq \Delta_{t},
    \end{equation}
    where the first inequality follows from \cref{lemma:deltat}, in the second inequality we use the assumption $\delta_{t-1}\leq \Delta_{t-1}$, and the equality holds by definition of $\Delta_t$ --see \algline{\ref{algeq:delta-update}}.} 
    %which completes the induction.}
\end{proof}

\begin{lemma}\label{lemma:complicated-t>tn-agda}
Suppose \cref{ASPT:lipshiz gradient} holds, \na{let $t\in\mathbb{N}$ such that $t>\hat t_{N(t)}$. Assuming that %\eqref{eq:backtrack} held for iteration $t-1$ 
\emph{\cond{t-1}} and $\delta_{t-1}\leq \Delta_{t-1}$ hold, if $l_t\geq L$ and {$\mu_t \in (0,\mu]$} at the time when %the backtracking condition in \eqref{eq:backtrack} 
\emph{\cond{t}} is checked during the execution of \emph{\shortalgline{\ref{algline:check-agda+}}} in \cref{alg:agda} for iteration $t$, then the following inequality holds:} 
%\xtodo{$t-1$ terms are ok to me here.}
\na{
\begin{equation}\label{eq:alternative_ls_appendix-true-part1-t>tn}
        \begin{aligned}
            \MoveEqLeft[2] \left(\tau_t - \frac{\tau_t^2 l_t}{2} - \Big(\frac{3}{2}+\frac{\sigma_t}{\sigma_{t-1}}\Big)\sigma_t\tau_t^2l_t^2   
            \right)\norm{G_x^{\tau_t}(\bx_t,\by_t)}^2 
            + \sigma_t\norm{G^{\sigma_t}_y(\bx_{t},\by_t)}^2
            +
            \sigma_{t}^2\frac{{\mu_t}}{2}\left\|{G^{\sigma_{t}}_y(\tilde\bx_{t+1},\by_{t})}\right\|^2 
            \\
            & +\sigma_{t-1}^2{\mu_{t-1}}\left\|G^{\sigma_{t-1}}_y(\bx_{t},\by_{t-1})\right\|^2-\left(1+\sigma^2_{t-1} l_{t-1}^2\right)\sigma_{t-1}\tau_{t-1}^2l_{t-1}^2\left\|G^{\tau_{t-1}}_x(\bx_{t-1},\by_{t-1})\right\|^2
            \\
        \leq & \cL(\bx_t,\by_t) - \na{\cL(\tilde\bx_{t+1},\tilde\by_{t+1})} +2\left(1+\sigma^2_{t-1} l_{t-1}^2\right)\left(\frac{2}{\sigma_{t-1}}+\sigma_{t-1}l^2_{t-1}-2{\mu_{t-1}}\right) \Delta_{t-1}
        \\
        & +2\Big(\Big(2+ \frac{\sigma_{t}}{\sigma_{t-1}} \Big)\Big(\frac{2}{\sigma_{t}}+\sigma_tl^2_{t}\Big)-4{\mu_t}\Big) \Delta_t;
        \end{aligned}
    \end{equation}}%
\na{therefore, for $\sigma_t=1/l_t$, \eqref{eq:alternative_ls_appendix-true-part1-t>tn} implies that \eqref{eq:ls-conv} holds with $\Lambda_t=6l_{t-1}(\Delta_{t}+2\Delta_{t-1})-8{\mu_{t-1}}\Delta_{t-1}$ as in \emph{\algline{\ref{algeq:Lambda-t>tn}}} and $R_t=2\tau_{t-1}^2l_{t-1}\left\|G^{\tau_{t-1}}_x(\bx_{t-1},\by_{t-1})\right\|^2-\sigma_{t-1}^2{\mu_{t-1}}\left\|{G^{\sigma_{t-1}}_y(\bx_{t},\by_{t-1})}\right\|^2$ as in \emph{\algline{\ref{algline:Rt}}}.}
%\na{Moreover, under these assumptions, \emph{\cond{t}} holds for $\sigma_t=1/l_t$ such that $l_t\geq L$.}
%\eqref{eq:alternative_ls_appendix-true-part1-t>tn} implies that \eqref{eq:backtrack} holds for iteration $t$.}
\end{lemma}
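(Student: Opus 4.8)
\textbf{Proof proposal for Lemma~\ref{lemma:complicated-t>tn-agda}.}
The plan is to mimic the structure of the proof of Lemma~\ref{lemma:complicated-true-tn} (the $t=\hat t_n$ case), but now the reset of $\by_t$ has \emph{not} happened at iteration $t$, so the dual iterate $\by_t$ is exactly $\tilde\by_t$ produced by the prox-ascent step at iteration $t-1$; this is precisely what lets us recycle the quantities $G^{\sigma_{t-1}}_y(\bx_t,\by_{t-1})$ and $G^{\tau_{t-1}}_x(\bx_{t-1},\by_{t-1})$ into the inequality. First I would invoke the descent lemma on $\bx$, i.e.\ \cref{lemma:descent_x-part1}/\eqref{eq:descent_x_inner-part1}, which is valid since $l_t\geq L$ forces \eqref{eq:LS-1}; this produces the term $(\tau_t-\tfrac{\tau_t^2 l_t}{2})\|G^{\tau_t}_x(\bx_t,\by_t)\|^2$ on the left and $\cL(\bx_t,\by_t)-\cL(\tilde\bx_{t+1},\tilde\by_{t+1})+h(\by_t)-h(\tilde\by_{t+1})+f(\tilde\bx_{t+1},\tilde\by_{t+1})-f(\tilde\bx_{t+1},\by_t)$ on the right.

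Next, the key difference from the $t=\hat t_n$ case: to bound $h(\by_t)-h(\tilde\by_{t+1})$ I would use the optimality condition of the prox-ascent step that \emph{generated} $\by_t$ at iteration $t-1$, namely $\grad_y f(\tilde\bx_t,\by_{t-1})-\tfrac{1}{\sigma_{t-1}}(\by_t-\by_{t-1})\in\partial h(\by_t)$ together with convexity of $h$, instead of the $\zeta$-optimality bound (\msf{True}) or the single gradient step bound (\msf{False}). This replaces $\Lambda_t$: the ``error'' terms now become the gradient-map quantities from step $t-1$ rather than $\zeta$-type constants, which is exactly why $R_t$ carries the $2\tau_{t-1}^2 l_{t-1}\|G^{\tau_{t-1}}_x\|^2-\sigma_{t-1}^2\mu_{t-1}\|G^{\sigma_{t-1}}_y(\bx_t,\by_{t-1})\|^2$ combination, and why $\Lambda_t$ becomes $6l_{t-1}(\Delta_t+2\Delta_{t-1})-8\mu_{t-1}\Delta_{t-1}$. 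Then, exactly as in \eqref{eq:separate-gap-boundary0-part1-tn}, I would bound $f(\tilde\bx_{t+1},\tilde\by_{t+1})-f(\tilde\bx_{t+1},\by_t)$ from above by combining $\mu_t$-strong concavity of $f(\tilde\bx_{t+1},\cdot)$, the Lipschitz-type inequalities \eqref{eq:LS-2}, \eqref{eq:LS-xtyt}, \eqref{eq:LS-xt+1yt} (all guaranteed since $l_t\geq L$, $\mu_t\leq\mu$ via \cref{lemma:complicated-ls-hold}), Young's inequality, and the identification of increments with gradient maps via Definition~\ref{def:prox}; this generates $\frac{\sigma_t l_t^2\tau_t^2}{2}\|G^{\tau_t}_x\|^2$, $\frac{\sigma_t}{2}\|G^{\sigma_t}_y(\tilde\bx_{t+1},\by_t)\|^2$, $-\frac{\sigma_t^2\mu_t}{2}\|G^{\sigma_t}_y(\tilde\bx_{t+1},\by_t)\|^2$, and a $\Delta_t$ term. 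I would also add $\sigma_t\|G^{\sigma_t}_y(\bx_t,\by_t)\|^2$ to both sides and use \eqref{eq:LS-xtyt} with the bound $\delta_t\leq\Delta_t$ from \cref{lemma:Delta-Bound-t>tn-agda} (which applies under the hypothesis $\delta_{t-1}\leq\Delta_{t-1}$ and \cond{t-1}).

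The extra wrinkle specific to CASE~2b is the term $\sigma_t/\sigma_{t-1}$ appearing in the coefficient of $\|G^{\tau_t}_x\|^2$: when I expand $\|G^{\sigma_{t-1}}_y(\bx_t,\by_{t-1})\|^2$-related cross terms coming from the $h$-subgradient bound at step $t-1$, I must relate $\|\by_t-\by_{t-1}\|$ to gradient maps at both scales $\sigma_{t-1}$ and $\sigma_t$, and re-expressing via $G^{\sigma_t}_y$ costs a factor $\sigma_t/\sigma_{t-1}$. I would then feed $\delta_{t-1}\leq\Delta_{t-1}$ (hypothesis) and $\delta_t\leq\Delta_t$ into the analogues of \eqref{eq:LS-xtyt}–\eqref{eq:LS-xt+1yt} evaluated at both $t-1$ and $t$, collect all $\|G^{\tau_t}_x\|^2$, $\|G^{\sigma_t}_y(\bx_t,\by_t)\|^2$, $\|G^{\sigma_t}_y(\tilde\bx_{t+1},\by_t)\|^2$, $\|G^{\sigma_{t-1}}_y(\bx_t,\by_{t-1})\|^2$, $\|G^{\tau_{t-1}}_x(\bx_{t-1},\by_{t-1})\|^2$ coefficients, and match them against the claimed left-hand side and the claimed $\Delta_{t-1},\Delta_t$-coefficients on the right. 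The main obstacle I anticipate is precisely this bookkeeping: keeping the $\sigma_t/\sigma_{t-1}$ ratios, the $(1+\sigma_{t-1}^2 l_{t-1}^2)$ prefactors, and the $\mu_t$ versus $\mu_{t-1}$ distinction all in their right places, and then verifying that substituting $\sigma_t=1/l_t$, $\sigma_{t-1}=1/l_{t-1}$ collapses the coefficients into exactly the $\Lambda_t=6l_{t-1}(\Delta_t+2\Delta_{t-1})-8\mu_{t-1}\Delta_{t-1}$ and $R_t$ stated in \algline{\ref{algeq:Lambda-t>tn}} and \algline{\ref{algline:Rt}}, so that \eqref{eq:ls-conv} is reproduced verbatim. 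The actual inequalities \eqref{eq:Lyy-1}, \eqref{eq:Lyy-2}, \eqref{eq:final-ls-xt+1yt} are immediate from \cref{lemma:complicated-ls-hold} and $\delta_t\leq\Delta_t$, so the content of the lemma is entirely in establishing \eqref{eq:alternative_ls_appendix-true-part1-t>tn}.
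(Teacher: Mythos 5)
Your proposal follows the paper's own proof essentially step for step: invoke the prox-optimality of $\tilde\by_t$ produced at iteration $t-1$ (valid since $(\tilde\bx_t,\tilde\by_t)=(\bx_t,\by_t)$ because no reset occurred), combine with $\mu_t$-strong concavity and the Lipschitz conditions from \cref{lemma:complicated-ls-hold}, convert displacements to gradient maps, feed in $\delta_{t-1}\leq\Delta_{t-1}$ (hypothesis), $\delta_t\leq\Delta_t$ (from \cref{lemma:Delta-Bound-t>tn-agda}), and \eqref{eq:final-ls-xt+1yt} at $t-1$ (from \cond{t-1}), then finish with the descent lemma \eqref{eq:descent_x_inner-part1}. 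One small imprecision: the $\sigma_t/\sigma_{t-1}$ factor in the $\|G_x^{\tau_t}\|^2$ coefficient does not come from re-expressing $\|\by_t-\by_{t-1}\|$, but from the three-point identity applied to $\frac{1}{\sigma_{t-1}}\langle\by_t-\by_{t-1},\tilde\by_{t+1}-\by_t\rangle$, which leaves a $\frac{1}{2\sigma_{t-1}}\|\tilde\by_{t+1}-\by_t\|^2$ piece; rewriting that as $\frac{\sigma_t^2}{2\sigma_{t-1}}\|G_y^{\sigma_t}(\tilde\bx_{t+1},\by_t)\|^2$ and then bounding via \eqref{eq:LS-xt+1yt} is what produces the ratio — the $\|\bv_{t+1}\|^2$ contributions from Young's inequality and the three-point identity cancel exactly.
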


\begin{proof}
\na{Fix $t\in\mathbb{N}$ such that $t>\hat t_{N(t)}$ and $l_t\geq L$.} The update rule of $\tilde\by_{t}$ implies that $0\in \partial h(\tilde\by_{t}) + \frac{1}{\sigma_{t-1}}(\tilde\by_{t}-\by_{t-1})-\grad_y f(\tilde\bx_{t},\by_{t-1})$; moreover, the convexity of $h(\cdot)$ implies that 
\begin{equation}\label{eq:opt-condition-y-part1}
    h(\tilde\by_{t}) + \frac{1}{\sigma_{t-1}} \langle \by_{t-1}-\tilde\by_{t}+ \sigma_{t-1} \grad_y f(\tilde\bx_{t},\by_{t-1}), \by-\tilde\by_{t} \rangle \leq h(\by)
\end{equation}
\na{holds for all $\by\in\cY$.} Furthermore, %by replacing $t$ with $t-1$ and choosing $\by=\by_{t+1}$ in \cref{eq:opt-condition-y-part1}, 
\na{since %\eqref{eq:backtrack} held for iteration $t-1$
%\xtodo{typo here, $\tilde \bx_t$}
%\xtodo{also saying $t> \hat t_N(t)$, we did not recompute $\by_t$, so $\tilde \by_t =\by_t$?}
\cond{t-1} holds and $t> \hat t_{N(t)}$, we have $(\tilde\bx_t, \tilde\by_t)=(\bx_t,\by_t)$; hence, choosing $\by=\tilde\by_{t+1}$ in \cref{eq:opt-condition-y-part1},} we obtain
\begin{equation}
    \begin{aligned}
        &h(\by_{t})-h(\tilde\by_{t+1})\leq \langle - \grad_y f(\bx_t, \by_{t-1}) + \frac{1}{\sigma_{t-1}}(\by_t -  \by_{t-1}),~\tilde\by_{t+1}-\by_t \rangle.  \label{eq:opt-y-back}
        % \\
        % &h(\by_{t})-h(\by_{t+1})\leq \langle - \grad_y f(\bx_t, \by_{t-1}) + \frac{1}{\sigma_{t-1}}(\by_t -  \by_{t-1}), \by_{t+1}-\by_t \rangle, \quad \forall\; t = \hat{t}_n+1 \label{eq:opt-y-back-init}
\end{aligned}
\end{equation}
%holds for all  $\xzr{t\geq \hat{t}_{N(t)}+1}$.
%\begin{equation}
%0\leq \langle-\nabla_y f\left(\bx_{t}, \by_{t-1}\right)+\frac{1}{\sigma_{t-1}}\left(\by_t-\by_{t-1}\right), \by_{t+1}-\by_t\rangle,
%\end{equation}
% We will first provide an upper bound of \cref{eq:opt-y-back}, and then a similar bound of \cref{eq:opt-y-back-init} can be induced by the same analysis.
Since $f(\bx, \xzr{\cdot})$ is $\mu$-strongly concave for any given $\bx \in \dom g$ and {$\mu_t \in (0,\mu]$}, using \cref{eq:opt-y-back} we get
\begin{equation}\label{eq:separate-gap-part1}
\begin{aligned}
\MoveEqLeft[2] f\left(\tilde\bx_{t+1}, \tilde\by_{t+1}\right)-h(\tilde\by_{t+1})-f\left(\tilde\bx_{t+1}, \by_t\right)+h(\by_t) \\
\leq & \left\langle\nabla_y f\left(\tilde\bx_{t+1}, \by_t\right), \tilde\by_{t+1}-\by_t\right\rangle-\frac{{\mu_t}}{2}\left\|\tilde\by_{t+1}-\by_t\right\|^2 +\langle - \grad_y f(\bx_t,\by_{t-1}) + \frac{1}{\sigma_{t-1}}(\by_t - \by_{t-1}),~\tilde\by_{t+1}-\by_t \rangle\\
= &
\left\langle\nabla_y f\left(\tilde\bx_{t+1}, \by_t\right)-\nabla_y f\left(\bx_t, \by_{t-1}\right),~\tilde\by_{t+1}-\by_t\right\rangle + \frac{1}{\sigma_{t-1}} \langle \by_t-\by_{t-1},~\tilde\by_{t+1}-\by_t \rangle -\frac{{\mu_t}}{2}\left\|\tilde\by_{t+1}-\by_t\right\|^2.
\end{aligned}
\end{equation}
%Because of our linesearch condition \cref{eq:LS-2} and Young's inequality, the first inner product can be bounded by
%\begin{equation}\label{eq:separate-gap-p1}
    %\begin{aligned}
%& \left\langle\nabla_y f\left(\bx_{t+1}, \by_t\right)-\nabla_y f\left(\bx_{t}, \by_t\right), \by_{t+1}-\by_t\right\rangle \\
%\leq & \frac{\sigma_t L_{t,2}^2 }{2}\left\|\bx_{t+1}-\bx_{t}\right\|^2+\frac{1}{2\sigma_t }\left\|\by_{t+1}-\by_t\right\|^2,
%\end{aligned}
%\end{equation}
%holds for all \xzr{$t\geq \hat{t}_{N(t)}+1$}. 
Denoting $\bv_{t+1}\triangleq\left(\tilde\by_{t+1}-\by_t\right)-\left(\by_t-\by_{t-1}\right)$, we can write the first inner product term on the r.h.s. of \cref{eq:separate-gap-part1} as
\begin{equation}\label{eq:separate-gap-p2-part1}
\begin{aligned}
\MoveEqLeft[2] \left\langle\nabla_y f\left(\tilde\bx_{t+1}, \by_t\right)-\nabla_y f\left(\bx_{t}, \by_{t-1}\right),~\tilde\by_{t+1}-\by_t\right\rangle \\
= & \left\langle\nabla_y f\left(\tilde\bx_{t+1}, \by_t\right)-\nabla_y f\left(\bx_{t}, \by_t\right),~\tilde\by_{t+1}-\by_t\right\rangle +\left\langle\nabla_y f\left(\bx_{t}, \by_t\right)-\nabla_y f\left(\bx_{t}, \by_{t-1}\right), \bv_{t+1}\right\rangle \\
& +\left\langle\nabla_y f\left(\bx_{t}, \by_t\right)-\nabla_y f\left(\bx_{t}, \by_{t-1}\right), \by_t-\by_{t-1}\right\rangle.
\end{aligned}
\end{equation}
Next, we bound the three terms on the r.h.s. of \eqref{eq:separate-gap-p2-part1}. \na{Since $l_t\geq L$, \cref{lemma:complicated-ls-hold} implies that \cref{eq:LS-2} holds; therefore, using Young's inequality,} we get
\begin{equation}\label{eq:separate-gap-p3-part1}
    \begin{aligned}
& \left\langle\nabla_y f\left(\tilde\bx_{t+1}, \by_t\right)-\nabla_y f\left(\bx_{t}, \by_t\right),~\tilde\by_{t+1}-\by_t\right\rangle 
\leq \frac{\sigma_{t} l_t^2 }{2}\left\|\tilde\bx_{t+1}-\bx_{t}\right\|^2+\frac{1}{2\sigma_{t}}\left\|\tilde\by_{t+1}-\by_t\right\|^2.
\end{aligned}
\end{equation}
%\xzr{Because \cref{alg:agda} is checking at \shortalgline{\ref{algline:check-agda+}} at time step $t$,   
\na{Moreover, since %\eqref{eq:backtrack} held for iteration $t-1$
%\xtodo{typo here}
\cond{t-1} holds and $t> \hat t_{N(t)}$, we have $(\tilde\bx_t, \tilde\by_t)=(\bx_t,\by_t)$, and using \cref{eq:Lyy-2} for $t-1$ together with Young's inequality, we get}
\begin{equation}\label{eq:separate-gap-p4-part1}
\left\langle\nabla_y f\left(\bx_{t}, \by_t\right)-\nabla_y f\left(\bx_{t}, \by_{t-1}\right), \bv_{t+1}\right\rangle \leq \frac{\sigma_{t-1} l_{t-1}^2}{2}\left\|\by_t-\by_{t-1}\right\|^2+\frac{1}{2\sigma_{t-1}}\left\|\bv_{t+1}\right\|^2 .
\end{equation}
For the third term, \na{strong-concavity of $f(\bx_t,\cdot)$ and $\rv{\mu_{t-1}=\mu_{t}\in(0,\mu]}$ implies that}
\begin{equation}\label{eq:separate-gap-p5-part1}
\left\langle\nabla_y f\left(\bx_{t}, \by_t\right)-\nabla_y f\left(\bx_{t},~\by_{t-1}\right), \by_t-\by_{t-1}\right\rangle \leq-{\mu_{t-1}}\left\|\by_t-\by_{t-1}\right\|^2 .
\end{equation}
Moreover, it can be easily checked that
\begin{equation}\label{eq:separate-gap-p6-part1}
\left\langle \by_t-\by_{t-1}, \tilde\by_{t+1}-\by_t\right\rangle=\frac{1}{2}\left\|\by_t-\by_{t-1}\right\|^2+\frac{1}{2}\left\|\tilde\by_{t+1}-\by_t\right\|^2-\frac{1}{2}\left\|\bv_{t+1}\right\|^2 .
\end{equation}
Plugging Equations \eqref{eq:separate-gap-p3-part1}~-~\eqref{eq:separate-gap-p6-part1} into \cref{eq:separate-gap-part1} and rearranging the terms, we conclude that
{\small
$$
\begin{aligned}
\MoveEqLeft f\left(\tilde\bx_{t+1}, \tilde\by_{t+1}\right)-h(\tilde\by_{t+1})-f\left(\tilde\bx_{t+1}, \by_t\right) + h(\by_t) \\
\leq &   \frac{\sigma_t l_t^2 }{2}\left\|\tilde\bx_{t+1}-\bx_{t}\right\|^2
-\left({\mu_{t-1}}-\frac{1}{2 \sigma_{t-1}}-\frac{\sigma_{t-1} l_{t-1}^2}{2}\right)\left\|\by_t-\by_{t-1}\right\|^2 -\left(\frac{{\mu_t}}{2}-\frac{1}{2\sigma_{t}}-\frac{1}{2\sigma_{t-1}}\right)\left\|\tilde\by_{t+1}-\by_t\right\|^2
\\
% = & \frac{l_t^2 \tau_{t}^2\sigma_{t}}{2}\left\|G^{\na{\tau_t}}_x(\bx_{t},\by_{t})\right\|^2
% -\sigma_{t-1}^2\left(\xzf{\mu_{t-1}}-\frac{1}{2 \sigma_{t-1}}-\frac{\sigma_{t-1} l_{t-1}^2}{2}\right)\left\|\xzr{G^{\sigma_{t-1}}_y(\bx_{t},\by_{t-1})}\right\|^2 -\sigma_{t}^2\left(\frac{\xzf{\mu_{t-1}}}{2}-\frac{1}{2\sigma_{t}}-\frac{1}{2\sigma_{t-1}}\right)\left\|\xzr{G^{\sigma_{t}}_y(\tilde\bx_{t+1},\by_{t})}\right\|^2
% \\
=
&  \frac{\sigma_{t}l_{t}^2 \tau_{t}^2}{2}\left\|G^{\tau_t}_x(\bx_{t},\by_{t})\right\|^2
 +\left(\frac{\sigma_{t-1}}{2}+\frac{\sigma_{t-1}^3 l_{t-1}^2}{2}\right)\left\|{G^{\sigma_{t-1}}_y(\bx_{t},\by_{t-1})}\right\|^2  +\Big(\frac{\sigma_t}{2}+ \frac{\sigma_{t}^2}{2\sigma_{t-1}} \Big)\left\|{G^{\sigma_{t}}_y(\tilde\bx_{t+1},\by_{t})}\right\|^2
\\
&
 -\sigma_{t-1}^2{\mu_{t-1}}\left\|{G^{\sigma_{t-1}}_y(\bx_{t},\by_{t-1})}\right\|^2  -\sigma_{t}^2\frac{{\mu_t}}{2}\left\|{G^{\sigma_{t}}_y(\tilde\bx_{t+1},\by_{t})}\right\|^2
 +
 \sigma_{t}\left\|{G^{\sigma_{t}}_y(\bx_{t},\by_{t})}\right\|^2
 -
 \sigma_{t}\left\|{G^{\sigma_{t}}_y(\bx_{t},\by_{t})}\right\|^2
 \\
 \leq & \frac{\sigma_{t}l_{t}^2 \tau_{t}^2}{2}\left\|G^{\na{\tau_t}}_x(\bx_{t},\by_{t})\right\|^2
+\left(\sigma_{t-1}+\sigma^3_{t-1} l_{t-1}^2\right)\left(\frac{4(1-\sigma_{t-1}{\mu_{t-1}})}{\sigma^2_{t-1}}+2l^2_{t-1}\right) \Delta_{t-1}  +\Big({2\sigma_t}+ \frac{\sigma_{t}^2}{\sigma_{t-1}} \Big)\Big(\frac{4(1-\sigma_t{\mu_t})}{\sigma_{t}^2}+2l^2_{t}\Big) \Delta_t
\\
&
+ {\left(\sigma_{t-1}+\sigma_{t-1}^3 l_{t-1}^2\right)\cdot \tau_{t-1}^2l_{t-1}^2\left\|G^{\tau_{t-1}}_x(\bx_{t-1},\by_{t-1})\right\|^2}
 + {\Big(\sigma_t+ \frac{\sigma_{t}^2}{\sigma_{t-1}} \Big)\cdot \tau_{t}^2l_{t}^2\left\|G^{\tau_{t}}_x(\bx_{t},\by_{t})\right\|^2}
 \\
 &
  -\sigma_{t-1}^2{\mu_{t-1}}\left\|{G^{\sigma_{t-1}}_y(\bx_{t},\by_{t-1})}\right\|^2  -\sigma_{t}^2\frac{{\mu_t}}{2}\left\|{G^{\sigma_{t}}_y(\tilde\bx_{t+1},\by_{t})}\right\|^2
 {-
\sigma_{t}\left\|{G^{\sigma_{t}}_y(\bx_{t},\by_{t})}\right\|^2},
% \\
% &
%  %-\sigma_{t-1}^2\xzf{\mu_t}\left\|G_y(\bx_{t-1},\by_{t-1})\right\|^2
%  -\sigma_{t}^2\frac{\xzf{\mu_t}}{2}\left\|G_y(\bx_{t},\by_{t})\right\|^2
\end{aligned}
$$}%
%holds for all \xzr{$t\geq  \hat{t}_{N(t)}+1$}, 
where the last inequality follows from \na{(i) $\delta_{t-1}\leq\Delta_{t-1}$ (by hypothesis) and $\delta_{t}\leq\Delta_{t}$ (by \cref{lemma:Delta-Bound-t>tn-agda}), (ii) 
\cref{eq:LS-xt+1yt,eq:LS-xtyt}, and (iii) the fact that \cref{eq:final-ls-xt+1yt} holds for iteration $t-1$ (by hypothesis)}. After the rearranging terms \na{and dropping a non-positive term on the r.h.s., we get}
\begin{equation}
\begin{aligned}
\MoveEqLeft[2]\sigma_{t-1}^2{\mu_{t-1}}\left\|{G^{\sigma_{t-1}}_y(\bx_{t},\by_{t-1})}\right\|^2  
+
\sigma_{t}^2\frac{{\mu_t}}{2}\left\|{G^{\sigma_{t}}_y(\tilde\bx_{t+1},\by_{t})}\right\|^2
+
{\sigma_{t}\left\|G^{\sigma_t}_y(\bx_{t},\by_{t})\right\|^2  }\\
\leq& -f\left(\tilde\bx_{t+1}, \tilde\by_{t+1}\right) + h(\tilde\by_{t+1}) +f\left(\tilde\bx_{t+1}, \by_t\right) - h(\by_t)
\\
& 
+2\left(1+\sigma^2_{t-1} l_{t-1}^2\right)\left(\frac{2(1-\sigma_{t-1}{\mu_{t-1}})}{\sigma_{t-1}}+\sigma_{t-1}l^2_{t-1}\right) \Delta_{t-1}  
+
\na{2\Big(\Big(2+ \frac{\sigma_{t}}{\sigma_{t-1}} \Big)\Big(\frac{2
%(1-\sigma_t\xzf{\mu_t})
}{\sigma_{t}}+\sigma_tl^2_{t}\Big)-4{\mu_t}\Big) \Delta_t}
\\
&
+ {\left(1+\sigma^2_{t-1} l_{t-1}^2\right)\sigma_{t-1}\tau_{t-1}^2l_{t-1}^2\left\|G^{\tau_{t-1}}_x(\bx_{t-1},\by_{t-1})\right\|^2}
+ {\left(\frac{3}{2} + \frac{\sigma_{t}}{\sigma_{t-1}} \right)\sigma_{t}\tau_{t}^2l_{t}^2\left\|G^{\tau_{t}}_x(\bx_{t},\by_{t})\right\|^2}.
\end{aligned}
\end{equation}
%holds for all \xzr{$t\geq  \hat{t}_{N(t)}+1$}. 
Thus, using \cref{eq:descent_x_inner-part1} completes the proof \na{of \cref{eq:alternative_ls_appendix-true-part1-t>tn}}. \na{Furthermore, setting $\sigma_t=1/l_t$ within \eqref{eq:alternative_ls_appendix-true-part1-t>tn} immediately implies \eqref{eq:ls-conv} since $\frac{\sigma_t}{\sigma_{t-1}}=\frac{l_{t-1}}{l_t}\leq \frac{1}{\gamma}$, which follows from \algline{\ref{algeq:l-update}}.}
\end{proof}
\begin{theorem}\label{thm:ls-hold-main-t>tn}
    Suppose \cref{ASPT:lipshiz gradient} holds. \na{For iteration $t\in\mathbb{N}$ such that $t > \hat t_{N(t)}$, $L_t\geq L$ {and $\mu_t \in (0,\mu]$}, both \emph{\cond{t}} and $\delta_t\leq \Delta_t$ will hold after executing \emph{\algline{\ref{algline:check-agda+}}} at most $\log_{1/\gamma}(L_t/l^\circ_t)+1$ many times for the given iteration $t\in\mathbb{N}$.} 
\end{theorem}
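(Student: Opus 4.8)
The plan is to replay, for CASE 2b, the argument used for CASE 1b in \cref{cor:tn-finite}, but now drawing on \cref{lemma:complicated-ls-hold}, \cref{lemma:complicated-t>tn-agda} and \cref{lemma:Delta-Bound-t>tn-agda} in place of their CASE 1 counterparts. As in the statements of those CASE 2 lemmas, I work inside the induction on $t$ carrying \cref{thm:backtrack}, so that \cond{t-1} and $\delta_{t-1}\le\Delta_{t-1}$ are available; this is legitimate because $t>\hat t_{N(t)}\ge\hat t_0=0$ forces $t\ge1$.

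First I would bound the number of checks of \algline{\ref{algline:check-agda+}} made during iteration $t$. Each failure of \cond{t} triggers $l_t\gets l_t/\gamma_2$, and the inner loop runs only while $l_t\le L_t$; since $l_0=L_0=\tilde l$, the $l$-updates use $\gamma_2=\gamma$, the $L$-updates use $\gamma_1=\gamma^r$, and $l_t^\circ$ is reset to $l_t$ at each accepted step, both $L_t$ and $l_t^\circ$ are of the form $\tilde l\gamma^{-j}$ with $j\in\mathbb{Z}$. Hence $L_t/l_t^\circ$ is an integer power of $1/\gamma$, and after at most $\log_{1/\gamma}(L_t/l_t^\circ)$ failures one reaches exactly $l_t=L_t$, still satisfying the loop guard $l_t\le L_t$. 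This is the same bookkeeping as in \cref{cor:tn-finite}.

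Next I would show that \cond{t} is forced to hold no later than the step at which $l_t=L_t$. At that step (or at any earlier step where $l_t\ge L$ already) we have $l_t\ge L$ by the hypothesis $L_t\ge L$, together with $\mu_t\in(0,\mu]$ and $t>\hat t_{N(t)}$, and \cond{t-1}, $\delta_{t-1}\le\Delta_{t-1}$ from the induction. Then \cref{lemma:complicated-ls-hold} delivers \eqref{eq:LS-Lyy}, \eqref{eq:LS-7} and \eqref{eq:LS-xt+1yt}, i.e.\ exactly \eqref{eq:Lyy-1}, \eqref{eq:Lyy-2} and \eqref{eq:final-ls-xt+1yt}, while \cref{lemma:complicated-t>tn-agda} with $\sigma_t=1/l_t$ delivers \eqref{eq:ls-conv} with $\Lambda_t$ and $R_t$ precisely the quantities assigned in \algline{\ref{algeq:Lambda-t>tn}} and \algline{\ref{algline:Rt}}. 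All four parts of \eqref{eq:backtrack} then hold, so the accepting branch at \algline{\ref{algline:check-agda+}} is taken and iteration $t$ ends; therefore the number of checks during iteration $t$ is at most $\log_{1/\gamma}(L_t/l_t^\circ)$ rejections plus one acceptance, giving the claimed $\log_{1/\gamma}(L_t/l_t^\circ)+1$. Finally, $\delta_t\le\Delta_t$ is immediate from \cref{lemma:Delta-Bound-t>tn-agda}, all of whose hypotheses ($L_t\ge L$, $\mu_t\in(0,\mu]$, \cond{t-1}, $\delta_{t-1}\le\Delta_{t-1}$) are in force.

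The step I expect to be the main obstacle is the consistency bookkeeping rather than any inequality: I must rule out that, while $l_t$ is being increased through rejections, the loop guard $l_t\le L_t$ is violated (triggering $L_t\gets L_t/\gamma_1$ and $\mu_t\gets\max\{\mu_t\gamma_1,\underline{\mu}\}$, and thereby switching the analysis to CASE 1 by moving $\hat t_{N(t)}$ up to $t$) \emph{before} \cond{t} is accepted. The previous paragraphs close this gap precisely: as soon as $l_t$ crosses $L$, which happens at the latest when $l_t=L_t$ because $L_t\ge L$, \cref{lemma:complicated-t>tn-agda} forces acceptance, so the inner loop exits through the accepting branch and never through $l_t>L_t$. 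A related point to verify is that $L_t$ and $\mu_t$ genuinely remain fixed at their CASE 2b values across all rejections within iteration $t$ — they are updated only at the bottom of the outer \texttt{LOOP} body, which is not reached until the inner \texttt{WHILE} exits — so the hypotheses of the invoked lemmas stay valid at every check.
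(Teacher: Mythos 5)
Your endgame — the bookkeeping that $l_t$ can be rejected at most $\log_{1/\gamma}(L_t/l^\circ_t)$ times before reaching $l_t=L_t$, at which point $l_t\ge L$ and Lemmas~\ref{lemma:complicated-ls-hold} and~\ref{lemma:complicated-t>tn-agda} force acceptance — matches the paper, and your observation that $L_t,\mu_t$ cannot change mid-iteration (they are updated only after the inner \texttt{WHILE} exits) is a genuine point worth making explicit. But there is a gap at the very first step: you cannot obtain \cond{t-1} and $\delta_{t-1}\le\Delta_{t-1}$ by appealing to ``the induction on $t$ carrying \cref{thm:backtrack}.'' First, the paper's proof of \cref{thm:backtrack} \emph{uses} \cref{thm:ls-hold-main-t>tn}, not the other way around, so the appeal is circular. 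Second, and more importantly, $\delta_{t-1}\le\Delta_{t-1}$ is not an unconditional invariant carried across all iterations — it is only established when $L_{t-1}\ge L$ and $\mu_{t-1}\le\mu$ (see \cref{lemma:Delta-Bound-tn-true} and \cref{lemma:Delta-Bound-t>tn-agda}); if the global estimates were still bad at time $t-1$, nothing would guarantee it. So you must actually argue that $L_{t-1}\ge L$ and $\mu_{t-1}\le\mu$, and then that the hypotheses of \cref{lemma:Delta-Bound-t>tn-agda} are met at $t-1$, which again invokes $t-2$, and so on.

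The paper closes this by the observation you omit: writing $n=N(t)$, the estimates are \emph{frozen} on $[\hat t_n,t]$, i.e.\ $L_s=L_t\ge L$ and $\mu_s=\mu_t\le\mu$ for every $s\in[\hat t_n,t]$. This makes CASE~1b applicable at $s=\hat t_n$, where \cref{cor:tn-finite} gives \cond{\hat t_n} and \cref{lemma:Delta-Bound-tn-true} gives $\delta_{\hat t_n}\le\Delta_{\hat t_n}$; that is the base case. One then runs a local induction \emph{over $s$ from $\hat t_n$ to $t$} (not over all iterations globally), with \cref{lemma:Delta-Bound-t>tn-agda} advancing $\delta_s\le\Delta_s$ and the lemma pair forcing \cond{s}. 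You also write that \cref{lemma:complicated-ls-hold} delivers ``exactly'' \eqref{eq:final-ls-xt+1yt}; it delivers \eqref{eq:LS-xt+1yt}, which has $\delta_t$ on the right, whereas \eqref{eq:final-ls-xt+1yt} has $\Delta_t$ — so $\delta_t\le\Delta_t$ must be in hand \emph{before} you can pass from one to the other, not deduced ``finally.'' Fixing the proposal therefore means: (i) state the freezing of $L_s,\mu_s$ on $[\hat t_n,t]$; (ii) establish the base case at $\hat t_n$ via the CASE~1b results; (iii) present the argument as an induction over $s\in[\hat t_n,t]$ rather than as a single step drawing on an unjustified outer induction; (iv) invoke $\delta_s\le\Delta_s$ before concluding the fourth backtracking condition.
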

%\xtodo{do we need to mention how to get $\log_{1/\gamma}(L_t/l^\circ_t)+1$ from $l_t\leq l$?}
\begin{proof}
    \na{Given $t\in\mathbb{N}$ such that $t>\hat t_{N(t)}$, $L_t\geq L$ {and $\mu_t \in (0,\mu]$}, let $n\in\mathbb{N}$ such that $n=N(t)$. Since $L_t=L_s$ {and $\mu_t = \mu_s$} for all $s\in[\hat t_n, t]$, it follows from $L_t\geq L$ {and $\mu_t \in (0,\mu]$} that $L_s\geq L$ {and $\mu_s \in (0,\mu]$} for all $s\in[\hat t_n, t]$. Using this fact, the rest of the proof directly follows from an induction argument. Indeed, since $L_{\hat t_n}\geq L$ {and $\mu_{\hat t_n } \in (0,\mu]$}, \cref{cor:tn-finite} implies that for iteration $\hat t_n$, \cond{\hat t_n} must hold within $\log_{1/\gamma}(L_{\hat t_n}/l_{\hat t_n}^\circ)+1$ times checking \eqref{eq:backtrack}; %according to \cref{thm:ls-hold-main-tn};
    moreover, according to \cref{lemma:Delta-Bound-tn-true}, we know that $\delta_{\hat t_n}\leq \Delta_{\hat t_n}$ holds. With %the former argument
    these two conclusions, we have established the base case for the induction proof, i.e., for $s=\hat t_n$.} \na{Next, we discuss the inductive step of the proof, i.e., for any iteration $s\in [\hat t_n+1, t]$, given that \cond{s-1} and $\delta_{s-1}\leq\Delta_{s-1}$ holds, we would like to show that \cond{s} and $\delta_s\leq\Delta_s$ will hold after a finite number of times executing {\shortalgline{\ref{algline:check-agda+}}} in \cref{alg:agda}. Since $L_s\geq L$ {and $\mu_s \in (0,\mu]$}, \cref{lemma:Delta-Bound-t>tn-agda} immediately implies that $\delta_s\leq \Delta_s$. We next show that \cond{s} holds. 
    %via using two arguments: (1) 
    Indeed, similar to the proof of \cref{cor:tn-finite}, checking \eqref{eq:backtrack} for iteration $s$ can fail at most $\log_{1/\gamma_2}(L_s/l^\circ_s)$ many times at which point one would have $l_s=L_s$; finally, whenever $l_s=L_s$, since $L_s\geq L$ {and $\mu_s \in (0,\mu]$} and within \agdap{} we set $\sigma_s=1/l_s$ for all $s\in[\hat t_n, t]$, \cref{lemma:complicated-ls-hold,lemma:complicated-t>tn-agda} imply that \cond{s} must hold, which completes the proof of the inductive step.}
    %\cref{lemma:complicated-ls-hold}, and using $\sigma_t=\frac{1}{l_t}$ in within \cref{eq:alternative_ls_appendix-true-part1-t>tn}.
\end{proof}
\begin{corollary}\label{cor:ls-hold-main-t>tn}
    Suppose \cref{ASPT:lipshiz gradient} holds. \na{For iteration $t\in\mathbb{N}$ such that $t > \hat t_{N(t)}$, if $l_t\geq L$ {and $\mu_t \in (0,\mu]$} at the time when the backtracking condition %\emph{\cond{t}} 
    in \eqref{eq:backtrack} is checked, during the execution of \emph{\shortalgline{\ref{algline:check-agda+}}} in \cref{alg:agda}, then both \emph{\cond{t}} and $\delta_t\leq \Delta_t$  hold for the given iteration $t\in\mathbb{N}$.} 
\end{corollary}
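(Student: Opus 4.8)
The plan is to obtain \cref{cor:ls-hold-main-t>tn} as an essentially immediate consequence of \cref{thm:ls-hold-main-t>tn} together with the per-iteration results \cref{lemma:complicated-ls-hold,lemma:Delta-Bound-t>tn-agda,lemma:complicated-t>tn-agda}; no fresh estimate is needed, only careful matching of hypotheses. First I would record two structural facts about \agdap{}. Inside the \texttt{while} loop of \cref{alg:agda} one always has $l_t\le L_t$, so the hypothesis $l_t\ge L$ at the moment \shortalgline{\ref{algline:check-agda+}} is executed forces $L_t\ge L$ as well. Moreover, since $t>\hat t_{N(t)}$, the remark following \cref{def:N-k} gives $t\ge 1$, $L_{t-1}=L_{\hat t_{N(t)}}=L_t$ and $\mu_{t-1}=\mu_t$; hence also $L_{t-1}\ge L$ and $\mu_{t-1}\in(0,\mu]$. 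Note that for $t>\hat t_{N(t)}$ the values $\Delta_t,\Lambda_t,R_t$ in memory when the check happens are exactly those assigned at the accepted check of iteration $t-1$ through \salgline{\ref{algeq:delta-update}}, \salgline{\ref{algeq:Lambda-t>tn}} and \salgline{\ref{algline:Rt}}, not through \salgline{\ref{algeq:delta_def1}}/\salgline{\ref{algeq:delta_def2}}.

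Next I would verify the two ``history'' hypotheses required by \cref{lemma:Delta-Bound-t>tn-agda,lemma:complicated-t>tn-agda}, namely that \cond{t-1} holds and $\delta_{t-1}\le\Delta_{t-1}$. For the former: in \cref{alg:agda} the counter is advanced $t\gets t+1$ only inside the branch guarded by \cond{t-1} evaluating to \texttt{True}; since iteration $t$ has begun, \cond{t-1} must have held. For the latter I would split on whether $t-1$ is itself a time at which $L$ increased. If $t-1=\hat t_{N(t-1)}$, then $\delta_{t-1}\le\Delta_{t-1}$ follows from \cref{lemma:Delta-Bound-tn-true}, which applies because $L_{t-1}\ge L$ and $\mu_{t-1}\in(0,\mu]$. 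Otherwise $t-1>\hat t_{N(t-1)}$, and since the hypotheses $L_{t-1}\ge L$, $\mu_{t-1}\in(0,\mu]$ of \cref{thm:ls-hold-main-t>tn} are met at iteration $t-1$, that theorem yields $\delta_{t-1}\le\Delta_{t-1}$ (it was checked finitely many times and accepted). With these in hand, \cref{lemma:Delta-Bound-t>tn-agda} gives $\delta_t\le\Delta_t$, which is the first assertion of the corollary; \cref{lemma:complicated-ls-hold}, invoked with $l_t\ge L$ and $\mu_t\in(0,\mu]$, produces all of \eqref{eq:ls-all}, so that \eqref{eq:LS-Lyy} and \eqref{eq:LS-7} are literally \eqref{eq:Lyy-1} and \eqref{eq:Lyy-2}, while \eqref{eq:LS-xt+1yt} combined with $\delta_t\le\Delta_t$ gives \eqref{eq:final-ls-xt+1yt}; and \cref{lemma:complicated-t>tn-agda}, whose remaining hypotheses ($t>\hat t_{N(t)}$, \cond{t-1}, $\delta_{t-1}\le\Delta_{t-1}$, $l_t\ge L$, $\mu_t\in(0,\mu]$, and $\sigma_t=1/l_t$ as set in \cref{alg:agda}) are all now verified, yields \eqref{eq:ls-conv} with precisely the $\Lambda_t$ of \salgline{\ref{algeq:Lambda-t>tn}} and the $R_t$ of \salgline{\ref{algline:Rt}}. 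Since \cond{t} is the conjunction of \eqref{eq:ls-conv}, \eqref{eq:Lyy-1}, \eqref{eq:Lyy-2} and \eqref{eq:final-ls-xt+1yt}, this proves \cond{t} holds at the given check.

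The only mildly delicate point, and hence where I would be most careful, is the bookkeeping that establishes $\delta_{t-1}\le\Delta_{t-1}$: one must treat the boundary case $t-1=\hat t_{N(t-1)}$ separately from $t-1>\hat t_{N(t-1)}$, and one must be sure there is no circularity in chaining \cref{thm:ls-hold-main-t>tn} and \cref{lemma:complicated-t>tn-agda}. There is none, because the inductive proof of \cref{thm:ls-hold-main-t>tn} appeals to the per-iteration lemmas \cref{lemma:complicated-ls-hold,lemma:complicated-t>tn-agda} directly rather than to this corollary, so the dependency graph is acyclic. Everything else is routine substitution into already-established inequalities.
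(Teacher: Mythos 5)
Your proposal is correct and follows essentially the same route as the paper: note that $l_t\le L_t$ forces $L_t\ge L$, obtain \cond{$t-1$} and $\delta_{t-1}\le\Delta_{t-1}$ from the inductive argument behind \cref{thm:ls-hold-main-t>tn}, then conclude via \cref{lemma:complicated-ls-hold,lemma:complicated-t>tn-agda,lemma:Delta-Bound-t>tn-agda}. The only difference is that you spell out what the paper compresses into a citation of that proof — the explicit case split $t-1=\hat t_{N(t-1)}$ versus $t-1>\hat t_{N(t-1)}$, the observation that the iteration counter only advances when \cond{$t-1$} holds, and the acyclicity of the lemma dependencies — all of which is consistent with the paper's argument.
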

\begin{proof}
    \na{$l_t\leq L_t$ and $l_t\geq L$ imply %that 
    $L_t\geq L$; hence, according to the proof of \cref{thm:ls-hold-main-t>tn}, it follows that %both 
    $\delta_{t-1}\leq \Delta_{t-1}$ and {\cond{t-1}} must be true simultaneously 
    %at the time 
    %the backtracking condition 
    when
    {\cond{t}} in \eqref{eq:backtrack} is checked for iteration $t$. Thus, \cref{lemma:complicated-ls-hold,lemma:complicated-t>tn-agda} %together 
    imply that \cond{t} holds, and \cref{lemma:Delta-Bound-t>tn-agda} implies that $\delta_t\leq \Delta_t$ holds.}
\end{proof}
\paragraph{Proof of \cref{thm:backtrack}}
In \cref{cor:tn-finite} and \cref{thm:ls-hold-main-t>tn} we have established that \cond{t} holds in finite time for all $t\in\mathbb{N}$ such that {CASE 1b} 
and {CASE 2b} are true, respectively. We are now ready to complete the proof of \cref{thm:backtrack} by showing that for any iteration $t\in\mathbb{N}$ belonging to either CASE 1a or CASE 2a, the algorithm cannot stay in that case for an infinite amount of time within the given iteration, i.e., for the given iteration $t$ that is in either CASE 1a or CASE 2a, after checking \eqref{eq:backtrack} finitely many times, either \cond{t} holds or one switches to CASE 1b or CASE 2b for the iteration $t$. 

\na{Given $t\in\mathbb{N}$, suppose that CASE 1a holds. Since every time \eqref{eq:backtrack} does not hold, the value of $l_t$ is increased using $l_t\gets l_t/\gamma_2$ according to the update rule in \agdap{}; it follows that checking \eqref{eq:backtrack} can fail at most \rv{$\lceil \max\{\log_{1/\gamma_2}(L/l_t),\log_{1/\gamma_1}(\mu_t/\mu)\}\rceil$} %\nsa{Check if it is ceiling.} 
many times before one switches to CASE 1b, i.e., one would have {$\mu_t \leq \mu$} and $l_t\geq L$ implying that $L_t\geq L$. Now suppose that CASE 2a holds for the given $t\in\mathbb{N}$. We next argue that within $\log_{1/\gamma_2}(L_t/l_t)+1$ many times \cond{t} is checked, either \eqref{eq:backtrack} holds or one moves to either CASE 1 for the given iteration $t$. First, note that every time \eqref{eq:backtrack} does not hold, according to \agdap{} one increases the value of $l_t$ using the update $l_t\gets l_t/\gamma_2$; second, since $\gamma_1=\gamma_2^r$ for some integer $r\geq 1$, and we initialize both $l_0$ and $L_0$ at %the same 
\rv{some given number $\tilde l$ %$={\tilde\mu}/\gamma_2$, 
such that $\tilde l>\tilde \mu$,} it follows that checking \eqref{eq:backtrack} can fail at most $\log_{1/\gamma_2}(L_t/l_t)$ many times at which point one would have $l_t=L_t$, and the next time it fails one switches to CASE 1 as the condition $l_t\leq L_t$ would not hold and one would increase $L_t$ according to the update rule $L_t\gets L_t/\gamma_1$. This result completes the proof since we have just showed that for $t$ such that CASE 1a is true, either \cond{t} holds after checking \eqref{eq:backtrack} finitely many times, or one switches to CASE 1b.}

\section{Proof of \cref{thm:avg--backtrack-time}}
\label{sec:backtrack-bound}
\na{Here we bound the average number of backtracking steps per iteration $t\in\mathbb{N}$.} \rv{Recall that $\tilde\mu$ is the initial estimate for $\mu$ and $\tilde l$ such that $\tilde l>\tilde \mu$ is the initial estimate for the local Lipschitz constant at $(\bx_0,\by_0)$, e.g., one can set $\tilde l=\tilde\mu/\gamma_2$.}
%represents the set containing all $ \hat{t}_n$ and $k_n$, respectively. We use $|\cdot|$ to denote the cardinality of the given set.
%\xtodo{max value of $k_n$ need to be modified accordingly if def of $k_n$ changes.}
\begin{lemma}\label{cor:bound-tn-kn}
     Suppose $\{\mu_t,l_t,L_t\}_{t\geq 0}$ are generated by \agdap{}, stated in \cref{alg:agda}, and
     %or \cref{alg:sagda+}.
     \na{let $\bar{N}\in \N$ and $\cT,\cK\subset\N$ be defined as in Definitions~\ref{def:N-k} and~\ref{def:kn}.}
     Then
    {$l_t\leq \bar L/\gamma_2$, $L_t\leq {\bar{L}/\gamma_1}$ and $\mu_t \geq \gamma_1\bar\mu $ for all $t\in\N$, where $\bar L  \triangleq 
    %\max\{L, \frac{{\tilde{\mu}}}{\mu}\tilde{l}\}
    \rv{\cR\;\tilde l}$ and $\bar \mu \triangleq \max\{\underline{\mu}/\gamma_1,~\rv{\tilde\mu/\cR}
    %\min\{\mu, \frac{\tilde l}{L}{\tilde{\mu}}\}
    \}$} and $\rv{\cR \triangleq \max\{\tilde\mu/\mu,~L/\tilde l,~1\}}$;  moreover, %$|\{ \hat{t}_n\}| 
    $|\cT| = %|\{k_n\}|
    |\cK|=\bar N\leq 
    %\lfloor \log_{1/\gamma}(\kappa)\rfloor+r
    \rv{\left\lceil \frac{1}{r}\log_{1/\gamma}(\rv{\cR})\right\rceil}$, and $\max_{\na{n\in [\bar N]}}\{k_n\}\leq 
    \rv{\left\lceil \frac{1}{r}\log_{1/\gamma}(\rv{\cR})\right\rceil}$, where $|\cdot|$ to denote the cardinality of the given set.
    %\nsa{Previously the bound was $\lceil\log_{1/\gamma_1}(\gamma_2\kappa)\rceil +1$} 
\end{lemma}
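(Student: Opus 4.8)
The plan is to track the evolution of the triple $(\mu_t,l_t,L_t)$ produced by \agdap{} and to exploit the structural fact, already extracted in the proof of \cref{thm:backtrack}, that whenever the backtracking condition \cond{t} is checked at a point with $l_t\ge L$ and $\mu_t\in(0,\mu]$ it is guaranteed to hold --this is \cref{thm:ls-hold-main-tn} when $t=\hat t_{N(t)}$ and \cref{cor:ls-hold-main-t>tn} when $t>\hat t_{N(t)}$; equivalently, every \emph{rejection} of \cond{t} occurs at a point with $l_t<L$ or $\mu_t>\mu$. I would begin with the elementary bookkeeping. The pair $(L_t,\mu_t)$ changes only through the update $L_t\gets L_t/\gamma_1,\ \mu_t\gets\max\{\mu_t\gamma_1,\underline\mu\}$ of \cref{alg:agda}, and only at the instants $\hat t_n$ of \cref{def:tn}; hence $\{L_t\}$ is nondecreasing, $\{\mu_t\}$ is nonincreasing, and induction on $n$ using $L_0=\tilde l$, $\mu_0=\tilde\mu$ and $\gamma_1=\gamma^r$ gives, for every $n$ with $L_{\hat t_n}<\infty$, $L_{\hat t_n}=\tilde l/\gamma_1^{\,n}$ and $\mu_{\hat t_n}=\max\{\tilde\mu\,\gamma_1^{\,n},\underline\mu\}$. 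In particular $k_n=\log_{1/\gamma_1}(L_{\hat t_n}/\tilde l)=n$, so $n\mapsto k_n$ is a bijection between the index set $[\bar N]$ and $\cK$, the instants $\hat t_0<\hat t_1<\cdots$ making up $\cT$ are in bijection with $[\bar N]$ as well, whence $|\cT|=|\cK|=\bar N$ and $\max_{n\in[\bar N]}k_n=\bar N$; it remains only to bound $\bar N$ and to read off the bounds on $l_t,L_t,\mu_t$.

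The heart of the argument is to show $\bar N\le k^{\star}\triangleq\big\lceil\tfrac1r\log_{1/\gamma}(\cR)\big\rceil$, where $\cR=\max\{\tilde\mu/\mu,\ L/\tilde l,\ 1\}$. Using $\underline\mu\le\mu$ and the explicit formulas above, an elementary computation with logarithms shows that $n\ge\tfrac1r\log_{1/\gamma}(\tilde\mu/\mu)$ forces $\mu_{\hat t_n}\le\mu$ while $n\ge\tfrac1r\log_{1/\gamma}(L/\tilde l)$ forces $L_{\hat t_n}\ge L$; as $k^{\star}\ge\tfrac1r\log_{1/\gamma}(\cR)$ dominates both thresholds, after $k^{\star}$ increases of $L_t$ we have $L_{\hat t_{k^{\star}}}\ge L$ and $\mu_{\hat t_{k^{\star}}}\in(0,\mu]$. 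Now suppose, for contradiction, that a $(k^{\star}{+}1)$-st increase occurs, i.e.\ $\hat t_{k^{\star}+1}<\infty$: then for $t\in[\hat t_{k^{\star}},\hat t_{k^{\star}+1})$ one has $L_t=L_{\hat t_{k^{\star}}}\ge L$ and $\mu_t=\mu_{\hat t_{k^{\star}}}\le\mu$, and since the inner while-loop multiplies $l_t$ by $1/\gamma_2$ on each rejection, the while-test $l_t\le L_t$ can only fail after \cond{t} was rejected at $l_t=L_t\ge L$ with $\mu_t\le\mu$ --which is impossible by \cref{thm:ls-hold-main-tn}/\cref{cor:ls-hold-main-t>tn}. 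Hence $L_t$ is never increased after $\hat t_{k^{\star}}$, so $\bar N\le k^{\star}$; since $L<\infty$ and $\mu,\tilde l,\tilde\mu>0$ we have $\cR<\infty$, and with $\gamma\in(0,1)$ this gives $\bar N<\infty$. The assertions on $|\cT|$, $|\cK|$ and $\max_n k_n$ follow.

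Finally, I would convert $\bar N\le k^{\star}$ into the uniform bounds; the basic arithmetic fact is $\gamma_1^{k^{\star}}=\gamma^{rk^{\star}}\ge\gamma^{\,r+\log_{1/\gamma}(\cR)}=\gamma_1/\cR$, which holds because $k^{\star}<1+\tfrac1r\log_{1/\gamma}(\cR)$. For $\mu_t$, monotonicity and the formula give $\mu_t\ge\mu_{\hat t_{\bar N}}=\max\{\tilde\mu\gamma_1^{\bar N},\underline\mu\}\ge\max\{\underline\mu,\ \tilde\mu\gamma_1^{k^{\star}}\}\ge\max\{\underline\mu,\ \gamma_1\tilde\mu/\cR\}=\gamma_1\bar\mu$. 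For $L_t$, $L_t\le L_{\hat t_{\bar N}}=\tilde l/\gamma_1^{\bar N}\le\tilde l/\gamma_1^{k^{\star}}\le\cR\tilde l/\gamma_1=\bar L/\gamma_1$. For $l_t$ --the delicate case-- the largest value ever taken by $l_t$ must be one at which \cond{t} was \emph{accepted}: were it rejected, $l_t$ would next be multiplied by $1/\gamma_2$ and that strictly larger value would again be checked (the interposed update of $L_t$, triggered when $l_t$ momentarily exceeds $L_t$ by the single factor $1/\gamma_2$, does not reset $l_t$), contradicting maximality. Tracking this accepted value, together with the coupled geometric growth of $l_t$ (factor $1/\gamma_2$) and $L_t$ (the larger factor $1/\gamma_1\ge1/\gamma_2$, the only mechanism that transiently pushes $l_t$ above $L_t$), the reset $l_{t+1}\gets\max\{\gamma_2 l_t,\tilde l\}$, and once more $\bar N\le k^{\star}$, yields $l_t\le\bar L/\gamma_2$. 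The main obstacle is exactly this last bound: squeezing the interleaved-growth and ceiling bookkeeping hard enough to obtain the sharper factor $1/\gamma_2$ rather than the easier $1/\gamma_1$; the remaining estimates are routine.
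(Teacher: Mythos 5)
The first two paragraphs of your proposal are correct and follow essentially the paper's own route: the closed forms $L_{\hat t_n}=\tilde l/\gamma_1^{n}$ and $\mu_{\hat t_n}=\max\{\tilde\mu\,\gamma_1^{n},\underline{\mu}\}$ (so $k_n=n$ and $|\cT|=|\cK|=\bar N$), the observation that $1/\gamma_1^{k_n}\geq\cR$ forces $L_{\hat t_n}\geq L$ and $\mu_{\hat t_n}\leq\mu$, and the fact from \cref{thm:ls-hold-main-tn,cor:ls-hold-main-t>tn} that the backtracking condition can never be rejected once $l_t\geq L$ and $\mu_t\in(0,\mu]$, which prevents any further increase of $L_t$; this gives $\bar N\leq\lceil\tfrac1r\log_{1/\gamma}(\cR)\rceil$ and the stated bounds on $L_t$ and $\mu_t$ exactly as in the paper. (Your contradiction step tacitly uses that $l_t$ and $L_t$ stay on the common lattice $\{\tilde l\,\gamma_2^{-j}:j\in\N\}$, since $\gamma_1=\gamma_2^{r}$, so the last rejection before the while-loop exits occurs at $l_t=L_t$ exactly; this should be said, but it is sound.)

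The genuine gap is the bound $l_t\leq\bar L/\gamma_2$, which you leave unproved and yourself flag as "the main obstacle." The sketched route is also shaky: the claim that "the largest value ever taken by $l_t$ is an accepted one" gives no bound by itself (acceptance can a priori occur at arbitrarily large $l_t$), and over an infinite horizon the supremum of $l_t$ need not be attained before boundedness is established, so arguing about "the maximal value" is circular as stated. The bound follows instead from a direct, rejection-based observation, which is what the paper's (terse) proof is implicitly using: every value of $l_t$ exceeding $\tilde l$ equals $1/\gamma_2$ times a value at which the condition was just rejected, and by \cref{thm:ls-hold-main-tn,cor:ls-hold-main-t>tn} any rejection happens with $l_t<L$ or $\mu_t>\mu$. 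In the first case the rejected value is below $L\leq\cR\,\tilde l=\bar L$. In the second case $\mu_t>\mu\geq\underline{\mu}$ forces $\tilde\mu\,\gamma_1^{k_{N(t)}}>\mu$, i.e.\ $\gamma_1^{-k_{N(t)}}<\tilde\mu/\mu\leq\cR$, hence $L_t=\tilde l\,\gamma_1^{-k_{N(t)}}<\bar L$, and the rejected value is at most $L_t$ because rejections only occur inside the while loop, whose guard is $l_t\leq L_t$. Either way every rejected value is at most $\bar L$, so every value of $l_t$ is at most $\max\{\tilde l,\ \bar L/\gamma_2\}=\bar L/\gamma_2$ (using $\cR\geq1$ and the decrease rule $l_{t+1}=\max\{\gamma_2 l_t,\tilde l\}$ upon acceptance); no interleaved-growth or ceiling bookkeeping is needed, and in particular no recourse to the weaker factor $1/\gamma_1$.
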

\begin{remark}
    {\rv{When $\mu$ is known, after setting $\tilde\mu=\mu$ and $\underline{\mu}=\mu$,} the definition of $\bar\mu$ implies that 
    $\bar{\mu} = \mu/\rv{\gamma_1}$;
    otherwise, we set $\underline{\mu}=0$, which implies that
    $\bar{\mu} = \min\left\{\mu,~\tilde\mu \frac{\tilde l}{L}\right\}$.
    }
\end{remark}

\begin{proof}
    \rv{According to \agdap{} in \cref{alg:agda}, based on the update rule for $L_t$ {and $\mu_t$}, i.e., $L_t\gets L_t/\gamma_1$ {and $\mu_t \gets \mu_t \gamma_1$}, one can bound $|\cT|=\bar N$ by $\lceil \log_{1/\gamma_1}(\cR)\rceil$; indeed, since $L_{\hat t_n}=\tilde l /\gamma_1^{k_n}$ and $\mu_{\hat t_n}=\max\{\tilde \mu \gamma_1^{k_n},~\underline{\mu}\}$, we can conlcude that $L_{\hat t_n}\geq L$ and $\mu_{\hat t_n}\leq \mu$ for $k_n\in\integers_+$ such that $1/\gamma_1^{k_n}\geq \cR$. Thus, $k_n\leq \lceil \log_{1/\gamma_1}(\cR)\rceil$ for all $n\in[\bar N]$, and this also implies that $\bar N\leq \lceil \log_{1/\gamma_1}(\cR)\rceil$.
    %thus, we have $\bar N\leq \left\lceil \frac{\log_{1/\gamma}(R)}{r}\right\rceil$, which also implies that $\max_{\na{n\in [\bar N]}}\{k_n\}\leq  \left\lceil \frac{\log_{1/\gamma}(R)}{r}\right\rceil$.
    Finally, because  $L_t=L_{\hat{t}_{N(t)}}=\tilde l/\gamma_1^{k_{N(t)}}$ and $\mu_t=\mu_{\hat{t}_{N(t)}}=\max\{\gamma_1^{k_{N(t)}}\tilde \mu,~\underline{\mu}\}$, 
    we can conclude that $L_t\leq \tilde l/\gamma_1^{\lceil \log_{1/\gamma_1}(\cR)\rceil}\leq  \cR\; \tilde l/\gamma_1=\bar{L}/\gamma_1$ and $\mu_t\geq \max\{\tilde\mu \gamma_1^{\lceil \log_{1/\gamma_1}(\cR)\rceil}, \underline{\mu}\}\geq \max\{\gamma_1\tilde \mu/\cR,\underline{\mu}\}=\gamma_1\bar{\mu}$ for all $t\geq 0$. {From the definition of $\bar L$ and $\bar\mu$, we have $L\leq \bar L$ and $\mu\geq \bar\mu$; hence,} when {$L_t\in [\bar{L},\bar{L}/\gamma_1]\subset [L, \bar L/\gamma_1]$ (at the same we also have $\mu_t \in [\gamma_1\bar{\mu},\bar{\mu}]\subset [\gamma_1\bar\mu, \mu]$ since $L_t$ and $\mu_t$ change together)}, \agdap{} stated in \cref{alg:agda} %or \cref{alg:sagda+} 
    can always find $l_t$ such that ${l}_t\in [\bar{L},L_t]$ and $l_t\leq {\bar L}/\gamma_2$ because \na{$\gamma_1=\gamma^r\leq \gamma=\gamma_2$ for some $\gamma\in(0,1)$ and $r\in\integers_+$.} At this point, one does not need to increase $l_t$ anymore since we simultaneously have $l_t\geq L$ and $\mu_t\leq \mu$. Indeed, it follows from \cref{thm:ls-hold-main-tn,cor:ls-hold-main-t>tn}
    that for any given $t\in\N$, the condition in \eqref{eq:backtrack} always holds whenever ${l}_t\geq L$ {and $\mu_t \in (0,\mu]$}, and within \agdap{} whenever \cond{t} holds,  one sets $L_{t+1}=L_t$, {$\mu_{t+1} = \mu_t$} and the iteration counter is incremented, i.e., $t\gets t+1$.} 
\end{proof}

%\xtodo{For this part, this bound may be optimized. I mean the $\lceil$ or $\rceil$. }

\begin{corollary}\label{cor:bound-NK}
        $N(t)\leq \rv{\left\lceil \frac{1}{r}\log_{1/\gamma}(\cR)\right\rceil}$ for all $t\geq 0$. 
\end{corollary}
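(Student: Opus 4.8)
The plan is to reduce the statement to \cref{cor:bound-tn-kn}, which already establishes that the \emph{total} number of increases of $L_t$ during an \agdap{} run is bounded, namely $\bar N\le \lceil \tfrac{1}{r}\log_{1/\gamma}(\cR)\rceil$; what remains is only the bookkeeping observation that for every (finite) $t\in\N$ one has $N(t)\le \bar N$.

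First I would recall from Definitions~\ref{def:tn} and~\ref{def:N-k} that $\hat t_n$ is finite precisely when $n\le \bar N$. Indeed, by the convention $L_\infty\triangleq +\infty$, as soon as $L_{\hat t_n}=+\infty$ we get $\hat t_{n+1}=\infty$ and $L_{\hat t_{n+1}}=+\infty$, so this propagates; and by the very definition $\bar N=\max\{n\in\N:\ L_{\hat t_n}<\infty\}$ the first index with $\hat t_n=\infty$ is $n=\bar N+1$. Consequently, for a fixed $t\in\N$ the set $\{n\in\N:\ t\ge \hat t_n\}$ appearing in \cref{def:N-k} contains no index $n>\bar N$, since for such $n$ one has $\hat t_n=+\infty>t$. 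Therefore $N(t)=\max\{n\in\N:\ t\ge \hat t_n\}\le \bar N$.

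Finally I would invoke \cref{cor:bound-tn-kn} — which uses $\gamma_1=\gamma^r$, so that $\log_{1/\gamma_1}(\cR)=\tfrac{1}{r}\log_{1/\gamma}(\cR)$, and which relies only on \cref{ASPT:lipshiz gradient} to ensure $\cR=\max\{\tilde\mu/\mu,\ L/\tilde l,\ 1\}<\infty$ — to conclude $N(t)\le \bar N\le \lceil \tfrac{1}{r}\log_{1/\gamma}(\cR)\rceil$ for all $t\ge 0$, as claimed. There is essentially no obstacle here: all the quantitative content (that each increase multiplies $L_t$ by the fixed factor $1/\gamma_1$, that $L_t$ ceases to grow once $L_t\ge L$ and $\mu_t\le\mu$, and that at most $\lceil\log_{1/\gamma_1}(\cR)\rceil$ such steps can occur before $1/\gamma_1^{k_n}\ge\cR$) was already carried out in \cref{cor:bound-tn-kn}; the present corollary needs only the elementary inequality $N(t)\le\bar N$ derived above.
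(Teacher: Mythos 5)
Your proposal is correct and follows the paper's own route: the paper also proves this corollary by directly invoking \cref{cor:bound-tn-kn}, with the bound $N(t)\le\bar N$ being the only (elementary) additional observation, which you spell out carefully and correctly.
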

\begin{proof}
    The proof directly follows from \cref{cor:bound-tn-kn}.
\end{proof}

\paragraph{Proof of \cref{thm:avg--backtrack-time}}
\label{pf:thm:avg--backtrack-time}
The bound on the number of \rv{times $L_t$ increases and $\mu_t$ decreases simultaneously} %immediately 
follows from \cref{cor:bound-tn-kn}. Next, we show the bound on the average number of backtracking checks per iteration. \na{For any %iteration 
$t\in\N$, recall that according to \agdap, every time \cond{t} fails to hold one increases the local estimate using $l_t\gets l_t/\gamma_2$; hence, the number of times \cond{t} is checked equals to $1+\log_{1/\gamma_2}(l_t/l^\circ_t)$, where $l_t$ denotes the value of the local constant estimate at the time \cond{t} holds and $l^\circ_t$ denotes the value of the local constant estimate at the beginning of iteration $t$, i.e., before \cond{t} is checked for the first time for iteration $t$. According to \agdap{}, $l^\circ_0=\tilde l$
%\triangleq \qsr{\tilde\mu}/\gamma_2$ 
and for all $t\in\N$, one has $l^\circ_{t+1}=\max\{\gamma_2 l_t, \tilde l\}$. Therefore, for any $T\in\N$, the average number of times per iteration \agdap{} checks the backtracking condition in~\eqref{eq:backtrack} by the end of iteration $T$ can be bounded by 
    \begin{equation}
    \begin{aligned}
                \frac{1}{T+1}\sum_{t=0}^{T} {I_t} = \frac{1}{T+1}\sum_{t=0}^{T}\Big(1+\log_{1/\gamma_2}\Big(\tfrac{l_{t}}{l^\circ_t}\Big)\Big)
                \leq \frac{1}{T+1}\sum_{t=0}^{T}\Big(1+\log_{1/\gamma_2}\Big(\tfrac{l_{t}}{\gamma_2 l_{t-1}}\Big)\Big),
    \end{aligned}
    \end{equation}
    with the convention that $l_{-1}\triangleq \tilde l/\gamma_2$ (since $l^\circ_0=\tilde l$); hence,
 %Indeed, since  $N(T)\leq  \lceil\log_{1/\gamma_1}(\kappa)\rceil$ for any $T$, $L_t$ will only change finite times.
\begin{equation}
 \begin{aligned}
              \frac{1}{T+1}\sum_{t=0}^{T} I_t &\leq
                %&   \frac{1}{T+1}\sum_{t=0}^{T}(1+\log_{1/\gamma_2}(\tfrac{l_{t}}{\gamma_2 l_{t-1}})) \\
               %=
               1 + \frac{1}{T+1} \log_{1/\gamma_2}\Big(\frac{1}{\gamma_2^T} \frac{l_T}{\tilde l}\Big) 
                 \\
                 &=1+ \frac{T}{T+1}+ \frac{1}{T+1} \log_{1/\gamma_2}\Big(\frac{l_T}{\tilde l}\Big) \leq 2+\frac{\log_{1/\gamma}(\bar L/\tilde l)}{T+1},
 \end{aligned}
 \end{equation}
 %where in the last inequality we used 
 which follows from $
 %\qsr{\tilde l = \tilde\mu/\gamma_2}\leq 
 l_t\leq {\bar L}/\gamma_2$ for all $t\geq 0$ and $\gamma_2=\gamma\in(0,1)$. \rv{Finally, the desired result is due to $\cR=\bar L/\tilde l$.}} 
 % Suppose $T \geq 1+ \log_{1/\gamma_2}(\gamma_1\kappa)$, we can obtain that
 % \begin{equation}
 % \begin{aligned}
 %              \frac{1}{T+1}\sum_{t=0}^{T} I_t \leq&   \frac{1}{T+1}\sum_{t=0}^{T}(1+\log_{1/\gamma_2}(\tfrac{l_{t}}{\gamma_2 l_{t-1}})) 
 %               \\
 %               =
 %               & 1 + \frac{1}{T+1} \log_{1/\gamma_2}(\gamma_2^{T+1} \frac{l_T}{l_{-1}}) 
 %                 \\
 %                 =
 %               & 2 + \frac{1}{T+1} \log_{1/\gamma_2}(\frac{l_T}{l_{-1}}) 
 %               \\
 %               \leq  & 3
 % \end{aligned}
 % \end{equation}
 % where the last inequality is by  
 % %\cref{thm:ls-hold-main} 
 % \tbd, i.e., $\mu\leq l_{t}\leq L/\gamma_2$  for all $t$. 
% \end{proof}
% \begin{remark}
%     Indeed, when $T\rightarrow \infty$, we can conclude $\lim_{T\rightarrow \infty}  \frac{1}{T+1}\sum_{t=0}^T I_t = 2$ following the similar analysis above.
% \end{remark}

%\section{Convergence complexity analysis of \agdap{}}
\section{Proof of \cref{thm:main}}
\label{sec:conv-complexity-wcsc-agda}
% In this section, we conduct an analysis to show that when our backtracking conditions are met, a critical inequality involving the sequence $\norm{G(\bx_t,\by_t)}, \cL(\bx_t,\by_t)$ and $\Delta_t$
% holds. The convergence results from the analysis in \cref{thm:convergence-ineq-flag-true} will be used in deriving the ultimate complexity results presented in \cref{cor:complexity-flag-true} for \cref{alg:agda}.
% \xtodo{The equations in this lemma should follow from the line search conditions.}
\na{We first start deriving some intermediate results that will be used in our complexity analysis.}
\begin{lemma}\label{lemma:complicated-imply-short-true-forproof}
   Suppose \Cref{ASPT:lipshiz gradient,aspt:bounded-Y} hold.
%and $\{\bx_{t},\by_{t},\tilde\by_{t},\xzr{L_t,l_t}\}_{t\geq0}$ are generated by \cref{alg:agda}. Then \cref{eq:alternative_ls_appendix} holds.
\na{For \agdap{}, displayed in~\cref{alg:agda}, consider an arbitrary iteration $t\in\N$. At the time \emph{\cond{t}} holds, one has
\begin{equation}
    \label{eq:alternative_ls_appendix}
    \begin{aligned}
       \MoveEqLeft[2]\Big(\tau_t - \Big(2+\frac{1}{\gamma}\Big){\tau_t^2 l_t}\Big)\norm{G_x^{\tau_t}(\bx_t,\by_t)}^2 + \sigma_t\norm{{G^{\sigma_t}_y(\bx_{t},\by_t)}}^2
                    +
            \sigma_{t}^2\frac{{\mu_{t}}}{2}\left\|{G^{\sigma_{t}}_y(\tilde\bx_{t+1},\by_{t})}\right\|^2\\
            &\leq \cL(\bx_t,\by_t) - \cL(\tilde\bx_{t+1},\tilde\by_{t+1})+4(3l_t-2{\mu_{t}})\Delta_t+Q_t,    
    \end{aligned} 
\end{equation}
where \rv{$Q_t$ is defined using $\{\hat{t}_n\}_{n\geq 0}$ and $N(t)$ as in Definitions~\ref{def:tn} and~\ref{def:N-k}, and $\Lambda_t$ as in~\cref{lemma:complicated-true-tn} for $t=\hat{t}_{N(t)}$ such that}
{\small
$$
Q_t\triangleq
\begin{cases}
            %\zeta + L_t \cD_y \sqrt{\frac{2\zeta}{\mu}}
            \Lambda_t, &\text{if $t=\hat{t}_{N(t)}$;}\\
            6l_{t-1}\Delta_t+4(3l_{t-1}-2{\mu_{t-1}})\Delta_{t-1}+2\tau_{t-1}^2l_{t-1}\norm{G_x^{\tau_{t-1}}(\bx_{t-1},\by_{t-1})}^2-\sigma_{t-1}^2{\mu_{t-1}}\left\|G^{\sigma_{t-1}}_y(\bx_{t},\by_{t-1})\right\|^2,  & \text{if $t>\hat{t}_{N(t)}$}.
            \end{cases}
$$}}%
\end{lemma}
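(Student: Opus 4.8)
\textbf{Proof plan for Lemma~\ref{lemma:complicated-imply-short-true-forproof}.}
The plan is to derive \eqref{eq:alternative_ls_appendix} from the case-specific descent inequalities already established in \cref{lemma:complicated-true-tn} and \cref{lemma:complicated-t>tn-agda}, by splitting into the two exhaustive cases $t=\hat t_{N(t)}$ and $t>\hat t_{N(t)}$ and absorbing the nonnegative coefficient terms on the left into the $\tau_t^2 l_t$ slack and the $\Delta_t$ slack on the right. First I would recall the uniform bounds from the Remark following the backtracking-condition box, namely $\mu_t\sigma_t=\mu_t/l_t<1$, together with $\sigma_t=1/l_t$ and the step-size monotonicity $\sigma_t/\sigma_{t-1}=l_{t-1}/l_t\le 1/\gamma$ coming from \algline{\ref{algeq:l-update}}. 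These are the only structural facts needed to turn the ``$\sigma_t/\sigma_{t-1}$'' and ``$\mu_t$'' prefactors appearing in \eqref{eq:alternative_ls_appendix-true-part1-tn} and \eqref{eq:alternative_ls_appendix-true-part1-t>tn} into the explicit constants $2+\frac1\gamma$ and $4(3l_t-2\mu_t)$.

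For the case $t=\hat t_{N(t)}$: by construction, once \cond{t} holds at iteration $t=\hat t_n$ we must have had $l_t\ge L$ and $\mu_t\le\mu$ at the final check (this is exactly the content of \cref{thm:ls-hold-main-tn} and the definition of $\hat t_n$), so \cref{lemma:complicated-true-tn} applies and gives \eqref{eq:alternative_ls_appendix-true-part1-tn}. Substituting $\sigma_t=1/l_t$, the coefficient of $\|G_x^{\tau_t}\|^2$ on the left becomes $\tau_t-\tfrac12\tau_t^2 l_t-\tfrac32\tau_t^2 l_t=\tau_t-2\tau_t^2 l_t\ge \tau_t-(2+\tfrac1\gamma)\tau_t^2 l_t$, and the $\Delta_t$ term $2\big(\tfrac{4(1-\sigma_t\mu_t)}{\sigma_t}+2\sigma_t l_t^2\big)\Delta_t = 2\big(4l_t(1-\mu_t/l_t)+2l_t\big)\Delta_t=(12l_t-8\mu_t)\Delta_t=4(3l_t-2\mu_t)\Delta_t$; setting $Q_t\triangleq\Lambda_t$ finishes this case. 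For the case $t>\hat t_{N(t)}$: here \cond{t-1} must have held (so $(\tilde\bx_t,\tilde\by_t)=(\bx_t,\by_t)$) and by the induction embedded in \cref{thm:ls-hold-main-t>tn} we have $\delta_{t-1}\le\Delta_{t-1}$ and $l_t\ge L$, $\mu_t\le\mu$ at the final check; thus \cref{lemma:complicated-t>tn-agda} gives \eqref{eq:alternative_ls_appendix-true-part1-t>tn}. Again substituting $\sigma_t=1/l_t$, $\sigma_{t-1}=1/l_{t-1}$, using $\sigma_t/\sigma_{t-1}\le 1/\gamma$ so that $\tfrac32+\sigma_t/\sigma_{t-1}\le 2+\tfrac1\gamma$ bounds the $\|G_x^{\tau_t}\|^2$ coefficient, and collecting the remaining $\Delta_{t-1}$, $\|G_x^{\tau_{t-1}}\|^2$ and $\|G_y^{\sigma_{t-1}}(\bx_t,\by_{t-1})\|^2$ terms (noting $1+\sigma_{t-1}^2 l_{t-1}^2=2$ and $\tfrac{2}{\sigma_{t-1}}+\sigma_{t-1}l_{t-1}^2-2\mu_{t-1}=3l_{t-1}-2\mu_{t-1}$, and likewise $2\big((2+\sigma_t/\sigma_{t-1})(\tfrac2{\sigma_t}+\sigma_t l_t^2)-4\mu_t\big)\Delta_t\le 6l_t\Delta_t-8\mu_t\Delta_t\le\dots$) identifies exactly the stated $Q_t$. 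I should be slightly careful here with which index ($l_{t-1}$ vs. $l_t$, $\mu_{t-1}$ vs. $\mu_t$) each $\Delta$ and gradient-map term carries, and use $l_{t-1}\ge\gamma l_t$ where a bound in the ``$t$''-direction is needed; but since $\mu_{t-1}=\mu_t$ and $l_{t-1}\le l_t/\gamma$ hold within a block $(\hat t_n,\hat t_{n+1})$, these are all routine.

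The main obstacle I anticipate is purely bookkeeping: matching the somewhat loose constant $3l_{t-1}-2\mu_{t-1}$ appearing in $Q_t$ against the tighter expressions coming out of \eqref{eq:alternative_ls_appendix-true-part1-t>tn} (which has $\big(2+\sigma_t/\sigma_{t-1}\big)(\tfrac2{\sigma_t}+\sigma_tl_t^2)$), and verifying that every term I want to move to the right-hand side genuinely has the right sign or is dominated after the substitution $\sigma_t=1/l_t$. Concretely, I would write out the left-hand coefficient of $\|G_x^{\tau_t}\|^2$ in \eqref{eq:alternative_ls_appendix-true-part1-t>tn} as $\tau_t-\tfrac12\tau_t^2l_t-(\tfrac32+\sigma_t/\sigma_{t-1})\tau_t^2l_t^2\sigma_t=\tau_t-(2+\sigma_t/\sigma_{t-1})\tau_t^2l_t$ and then invoke $\sigma_t/\sigma_{t-1}\le1/\gamma$; the $\|G_y^{\sigma_t}(\bx_t,\by_t)\|^2$ and $\tfrac{\mu_t}{2}\sigma_t^2\|G_y^{\sigma_t}(\tilde\bx_{t+1},\by_t)\|^2$ terms pass through unchanged, and the negative $\|G_y^{\sigma_{t-1}}(\bx_t,\by_{t-1})\|^2$ term and the $+2\tau_{t-1}^2l_{t-1}\|G_x^{\tau_{t-1}}\|^2$ term are exactly absorbed into the definition of $Q_t$ (equivalently $R_t$ from \algline{\ref{algline:Rt}}). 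Once this matching is done in both cases, \eqref{eq:alternative_ls_appendix} follows, completing the proof.
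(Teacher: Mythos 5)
There is a genuine gap, and it is conceptual rather than computational. Your plan hinges on the claim that once \cond{t} holds we ``must have had $l_t\ge L$ and $\mu_t\le\mu$ at the final check,'' citing \cref{thm:ls-hold-main-tn} (and, for $t>\hat t_{N(t)}$, \cref{thm:ls-hold-main-t>tn}). Those results only give the \emph{sufficiency} direction: if $l_t\ge L$ and $\mu_t\in(0,\mu]$ when \eqref{eq:backtrack} is checked, then \cond{t} holds. The converse is false, and deliberately so: the whole point of the nonmonotone step-size search is that \eqref{eq:backtrack} is typically \emph{accepted with $l_t$ well below $L$}, exploiting local smoothness. Consequently, \cref{lemma:complicated-true-tn} and \cref{lemma:complicated-t>tn-agda} (whose hypotheses include $l_t\ge L$, $\mu_t\le\mu$, and for the second also $\delta_{t-1}\le\Delta_{t-1}$) cannot be invoked at an arbitrary accepted iteration, so your derivation establishes \eqref{eq:alternative_ls_appendix} only for the atypical accepted steps where the local estimate happens to dominate the global constant, leaving the generic case unproven.

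The paper's proof needs none of this machinery: when \cond{t} holds, the inequality \eqref{eq:ls-conv} holds \emph{verbatim}, because it is literally the first inequality the algorithm verifies numerically before accepting the step --- no Lipschitz or strong-concavity argument enters at this stage. The lemma is then a pure bookkeeping restatement: set $Q_t=\Lambda_t+R_t$, where for $t=\hat t_{N(t)}$ the algorithm sets $R_t=0$ (so $Q_t=\Lambda_t$ as in \cref{lemma:complicated-true-tn}), and for $t>\hat t_{N(t)}$ lines \ref{algeq:Lambda-t>tn} and \ref{algline:Rt} give $\Lambda_t=6l_{t-1}(\Delta_t+2\Delta_{t-1})-8\mu_{t-1}\Delta_{t-1}=6l_{t-1}\Delta_t+4(3l_{t-1}-2\mu_{t-1})\Delta_{t-1}$ and $R_t=2\tau_{t-1}^2l_{t-1}\|G_x^{\tau_{t-1}}(\bx_{t-1},\by_{t-1})\|^2-\sigma_{t-1}^2\mu_{t-1}\|G_y^{\sigma_{t-1}}(\bx_t,\by_{t-1})\|^2$, which together reproduce the stated $Q_t$. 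The case-by-case constant matching you outline (turning $\tfrac32+\sigma_t/\sigma_{t-1}$ into $2+\tfrac1\gamma$, and $2(\tfrac{4(1-\sigma_t\mu_t)}{\sigma_t}+2\sigma_tl_t^2)$ into $4(3l_t-2\mu_t)$) is exactly the work done inside \cref{lemma:complicated-true-tn,lemma:complicated-t>tn-agda} to justify the \emph{design} of condition \eqref{eq:ls-conv} and the well-definedness result \cref{thm:backtrack}; it is not needed, and does not suffice, to prove the present lemma.
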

\begin{proof}
\na{At the time \cond{t} holds, one also have \eqref{eq:ls-conv}; therefore, the desired result follows from %bounding 
setting $Q_t$ to $\Lambda_t+R_t$ term appearing in \cref{eq:ls-conv}.}
%by $Q_t$, where we use the fact that $\norm{\by_t-\tilde\by_{t+1}}\leq\cD_y$ because of \cref{aspt:bounded-Y}.
\end{proof}
%\qstodo{will this notation $R_t$ be confused by $R$?}

\begin{lemma}\label{lemma:partial-telescop-lemma-flag-true}
Suppose \Cref{ASPT:lipshiz gradient,aspt:bounded-Y} hold. \na{For \agdap{}, displayed in~\cref{alg:agda}, given an arbitrary iteration $T\in\N$, let $n=N(T)$, where $N(\cdot)$ is defined in Definition~\ref{def:N-k}. Then,}   
\begin{equation}
\label{eq5}
\begin{aligned}
\MoveEqLeft\sum_{t=\hat{t}_n}^{T}\gamma_0\tau_t\norm{G_x^{\tau_t}(\bx_t,\by_t)}^2
        + \sum_{t=\hat{t}_n}^{T}{\sigma_t}\norm{G_y^{\sigma_t}(\bx_t,\by_t)}^2 
        + \sum_{t=\hat{t}_n}^{T} \frac{{\mu_{t}}\sigma_t^2}{2}\norm{G_y^{\sigma_t}(\bx_{t+1},\by_t)}^2 
        \\
        & \leq \cL(\bx_{ \hat{t}_n},\by_{ \hat{t}_n})-\cL(\bx_{T+1},\tilde\by_{T+1}) + \Delta
        _{ \hat{t}_n} \sum_{t=\hat{t}_n}^{T} \psi_t \prod_{i=\hat{t}_n}^{t-1}B_i 
        + \na{\Lambda_{\hat t_n}},
        %\zeta + \frac{L}{\na{\gamma^r}}\cD_y \sqrt{\frac{2\zeta}{\mu}},
    \end{aligned}
\end{equation}
where
$\psi_t \triangleq \begin{cases}
   24l_t - 16{\mu_{t}} & \text{if}\; t=  \hat{t}_n, \\
    24l_t - 16{\mu_{t}} + 6l_{t-1}& \text{if}\; \hat{t}_n< \na{t<\hat{t}_{n+1}}. 
\end{cases}$ and $B_t\triangleq1-{\mu_{t}}\sigma_t/2$ for all $t\in\N$.
% \xzr{$\psi_t \triangleq \begin{cases}
%    24l_t - 16\mu & \text{if}\; t=  \hat{t}_n, \\
%     2(2l_t+l_{t-1})(3-\frac{2\mu}{l_t})+12l_{t} -8\mu& \text{if}\; \hat{t}_n< t\leq  \hat{t}_{n+1}-1\; 
% \end{cases}$}
\end{lemma}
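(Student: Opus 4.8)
The plan is to sum the one-step estimate of \cref{lemma:complicated-imply-short-true-forproof} over the entire ``$L$-block'' $t\in\{\hat t_n,\dots,T\}$, telescope the $\cL$-terms, cancel the correction terms $Q_t$, and finally use the $\tau_t$-rule together with the $\Delta$-recursion to absorb the leftover primal-gradient terms. Throughout I use that $L_t=L_{\hat t_n}$ and $\mu_t=\mu_{\hat t_n}$ are constant on this block (Definitions~\ref{def:tn}--\ref{def:N-k}), that $l_t\leq L_t=L_{\hat t_n}$ whenever \cond{t} holds and the iterate is recorded, and that $\mu_t\sigma_t=\mu_t/l_t<1$, so $B_t=1-\mu_t\sigma_t/2\in(1/2,1)$. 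First I would invoke \cref{lemma:complicated-imply-short-true-forproof} at the moment \cond{t} holds to obtain \eqref{eq:alternative_ls_appendix} for each $t\in\{\hat t_n,\dots,T\}$; since \cond{t} holds we may identify $\tilde\bx_{t+1}=\bx_{t+1}$ and $\tilde\by_{t+1}=\by_{t+1}$, and since no $L$-increase occurs strictly inside the block, $\by_{t+1}$ is not overwritten for $t<T$, so summing over $t=\hat t_n,\dots,T$ makes the $\cL$-differences telescope to $\cL(\bx_{\hat t_n},\by_{\hat t_n})-\cL(\bx_{T+1},\tilde\by_{T+1})$.

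On the right-hand side, $Q_{\hat t_n}=\Lambda_{\hat t_n}$, while for $t>\hat t_n$ I would reindex by $s=t-1\in\{\hat t_n,\dots,T-1\}$ and observe three effects: (i) the summand $-\sigma_s^2\mu_s\|G_y^{\sigma_s}(\bx_{s+1},\by_s)\|^2$ in $Q_{s+1}$ merges with the left-hand term $\tfrac{\mu_s\sigma_s^2}{2}\|G_y^{\sigma_s}(\bx_{s+1},\by_s)\|^2$, leaving coefficient $\tfrac{3\mu_s\sigma_s^2}{2}\geq\tfrac{\mu_s\sigma_s^2}{2}$ on that nonnegative term; (ii) the summand $2\tau_s^2l_s\|G_x^{\tau_s}(\bx_s,\by_s)\|^2$ in $Q_{s+1}$ changes the primal coefficient at $s$ from $\tau_s-(2+\tfrac1\gamma)\tau_s^2l_s$ to $\tau_s-(4+\tfrac1\gamma)\tau_s^2l_s$ for $s<T$ (it stays $\tau_T-(2+\tfrac1\gamma)\tau_T^2l_T$ at $s=T$); (iii) the residual $\Delta$-terms accumulate to $\Theta:=\sum_{t=\hat t_n}^{T}4(3l_t-2\mu_t)\Delta_t+\sum_{t=\hat t_n+1}^{T}[6l_{t-1}\Delta_t+4(3l_{t-1}-2\mu_{t-1})\Delta_{t-1}]$. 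Using the $\tau_t$-rule with $\gamma_2=\gamma$: if $\hat E_t$ denotes the third term in the parenthesis of the $\tau_t$-update, then $(4+\tfrac1\gamma+\hat E_t)\tau_tl_t=1-\gamma_0$, hence $\tau_t-(4+\tfrac1\gamma)\tau_t^2l_t=\gamma_0\tau_t+\hat E_t\tau_t^2l_t$ for $t<T$, and $\tau_T-(2+\tfrac1\gamma)\tau_T^2l_T\geq\gamma_0\tau_T$. Thus the left side is at least $\sum_t\big[\gamma_0\tau_t\|G_x^{\tau_t}\|^2+\sigma_t\|G_y^{\sigma_t}(\bx_t,\by_t)\|^2+\tfrac{\mu_t\sigma_t^2}{2}\|G_y^{\sigma_t}(\bx_{t+1},\by_t)\|^2\big]$ plus the surplus $\sum_{t<T}\hat E_t\tau_t^2l_t\|G_x^{\tau_t}\|^2$, while the right side is $\cL(\bx_{\hat t_n},\by_{\hat t_n})-\cL(\bx_{T+1},\tilde\by_{T+1})+\Lambda_{\hat t_n}+\Theta$.

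It remains to bound $\Theta$. Regrouping and reindexing shows that the coefficient of each $\Delta_t$ in $\Theta$ is $\leq\psi_t$ (equality for $t<T$; for $t=T$ use $l_T>\mu_T>\tfrac{2}{3}\mu_T$), so $\Theta\leq\sum_{t=\hat t_n}^{T}\psi_t\Delta_t$. Unrolling $\Delta_{t+1}=B_t\Delta_t+C_t\|G_x^{\tau_t}(\bx_t,\by_t)\|^2$ gives $\Delta_t=\Delta_{\hat t_n}\prod_{i=\hat t_n}^{t-1}B_i+\sum_{j=\hat t_n}^{t-1}C_j\|G_x^{\tau_j}\|^2\prod_{i=j+1}^{t-1}B_i$, and swapping the summation order yields
\[
\sum_{t=\hat t_n}^{T}\psi_t\Delta_t\;\leq\;\Delta_{\hat t_n}\sum_{t=\hat t_n}^{T}\psi_t\prod_{i=\hat t_n}^{t-1}B_i\;+\;\sum_{j=\hat t_n}^{T-1}C_j\Big(\sum_{t=j+1}^{T}\psi_t\prod_{i=j+1}^{t-1}B_i\Big)\|G_x^{\tau_j}(\bx_j,\by_j)\|^2 .
\]
The crux --- and the main obstacle --- is absorbing this last double sum into the surplus, i.e.\ proving $C_j\big(\sum_{t=j+1}^{T}\psi_t\prod_{i=j+1}^{t-1}B_i\big)\leq\hat E_j\tau_j^2l_j$ for every $j$. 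On the block $\psi_t\leq 24l_t+6l_{t-1}-16\mu_t\leq 30L_{\hat t_n}-16\mu_{\hat t_n}$ and $B_i\leq 1-\mu_{\hat t_n}/(2L_{\hat t_n})$, so a geometric-series bound gives $\sum_{t=j+1}^{T}\psi_t\prod_{i=j+1}^{t-1}B_i\leq(30L_{\hat t_n}-16\mu_{\hat t_n})\tfrac{2L_{\hat t_n}}{\mu_{\hat t_n}}=\tfrac{4(15L_{\hat t_n}-8\mu_{\hat t_n})L_{\hat t_n}}{\mu_{\hat t_n}}$; substituting $\sigma_j=1/l_j$ into the definitions of $C_j$ and $\hat E_j$ gives exactly $\hat E_j\tau_j^2l_j/C_j=\tfrac{4(15L_j-8\mu_j)L_j}{\mu_j}$, so the $\tau_t$-formula is calibrated precisely so that this inequality holds (with equality in the worst case). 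Moving the double sum to the left keeps every primal coefficient $\geq\gamma_0\tau_j$, and discarding the surplus nonnegative dual contributions yields \eqref{eq5}; the degenerate case $T=\hat t_n$ (empty $s$-sums) is covered by the same argument.
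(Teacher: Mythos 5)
Your proposal is correct and follows essentially the same route as the paper's proof of this lemma: sum the one-step bound of Lemma~\ref{lemma:complicated-imply-short-true-forproof} over the $L$-block, fold the $Q_t$ terms to upgrade the primal coefficient to $\tau_t-(4+\tfrac1\gamma)\tau_t^2l_t$ and to collect $\sum_t\psi_t\Delta_t$, unroll the $\Delta$-recursion and swap summation order, bound $\psi_t\leq 30L_{\hat t_n}-16\mu_{\hat t_n}$ together with the geometric series $\leq 2L_{\hat t_n}/\mu_{\hat t_n}$, and finally observe that the $\tau_t$-formula is calibrated so the resulting $C_j$-weighted tail is absorbed leaving exactly $\gamma_0\tau_j$. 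The only cosmetic difference is that you make the calibration identity $\tau_t-(4+\tfrac1\gamma)\tau_t^2l_t=\gamma_0\tau_t+\hat E_t\tau_t^2l_t$ explicit, whereas the paper substitutes the $\tau_t$-formula directly at the end.
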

\begin{proof}
\na{Given $T\geq 0$, since we set $n=N(T)$, we have $ \hat{t}_n\leq T<\hat{t}_{n+1}$ --note that if $n=\bar N$, then $\hat{t}_{n+1}=+\infty$. \rv{Moreover, since $T<\hat t_{n+1}$ and \cond{t} holds with $(\tilde \bx_{t+1},\tilde \by_{t+1})$ for $\hat t_{n}\leq t\leq T$, we have $(\tilde \bx_{t+1},\tilde \by_{t+1})=(\bx_{t+1},\by_{t+1})$ for $t\in\N$ such that $\hat t_{n}\leq t\leq T-1$ and $\tilde \bx_{T+1}=\bx_{T+1}$; however, it is possible that $\tilde \bx_{T+1}\neq\bx_{T+1}$ in case $T+1=\hat t_{n+1}$}. Thus, with the convention $\prod_{j=i}^{i'}B_j=1$ if $i'<i$,} it follows from \cref{lemma:complicated-imply-short-true-forproof} that
\begin{equation}
\label{eq:crude-grad-bound-1}
        \begin{aligned}
        \MoveEqLeft[2]\sum_{t=\hat{t}_n}^{T}
        \Big(\tau_{t}-{\Big(4+\frac{1}{\gamma}\Big)}{\tau_t^2 l_t}\Big)
        \norm{G_x^{\tau_t}(\bx_{t},\by_{t})}^2 
        +\sum_{t=\hat{t}_n}^{T}{\sigma_t} \norm{G_y^{\sigma_t} (\bx_{t},\by_{t})}^2
        + \sum_{t=\hat{t}_n}^{T} \frac{{\mu_{t}}\sigma_t^2}{2}\norm{G_y^{\sigma_t}(\bx_{t+1},\by_t)}^2 
        \\
        &\leq \cL(\bx_{ \hat{t}_n},\by_{ \hat{t}_n}) - \cL(\tilde\bx_{T+1},\tilde\by_{T+1}) +
       \sum_{t=\hat{t}_n}^{T}\psi_t\Delta_t+ \na{\Lambda_{\hat t_n}}  
       %\zeta + L_t\cD_y \sqrt{\frac{2\zeta}{\mu}}
       \\
      & \leq \cL(\bx_{ \hat{t}_n},\by_{ \hat{t}_n}) - \cL(\tilde\bx_{T+1},\tilde\by_{T+1})+ \Delta_{ \hat{t}_n}\sum_{t=\hat{t}_n}^{T}\psi_t\prod_{i=\hat{t}_n}^{t-1}B_i 
        \\
        & \quad +\sum_{t={\hat{t}_n+1}}^{T}\psi_t\sum_{i= \hat{t}_n+1}^t \Big(C_{i-1}\norm{G_x^{\tau_{i-1}}(\bx_{i-1},\by_{i-1})}^2 \prod_{j=i}^{t-1}B_j \Big)
        + \na{\Lambda_{\hat t_n}},
        %\zeta + \na{L_{\hat t_n}}\cD_y \sqrt{\frac{2\zeta}{\mu}},
    \end{aligned}
\end{equation}
\na{where the second inequality follows from \algline{\ref{algeq:delta-update}}, i.e., $\Delta_{t+1}=B_t\Delta_t+C_t\norm{G_x^{\tau_t}(\bx_t,\by_t)}^2$ for all $t\in\N$ such that $\hat t_n\leq t<T$ --here, $B_t=1-\rv{\mu_t}\sigma_t/2$ and $C_t=\frac{(1-\sigma_{t}{\mu_{t}})(2-\sigma_{t}{\mu_{t}} )}{\sigma_{t}{\mu_{t}}} \frac{L_{t}^2}{\mu_{t}^2} \tau_{t}^2$.}
% and we let
% \begin{equation}
%     \label{eq:psi-t-flag-true}
%     \alpha_t = 
%        \left(\frac{\sigma_{t}}{2}+\frac{\sigma^3_{t} l_{t}^2}{2}\right)\left(\frac{4}{\sigma^2_{t}}+2l^2_{t}\right)=\xzr{3l_t},
%     \quad
%      \hat\psi_t = \begin{cases}
%      \left(\sigma_t
%     + \frac{\sigma^3_{t} l_{t}^2}{2}
%    \right)\left(\frac{4}{\sigma^2_{t}}+2l^2_{t}\right)\xzr{=6l_t}
%     & \text{if}\; t=\hat{t}_n,
%     \\
%     \left(\sigma_t
%     + \frac{\sigma^3_{t} l_{t}^2}{2}
%     +\frac{\sigma_t^2}{2\sigma_{t-1}}\right)\left(\frac{4}{\sigma^2_{t}}+2l^2_{t}\right) \xzr{=6l_t + 3l_{t-1}}
%     & 
%     \text{otherwise.}
%      \end{cases}
% \end{equation}
Indeed, by rearranging terms, we have
\begin{equation*}%\label{eq18-flag-true}
    \begin{aligned}
        \sum_{{t=\hat{t}_n+1}}^{T}\psi_t \sum_{i=\hat{t}_n+1}^{t} \Big( C_{i-1}\|G_x^{\tau_{i-1}}(\bx_{i-1},\by_{i-1})\|^2 \prod_{j=i}^{t-1} B_j\Big)=\sum_{t=\hat{t}_n}^{T-1}C_{t}\|G_x^{\tau_t}(\bx_{t},\by_{t})\|^2 \sum_{i=t}^{T-1}\psi_{i+1} \prod_{j=t+1}^{i}B_{j}.
    \end{aligned}
\end{equation*}
\na{Then using above identity within \eqref{eq:crude-grad-bound-1}, we get}
\begin{equation}\label{eq:long-step-conv-1-flag-true}
    \begin{aligned}
        \MoveEqLeft\tau_{T}\Big(1-{\Big(4+\frac{1}{\gamma}\Big)}{\tau_T l_T}\Big) \norm{G_x^{\tau_T}(\bx_T, \by_T)}^2 +\sum_{t=\hat{t}_n}^{\xqs{T-1}} 
        \Big(\tau_{t}-{\Big(4+\frac{1}{\gamma}\Big)}{\tau_t^2 l_t}
       -
       C_{t} \sum_{i=t}^{T-1}\psi_{i+1} \prod_{j=t+1}^{i}B_{j}\Big)\norm{G_x^{\tau_t}(\bx_t,\by_t)}^2\\
        &\quad + \sum_{t=\hat{t}_n}^{T}{\sigma_t}\norm{G_y^{\sigma_t}(\bx_t,\by_t)}^2 
        + \sum_{t=\hat{t}_n}^{T} \frac{{\mu_{t}}\sigma_t^2}{2}\norm{G_y^{\sigma_t}(\bx_{t+1},\by_t)}^2 
        \\
        &\leq \cL(\bx_{ \hat{t}_n},\by_{ \hat{t}_n})-\cL(\tilde\bx_{T+1},\tilde \by_{T+1}) + \Delta_{ \hat{t}_n}\sum_{t=\hat{t}_n}^{T}\psi_t\prod_{i=\hat{t}_n}^{t-1}B_i 
                + \na{\Lambda_{\hat t_n}}.
                %\zeta + L_{\hat t_n}\cD_y \sqrt{\frac{2\zeta}{\mu}}.
    \end{aligned}
\end{equation}
\rv{Note that $\hat{t}_n\leq T<\hat{t}_{n+1}$; moreover, $L_t=L_{\hat{t}_n}$ and $\mu_t=\mu_{\hat t_n}$ for all $t\in\N$ such that $\hat{t}_n\leq t<\hat{t}_{n+1}$. Therefore,} we have
$1/L_{\hat{t}_n}=1/L_t\leq \sigma_{t} = 1/{l}_t$ for all $ \hat{t}_n\leq t<\hat{t}_{n+1}$, and \rv{from the definition $\psi_t$,}
% $$\psi_t \triangleq \begin{cases}
%    24l_t - 16\xzf{\mu_{t}} & \text{if}\; t=  \hat{t}_n, \\
%     24l_t - 16\xzf{\mu_{t}} + 6l_{t-1}& \text{if}\; \hat{t}_n< t<\hat{t}_{n+1}\; 
% \end{cases},$$
it follows that
\begin{equation}\label{eq:partial-bound-flag-true}
    \begin{aligned}
    & \psi_t  
    %\leq  
    % \begin{cases}
    %      6l_t + 3l_t + 3l_{t-1}\leq 12 L_{ \hat{t}_n} & \text{if} \; t=  \hat{t}_n
    % \end{cases}
     \leq {30}L_{ \hat{t}_n}-{16{\mu_{t}}}
        ,\qquad
        \sum_{i=t}^{T-1}\prod_{j=t+1}^iB_j\leq \sum_{i=t}^{T-1}\prod_{j=t+1}^i (1-\frac{{\mu_{\hat{t}_n}}}{2L_{ \hat{t}_n}})\na{\leq\sum_{k=0}^{\infty}\Big(1-\frac{{\mu_{\hat{t}_n}}}{2L_{ \hat{t}_n}}\Big)^k} = \frac{2L_{ \hat{t}_n}}{{\mu_{\hat{t}_n}}}
    \end{aligned}
\end{equation}
for all $t\in\N$ such that $\hat{t}_n\leq t<\hat{t}_{n+1}$.

Therefore, if we use the two upper bounds provided in \eqref{eq:partial-bound-flag-true} within \eqref{eq:long-step-conv-1-flag-true} and the fact that $L_t=L_{\hat t_n}$ and $\mu_t={\mu_{\hat{t}_n}}$ for all $t\in\N$ such that $\hat{t}_n\leq t<\hat{t}_{n+1}$, it follows that
\begin{equation}
\label{eq:partial-bound-flag-true-2}
    \begin{aligned}
        \MoveEqLeft\sum_{t=\hat{t}_n}^{T} \tau_t\Big(1- \Big(4+\frac{1}{\gamma}\Big)\tau_t l_t
        - {\frac{4\tau_{t}(1-\sigma_{t}{\mu_{t}})(2-\sigma_{t}{\mu_{t}}  )}{\sigma_{t} } \frac{(15L_{t}-8{\mu_{t}})L_{t}^3}{\mu_{t}^4}}\Big)\norm{G_x^{\tau_t}(\bx_t,\by_t)}^2 
        \\
        &\quad + \sum_{t=\hat{t}_n}^{T} {\sigma_t}\norm{G_y^{\sigma_t} (\bx_t,\by_t)}^2
        + \sum_{t=\hat{t}_n}^{T} \frac{{\mu_{t}}\sigma_t^2}{2}\norm{G_y^{\sigma_t}(\bx_{t+1},\by_t)}^2 
        \\
        &\leq \na{\cL(\bx_{ \hat{t}_n},\by_{ \hat{t}_n})}-\cL(\tilde\bx_{T+1},\tilde \by_{T+1}) +  \Delta
        _{ \hat{t}_n} \sum_{t=\hat{t}_n}^{T} 
        %(9{l}_{t}+3{l}_{t-1})
        \psi_t \prod_{i=\hat{t}_n}^{t-1}B_i
       + \na{\Lambda_{\hat t_n}}.    
       %\zeta + \na{L_{\hat t_n}}\cD_y \sqrt{\frac{2\zeta}{\mu}}.
    \end{aligned}
\end{equation}
% \xtodo{Have not update \cref{alg:agda}}
Moreover, using $\tau_t \triangleq \frac{(1-\gamma_0)}{l_t}\left({4+\frac{1}{\gamma}} + {\frac{4(1-\sigma_{t}{\mu_{t}})(2-\sigma_{t}{\mu_{t}}  )(15L_{t}-8{\mu_{t}})L_{t}^3}{{\mu_{t}}^4}} \right)^{-1}$
%and $L_{\hat t_n} \leq \frac{L}{\gamma_1}=\na{\frac{L}{\gamma^r}}$ 
\na{within \eqref{eq:partial-bound-flag-true-2} leads to the desired result.}
%, it follows that
% \begin{equation}
%     \begin{aligned}
%         &\sum_{t=\hat{t}_n}^{T}\gamma_0\tau_t\norm{G_x^{\tau_t}(\bx_t,\by_t)}^2
%         + \sum_{t=\hat{t}_n}^{T}\xzr{\sigma_t}\norm{G_y^{\sigma_t}(\bx_t,\by_t)}^2 
%         + 
%         \sum_{t=\hat{t}_n}^{T} \frac{\xzf{\mu_{t}}\sigma_t^2}{2}\norm{G_y^{\sigma_t}(\bx_{t+1},\by_t)}^2 
%         \\
%         & \leq \cL(\bx_{ \hat{t}_n},\by_{ \hat{t}_n})-\cL(\bx_{T+1},\tilde\by_{T+1}) + \Delta
%         _{ \hat{t}_n} \sum_{t=\hat{t}_n}^{T}   \psi_t\prod_{i=\hat{t}_n}^{t-1}B_i 
%         +     \zeta + \frac{L}{\na{\gamma^r}}\cD_y \sqrt{\frac{2\zeta}{\xzf{\mu_{t}}}}
%     \end{aligned}
% \end{equation}
%which completes the proof.
\end{proof}

\begin{lemma}\label{thm:convergence-ineq-flag-true}
Suppose \Cref{ASPT:lipshiz gradient,aspt:bounded-Y,aspt:primal_lb} hold. \na{For \agdap{}, displayed in~\cref{alg:agda},} 
\begin{equation}
\label{eq:complexity-bound}
\begin{aligned}
        &\sum_{t=0}^{T}\gamma_0\tau_t\norm{G_x^{\tau_t}(\bx_t,\by_t)}^2
        + \sum_{t=0}^{T}{\sigma_t}\norm{G_y^{\sigma_t}(\bx_t,\by_t)}^2 
        + 
        \sum_{t=0}^{T} \frac{{\mu_{t}}\sigma_t^2}{2}\norm{G_y^{\sigma_t}(\bx_{t+1},\by_t)}^2 
        \leq \cL(\bx_0,\by_0)\xqs{-F^*} 
        %+ \xqs{(\frac{1}{\gamma_1} + \frac{3}{2})L\cD_y^2}
        + \Gamma_T+ \Xi_T
    \end{aligned}
\end{equation}
holds for all $T\in\N$, where using $\psi_t$, $B_t$ and $N(T)$ given in \cref{lemma:partial-telescop-lemma-flag-true} and Definition~\ref{def:N-k}, we define $\Gamma_T$ and $\Xi_T$ as
\begin{equation}
\label{eq:Sigma_Gamma}
    \begin{aligned}
        &\Xi_T\triangleq\sum_{n=0}^{N(T)-1}\left(\sum_{t={ \hat{t}_n}}^{ \hat{t}_{n+1}-1}\psi_t \prod_{i=\hat{t}_n}^{t-1}B_i\right) \Delta
        _{ \hat{t}_n} +\left(\sum_{t={\hat t_{N(T)}}}^{T} \psi_t\prod_{i=\hat t_{N(T)}}^{t-1}B_i\right) \Delta
        _{\hat t_{N(T)}},\\
        &\Gamma_T \triangleq \Big(1+N(T)\Big)\cdot{\Big(\bar{L}/\gamma^r+\rv{L}\Big)\cD_y^2} + \na{\sum_{n=0}^{N(T)}\Lambda_{\hat t_n}.}
    \end{aligned}
\end{equation}
%$\psi_t$ are defined in \cref{lemma:partial-telescop-lemma-flag-true}, and $N(T)$ is defined in Definition~\ref{def:N-k}.
\end{lemma}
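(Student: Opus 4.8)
The plan is to assemble the global estimate \eqref{eq:complexity-bound} by patching together the per-epoch bounds of \cref{lemma:partial-telescop-lemma-flag-true} over the finitely many epochs $[\hat t_n,\hat t_{n+1})$ cut out by the increase times of $L_t$ (Definition~\ref{def:tn}). Fix $T\in\N$, put $\bar n\triangleq N(T)$, and let $b_n\triangleq\hat t_{n+1}-1$ for $0\le n<\bar n$ and $b_{\bar n}\triangleq T$, so the intervals $\{[\hat t_n,b_n]\}_{n=0}^{\bar n}$ partition $\{0,\dots,T\}$ and on each of them $L_t$ and $\mu_t$ — hence $\sigma_t,\tau_t$ and $B_t=1-\mu_t\sigma_t/2$ — are constant. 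For each complete epoch ($n<\bar n$) I would invoke \cref{lemma:partial-telescop-lemma-flag-true} with $T$ replaced by $b_n=\hat t_{n+1}-1$, which is legitimate since $N(\hat t_{n+1}-1)=n$; for the trailing epoch I would invoke it with $T$ itself. This produces, for every $n\in\{0,\dots,\bar n\}$, an inequality whose left-hand side is the restriction of the left-hand side of \eqref{eq:complexity-bound} to $t\in[\hat t_n,b_n]$ and whose right-hand side is $\cL(\bx_{\hat t_n},\by_{\hat t_n})-\cL(\tilde\bx_{b_n+1},\tilde\by_{b_n+1})+\Delta_{\hat t_n}\sum_{t=\hat t_n}^{b_n}\psi_t\prod_{i=\hat t_n}^{t-1}B_i+\Lambda_{\hat t_n}$.

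Next I would add these $\bar n+1$ inequalities. Since the intervals $[\hat t_n,b_n]$ tile $\{0,\dots,T\}$, the left-hand sides add up to exactly the left-hand side of \eqref{eq:complexity-bound}; the $\Lambda$-terms add up to $\sum_{n=0}^{\bar n}\Lambda_{\hat t_n}$; and the $\Delta$-weighted sums add up to precisely $\Xi_T$ as defined in \eqref{eq:Sigma_Gamma}. For the coupling-function values I would telescope: at each boundary index $\hat t_{n+1}$ ($0\le n<\bar n$), the term $-\cL(\tilde\bx_{\hat t_{n+1}},\tilde\by_{\hat t_{n+1}})$ inherited from epoch $n$ and the term $+\cL(\bx_{\hat t_{n+1}},\by_{\hat t_{n+1}})$ inherited from epoch $n+1$ combine; since the $\by$-reset performed at the top of iteration $\hat t_{n+1}$ leaves $\bx_{\hat t_{n+1}}$ untouched, and \cond{\hat t_{n+1}-1} having held forces $\tilde\bx_{\hat t_{n+1}}=\bx_{\hat t_{n+1}}$, this combination equals the ``reset cost'' $\cL(\bx_{\hat t_{n+1}},\by_{\hat t_{n+1}})-\cL(\bx_{\hat t_{n+1}},\tilde\by_{\hat t_{n+1}})$, where $\by_{\hat t_{n+1}}$ is the post-reset iterate and $\tilde\by_{\hat t_{n+1}}$ the iterate accepted at the end of epoch $n$. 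Hence the sum of the coupling-value differences equals $\cL(\bx_0,\by_0)-\cL(\tilde\bx_{T+1},\tilde\by_{T+1})+\sum_{n=1}^{\bar n}\big[\cL(\bx_{\hat t_n},\by_{\hat t_n})-\cL(\bx_{\hat t_n},\tilde\by_{\hat t_n})\big]$.

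It then remains to bound the $\bar n$ reset costs together with the trailing term $-\cL(\tilde\bx_{T+1},\tilde\by_{T+1})$ by $(\bar n+1)(\bar L/\gamma^r+L)\cD_y^2$. For a fixed $n$, $\cL(\bx,\cdot)\le F(\bx)$ holds pointwise (by the definitions of $F$ and $\Phi$), so the reset cost is at most $F(\bx_{\hat t_n})-\cL(\bx_{\hat t_n},\tilde\by_{\hat t_n})=\Phi(\bx_{\hat t_n})-\big[f(\bx_{\hat t_n},\tilde\by_{\hat t_n})-h(\tilde\by_{\hat t_n})\big]$. Here I would use that $\tilde\by_{\hat t_n}$ is the output of \shortalgline{\ref{algeq:y-update}} at iteration $\hat t_n-1$ with $\tilde\bx_{\hat t_n}=\bx_{\hat t_n}$: its prox optimality produces a subgradient $s\in\partial h(\tilde\by_{\hat t_n})$ of the form $s=\grad_y f(\bx_{\hat t_n},\by_{\hat t_n-1})+l_{\hat t_n-1}(\by_{\hat t_n-1}-\tilde\by_{\hat t_n})$; writing $\Phi(\bx_{\hat t_n})=f(\bx_{\hat t_n},\by^*(\bx_{\hat t_n}))-h(\by^*(\bx_{\hat t_n}))$ and combining the concavity of $f(\bx_{\hat t_n},\cdot)$, the convexity of $h$, the $L$-Lipschitzness of $\grad f$, and $\|\tilde\by_{\hat t_n}-\by^*(\bx_{\hat t_n})\|\le\cD_y$, $\|\tilde\by_{\hat t_n}-\by_{\hat t_n-1}\|\le\cD_y$ bounds the reset cost by $(l_{\hat t_n-1}+L)\cD_y^2\le(\bar L/\gamma^r+L)\cD_y^2$, using $l_t\le\bar L/\gamma_2\le\bar L/\gamma^r$ from \cref{cor:bound-tn-kn}. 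Running the same computation with $\tilde\by_{T+1}$ (the output of \shortalgline{\ref{algeq:y-update}} at iteration $T$) gives $\cL(\tilde\bx_{T+1},\tilde\by_{T+1})\ge F(\tilde\bx_{T+1})-(\bar L/\gamma^r+L)\cD_y^2\ge F^*-(\bar L/\gamma^r+L)\cD_y^2$ by \cref{aspt:primal_lb}. Collecting terms, the coupling-value contribution is at most $\cL(\bx_0,\by_0)-F^*+(\bar n+1)(\bar L/\gamma^r+L)\cD_y^2$; adding $\sum_{n=0}^{\bar n}\Lambda_{\hat t_n}$ turns the right-hand side into $\cL(\bx_0,\by_0)-F^*+\Gamma_T$, and adding $\Xi_T$ produces \eqref{eq:complexity-bound}.

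The step I expect to be the main obstacle is precisely this uniform bound on the reset cost: showing that replacing the $\by$-iterate at an epoch boundary — and bounding the loss at the terminal index $T+1$ — costs no more than $O((\bar L/\gamma^r+L)\cD_y^2)$, uniformly in whether $\texttt{max\_solver}$ is $\texttt{True}$ or $\texttt{False}$. The delicate point is that this estimate must not invoke $L_t\ge L$ (a reset is triggered exactly when the current estimate $L_t$ has just been found too small), so no ``sufficient-ascent'' inequality for the prox step is available; one must argue purely from the prox optimality condition and the primal-gap identity $\Phi(\bx)-[f(\bx,\by)-h(\by)]$ evaluated at a prox point. One must also keep careful track of which $\by$-iterate (pre- or post-reset) enters each $\cL$-value, and of the fact that the ``initial point'' in the statement is the state of $(\bx,\by)$ at the start of the main loop (after the first $\by$-setup at $\hat t_0=0$), which is why only $\bar n$, rather than $\bar n+1$, interior reset corrections appear in $\Gamma_T$.
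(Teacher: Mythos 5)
Your proposal is correct and follows essentially the same route as the paper's proof: apply \cref{lemma:partial-telescop-lemma-flag-true} on each epoch $[\hat t_n,\hat t_{n+1})$ and on the trailing segment, telescope the $\cL$-values across the epoch boundaries, and bound each reset cost $\cL(\bx_{\hat t_{n}},\by_{\hat t_{n}})-\cL(\bx_{\hat t_{n}},\tilde\by_{\hat t_{n}})$ as well as the terminal term via $F(\bx_{T+1})\geq F^*$ by $(\bar L/\gamma^r+L)\cD_y^2$. The prox-optimality/primal-gap estimate you derive inline for the reset cost (which indeed needs only \cref{ASPT:lipshiz gradient,aspt:bounded-Y}, not $l_t\geq L$) is exactly the content of the paper's auxiliary \cref{lemma:F-common-gap}, so the only difference is that you re-prove that lemma rather than cite it.
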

\begin{proof}
\na{Given $T\geq 0$, it follows from \cref{lemma:partial-telescop-lemma-flag-true} that}
\begin{equation}\label{eq:partial-sum-tail-flag-true}
\begin{aligned}
    \MoveEqLeft\sum_{t=\hat t_{N(T)}}^{T}\gamma_0\tau_t\norm{G_x^{\tau_t}(\bx_t,\by_t)}^2
        + \sum_{t=\hat t_{N(T)}}^{T}{\sigma_t}\norm{G_y^{\sigma_t}(\bx_t,\by_t)}^2 
        +
       \sum_{t=\hat t_{N(T)}}^{T}\frac{{\mu_{t}}\sigma_t^2}{2}\norm{G_y^{\sigma_t}(\bx_{t+1},\by_t)}^2 
        \\
        & \leq \cL(\bx_{t_{N(T)}},\by_{t_{N(T)}})-\cL(\bx_{T+1},\tilde{\by}_{T+1}) + \na{\Delta_{\hat{t}_{T(N)}}} \sum_{t=\hat t_{N(T)}}^{T}  \psi_t\prod_{i=\hat t_{N(T)}}^{t-1}B_i  
       + \na{\Lambda_{\hat t_{N(T)}}},
       %\zeta + \frac{L}{\gamma^r}\cD_y\sqrt{\frac{2\zeta}{\xzf{\mu_{t}}}}
    \end{aligned}
\end{equation}
where $N(T)$ is given in Definition~\ref{def:N-k}. \na{If $N(T)\geq 1$, then for all $n\in\N$ such that $n\leq N(T)-1$,} \cref{lemma:partial-telescop-lemma-flag-true} also implies that
\begin{equation}\label{eq:partial-sum-1-flag-true}
\begin{aligned}
    \MoveEqLeft\sum_{t=\hat{t}_n}^{ \hat{t}_{n+1}-1}\gamma_0\tau_t\norm{G_x^{\tau_t}(\bx_t,\by_t)}^2
        + \sum_{t=\hat{t}_n}^{ \hat{t}_{n+1}-1}{\sigma_t}\norm{G_y^{\sigma_t}(\bx_t,\by_t)}^2 
        + 
        \sum_{t=\hat{t}_n}^{\hat{t}_{n+1}-1} \frac{{\mu_{t}}\sigma_t^2}{2}\norm{G_y^{\sigma_t}(\bx_{t+1},\by_t)}^2 
        \\
        & \leq \cL(\bx_{ \hat{t}_n},\by_{ \hat{t}_n})-\cL(\bx_{ \hat{t}_{n+1}},\tilde{\by}_{ \hat{t}_{n+1}}) +\Delta_{ \hat{t}_n} \sum_{t=\hat{t}_n}^{ \hat{t}_{n+1}-1} \psi_t \prod_{i=\hat{t}_n}^{t-1}B_i  + \na{\Lambda_{\hat t_n}}.
        %\zeta + \frac{L}{\gamma^r}\cD_y\sqrt{\frac{2\zeta}{\xzf{\mu_{t}}}}.
    \end{aligned}
\end{equation}
Moreover, \na{if $N(T)\geq 1$, then for all $n\in\N$ such that $n\leq N(T)-1$, it is crucial to note that $\tilde{\by}_{ \hat{t}_{n+1}}\neq\by_{\hat{t}_{n+1}}$; indeed,} $$\tilde{\by}_{ \hat{t}_{n+1}} = \prox{\sigma_{ \hat{t}_{n+1}-1}h}(\by_{ \hat{t}_{n+1}-1} + \sigma_{\hat{t}_{n+1}-1}\grad_y f(\bx_{ \hat{t}_{n+1}}, \by_{ \hat{t}_{n+1}-1}))$$
\na{while $\by_{\hat{t}_{n+1}}$ is computed 
%such that $\cL(\bx_{\hat{t}_{n+1}},\by_{\hat{t}_{n+1}})+\zeta\geq \max_{y\in\cY}\cL(\bx_{\hat{t}_{n+1}},\by)$.
using \algline{\ref{algeq:haty-max}} for the case \msf{True} and using \algline{\ref{algeq:haty-y}} for the case \msf{False}.
Therefore, setting \rv{$\bar\bx=\bx_{\hat{t}_{n+1}}$, $\bar\by=\by_{ \hat{t}_{n+1}-1}$,} $\by={\by}_{ \hat{t}_{n+1}}$ and $\hat\by=\tilde{\by}_{ \hat{t}_{n+1}}$ within \cref{lemma:F-common-gap}, it implies that}
%\xtodo{Maybe we can remind \cref{cor:bound-tn-kn} for $L\leq L/\gamma^r$. I thought for a while on this trivial condition.}
\begin{equation}
\label{eq:tail_bound_correction-tn}
    \na{\cL(\bx_{\hat{t}_{n+1}}, {\by}_{ \hat{t}_{n+1}}) - \cL(\bx_{ \hat{t}_{n+1}}, \tilde\by_{ \hat{t}_{n+1}})} \leq \Big(\frac{1}{\sigma_{ \hat{t}_{n+1}-1}} +\rv{L}\Big)\cD_y^2 \leq \Big(L_{ \hat{t}_{n}}+\rv{L}\Big)\cD_y^2\leq  {\Big(\bar{L}/\gamma^r+\rv{L}\Big)\cD_y^2},
\end{equation}
\na{where we used $1/\sigma_{\hat{t}_{n+1}-1}={l_{\hat t_n}}\leq L_{\hat t_n}\leq {\bar L/\gamma_1=\bar L/\gamma^r}$ from \cref{cor:bound-tn-kn}. Similarly, using the same argument, i.e., invoking \cref{lemma:F-common-gap} this time by setting $\by=\by^*(\bx_{T+1})$ and $\hat\by=\tilde{\by}_{T+1}$ implies that}
\begin{equation}
\label{eq:tail_bound_correction}
   F(\bx_{T+1}) - \cL(\bx_{T+1}, \tilde{\by}_{T+1}) \leq {\Big(\bar{L}/\gamma^r+\rv{L}\Big)\cD_y^2}.
\end{equation}
\na{Next, after adding \eqref{eq:partial-sum-1-flag-true} and \eqref{eq:tail_bound_correction-tn}, we sum the resulting inequality over $n\in\{0,1,2,...,N(T)-1\}$, and finally we add \eqref{eq:partial-sum-tail-flag-true} and \eqref{eq:tail_bound_correction} to the sum, which leads to the desired inequality in \eqref{eq:complexity-bound}, where we use the fact $F(\bx_{T+1})\geq F^*$.} 
\end{proof}
\na{We are now ready to provide the proof of the main result of this paper.}
\paragraph{Proof of \cref{thm:main}}
For all $t\in\N$, $B_t = 1-\frac{\sigma_t{\mu_{t}}}{2} \leq 1- \frac{{\mu_{t}}}{2L_t}$ since $l_t\leq L_t$. Moreover, for all $n\in\N$ such that $n\leq \bar N$, we also have $L_{\hat{t}_n}= {\frac{\tilde l}{\gamma_1^{k_n}}}$ and ${\mu_{\hat{t}_n}=\max\{\underline{\mu},\tilde{\mu}\gamma_1^{k_n}}\}$ \rv{for some initial estimates $\tilde l>\tilde \mu>0$} %\xzf{$\tilde{l}=\tilde{\mu}/\gamma_2$}
(see \cref{rem:L_tn}),
  %\xtodo{This is still correct, right?$L_{ \hat{t}_n}= \xzr{\frac{\tilde l}{\gamma_1^{k_n}}} =\frac{\mu}{\gamma_2\gamma_1^{k_n}}$.} 
  and $L_t = L_{\hat{t}_n}$ and $\mu_t={\mu_{\hat{t}_n}}$ for all $t\in\N$ such that $\hat{t}_n\leq t \leq  \hat{t}_{n+1}-1$, where $\bar N$ is \na{given} in Definition~\ref{def:N-k}. Therefore, for any given $T\in\N$, 
  %using the definitions of $\{\psi_t\}_{t=0}^T$ and $\Xi_T$, 
  we get $\psi_t\leq 30L_t$ for all $t\in\N$ such that $0\leq t\leq T$. %which helps us 
  Thus, we can bound $\Xi_T$ as follows: %$\varsigma$
        \begin{equation}\label{eq:first-bound-Xi-K}
        \begin{aligned}
               \Xi_T & \leq   \sum_{n=0}^{N(T)-1}\left(\sum_{t={ \hat{t}_n}}^{\hat{t}_{n+1}-1} 
               30L_t\prod_{i=\hat{t}_n}^{\na{t-1}}B_i\right) \Delta
        _{ \hat{t}_n} +\left(\sum_{t=t_{N(T)}}^{T} 
        %(12{l}_t + 18{l}_{t-1})
        30L_t
        \prod_{i=t_{N(T)}}^{t-1}B_i\right) \Delta
        _{t_{N(T)}}  \\
         &\leq
        %\frac{\xzf{\tilde{\mu}}}{\gamma_2}
        \sum_{n=0}^{N(T)-1}\frac{\rv{30\tilde l}}{\gamma_1^{k_n}}\sum_{t={\hat{t}_n}}^{\hat{t}_{n+1}-1} \Big(1-
        %{\gamma_2}
        \rv{\frac{\tilde\mu}{2\tilde l}}\gamma_1^{{(1+\varsigma)}k_n}\Big)^{t- \hat{t}_n} \Delta
        _{ \hat{t}_n} + 
        %{\frac{30\xzf{\tilde{\mu}}}{\gamma_2}}\cdot
        \frac{\rv{30\tilde l}}{\gamma_1^{k_{N(T)}}} \sum_{t=t_{N(T)}}^{T} \Big(1-
        %{\gamma_2}
        \rv{\frac{\tilde\mu}{2\tilde l}}\gamma_1^{{(1+\varsigma)}k_{N(T)}}\Big)^{t-t_{N(T)}}  \Delta
        _{t_{N(T)}}
        \\
        & \leq \rv{60\tilde l\cdot\frac{{\tilde l}}{\tilde\mu}}
 %\frac{\xzf{\tilde{\mu}}{\gamma_2^2}
 \left(\sum_{n=0}^{N(T)-1}\frac{1}{\gamma_1^{{(2+\varsigma)}k_n}} \Delta_{\hat{t}_n} +\frac{1}{\gamma_1^{{(2+\varsigma)}k_{N(T)}}}\Delta_{t_{N(T)}}\right)
        = \rv{60\tilde l\cdot\frac{{\tilde l}}{\tilde\mu}}
        %{\frac{60\xzf{\tilde{\mu}}}{\gamma_2^2}}
        \sum_{n=0}^{N(T)}\na{\frac{1}{\gamma_1^{{(2+\varsigma)}k_n}}\Delta_{\hat{t}_n}}.
        %=\frac{60\xzf{\gamma_2}}{\xzf{\tilde{\mu}^2}}\sum_{n=0}^{N(T)}\xzf{L_{\hat t_n}^3\Delta_{\hat{t}_n}}.
        %\leq \na{{\frac{60\xzf{\tilde{\mu}}}{\gamma_2^2}}\sum_{n=0}^{\lfloor \frac{\log_{1/\gamma}(\kappa)+1}{r}\rfloor-1}\frac{1}{\gamma_1^{2k_n}}\Delta_{\hat{t}_n}},
        \end{aligned}
    \end{equation} 
    \rv{Whenever $\mu>0$ is known, we set $\underline{\mu}=\tilde\mu=\mu$; hence, we get $\varsigma=0$ if $\mu$ is known; otherwise, $\varsigma=1$.}
\na{Next, we show that $\bar\Delta$ satisfies $\max\{\Delta_{\hat t_n}: n=0,\ldots,N(T)\}\leq \bar\Delta$. Indeed, for the case \msf{True}, \rv{according to \algline{\ref{algeq:delta_def1}}, 
% since
% \begin{equation*}
%     h(\by_{ \hat{t}_n}) - f(\bx_{ \hat{t}_n},\by_{ \hat{t}_n}) \leq h(\by^*(\bx_{ \hat{t}_n})) - f(\bx_{ \hat{t}_n},\by^*(\bx_{ \hat{t}_n})) + \zeta,\quad \na{\forall~n\in[\bar N],}
% \end{equation*}
% %holds for all $n\geq 0$,
% %it follows from 
% \cref{lemma:y0-bound} implies that $\Delta_{ \hat{t}_n}\leq \frac{2\zeta}{\xzf{\bar{\mu}}}$ 
we have $\Delta_{ \hat{t}_n}=\frac{2\zeta}{\mu_t}\leq\frac{2\zeta}{\gamma_1\bar \mu}$ 
for all \na{$n\in [\bar N]$} since $\mu_t\geq \gamma_1\bar\mu$.} On the other hand, 
for the case \msf{False}, we have $\Delta_{\hat t_n}= \min\{(1+2\frac{L_{\hat t_n}}{{\mu_{\hat{t}_n}}})d_{\hat t_n},~\overline{\cD}_y\}^2\leq \bar\Delta$, where we use \rv{$L_{\hat t_n}\leq \bar L/\gamma_1$, $\mu_{\hat t_n}\geq \gamma_1\bar\mu$, and} the fact that $d_{\hat t_n}\leq\cD_y$ because of \cref{aspt:bounded-Y}
%, and we also used $L_{\hat t_n} \leq \frac{L}{\gamma_1}=\na{\frac{L}{\gamma^r}}$ for all $t\in\N$
. Therefore, using $N(T)\leq \rv{\lceil \frac{1}{r} \log_{1/\gamma}(\cR)\rceil}$ and $\rv{\cR=\max\{{L}/\tilde{l},~\tilde{\mu}/\mu,~1\}}$, which follows from {\cref{cor:bound-tn-kn}}, we get that
%we get
%\xtodo{This bound may be able to optimized.}
\rv{
\begin{equation*}
    \begin{aligned}
    \Xi_T 
    %\leq\frac{60\xzf{\gamma_2}}{\xzf{\tilde{\mu}^2}}\sum_{n=0}^{N(T)}\xzf{L_{\hat t_n}^3\Delta_{\hat{t}_n}}
    \leq  60\tilde l\cdot\frac{\tilde l}{\tilde \mu} \bar\Delta\sum_{n=0}^{\lceil \frac{1}{r}\log_{1/\gamma}(\cR)\rceil}\frac{1}{\gamma_1^{{(2+\varsigma)}k_n}}
    \leq 60\tilde l\cdot\frac{\tilde l}{\tilde \mu} \bar\Delta\cdot\frac{\left(1/{\gamma_1^{{(2+\varsigma)}}}\right)^{\lceil \log_{1/\gamma_1}(\cR)\rceil}-1}{1/{\gamma_1^{(2+\varsigma)}}-1} \leq 
    60\tilde l\cdot\frac{\tilde l}{\tilde \mu} \frac{\cR^{(2+\varsigma)}}{1-\gamma^{{(2+\varsigma)}r}} \bar\Delta,
    %60\frac{\gamma^{\xzf{(2+\varsigma)r-(4+\varsigma)}}}{1-\gamma^{\xzf{(2+\varsigma)}r}} \xzf{\tilde{\mu}\xzrr{\cR}^{\xzf{(2+\varsigma)}}}\bar\Delta,
    \end{aligned}
\end{equation*}}%
where in the second inequality we used the fact that $\{k_n\}_n\in\integers_+$ is an increasing sequence.} 

\na{Next, we show that $\bar\Lambda$ satisfies $\max\{\Lambda_{\hat t_n}: n=0,\ldots,N(T)\}\leq \bar\Lambda$. Indeed, for the case \msf{True}, according to \agdap{} we have $\Lambda_{\hat t_n}=\zeta + L_{\hat t_n}\norm{\by_{\hat t_n}-\tilde\by_{\hat t_n+1}} \sqrt{\frac{2\zeta}{{\mu_{\hat{t}_n}}}}$; hence $\Lambda_{\hat t_n}\leq \zeta + L_{\hat t_n}\cD_y\sqrt{\frac{2\zeta}{{\mu_{\hat{t}_n}}}}\leq \zeta + \rv{
\frac{\bar{L}}{\sqrt{\bar{\mu}}}
\cD_y\sqrt{\frac{2\zeta}{\gamma^{3r}}}=\bar\Lambda}$, 
where we use the fact that $\norm{\by_t-\tilde\by_{t+1}}\leq\cD_y$ (from \cref{aspt:bounded-Y}), and we also used $L_{\hat t_n} \leq {\bar{L}}/\gamma_1$, {$\mu\geq\gamma_1 {\mu}$} (from \cref{cor:bound-tn-kn}) and $\gamma_1=\gamma^r$. %On the other hand, 
For the case \msf{False}, we have $\Lambda_{\hat t_n}= 2 L_{\hat t_n} d_{\hat t_n}\norm{\by_{\hat t_n}-\tilde\by_{\hat t_n+1}}\leq 2{\bar{L}}/\gamma_1\cD_y^2=\bar\Lambda$, which follows from the same bounds used in the other case in addition to $\norm{d_{\hat t_n}}\leq \cD_y$ due to \cref{aspt:bounded-Y}. Therefore, we can bound $\Gamma_T$, defined in \eqref{eq:Sigma_Gamma}, as $\Gamma_T\leq (1+N(T))\Big({\Big(\bar{L}/\gamma^r+\rv{L}\Big)\cD_y^2}+\bar\Lambda\Big)$. Finally, using the $N(T)$ bound from \cref{cor:bound-NK} completes the proof.}

\section{Additional technical \na{results}}\label{app:technical}
\begin{lemma}
Suppose Assumption \ref{ASPT:lipshiz gradient} holds. For any given $\na{\bar\bx^+}\in\dom g$, let $H:\cY\to\reals\cup\{+\infty\}$ such that $H(\cdot)\triangleq h(\cdot)-f(\bar\bx^+,\cdot)$ and suppose that
   \begin{equation}\label{eq:general-ls1}
   -f(\bar\bx^+,\bar\by)-\fprod{\grad_y f(\bar\bx^+,\bar\by),\rv{\bar\by^+}-\bar\by}+\frac{L'}{2}\norm{\bar\by^+-\bar\by}^2 \geq 
       - f(\bar\bx^+,\bar\by^+)
   \end{equation}
   holds for some $L'>0$ and $\bar\by\in\dom h$, then
\begin{equation}
    \label{eq:str-cvx-H}
    H(\by)\geq H(\bar\by^+)-\fprod{G_y^\sigma(\bar\bx^+,\bar\by),\by-\bar\by}+\frac{\sigma}{2}(2-L'\sigma)\norm{G_y^\sigma(\bar\bx^+,\bar\by)}^2+\frac{\mu}{2}\norm{\by-\bar\by}^2
\end{equation}
holds for all $\by\in\cY$, where $\bar\by^+=\prox{\sigma h}\Big(\bar{\by}+\sigma \grad_y f(\bar\bx^+,\bar \by)\Big)$ and \na{$G_y^\sigma(\bar\bx^+,\bar\by)=(\bar\by^+-\bar\by)/\sigma$.}
\end{lemma}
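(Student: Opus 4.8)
The statement is a standard ``prox-gradient descent lemma with strong convexity'' packaged for the function $H = h - f(\bar\bx^+,\cdot)$. The plan is to first record the first-order optimality condition for the prox step defining $\bar\by^+$, then combine it with the convexity of $h$, the strong concavity of $f(\bar\bx^+,\cdot)$ (equivalently, strong convexity of $-f(\bar\bx^+,\cdot)$), and the hypothesis \eqref{eq:general-ls1}, which plays the role of the ``descent inequality'' that one would normally get for free from $L$-smoothness but here is only assumed to hold for the particular pair $(\bar\by,\bar\by^+)$ with constant $L'$. Throughout, write $\bg \triangleq G_y^\sigma(\bar\bx^+,\bar\by) = (\bar\by^+ - \bar\by)/\sigma$, so that $\bar\by^+ = \bar\by - \sigma\bg$ and $\norm{\bar\by^+-\bar\by}^2 = \sigma^2\norm{\bg}^2$.

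First I would write the optimality condition for $\bar\by^+ = \prox{\sigma h}(\bar\by + \sigma\grad_y f(\bar\bx^+,\bar\by))$: this gives $\tfrac{1}{\sigma}(\bar\by - \bar\by^+) + \grad_y f(\bar\bx^+,\bar\by) \in \partial h(\bar\by^+)$, i.e. $-\bg + \grad_y f(\bar\bx^+,\bar\by)\in\partial h(\bar\by^+)$. By the subgradient inequality for the convex function $h$, for every $\by\in\cY$,
\begin{equation*}
h(\by) \geq h(\bar\by^+) + \fprod{-\bg + \grad_y f(\bar\bx^+,\bar\by),\ \by - \bar\by^+}.
\end{equation*}
Next, since $-f(\bar\bx^+,\cdot)$ is $\mu$-strongly convex, $-f(\bar\bx^+,\by) \geq -f(\bar\bx^+,\bar\by) + \fprod{-\grad_y f(\bar\bx^+,\bar\by),\ \by - \bar\by} + \tfrac{\mu}{2}\norm{\by-\bar\by}^2$. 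Add these two inequalities; the $\grad_y f(\bar\bx^+,\bar\by)$ terms partially telescope, leaving
\begin{equation*}
H(\by) \geq h(\bar\by^+) - f(\bar\bx^+,\bar\by) + \fprod{-\bg,\ \by - \bar\by^+} + \fprod{\grad_y f(\bar\bx^+,\bar\by),\ \bar\by^+ - \bar\by} + \tfrac{\mu}{2}\norm{\by-\bar\by}^2.
\end{equation*}
Now invoke hypothesis \eqref{eq:general-ls1} in the form $-f(\bar\bx^+,\bar\by) + \fprod{\grad_y f(\bar\bx^+,\bar\by),\ \bar\by^+-\bar\by} \geq -f(\bar\bx^+,\bar\by^+) - \tfrac{L'}{2}\norm{\bar\by^+-\bar\by}^2$ to replace the middle two terms, which yields $H(\by) \geq h(\bar\by^+) - f(\bar\bx^+,\bar\by^+) - \tfrac{L'}{2}\norm{\bar\by^+-\bar\by}^2 + \fprod{-\bg,\ \by - \bar\by^+} + \tfrac{\mu}{2}\norm{\by-\bar\by}^2 = H(\bar\by^+) - \tfrac{L'\sigma^2}{2}\norm{\bg}^2 + \fprod{-\bg,\ \by-\bar\by^+} + \tfrac{\mu}{2}\norm{\by-\bar\by}^2$.

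It remains to massage the inner product $\fprod{-\bg,\ \by - \bar\by^+}$ into the desired shape. Using $\bar\by^+ = \bar\by - \sigma\bg$, split $\by - \bar\by^+ = (\by - \bar\by) + \sigma\bg$, so $\fprod{-\bg,\ \by-\bar\by^+} = -\fprod{\bg,\ \by-\bar\by} - \sigma\norm{\bg}^2$. Substituting and collecting the $\norm{\bg}^2$ coefficients gives $-\tfrac{L'\sigma^2}{2}\norm{\bg}^2 - \sigma\norm{\bg}^2 = -(\sigma - \tfrac{L'\sigma^2}{2}) \cdot (-\norm{\bg}^2) $... more carefully: the coefficient is $-\sigma - \tfrac{L'\sigma^2}{2} = -\tfrac{\sigma}{2}(2 + L'\sigma)$, which is the \emph{negative} of what \eqref{eq:str-cvx-H} wants. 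This sign discrepancy is the one subtle point: I should double-check the direction of the prox optimality condition and of \eqref{eq:general-ls1}, since the claimed $+\tfrac{\sigma}{2}(2-L'\sigma)\norm{G_y^\sigma}^2$ term must come out with a plus sign. The resolution is that \eqref{eq:general-ls1} should be used as the upper bound $-f(\bar\bx^+,\bar\by^+) \leq -f(\bar\bx^+,\bar\by) - \fprod{\grad_y f(\bar\bx^+,\bar\by),\bar\by^+-\bar\by} + \tfrac{L'}{2}\norm{\bar\by^+-\bar\by}^2$ applied to \emph{lower-bound} $H(\bar\by^+) = h(\bar\by^+) - f(\bar\bx^+,\bar\by^+)$ after we have already produced $h(\bar\by^+) - f(\bar\bx^+,\bar\by) + \dots$ on the right — i.e. one adds and subtracts $f(\bar\bx^+,\bar\by^+)$ rather than substituting directly, so that \eqref{eq:general-ls1} is applied with the inequality oriented to build $H(\bar\by^+)$ from above. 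Concretely: write $h(\bar\by^+) - f(\bar\bx^+,\bar\by) + \fprod{\grad_y f,\bar\by^+-\bar\by} = H(\bar\by^+) + [f(\bar\bx^+,\bar\by^+) - f(\bar\bx^+,\bar\by) + \fprod{\grad_y f,\bar\by^+-\bar\by}]$ and bound the bracket below by $-\tfrac{L'}{2}\norm{\bar\by^+-\bar\by}^2$ using \eqref{eq:general-ls1}. Then the coefficient of $\norm{\bg}^2$ becomes $-\sigma + \tfrac{L'\sigma^2}{2}$, hmm — I expect the correct bookkeeping to produce exactly $+\tfrac{\sigma}{2}(2-L'\sigma)\norm{\bg}^2$ once the sign of the $-\sigma\norm{\bg}^2$ cross term from the prox optimality is combined correctly (the prox optimality contributes $+\sigma\norm{\bg}^2$, not $-\sigma\norm{\bg}^2$, because $\fprod{-\bg, \sigma\bg}$ should be re-examined against the orientation $-\bg\in\partial(\sigma h)(\bar\by^+) - \sigma\grad_y f$). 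The main obstacle is therefore purely this sign/orientation bookkeeping between the three ingredients; once it is pinned down, the identity $\fprod{-\bg,\by-\bar\by}$ is carried verbatim into \eqref{eq:str-cvx-H} and nothing further is needed. I would finish by stating that \eqref{eq:str-cvx-H} holds for all $\by\in\cY$ since $\by$ was arbitrary.
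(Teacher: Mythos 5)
Your plan is the right one and, up to bookkeeping, is the same argument as the paper's (the paper introduces the auxiliary function $\tilde H(\by)=h(\by)-f(\bar\bx^+,\bar\by)-\fprod{\grad_yf(\bar\bx^+,\bar\by),\by-\bar\by}+\tfrac{1}{2\sigma}\norm{\by-\bar\by}^2$ and observes $\bar\by^+=\argmin_\by\tilde H(\by)$, but this is only a convenient way to organize the same three ingredients: prox optimality and convexity of $h$, strong concavity of $f(\bar\bx^+,\cdot)$, and \eqref{eq:general-ls1}). Your sign anxiety, however, traces to a single wrong relation that you never fix: since $G_y^\sigma(\bar\bx^+,\bar\by)=(\bar\by^+-\bar\by)/\sigma$ by definition, you have $\bar\by^+=\bar\by+\sigma\bg$, \emph{not} $\bar\by-\sigma\bg$. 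With the correct relation, $\by-\bar\by^+=(\by-\bar\by)-\sigma\bg$, so the cross term is $\fprod{-\bg,\by-\bar\by^+}=-\fprod{\bg,\by-\bar\by}+\sigma\norm{\bg}^2$ (a \emph{plus} $\sigma\norm{\bg}^2$), and combining with the $-\tfrac{L'\sigma^2}{2}\norm{\bg}^2$ already extracted gives $(\sigma-\tfrac{L'\sigma^2}{2})\norm{\bg}^2=\tfrac{\sigma}{2}(2-L'\sigma)\norm{\bg}^2$ exactly; no ``adding and subtracting $f(\bar\bx^+,\bar\by^+)$'' or re-orienting \eqref{eq:general-ls1} is needed.

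Separately, be aware that your intermediate display after adding the two inequalities contains two sign errors that cancel: the telescoping of the $\grad_yf$ terms actually produces $\fprod{\grad_yf(\bar\bx^+,\bar\by),\ \bar\by-\bar\by^+}=-\fprod{\grad_yf(\bar\bx^+,\bar\by),\ \bar\by^+-\bar\by}$ (you wrote $+$), and you then misquote \eqref{eq:general-ls1} with $+\fprod{\grad_yf,\bar\by^+-\bar\by}$ on the left where the rearrangement of \eqref{eq:general-ls1} has $-\fprod{\grad_yf,\bar\by^+-\bar\by}$. These two slips offset, which is why you still land on the correct $H(\by)\geq H(\bar\by^+)-\tfrac{L'\sigma^2}{2}\norm{\bg}^2+\fprod{-\bg,\by-\bar\by^+}+\tfrac{\mu}{2}\norm{\by-\bar\by}^2$. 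If you repair the $\bar\by^+=\bar\by+\sigma\bg$ relation and redo the telescoping and the quote of \eqref{eq:general-ls1} with the right signs, your argument closes cleanly and matches the paper's.
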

\begin{proof}
     Define $\tilde H:\cY\to\reals\cup\{+\infty\}$ such that $\widetilde H(\by)\triangleq h(\by)-f(\bar\bx^+,\bar\by)-\fprod{\grad_y f(\bar\bx^+,\bar\by),\by-\bar\by}+\frac{1}{2\sigma}\norm{\by-\bar\by}^2$ for all $\by\in\cY$. Clearly, $\bar\by^+=\argmin_{\by\in\cY}\widetilde H(\by)$. The first-order optimality condition implies that
    \begin{align*}
        \grad_y f(\bar\bx^+,\bar\by)-G_y^\sigma(\bar\bx^+,\bar\by)=\grad_y f(\bar\bx^+,\bar\by) - \frac{1}{\sigma}(\bar\by^+ -\bby)\in\partial h(\bar\by^+);
    \end{align*}
    therefore, using the convexity of $h$, we get
    \begin{align}
    \label{eq:subgradient_h}
        h(\by)\geq h(\bar\by^+)+\fprod{\grad_y f(\bar\bx^+,\bar\by)-G_y^\sigma(\bar\bx^+,\bar\by),\by-\bar\by^+},\quad\forall~\by\in\cY.
    \end{align}
    Moreover, since $f(\bar\bx^+,\cdot)$ is $\mu$-strongly concave, for all $\by\in\dom h$, 
    \begin{equation}
    \label{eq:str-concavity}
        \begin{split}
            -f(\bar\bx^+,\by)-\frac{\mu}{2}\norm{\by-\bar\by}^2 
            &\geq -f(\bar\bx^+,\bar\by)-\fprod{\grad_yf(\bar\bx^+,\bar\by),\by-\bar\by}\\
            & = -f(\bar\bx^+,\bar\by)-\fprod{\grad_yf(\bar\bx^+,\bar\by),\bar\by^+-\bar\by}-\fprod{\grad_y f(\bar\bx^+,\bar\by),~\by-\bar\by^+}.\\
        \end{split}
    \end{equation}
   Next, summing \eqref{eq:subgradient_h} and \eqref{eq:str-concavity} gives
   \begin{align}
   \label{eq:H-bound-1}
    \begin{split}
       H(\by)-\frac{\mu}{2}\norm{\by-\bar\by}^2 
        &\geq \widetilde H(\bar\by^+)-\frac{1}{2\sigma}\norm{\bar\by^+-\bar\by}^2-\fprod{G_y^\sigma(\bar\bx^+,\bar\by),\by-\bar\by^+}\\
        & = \widetilde H(\bar\by^+)-\frac{1}{2\sigma}\norm{\bar\by^+-\bar\by}^2-\fprod{G_y^\sigma(\bar\bx^+,\bar\by),\by-\bar\by}+\sigma\fprod{G_y^\sigma(\bar\bx^+,\bar\by),(\bar\by^+-\bar\by)/\sigma}\\
        & = \widetilde H(\bar\by^+)+\frac{\sigma}{2}\norm{G_y^\sigma(\bar\bx^+,\bar\by)}^2-\fprod{G_y^\sigma(\bar\bx^+,\bar\by),\by-\bar\by}.
    \end{split}
   \end{align}
    Then it follows from \cref{eq:general-ls1} that
    \begin{equation}
        \label{eq:H-bound-2}
        \begin{split}
        \widetilde H(\bar\by^+)
        &=h(\bar\by^+)-f(\bar\bx^+,\bar\by)-\fprod{\grad_y f(\bar\bx^+,\bar\by),\bar\by^+-\bar\by}+\frac{L'}{2}\norm{\bar\by^+-\bar\by}^2+\frac{1}{2}\Big(\frac{1}{\sigma}-L'\Big)\norm{\bar\by^+-\bar\by}^2\\
        &\geq H(\bar\by^+)+\frac{1}{2}\Big(\frac{1}{\sigma}-L'\Big)\norm{\bar\by^+-\bar\by}^2
        = H(\bar\by^+)+\frac{\sigma}{2}(1-L'\sigma)\norm{G_y^\sigma(\bar\bx^+,\bar\by)}^2.\\
        \end{split}
    \end{equation}
    Thus, \eqref{eq:H-bound-1} and \eqref{eq:H-bound-2} together imply the desired inequality in~\eqref{eq:str-cvx-H}.
\end{proof}
\begin{lemma}
\label{cor:str-concavity}
Suppose Assumption \ref{ASPT:lipshiz gradient} holds. For any given $(\bar\bx^+,\bar\by)\in\dom g\times\dom h$ and $\sigma>0$, if \cref{eq:general-ls1} holds for some $L'\geq 0$, then
\begin{align}
\begin{split}
    &2\sigma\fprod{ G_y^\sigma(\bar\bx^+,\na{\bar\by}),\na{\bar\by}-{\by}^{*}(\bar\bx^+)}+\sigma^2\norm{G_y^\sigma(\bar\bx^+,\na{\bar\by})}\leq-\sigma\mu\norm{\na{\bar\by}-{\by}^{*}(\bar\bx^+)}^2-\sigma^2(1-L'\sigma)\norm{G_y^\sigma(\bar\bx^+,\na{\bar\by})}^2.
\end{split}
\end{align}
\end{lemma}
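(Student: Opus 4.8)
The statement is an immediate corollary of the preceding lemma (the one establishing \eqref{eq:str-cvx-H}), so the plan is short. First I would recall that, by \cref{def:opt-response} and \cref{def:max-function}, the point $\by^{*}(\bar\bx^+)$ is the unique maximizer of $\by\mapsto f(\bar\bx^+,\by)-h(\by)$ over $\cY$, equivalently the unique minimizer of $H(\cdot)\triangleq h(\cdot)-f(\bar\bx^+,\cdot)$; this minimizer exists and is unique because $f(\bar\bx^+,\cdot)$ is $\mu$-strongly concave by \cref{ASPT:lipshiz gradient} and $h$ is proper closed convex, so $H$ is proper closed $\mu$-strongly convex (hence coercive). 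In particular $H(\bar\by^+)\geq H(\by^{*}(\bar\bx^+))$, where $\bar\by^+=\prox{\sigma h}(\bar\by+\sigma\grad_y f(\bar\bx^+,\bar\by))$ as in the preceding lemma.

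Next I would invoke the preceding lemma, whose hypotheses are exactly those assumed here (namely \eqref{eq:general-ls1} for some $L'\geq 0$), with the test point $\by=\by^{*}(\bar\bx^+)$. This gives
\begin{equation*}
H(\by^{*}(\bar\bx^+))\geq H(\bar\by^+)-\fprod{G_y^\sigma(\bar\bx^+,\bar\by),\by^{*}(\bar\bx^+)-\bar\by}+\frac{\sigma}{2}(2-L'\sigma)\norm{G_y^\sigma(\bar\bx^+,\bar\by)}^2+\frac{\mu}{2}\norm{\by^{*}(\bar\bx^+)-\bar\by}^2.
\end{equation*}
Using $H(\bar\by^+)\geq H(\by^{*}(\bar\bx^+))$ on the right-hand side and cancelling the common term $H(\by^{*}(\bar\bx^+))$ on both sides yields
\begin{equation*}
\fprod{G_y^\sigma(\bar\bx^+,\bar\by),\bar\by-\by^{*}(\bar\bx^+)}\leq -\frac{\sigma}{2}(2-L'\sigma)\norm{G_y^\sigma(\bar\bx^+,\bar\by)}^2-\frac{\mu}{2}\norm{\bar\by-\by^{*}(\bar\bx^+)}^2.
\end{equation*}

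Finally I would multiply this inequality through by $2\sigma>0$ and split the coefficient $-\sigma^2(2-L'\sigma)=-\sigma^2-\sigma^2(1-L'\sigma)$, moving the $-\sigma^2\norm{G_y^\sigma(\bar\bx^+,\bar\by)}^2$ term to the left, which gives exactly the claimed bound
\begin{equation*}
2\sigma\fprod{G_y^\sigma(\bar\bx^+,\bar\by),\bar\by-\by^{*}(\bar\bx^+)}+\sigma^2\norm{G_y^\sigma(\bar\bx^+,\bar\by)}^2\leq -\sigma\mu\norm{\bar\by-\by^{*}(\bar\bx^+)}^2-\sigma^2(1-L'\sigma)\norm{G_y^\sigma(\bar\bx^+,\bar\by)}^2
\end{equation*}
(the norm in the second term on the left of the stated inequality should be squared, which I would note). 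There is no real obstacle here: the only thing to be careful about is the justification that the argmax defining $\by^{*}(\bar\bx^+)$ is attained and that it is a global minimizer of $H$, so that $H(\bar\by^+)\geq H(\by^{*}(\bar\bx^+))$ holds without any assumption relating $\bar\by^+$ and $\by^{*}(\bar\bx^+)$; everything else is algebraic rearrangement.
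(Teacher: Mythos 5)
Your proof is correct and follows essentially the same route as the paper's one-line argument: plug $\by=\by^{*}(\bar\bx^+)$ into \eqref{eq:str-cvx-H}, use that $\by^{*}(\bar\bx^+)$ minimizes $H$ to discard the $H(\bar\by^+)-H(\by^{*}(\bar\bx^+))\geq 0$ term, then multiply by $2\sigma$ and split $-\sigma^2(2-L'\sigma)$. You are also right that the second term on the left of the statement should read $\sigma^2\norm{G_y^\sigma(\bar\bx^+,\bar\by)}^2$ (the exponent is a typo in the paper).
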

\begin{proof}
From the definition of $\by^*(\cdot)$, we have $\by^*(\cdot)=\argmin_{\by\in\cY} H(\by)$; therefore, substituting $\by=\by^*(\bar\bx^+)$ in \eqref{eq:str-cvx-H}, rearranging the terms and multiplying both sides by $2\sigma$ immediately implies the desired inequality.
\end{proof}
\begin{lemma}\label{lemma:y0-bound}
    Suppose \cref{ASPT:lipshiz gradient} holds. For any given $\bx\in\cX$ and $\zeta\geq 0$, consider \na{$\tilde\by\in\cY$} such that
    \begin{equation}\label{eq:boundary-init}
      h(\tilde\by)-f(\bx,\tilde\by)\leq h(\by^*(\bx)) - f(\bx,\by^*(\bx)) + \zeta.
    \end{equation}
    Then, $\|\tilde\by-\by^*(\bx)\|^2 \leq \frac{2\zeta}{\mu}$, and for all $y\in\cY$ it holds that
        \begin{equation}
        h(\tilde \by)-h(\by) \leq \langle - \grad_y f(\bx,\tilde\by) , \by - \tilde\by \rangle + \zeta + L\|\by -\tilde\by\|\sqrt{\frac{2\zeta}{\mu}},
    \end{equation}
    which trivially implies $h(\tilde \by) - h(\by)\leq \langle - \grad_y f(\bx,\tilde\by) , \by - \tilde\by \rangle + \zeta + L\cD_y\sqrt{\frac{2\zeta}{\mu}}$ if \cref{aspt:bounded-Y} holds as well.
\end{lemma}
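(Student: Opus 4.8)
The plan is to prove the two claims in sequence, starting from the defining inequality \eqref{eq:boundary-init}, which says $\tilde\by$ is a $\zeta$-suboptimal maximizer of $\by\mapsto f(\bx,\by)-h(\by)$. First I would establish the distance bound $\|\tilde\by-\by^*(\bx)\|^2\leq 2\zeta/\mu$. Writing $H(\cdot)\triangleq h(\cdot)-f(\bx,\cdot)$, by Assumption~\ref{ASPT:lipshiz gradient} the function $H$ is $\mu$-strongly convex and $\by^*(\bx)=\argmin_{\by\in\cY}H(\by)$. Strong convexity gives the standard quadratic growth bound $H(\by)\geq H(\by^*(\bx))+\frac{\mu}{2}\|\by-\by^*(\bx)\|^2$ for all $\by\in\cY$ (using that the subdifferential of $H$ at $\by^*(\bx)$ contains $\bzero$). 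Applying this at $\by=\tilde\by$ and combining with \eqref{eq:boundary-init}, i.e. $H(\tilde\by)\leq H(\by^*(\bx))+\zeta$, yields $\frac{\mu}{2}\|\tilde\by-\by^*(\bx)\|^2\leq\zeta$, which is the first claim.

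Next I would derive the approximate subgradient inequality for $h$ at $\tilde\by$. The key observation is that even though $\tilde\by$ need not be an exact proximal/optimality point, optimality of $\by^*(\bx)$ for $H$ gives $\grad_y f(\bx,\by^*(\bx))\in\partial h(\by^*(\bx))$, hence for all $\by\in\cY$,
\begin{equation*}
h(\by)\geq h(\by^*(\bx))+\langle \grad_y f(\bx,\by^*(\bx)),\,\by-\by^*(\bx)\rangle.
\end{equation*}
I would rearrange this into a lower bound on $h(\by^*(\bx))-h(\by)$, and then bridge from $\by^*(\bx)$ back to $\tilde\by$: estimate $|h(\tilde\by)-h(\by^*(\bx))|$ is \emph{not} what we want directly; instead, the cleaner route is to write $h(\tilde\by)-h(\by) = \big(h(\tilde\by)-h(\by^*(\bx))\big) + \big(h(\by^*(\bx))-h(\by)\big)$ and control the first difference using \eqref{eq:boundary-init} together with $L$-smoothness of $f(\bx,\cdot)$, and the second using the subgradient inequality above. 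Specifically, from \eqref{eq:boundary-init}, $h(\tilde\by)-h(\by^*(\bx))\leq f(\bx,\tilde\by)-f(\bx,\by^*(\bx))+\zeta$, and from the subgradient inequality $h(\by^*(\bx))-h(\by)\leq \langle\grad_y f(\bx,\by^*(\bx)),\by^*(\bx)-\by\rangle$. Adding these and then replacing $\grad_y f(\bx,\by^*(\bx))$ by $\grad_y f(\bx,\tilde\by)$ at the cost of an error term $\|\grad_y f(\bx,\tilde\by)-\grad_y f(\bx,\by^*(\bx))\|\cdot\|\by-\by^*(\bx)\|\leq L\|\tilde\by-\by^*(\bx)\|\cdot\|\by-\by^*(\bx)\|$, and similarly replacing $\by^*(\bx)$ by $\tilde\by$ in the inner product argument while absorbing $f(\bx,\tilde\by)-f(\bx,\by^*(\bx))-\langle\grad_y f(\bx,\tilde\by),\tilde\by-\by^*(\bx)\rangle$ — which is $\geq 0$ by concavity, so it can be dropped on the correct side — I should land on
\begin{equation*}
h(\tilde\by)-h(\by)\leq \langle -\grad_y f(\bx,\tilde\by),\,\by-\tilde\by\rangle + \zeta + L\|\by-\tilde\by\|\,\|\tilde\by-\by^*(\bx)\|.
\end{equation*}
Substituting the distance bound $\|\tilde\by-\by^*(\bx)\|\leq\sqrt{2\zeta/\mu}$ then gives the stated inequality, and under Assumption~\ref{aspt:bounded-Y} the factor $\|\by-\tilde\by\|$ is bounded by $\cD_y$, yielding the final remark.

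The main obstacle I anticipate is getting the bookkeeping of the error terms exactly right when I swap $\by^*(\bx)$ for $\tilde\by$ in two places at once (once inside $\grad_y f$ and once inside the inner product pairing), making sure every discarded term has the correct sign via concavity of $f(\bx,\cdot)$ and convexity of $h$, so that no term of order $\|\tilde\by-\by^*(\bx)\|^2$ is left unaccounted — it should all collapse into the single $L\|\by-\tilde\by\|\sqrt{2\zeta/\mu}$ term plus the $\zeta$ from \eqref{eq:boundary-init}. A slightly more robust alternative, if the direct swap is messy, is to invoke the intermediate lemma on $\widetilde H$ (the one proved just above, with the map $H(\cdot)=h(\cdot)-f(\bar\bx^+,\cdot)$) — but that lemma is stated for exact proximal points $\bar\by^+$, so it does not immediately apply here; the self-contained argument via strong convexity and a single Cauchy--Schwarz estimate is cleaner and is what I would write.
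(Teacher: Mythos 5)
Your proposal is correct and follows essentially the same route as the paper's proof: the subgradient inequality $\grad_y f(\bx,\by^*(\bx))\in\partial h(\by^*(\bx))$, the $\zeta$-suboptimality \eqref{eq:boundary-init}, (strong) concavity of $f(\bx,\cdot)$ to discard the linearization gap at $\by^*(\bx)$, one Lipschitz gradient swap, and strong convexity of $h(\cdot)-f(\bx,\cdot)$ for the bound $\|\tilde\by-\by^*(\bx)\|^2\leq 2\zeta/\mu$. The only care needed is the point you already flag yourself: decompose $\by^*(\bx)-\by=(\by^*(\bx)-\tilde\by)+(\tilde\by-\by)$ first, use concavity (with the gradient at $\by^*(\bx)$) to drop the first piece, and swap $\grad_y f(\bx,\by^*(\bx))\to\grad_y f(\bx,\tilde\by)$ only in the term paired with $\tilde\by-\by$, so the error factor is $L\|\tilde\by-\by^*(\bx)\|\,\|\by-\tilde\by\|$ rather than $L\|\tilde\by-\by^*(\bx)\|\,\|\by-\by^*(\bx)\|$ — which is exactly how the paper's proof arranges it.
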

\begin{proof}
\na{
    Since $\by^*(\bx) = \argmin_{\by\in\cY} h(\by) - f(\bx,\by)$, it implies that 
    \begin{equation}
         \grad_y f(\bx,\by^*(\bx))\in \partial h(\by^*(\bx));
    \end{equation}
   hence, for any $\by \in \cY$, one gets
    \begin{equation}
        h(\by)\geq h(\by^*(\bx)) + \langle \grad_y f(\bx,\by^*(\bx)), \by -\by^*(\bx) \rangle.
    \end{equation}
    Therefore, \cref{eq:boundary-init} implies that
    \begin{equation*}
     h(\tilde\by) + \langle \grad_y f(\bx,\by^*(\bx)), \by -\by^*(\bx) \rangle \leq h(\by)  + \zeta +   f(\bx,\tilde \by) - f(\bx,\by^*(\bx)).
    \end{equation*}
    By adding $\langle \grad_y f(\bx,\by^*(\bx)),   \by^*(\bx)-\tilde\by   \rangle$ to both sides, one gets
    \begin{equation*}
    \begin{aligned}
             h(\tilde\by) + \langle \grad_y f(\bx,\by^*(\bx)), \by -\tilde\by \rangle & \leq h(\by)  + \zeta +   f(\bx,\tilde\by) - f(\bx,\by^*(\bx)) + \langle \grad_y f(\bx,\by^*(\bx)),   \by^*(\bx)-\tilde\by   \rangle \\
             & \leq h(\by)  + \zeta - \frac{\mu}{2}\|\tilde \by - \by^*(\bx)\|^2,
    \end{aligned}
    \end{equation*}
    where the second inequality %we used 
    follows from the strong concavity of $f(\bx,\cdot)$. By adding $\langle \grad_y f(\bx,\tilde\by), \by -\tilde\by \rangle$ to both sides, %we get
    \begin{equation*}
    \begin{aligned}
             h(\tilde\by) + \langle \grad_y f(\bx,\tilde\by), \by -\tilde\by \rangle & \leq h(\by)  + \zeta - \frac{\mu}{2}\|\tilde \by - \by^*(\bx)\|^2 - \langle \grad_y f(\bx,\by^*(\bx)) -  \grad_y f(\bx,\tilde\by), \by - \tilde\by \rangle \\
             & \leq h(\by)  + \zeta + L\|\by^*(\bx)-\tilde \by\|\|\by -\tilde \by\|.
    \end{aligned}
    \end{equation*}
    Furthermore, using the optimality of $\by^*(\bx)$ and the strong convexity of $h(\cdot)-f(\bx,\cdot)$, \cref{eq:boundary-init} implies that
    $
        \frac{\mu}{2}\|\tilde\by-\by^*(\bx)\|^2 \leq \zeta.
    $
    Therefore, we obtain 
    \begin{equation*}
    \begin{aligned}
             h(\tilde\by) + \langle \grad_y f(\bx,\tilde\by), \by -\tilde\by \rangle \leq h(\by)  + \zeta + L\|\by -\tilde \by\|\sqrt{\frac{2\zeta}{\mu}}\leq h(\by)  + \zeta + L\cD_y\sqrt{\frac{2\zeta}{\mu}},
    \end{aligned}
    \end{equation*}
    which completes the proof.}
\end{proof}
\begin{lemma}\label{lemma:deltat}
 %Suppose Assumption \ref{ASPT:lipshiz gradient} holds  and either \cref{alg:agda} is checking at \shortalgline{\ref{algline:check-agda+}} or \cref{alg:sagda+} is checking at  \shortalgline{18} at time step
 Suppose Assumption \ref{ASPT:lipshiz gradient} holds, and \na{let $t\in\mathbb{N}$ such that $t\geq \hat t_{N(t)}+1$. Assuming %\eqref{eq:backtrack} held for iteration $t-1$
 \emph{\cond{t-1}} holds, if $L_t\geq L$ {and $\mu_t\in(0,\mu]$} at the time when 
 %the backtracking condition in \eqref{eq:backtrack} 
 \emph{\cond{t}} is checked during the execution of \emph{\shortalgline{\ref{algline:check-agda+}}} in \cref{alg:agda} 
 %or \emph{\shortalgline{\ref{algline:check-sagda+}}} of \cref{alg:sagda+} 
 for iteration $t$, then}
    \begin{equation}
    \begin{aligned}
        \delta_{t}\leq 
        B_{t-1}\delta_{t-1} + C_{t-1}\|G_x^{\tau_{t-1}}(\bx_{t-1},\by_{t-1})\|^2,
    \end{aligned}
\end{equation}
where $B_{t}=\left(1-{\mu_t} \sigma_{t}/2\right)$ , $C_{t}= \frac{(1-\sigma_{t} \rv{\mu_t})(2-\sigma_{t} {\mu_t} )}{\sigma_{t} {\mu_t}} \frac{L_t^2}{\mu_t^2} \tau_{t}^2$, $\delta_t = \|\by_t-\by^*(\bx_t)\|^2$ and $N(t)$ is defined in Definition~\ref{def:N-k}.
%and $\tilde \delta_t \triangleq \|\tilde\by_t-\by^*(\bx_t)\|^2$.
\end{lemma}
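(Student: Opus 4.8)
The plan is to turn one proximal--gradient step in the dual variable into the stated one-step recursion by combining the strong-concavity contraction in \cref{cor:str-concavity} with the Lipschitz continuity of $\by^*(\cdot)$ from \cref{lem:primal_lipschitz}. First I would unpack the hypotheses. Since $t\ge\hat t_{N(t)}+1$, both $t-1$ and $t$ lie in $[\hat t_{N(t)},\hat t_{N(t)+1})$, so $L_{t-1}=L_t\ge L$ and $\mu_{t-1}=\mu_t\in(0,\mu]$; moreover $\mu_{t-1}\le\tilde\mu$ and $\sigma_{t-1}=1/l_{t-1}$, hence $\mu\sigma_{t-1}\le\tilde\mu/\tilde l<1$ and $\mu_{t-1}\sigma_{t-1}<1$. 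Because \cond{t-1} held and $t$ is not a reset time (so $\by_t$ is not overwritten by \shortalgline{\ref{algeq:haty-max}} or \shortalgline{\ref{algeq:haty-y}}), we have $\bx_t=\tilde\bx_t$ and $\by_t=\tilde\by_t=\prox{\sigma_{t-1}h}\big(\by_{t-1}+\sigma_{t-1}\grad_y f(\bx_t,\by_{t-1})\big)$; equivalently $\by_t-\by_{t-1}=\sigma_{t-1}G_y^{\sigma_{t-1}}(\bx_t,\by_{t-1})$ and $\bx_{t-1}-\bx_t=\tau_{t-1}G_x^{\tau_{t-1}}(\bx_{t-1},\by_{t-1})$.

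Next I would apply \cref{cor:str-concavity} with $\bar\bx^+=\bx_t$, $\bar\by=\by_{t-1}$, $\sigma=\sigma_{t-1}$ and $L'=l_{t-1}$; its hypothesis \eqref{eq:general-ls1} is exactly the backtracking inequality \eqref{eq:Lyy-1} contained in \cond{t-1}. Since $\sigma_{t-1}=1/l_{t-1}$ annihilates the $(1-L'\sigma)$ term, this gives
\[
2\sigma_{t-1}\fprod{G_y^{\sigma_{t-1}}(\bx_t,\by_{t-1}),\,\by_{t-1}-\by^*(\bx_t)}+\sigma_{t-1}^2\norm{G_y^{\sigma_{t-1}}(\bx_t,\by_{t-1})}^2\le-\mu\sigma_{t-1}\norm{\by_{t-1}-\by^*(\bx_t)}^2 .
\]
Expanding $\delta_t=\norm{\by_t-\by^*(\bx_t)}^2=\norm{(\by_{t-1}-\by^*(\bx_t))+\sigma_{t-1}G_y^{\sigma_{t-1}}(\bx_t,\by_{t-1})}^2$ and substituting the last display cancels the cross term together with the squared-gradient term, yielding $\delta_t\le(1-\mu\sigma_{t-1})\,\norm{\by_{t-1}-\by^*(\bx_t)}^2$.

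Then I would split $\by_{t-1}-\by^*(\bx_t)=(\by_{t-1}-\by^*(\bx_{t-1}))+(\by^*(\bx_{t-1})-\by^*(\bx_t))$ and use the weighted Young inequality with ratio $\rho=\tfrac{\mu\sigma_{t-1}/2}{1-\mu\sigma_{t-1}}>0$, so that $(1-\mu\sigma_{t-1})(1+\rho)=1-\mu\sigma_{t-1}/2$ and $(1-\mu\sigma_{t-1})(1+\rho^{-1})=\tfrac{(1-\mu\sigma_{t-1})(2-\mu\sigma_{t-1})}{\mu\sigma_{t-1}}$. Bounding $\norm{\by^*(\bx_{t-1})-\by^*(\bx_t)}\le\kappa\norm{\bx_{t-1}-\bx_t}=\kappa\tau_{t-1}\norm{G_x^{\tau_{t-1}}(\bx_{t-1},\by_{t-1})}$ via \cref{lem:primal_lipschitz} then produces
\[
\delta_t\le\Big(1-\tfrac{\mu\sigma_{t-1}}{2}\Big)\delta_{t-1}+\tfrac{(1-\mu\sigma_{t-1})(2-\mu\sigma_{t-1})}{\mu\sigma_{t-1}}\,\kappa^2\tau_{t-1}^2\,\norm{G_x^{\tau_{t-1}}(\bx_{t-1},\by_{t-1})}^2 .
\]
Finally I would enlarge the two constants to $B_{t-1}$ and $C_{t-1}$: since $\mu_{t-1}\le\mu$ we get $1-\mu\sigma_{t-1}/2\le1-\mu_{t-1}\sigma_{t-1}/2=B_{t-1}$, and since $u\mapsto\tfrac{(1-u)(2-u)}{u}=\tfrac2u-3+u$ is strictly decreasing on $(0,1)$ while $\mu_{t-1}\sigma_{t-1}\le\mu\sigma_{t-1}\in(0,1)$ and $\kappa^2=L^2/\mu^2\le L_{t-1}^2/\mu_{t-1}^2$, the $G_x$-coefficient is at most $\tfrac{(1-\mu_{t-1}\sigma_{t-1})(2-\mu_{t-1}\sigma_{t-1})}{\mu_{t-1}\sigma_{t-1}}\tfrac{L_{t-1}^2}{\mu_{t-1}^2}\tau_{t-1}^2=C_{t-1}$, which is the claim.

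The genuinely delicate part is all bookkeeping rather than analysis: confirming that $t\ge\hat t_{N(t)}+1$ indeed forces $\by_t$ to be the prox-gradient iterate $\tilde\by_t$ and forces $L_{t-1}=L_t$, $\mu_{t-1}=\mu_t$ (so that the hypothesis $L_t\ge L$, $\mu_t\in(0,\mu]$ transfers to index $t-1$, which is where it is used); checking the strict inequalities $\mu\sigma_{t-1}<1$ and $\mu_{t-1}\sigma_{t-1}<1$ so the Young ratio and the monotonicity argument are legitimate; and verifying that replacing $(\mu,L)$ by the more conservative estimates $(\mu_{t-1},L_{t-1})$ only enlarges both coefficients, so that the final bound is genuinely $\le B_{t-1}\delta_{t-1}+C_{t-1}\norm{G_x^{\tau_{t-1}}(\bx_{t-1},\by_{t-1})}^2$.
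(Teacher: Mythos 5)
Your argument is, in substance, the paper's own proof of this lemma: you expand $\delta_t=\norm{\by_t-\by^*(\bx_t)}^2$ around the accepted prox step $\by_t=\tilde\by_t=\prox{\sigma_{t-1}h}\big(\by_{t-1}+\sigma_{t-1}\grad_y f(\bx_t,\by_{t-1})\big)$, invoke \cref{cor:str-concavity} with $\bar\bx^+=\bx_t$, $\bar\by=\by_{t-1}$, $L'=l_{t-1}$ (its hypothesis \eqref{eq:general-ls1} being exactly \eqref{eq:Lyy-1} from the accepted backtracking condition at $t-1$, and $\sigma_{t-1}l_{t-1}=1$ killing the extra term), then perform the same tuned Young split and use the $\kappa$-Lipschitzness of $\by^*$ from \cref{lem:primal_lipschitz}; the bookkeeping ($L_{t-1}=L_t\geq L$, $\mu_{t-1}=\mu_t\leq\mu$, $\by_t$ not overwritten since $t>\hat t_{N(t)}$, $\bx_{t-1}-\bx_t=\tau_{t-1}G_x^{\tau_{t-1}}(\bx_{t-1},\by_{t-1})$) is also what the paper verifies. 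The only structural difference is that the paper weakens $-\sigma_{t-1}\mu$ to $-\sigma_{t-1}\mu_{t-1}$ \emph{before} the Young step (choosing $\alpha=\tfrac12\,\mu_{t-1}\sigma_{t-1}/(1-\mu_{t-1}\sigma_{t-1})$) and lands on $B_{t-1},C_{t-1}$ directly, whereas you carry the true $\mu$ through and only enlarge the constants at the end.

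That difference is where your one gap sits. You justify $\mu\sigma_{t-1}<1$ by claiming $\mu\sigma_{t-1}\leq\tilde\mu/\tilde l<1$, but this presumes $\mu\leq\tilde\mu$, which \cref{alg:agda} does not assume (only $\tilde l>\tilde\mu>0$), and the lemma's hypothesis $\mu_t\leq\mu$ gives no control of the \emph{true} $\mu$ by $\tilde\mu$: when $\mu$ is unknown and underestimated, the accepted local constant $l_{t-1}$ can be as small as $\tilde l<\mu$, so $\mu\sigma_{t-1}=\mu/l_{t-1}$ may exceed $1$. In that regime your Young ratio $\rho=\tfrac{\mu\sigma_{t-1}/2}{1-\mu\sigma_{t-1}}$ is negative (or undefined) and the final comparison using the monotonicity of $u\mapsto(1-u)(2-u)/u$ on $(0,1)$ also breaks, so the step as written fails. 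The quantity that \emph{is} guaranteed to be in $(0,1)$ is $\mu_{t-1}\sigma_{t-1}=\mu_{t-1}/l_{t-1}$ (since $\mu_{t-1}\leq\tilde\mu<\tilde l\leq l_{t-1}$), which is precisely why the paper switches to $\mu_{t-1}$ before splitting. Your proof is repaired either by doing the same (then no terminal enlargement beyond $\kappa^2\leq L_{t-1}^2/\mu_{t-1}^2$ is needed), or by adding the trivial case distinction: if $\mu\sigma_{t-1}\geq 1$, your intermediate bound $\delta_t\leq(1-\mu\sigma_{t-1})\norm{\by_{t-1}-\by^*(\bx_t)}^2$ already forces $\delta_t=0$, and the claimed recursion holds vacuously since $B_{t-1},C_{t-1}\geq 0$.
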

\begin{proof}
Fix $t\geq \hat{t}_{N(t)}+1$. For notation simplicity, let $\by^*_{t}\triangleq\by^*(\bx_{t})$.
%\xzr{By Definition \ref{def:N-k}, we have $t\geq  \hat{t}_n$, and $L_{t}=L_{ \hat{t}_n}$.}
\na{For iteration $t$, at the time \cref{alg:agda} is checking \eqref{eq:backtrack} in \shortalgline{\ref{algline:check-agda+}},} %or \cref{alg:sagda+} is checking \eqref{eq:backtrack} in \shortalgline{\ref{algline:check-sagda+}}, 
we have $\by_{t}=\prox{\sigma_{t-1} h}\Big(\by_{t-1}+\sigma_{t-1} \grad_y f(\bx_{t},\by_{t-1})\Big)$
and $G_y^{\sigma_{t-1}} (\bx_{t},\by_{t-1})=(\by_{t}-\by_{t-1})/\sigma_{t-1}$. Therefore, %we can obtain 
\begin{equation}\label{delta_primal_AGDA}
    \begin{aligned}
        \delta_{t}&=\|\by_{t}^*-\by_{t}\|^2\\
        &=\|\by_{t}^*-\by_{t-1}-\sigma_{t-1} G_y^{\sigma_{t-1}} (\bx_{t},\by_{t-1})\|^2\\
        &=\|\by_{t}^*-\by_{t-1}\|^2 + 2\sigma_{t-1} \langle  G_y^{\sigma_{t-1}} (\bx_{t},\by_{t-1}),\by_{t-1} - \by_{t}^*\rangle +\sigma_{t-1}^2 \| G_y^{\sigma_{t-1}} (\bx_{t},\by_{t-1})\|^2.
    \end{aligned}
\end{equation}
Then, \na{since \cref{eq:Lyy-1} holds for the iteration $t-1$, it follows from \rv{$\mu_{t-1}=\mu_t\in(0,\mu]$} and \cref{cor:str-concavity} \rv{(with $\bar\by=\by_{t-1}$ and $\bar\bx^+=\bx_t$)} that}
\begin{equation}\label{lemma_use_AGDA}
    \begin{aligned}
        \MoveEqLeft 2\sigma_{t-1} \langle  G_y^{\sigma_{t-1}} (\bx_{t},\by_{t-1}), \by_{t-1}-\by_{t}^*\rangle + \sigma_{t-1}^2\| G_y^{\sigma_{t-1}} (\bx_{t},\by_{t-1})\|^2\\
        &\leq -\sigma_{t-1} {\mu_{t-1}} \|\by_{t-1}-\by_{t}^*\|^2 - \sigma_{t-1}^2(1-\sigma_{t-1} l_{t-1} ) \| G_y^{\sigma_{t-1}} (\bx_{t},\by_{t-1})\|^2.
    \end{aligned}
\end{equation}
By plugging \cref{lemma_use_AGDA} into \cref{delta_primal_AGDA}, we can obtain that 
\begin{equation}\label{delta_ineq1_AGDA}
    \begin{aligned}
        \delta_{t}&\leq (1-\sigma_{t-1} {\mu_{t-1}} )\|\by_{t}^*-\by_{t-1}\|^2 - \sigma_{t-1}^2(1-\sigma_{t-1}l_{t-1} )\| G_y^{\sigma_{t-1}} (\bx_{t},\by_{t-1})\|^2.
    \end{aligned}
\end{equation}
Next, by Young's inequality, for any $\alpha>0$, we can bound $\|\by_{t}^*-\by_{t-1}\|^2$ as follows
\begin{equation*}
    \begin{aligned}
        &\|\by_{t}^*-\by_{t-1}\|^2
        =\|\by_{t}^*-\by_{t-1}^*+\by_{t-1}^*-\by_{t-1}\|^2\leq \Big(1+\frac{1}{\alpha}\Big)\frac{L_{t-1}^2}{\mu_{t-1}^2} \|\bx_{t}-\bx_{t-1}\|^2 + (1+\alpha) \delta_{t-1},
    \end{aligned}
\end{equation*}
where we use the fact that $\by^*(\cdot)$ is $\kappa$-Lipschiz from \cref{lem:primal_lipschitz} and $L_{t-1}=L_{t}\geq L$ \rv{and $\mu_{t-1}=\mu_t \in (0, \mu]$}. Combining it with \eqref{delta_ineq1_AGDA} implies 
\begin{equation*}
    \begin{aligned}
        \delta_{t}
        &\leq (1-\sigma_{t-1} {\mu_{t-1}})(1+\alpha)\delta_{t-1}+ (1-\sigma_{t-1} \rv{\mu_{t-1}})\Big(1+\frac{1}{\alpha}\Big)\frac{L_{t-1}^2}{\mu_{t-1}^2} \tau_{t-1}^2 \| G_{x}^{\tau_{t-1}} (\bx_{t-1},\by_{t-1})\|^2\\
        &\quad - \sigma_{t-1}^2 (1-\sigma_{t-1} l_{t-1} )\| G_y^{\sigma_{t-1}} (\bx_{t},\by_{t-1})\|^2 .
    \end{aligned}
\end{equation*}
Let $\alpha = \frac{1}{2}\frac{{\mu_{t-1}}\sigma_{t-1}}{1-{\mu_{t-1}}\sigma_{t-1}}$. Note that $\alpha> 0$ since \rv{$\sigma_{t}=\frac{1}{l_t}\leq\frac{1}{\tilde l}<\frac{1}{\tilde\mu}$ and $\mu_t\leq\tilde\mu$} %holds 
for all $t\geq 0$ \rv{--here we used that $\tilde l>\tilde \mu$ due to initialization, e.g., $\tilde l=\tilde\mu/\gamma_2$ is a possible choice for $\tilde l$ for the given $\tilde \mu>0$.} 
%This choice implies that
Thus,
\begin{equation*}
    \begin{aligned} %\label{delta_bd_inner} 
        \delta_{t}&\leq \Big(1-\frac{1}{2}\sigma_{t-1} {\mu_{t-1}}\Big)\delta_{t-1} + \frac{(1-\sigma_{t-1} {\mu_{t-1}})(2-\sigma_{t-1} {\mu_{t-1}} )}{\sigma_{t-1} {\mu_{t-1}}} \frac{L_{t-1}^2}{\mu_{t-1}^2} \tau_{t-1}^2 \| G_{x}^{\tau_{t-1}} (\bx_{t-1},\by_{t-1})\|^2.
    \end{aligned}
\end{equation*}
\end{proof}
\begin{lemma}\label{lemma:F-common-gap}
    \xqs{Suppose \cref{ASPT:lipshiz gradient,aspt:bounded-Y} hold.} Given $\bar\bx,\bar\by\in\dom g\times\dom h$, let $\hat\by = \prox{\sigma h}(\bar\by + \sigma \na{\grad_y} f(\bar\bx,\bar\by))$ for some $\sigma>0$. Then,
    \begin{equation}
        \cL(\bar\bx,\by)  - \cL(\bar\bx,\hat\by) \leq \Big(\frac{1}{\sigma}+\rv{L}\Big)\rv{\norm{\bar\by-\hat\by}\norm{\by-\hat\by}},\quad\forall~\by\in\cY;
    \end{equation}
    therefore, $F(\bar\bx)=\max_{y\in\cY}\cL(\bar\bx,\by)\leq \cL(\bar\bx,\hat\by)+\Big(\frac{1}{\sigma}+\rv{L}\Big)\cD_y^2$.
\end{lemma}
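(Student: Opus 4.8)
The plan is to exploit the first-order optimality characterization of the proximal step defining $\hat\by$, together with the convexity of $h$ and the concavity and $L$-smoothness of $f(\bar\bx,\cdot)$ from \cref{ASPT:lipshiz gradient}. First I would observe that the $g(\bar\bx)$ terms cancel, so $\cL(\bar\bx,\by)-\cL(\bar\bx,\hat\by)=\big(f(\bar\bx,\by)-f(\bar\bx,\hat\by)\big)+\big(h(\hat\by)-h(\by)\big)$ for every $\by\in\cY$.

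Next, since $\hat\by=\prox{\sigma h}\big(\bar\by+\sigma\grad_y f(\bar\bx,\bar\by)\big)$, the optimality condition for the proximal map gives $\grad_y f(\bar\bx,\bar\by)-\tfrac{1}{\sigma}(\hat\by-\bar\by)\in\partial h(\hat\by)$; the subgradient inequality for the convex function $h$ then yields $h(\hat\by)-h(\by)\le\langle \tfrac{1}{\sigma}(\hat\by-\bar\by)-\grad_y f(\bar\bx,\bar\by),\,\by-\hat\by\rangle$. For the difference of the $f$-terms I would use concavity of $f(\bar\bx,\cdot)$, which gives $f(\bar\bx,\by)-f(\bar\bx,\hat\by)\le\langle\grad_y f(\bar\bx,\hat\by),\,\by-\hat\by\rangle$. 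Adding these two estimates partially cancels the $\grad_y f(\bar\bx,\bar\by)$ contribution and leaves $\cL(\bar\bx,\by)-\cL(\bar\bx,\hat\by)\le\big\langle\grad_y f(\bar\bx,\hat\by)-\grad_y f(\bar\bx,\bar\by)+\tfrac{1}{\sigma}(\hat\by-\bar\by),\,\by-\hat\by\big\rangle$.

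Then I would apply Cauchy--Schwarz followed by the triangle inequality to bound the right-hand side by $\big(\norm{\grad_y f(\bar\bx,\hat\by)-\grad_y f(\bar\bx,\bar\by)}+\tfrac{1}{\sigma}\norm{\hat\by-\bar\by}\big)\norm{\by-\hat\by}$, and finally invoke the $L$-Lipschitz continuity of $\grad f$ (\cref{ASPT:lipshiz gradient}), which implies that $\grad_y f(\bar\bx,\cdot)$ is $L$-Lipschitz, so that $\norm{\grad_y f(\bar\bx,\hat\by)-\grad_y f(\bar\bx,\bar\by)}\le L\norm{\hat\by-\bar\by}$. This yields the claimed bound $\cL(\bar\bx,\by)-\cL(\bar\bx,\hat\by)\le\big(\tfrac{1}{\sigma}+L\big)\norm{\bar\by-\hat\by}\,\norm{\by-\hat\by}$ for all $\by\in\cY$.

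For the concluding inequality I would specialize $\by=\by^*(\bar\bx)=\argmax_{\by\in\cY}\cL(\bar\bx,\by)$ from Definition~\ref{def:opt-response}, so that $F(\bar\bx)-\cL(\bar\bx,\hat\by)\le\big(\tfrac{1}{\sigma}+L\big)\norm{\bar\by-\hat\by}\,\norm{\by^*(\bar\bx)-\hat\by}$; since $\hat\by$ (a prox value), $\bar\by$, and $\by^*(\bar\bx)$ all lie in $\dom h$, \cref{aspt:bounded-Y} bounds both norms by $\cD_y$, giving $F(\bar\bx)\le\cL(\bar\bx,\hat\by)+\big(\tfrac{1}{\sigma}+L\big)\cD_y^2$. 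I do not anticipate a genuine obstacle: the only points needing a little care are keeping the sign conventions of the prox optimality condition consistent with the $G_y^\sigma$ gradient-map notation used elsewhere, and confirming that both $\by^*(\bar\bx)$ and $\hat\by$ genuinely belong to $\dom h$ so the diameter bound applies.
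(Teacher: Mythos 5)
Your proposal is correct and follows essentially the same route as the paper's proof: the prox optimality condition combined with convexity of $h$, concavity of $f(\bar\bx,\cdot)$, Cauchy--Schwarz, and $L$-Lipschitz continuity of $\grad_y f(\bar\bx,\cdot)$, with the final bound obtained by specializing to the maximizer and using the diameter $\cD_y$. No gaps.
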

\begin{proof}
    Since $\hat\by = \prox{\sigma h}(\bar\by + \sigma \grad_y f(\bar\bx,\bar\by))$, from the first-order optimality condition, we obtain that
    \begin{equation}
       -\frac{1}{\sigma}(\hat\by -\bar\by) + \grad_y f(\bar\bx,\bar\by)\in \partial h(\hat\by).
    \end{equation}
    Moreover, \rv{it follows from \cref{ASPT:lipshiz gradient} %and \cref{aspt:bounded-Y} 
    that}
    \begin{equation}
        \begin{aligned}
            \rv{f(\bar\bx,\by) - f(\bar\bx,\hat\by)\leq \langle \grad_y f(\bar\bx,\hat\by),~\by-\hat\by \rangle.} %+ \frac{L}{2}\cD_y^2.
        \end{aligned}
    \end{equation}
    Using the convexity of $h$, we further obtain that
    \begin{equation}
        \begin{aligned}
             \MoveEqLeft f(\bar\bx,\by) -h(\by) - f(\bar\bx,\hat\by) +  h(\hat\by) 
             \\
             \leq &
              \langle \grad_y f(\bar\bx,\hat\by) + \frac{1}{\sigma}(\hat\by -\bar\by) - \grad_y f(\bar\bx,\bar\by),~\by-\hat\by\rangle 
              %+ \frac{L}{2}\cD_y^2
              \\
              \leq &
              \Big(\| \grad_y f(\bar\bx,\hat\by)- \grad_y f(\bar\bx,\bar\by)\| + \frac{1}{\sigma}\|\hat\by -\bar\by\|\Big)\cdot \| \hat\by - \by\|.
              %+ \frac{L}{2}\cD_y^2
        \end{aligned}
    \end{equation}
    The above inequality together with \cref{ASPT:lipshiz gradient,aspt:bounded-Y} leads to the desired conclusion.
\end{proof}
\section{Derivation of the complexity for other works}
%\nsa{Make corrections according to LL paper}
\label{sec:complexity-others}
\subsection{Derivation of complexity in \cite{yang2022nest}}\label{comp:neada}
The nested adaptive (\neada) algorithm by \cite{yang2022nest} \sa{is a double-loop method \rv{to solve $\min_{\bx\in\cX}\max_{\by\in Y}f(\bx,\by)$ for some closed convex set $Y$}, where the inner loop is to inexactly maximize the coupling function for a given primal iterate 
%with controllable stopping criteria 
and in the outer loop, the primal variable is updated \rv{using the gradient of the coupling function} computed at the current primal iterate and \mg{an} inexact dual maximizer.} For %nonconvex-strongly-concave 
\sa{WCSC} minimax problems, \neada{} with adaptive stepsizes can achieve the near-optimal $\Tilde{\cO}(\epsilon^{-2})$ 
%and $\Tilde{\cO}(\epsilon^{-4})$ 
\rv{gradient complexity 
%respectively 
in the deterministic 
%and stochastic 
gradient setting.} \sa{However, %in the paper 
the dependence of $\cO(1)$ constant on $\kappa$ and $L$ is not explicitly stated in~\cite{yang2022nest}; and in the rest of this section, we compute the important terms %forming
within the $\cO(1)$ constant.} \rv{For all $\bx\in\cX$ and $\by\in \cY$, let $G_y(\bx,\by)\triangleq\norm{\by-\Pi_{Y}(\by+\grad_y f(\by,\by))}$, where $\Pi_Y(\cdot)$ denotes the Euclidean projection onto $Y$.} \sa{According to the proof of \cite[Theorem 3.2]{yang2022nest}, %the total complexity for 
\neada{} in the deterministic setting %can compute
has a theoretical guarantee that
\begin{equation*}
    \rv{\norm{\grad_x f(\bx_T,\by_T)}^2+\kappa^2\norm{G_y(\bx_T,\by_T)}^2}\leq \epsilon^2
\end{equation*}
for some $T\leq \Tilde{\cO}\Big( \Big((A+\cE)^2+\sqrt{v_0}(A+\cE) 
%+ \frac{(L+1)^2}{\mu^2 a_2}
\Big)\epsilon^{-2}\Big)$} with $A+\cE = \tilde{\cO}\Big(\frac{\Phi(x_0)- \min_x \Phi(x)}{\eta} + \kappa L \eta + \frac{\kappa^2 (L+1)^2}{\sqrt{v_0}} \Big)$ for some $v_0,\eta>0$ \sa{--see~\cite[Theorem B.1]{yang2022nest} in the appendix of the paper for the details; moreover, for each outer iteration $t=1,\ldots, T$, subroutine $\cA$ to inexact solve $\max_{\by\in Y} f(\bx_t,\by)$ requires $\cO(\frac{1}{a_2}\log(t))$ gradient calls for some $a_2\in (0,1)$ constant specific to the subroutine $\cA$ that depend on structural properties of $f(\bx,\cdot)$ uniformly in $\bx\in\cX$ such as $L$ and $\mu$. Since $\sigma = 0$, if one uses accelerated \xz{backtracking method for solving strongly convex problems}~\cite{calatroni2019backtracking,rebegoldi2022scaled} 
%\nsa{I sent you papers that can achieve this result using backtracking on $L$, cite them here!} 
as the subroutine $\cA$ \rv{for the case $\mu$ is known}, then $a_2 = \frac{1}{\sqrt{\kappa}}$. %when , then the required computational 
Therefore, the total gradient complexity for \neada{} in the deterministic case} is at least $\sa{\Tilde{\cO}\Big( \Big((A+\cE)^2+\sqrt{v_0}(A+\cE) 
%+ \frac{(L+1)^2}{\mu^2 a_2}
\Big)\epsilon^{-2}/a_2\Big)}=\tilde \cO (\kappa^{4.5} L^4 \epsilon^{-2})$. \rv{On the other hand, for the case $\mu$ is unknown~\cite{bao2023global}, one can only get $a_2=\frac{1}{\kappa}$; thus, the complexity can be bounded by $\tilde \cO (L^4\kappa^{5} \epsilon^{-2})$.}

\subsection{Derivation of complexity in \cite{li2022tiada}} \label{comp:tiada}
%The time-scale adaptive (TiAda) algorithm in \cite{li2022tiada} is a single-loop adaptive GDA algorithm for nonconvex strongly-concave minimax optimization that can achieve near-optimal complexities by finding the time-scale separation between the primal and dual stepsizes.\\ \\
To get the near-optimal complexities, their theoretical analysis %suggests to choose 
\sa{requires} $\alpha>\frac{1}{2}>\beta$. %Then we will use 
\sa{To come up with explicit $\kappa$ and $L$ dependence of their $\cO(1)$ constant, we will use this as well.} %this condition to compute the complexities.
\sa{The complexity for the deterministic case is discussed in \cite[Theorem C.1]{li2022tiada}, and they show that 
\begin{equation}
\label{eq:x-complexity}
\sum_{t=0}^{T-1}\norm{\grad_x f(\bx_t,\by_t)}^2\leq \max\{5C_1,2C_2\}\triangleq C,
\end{equation}
for some constants $C_1$ and $C_2$, which we will analyze next.}  %According to the proof of Theorem C.1,} 
\xz{Indeed, both $C_1$ and $C_2$ are explicitly defined in the statement of Theorem C.1 depending on some other positive constants, i.e., $c_1,c_2,c_3,c_4>0$ and $c_5>0$, which are defined as follows\footnote{\sa{In~\cite{li2022tiada}, there are typos in the definition of $c_1$ and $c_3$; $v_{t_0}^y$ appearing in $c_1$ and $c_3$ should be $v_0^y$.}}:
%from the expression in \cite[Theorem C.1]{li2022tiada}, where
\begin{equation}
\begin{aligned}
        & c_1= \frac{\eta_x \kappa^2}{\eta_y(v_{0}^y)^{\alpha-\beta}},\quad  c_2 =\max\Big\{\frac{4\eta_y \mu L}{\mu+ L},\ \eta_y (\mu+L)\Big\},\quad c_3 = 4(\mu+L) \Big(\frac{1}{\mu^2} + \frac{\eta_y}{(v_{0}^y)^\beta} \Big)c_2^{1/\beta},\\
        & c_4= (\mu+L)\Big( \frac{2\kappa^2}{(v_0^y)^\alpha}+ \frac{(\mu+L)\kappa^2}{\eta_y \mu L}\Big),\quad c_5 = c_3 + \frac{\eta_y v_0^y}{(v_0^y)^\beta}+ \frac{\eta_y c_2^{\frac{1-\beta}{\beta}}}{1-\beta}.
\end{aligned}
\end{equation}
Because $C_1$ and $C_2$ are monotonically increasing in $\{c_i\}_{i=1}^5$ and \sa{they have complicated forms, 
%to compute exact quantities, 
we compute a lower bound for each $c_i$, $i=1,\ldots,5$ and use these bounds to further bound $C_1$ and $C_2$ from below. This will allow us to provide a lower bound on \texttt{TiAda} complexity results in terms of its dependence on $L,\mu$ and $\kappa$. Initialization of \tiada{} requires setting six parameters: $\eta_x,\eta_y,v_0^x,v_0^y>0$ and $\alpha,\beta\in(0,1)$ such that $\alpha>\beta$. Since the problem parameters $L,\mu$ and $\kappa$ are unknown, we treat all these parameters as $O(1)$ constants.} Indeed,
\begin{enumerate}
    \item[(i)] consider $c_1$, treating $\frac{\eta_x}{\eta_y(v_{0}^y)^{\alpha-\beta}}$ as $O(1)$ constant, we have $c_1= \Theta(\kappa^2)$;
    \item[(ii)] for $c_2$, we similarly treat $\eta_y$ as $O(1)$ constant, and get $c_2= \Theta(L)$;
    \item[(iii)] for $c_3$, since $c_2= \Theta(L)$ and we treat $ \frac{\eta_y}{(v_{0}^y)^\beta}$ as $O(1)$ constant, we get $c_3= \Theta(\frac{\kappa L^{1/\beta}}{\mu})$;
    \item[(iv)]  similarly, we have the bounds: $c_4= \Theta(\kappa^3)$ and $c_5= \Theta(\frac{\kappa L^{1/\beta}}{\mu})$. 
    %where we treat $\alpha,\beta,\eta_x,\eta_y,v_{0}^x,v_{0}^y$ as $O(1)$ constants.
\end{enumerate}
Next, we %will find a proper 
\sa{derive} lower bound for $C_1$ and $C_2$ \sa{in terms of $L,\mu$ and $\kappa$}. We first %analyze the components of 
\sa{focus on} $C_1$. Indeed, \sa{it follows from the definition of $C_1$ in \cite[Theorem C.1]{li2022tiada} that} %under the assumption that $\alpha,\beta,\eta_x,\eta_y,v_{0}^x,v_{0}^y$ are $O(1)$ constant, we can obtain that 
\begin{equation}
    C_1 = \Omega\Big( 
    %(c_1c_5)^{\frac{1}{1-\alpha}}, 
    (c_1c_4)^{\frac{1}{\alpha-\beta}}\mathbf{1}_{2\alpha-\beta<1}+ (c_1c_4)^{\frac{2}{1-\alpha}}\mathbf{1}_{2\alpha-\beta\geq 1} \Big)
\end{equation}
where \sa{$\mathbf{1}_{2\alpha-\beta<1}= 1$ if $2\alpha-\beta<1$; otherwise, it is $0$, and we define $\mathbf{1}_{2\alpha-\beta\geq 1}=1-\mathbf{1}_{2\alpha-\beta<1}$.} 
\sa{Although $\alpha,\beta\in (0,1)$, it is necessary to consider their effects when they appear as exponents.}
%calculated at an exponential position. 
Therefore, we consider the following two cases:
\begin{enumerate}
    \item when $2\alpha-\beta \geq 1$, since $\beta>0$, we can conclude that $\alpha > \frac{1}{2}$. \sa{As a result, $(c_1c_4)^{\frac{2}{1-\alpha}}\geq(c_1c_4)^4=\Theta(\kappa^{20})$; therefore,} 
    %we can get the lower bound of $C_1$ as follows
    \begin{equation*}
        C_1 %\geq \Theta\Big(\Big) >\Theta\Big((c_1c_4)^4\Big) \geq   \Theta
        \sa{=\Omega\Big(\kappa^{20}\Big);}
    \end{equation*}
    \item when $2\alpha-\beta  < 1$, it implies $\alpha-\beta<\frac{1-\beta}{2}<\frac{1}{2}$. As a result, we can %get the lower bound of 
    \sa{bound $C_1$ from below as follows: $(c_1c_4)^{\frac{1}{\alpha-\beta}}\geq (c_1c_2)^2=\Theta(\kappa^{10})$; therefore,}
    \begin{equation*}
        C_1 =\Omega\Big(\kappa^{10}\Big).
    \end{equation*}
\end{enumerate}
\sa{Therefore, using \eqref{eq:x-complexity} and $C\geq C_1$, we can conclude that for any $\epsilon>0$, the gradient complexity of \tiada{} for $\sum_{t=0}^{T-1}\norm{\grad_x f(\bx_t,\by_t)}^2\leq \epsilon^2$ to hold is at least $\cO(\kappa^{10}\epsilon^{-2})$. Therefore, for the deterministic WCSC minimax problems, the gradient complexity of \tiada{} to compute an $\epsilon$-stationary point in terms of metric \textbf{(M2)} is at least $\cO(\kappa^{10}\epsilon^{-2})$.}}
\subsection{Derivation of complexity in \cite{lu2020hybrid}}
The hybrid successive approximation~(\texttt{HiBSA}) algorithm is a single-loop proximal alternating algorithm for solving deterministic nonconvex minimax problems $\min_{\cX}\max_{y\in\cY}\cL(\bx,y)$, where $\cL$ has the same form in~\eqref{eq:main-problem}. \sa{\texttt{HiBSA} uses block coordinate updates for the primal variable that has $N\geq 1$ blocks. Moreover, it employs} constant step sizes for primal and dual updates, and to set these step sizes one needs to know the \mg{L}ipschitz constant $L$ and strongly concavity modulus $\mu$. \sa{Convergence results provided in the paper are in terms of \textbf{(M2)} metric.}

\sa{For the WCSC setting, according to \cite[Lemma 3]{lu2020hybrid}, whenever dual step size $\rho$ and primal step size $\frac{1}{\beta}$ are chosen satisfying the following two conditions: $0< \rho < \frac{\mu}{4L^2}$ and $\beta > L^2(\frac{2}{\mu^2 \rho}+\frac{\rho}{2})+\frac{L}{2}-{L}$, it is guaranteed that the \texttt{HiBSA} iterate sequence satisfies the following bound for all $k\geq 0$:
%\todo[inline]{NSA: $\beta > L^2(\frac{2}{\mu^2 \rho}+\frac{\rho}{2})+\frac{L}{2}-\blue{L}$, $c_2=\beta + \blue{L}-\frac{L}{2}-L^2(\frac{2}{\mu^2 \rho}+\frac{\rho}{2})$}
\begin{equation}
    \begin{aligned}
         c_1\norm{y^{k+1}-y^k}^2 + c_2 \sum_{i=1}^N \norm{x_i^{k+1}-x_i^k}^2 \leq \cP^k - \cP^{k+1},
    \end{aligned}
\end{equation}
where $c_1\triangleq 4(\frac{1}{\rho}- \frac{L^2}{2\mu})-\frac{7}{2\rho}>0$ and $c_2\triangleq \beta + {L}-\frac{L}{2}-L^2(\frac{2}{\mu^2 \rho}+\frac{\rho}{2})>0$, and 
$\cP^{k}\triangleq \cL(x^{k},y^{k})+\Big(\frac{2}{\rho^2 \mu}+\frac{1}{2\rho} - 4(\frac{1}{\rho} - \frac{L^2}{2\mu}) \Big)\norm{y^{k}-y^{k-1}}^2$, which is the value of the potential function at iteration $k\geq 0$ --here, $c_1,c_2>0$ follows from the conditions on $\rho$ and $\beta$. Furthermore, from the proof of \cite[Theorem 1]{lu2020hybrid}, for all $k\geq 0$ we get}
\begin{equation}
    \begin{aligned}
       &\norm{G_{x_i}(x^k,y^k)}\leq (\beta+ 2L) \norm{x_i^{k+1}-x_i^k},\quad i=1,\ldots,N,\\
       &\norm{G_y(x^k,y^k)}\leq L\norm{x^{k+1}-x^k}  + \frac{1}{\rho}\norm{y^{k+1}-y^k},
    \end{aligned}
\end{equation}
\sa{which immediately implies that}
\begin{equation}
    \begin{aligned}
        \norm{G(x^k,y^k)}^2 &\leq \Big((\beta +2L)^2 + 2L^2 \Big) \norm{x^{k+1}-x^k}^2 + \frac{2}{\rho^2}\norm{y^{k+1}-y^k}^2 \leq \frac{\cP^{k}-\cP^{k+1}}{d_1},\quad \forall~k\geq 0,
    \end{aligned}
\end{equation}
where \sa{$d_1 \triangleq \min \{ 4(\frac{1}{\rho}-\frac{L^2}{2\mu})-\frac{7}{2\rho},~\beta +{L} - \frac{L}{2}-L^2(\frac{2}{\mu^2 \rho} + \frac{\rho}{2})\}/ \max \{\frac{2}{\rho^2},~(\beta +2L)^2 + 2L^2\}$ --note that $d_1>0$ due to conditions on $\rho$ and $\beta$. Therefore, for any $K\geq 1$, one has 
%$\rho < \frac{\mu}{4L^2}$, $\beta > L^2(\frac{2}{\mu^2 \rho}+\frac{\rho}{2})+\frac{L}{2}-\mu$.
\begin{equation}
\label{eq:hibsa-bound}
    \frac{1}{K}\sum_{k=1}^K\norm{G(x^k,y^k)}^2\leq \frac{1}{K}\cdot\frac{P^1-\underline{\cL}}{d_1}\leq\frac{1}{K}\cdot\frac{\cL(x^1,y^1)-\underline{\cL}+c_3\cD_y^2}{d_1},
\end{equation}
where $c_3\triangleq \frac{2}{\rho^2 \mu}+\frac{1}{2\rho} - 4(\frac{1}{\rho} - \frac{L^2}{2\mu})$, $\cD_y$ is the diameter of the dual domain, and it is assumed that there exists $\underline{\cL}>-\infty$ such that $\cL(x,y)\geq \underline{\cL}$ for all $x\in\cX$ and $y\in\cY$. Note that $c_3>\frac{2}{\rho^2\mu}>32 L\kappa^3$, where we used $0<\rho<\frac{\mu}{4L^2}$; moreover, we also have $\beta>\frac{2L^2}{\mu^2\rho}-%\mu
{\frac{L}{2}}>\frac{2\kappa^2}{\rho}-%\mu
{L}$; therefore, 
{\begin{eqnarray*}
   \frac{1}{d_1}\geq \frac{(\beta +2L)^2 + 2L^2}{4(\frac{1}{\rho}-\frac{L^2}{2\mu})}&\geq& \frac{\kappa^2}{2}\cdot\frac{\beta^2}{\frac{2\kappa^2}{\rho}-L\kappa^3}\\
   &\geq& \frac{\kappa^2\beta}{2}\cdot\frac{\frac{2\kappa^2}{\rho}-%\mu
   {L}}{\frac{2\kappa^2}{\rho}-L\kappa^3}\geq \frac{\kappa^2\beta}{2}\geq \frac{\kappa^2}{2}\Big(\frac{2\kappa^2}{\rho}-%\mu
 {L}\Big)\geq{\kappa^2}\Big(4\kappa^3 L-\frac{%\mu
   {L}}{2}\Big)=\Omega(L\kappa^5).
\end{eqnarray*}%
}
Thus, combining this bound with \eqref{eq:hibsa-bound} and using $c_3>32 L\kappa^3$, we get $\frac{\cL(x^1,y^1)-\underline{\cL}+c_3\cD_y^2}{d_1}=\Omega(L^2\kappa^8\cD_y^2)$.
%Then by using the largest possible step sizes $\beta^{-1} = \cO(L^{-1}\kappa^{-3})$ and $\rho = \cO(L^{-1}\kappa^{-1})$ that satisfies $\rho < \frac{\mu}{4L^2}$, $\beta > L^2(\frac{2}{\mu^2 \rho}+\frac{\rho}{2})+\frac{L}{2}-\mu$, we know that $\frac{1}{d_1}\geq \Omega(L^{1}\kappa^{5})$, and $\cP^1- \underline{\cL} =\cL(x^1,y^1) + \cO(L\kappa^3 \cD_y^2)-\underline{\cL}$. Therefore,
This result implies that the total complexity of HiBSA to compute an $\epsilon$-stationary point for WCSC minimax problems is at least $\cO(\kappa^8 L^2\epsilon^{-2})$.}
\subsection{Derivation of complexity in~\cite{xu2023unified}}
The authors of~\cite{xu2023unified} consider $\min_{x\in \cX}\max_{y\in \cY}\cL(x,y)=\mathds{1}_{X}(x)+f(x,y)-\mathds{1}_{Y}(y)$, where $X$ and $Y$ are closed convex sets, and $f$ is a smooth function such that $f$ is strongly concave in $y$. It is shown that the \agda~method shown in~\eqref{eq:gda} with $g$ and $h$ being the indicator functions of $X$ and $Y$, respectively, is guaranteed to generate an $\epsilon$-stationary point in terms of metric \textbf{(M2)} within $\frac{F_0-\underline{F}}{d_1\epsilon^2}$ iterations whenever the primal ($\tau$) and dual ($\sigma$) step sizes satisfy $\tau<\min\{\frac{1}{L},~\frac{1}{L^2\sigma+4\kappa^2/\sigma}\}$ and $\sigma\leq \frac{1}{4L\kappa}$, where $d_1=\min\{\frac{1}{2\tau}-\frac{\sigma L^2}{2}-\frac{2\kappa^2}{\sigma},~\frac{3\mu-\sigma L^2}{2}+\frac{\mu-4\sigma L^2}{2\sigma\mu}\}/\max\{\frac{1}{\tau^2}+2L^2,~\frac{2}{\sigma^2}\}$, $F_0=\cO(\cL(x_0,y_0))$, $\underline{F}=\cL^*-(\mu+\frac{7}{2\sigma}-\frac{\sigma L^2}{2}-2L\kappa)\cD_y^2$ and $\cL^*=\inf_{x\in X, y\in Y}\cL(x,y)$. Thus, choosing $\tau=\cO(\frac{1}{L\kappa^3})$ and $\sigma=\Theta(\frac{1}{L\kappa})$ implies that $\frac{1}{d_1}=\Omega(L\kappa^5)$ and $F_0-\underline{F}=\Omega(\cL(x_0,y_0)-\cL^*+L\kappa\cD_y^2)$; therefore, the upper complexity provided by the authors can be bounded from below as follows:
\begin{align*}
    \frac{F_0-\underline{F}}{d_1\epsilon^2}=\Omega\Big(L\kappa^5(\cL(x_0,y_0)-\cL^*+L\kappa\cD_y^2)\epsilon^{-2}\Big).
\end{align*}

\end{document}